\theoremstyle{plain}
\newtheorem{theorem}[equation]{Theorem}
\newtheorem{lemma}[equation]{Lemma}
\newtheorem{proposition}[equation]{Proposition}
\theoremstyle{definition}
\newtheorem{definition}[equation]{Definition}
\theoremstyle{remark}
\newtheorem{remark}[equation]{Remark}
\numberwithin{equation}{section}
\newcommand{\NN}{{\mathbb{N}}}
\newcommand{\eps}{\varepsilon}
\newcommand{\dist}{\operatorname{dist}}
\newcommand{\re}{\mathbb{R}}
\newcommand{\rn}{\mathbb{R}^n}
\newcommand{\ren}{\mathbb{R}^n}
\newcommand{\ree}{\mathbb{R}^{n+1}}
\newcommand{\cA}{\mathcal{A}}
\newcommand{\om}{\Omega}
\newcommand{\cH}{\mathcal{H}}
\newcommand{\W}{\mathcal{W}}
\newcommand{\B}{\mathcal{B}}
\newcommand{\s}{\mathcal{S}}
\newcommand{\pom}{\partial\Omega}
\newcommand{\hm}{\omega}
\newcommand{\bfpsi}{\mbf{\Psi}}
\newcommand{\xbf}{\mathbf{X}}
\newcommand{\mbf}[1]{{\mathbf{#1}}}
\renewcommand{\emptyset}{\text{\textup{\O}}}
\DeclareMathOperator{\supp}{supp}
\DeclareMathOperator{\diam}{diam}
\DeclareMathOperator{\partialp}{\partial_P}
\def\Lip{\mathop{\operatorname{Lip}}\nolimits}
\def\BMO{\mathop{\operatorname{BMO_P}}\nolimits}
\newcommand{\vertiii}[1]{{\left\vert\kern-0.15ex\left\vert\kern-0.15ex\left\vert #1
		\right\vert\kern-0.15ex\right\vert\kern-0.15ex\right\vert}}
\def\Xint#1{\mathchoice
{\XXint\displaystyle\textstyle{#1}}%
{\XXint\textstyle\scriptstyle{#1}}%
{\XXint\scriptstyle\scriptscriptstyle{#1}}%
{\XXint\scriptscriptstyle%
\scriptscriptstyle{#1}}%
\!\int}
\def\XXint#1#2#3{{\setbox0=\hbox{$#1{#2#3}{%
\int}$ }
\vcenter{\hbox{$#2#3$ }}\kern-.6\wd0}}
\def\barint{\,\Xint -} % \, corrects the \! used in the definition
\def\bariint{\barint_{} \kern-.4em \barint}
\def\bariiint{\bariint_{} \kern-.4em \barint}
\renewcommand{\iint}{\int_{}\kern-.34em \int} %\, minor space between the integrals
\renewcommand{\iiint}{\iint_{}\kern-.34em \int} %\, minor space between the integrals
\renewcommand{\d}{\, \mathrm{d}}
\begin{document}
\allowdisplaybreaks

\title[%$A_\infty$ 
$\mathrm{L}^p$ solvability implies regularity for graphs]
{Solvability of the $\mathrm{L}^p$ Dirichlet problem for the heat equation is equivalent to parabolic uniform rectifiability in the
case of a parabolic Lipschitz graph.
% {Parabolic uniform rectifiability and caloric measure I:\\ %$A_\infty$ 
% $\mathrm{L}^p$ solvability of the Dirichlet problem implies parabolic 
% uniform rectifiability of\\ a parabolic Lipschitz graph
}

\author{Simon Bortz}
\address{Department of Mathematics
\\
University of Alabama
\\
Tuscaloosa, AL, 35487, USA}
\email{sbortz@ua.edu}

\author{Steve Hofmann}
\address{
Department of Mathematics
\\
University of Missouri
\\
Columbia, MO 65211, USA}
\email{hofmanns@missouri.edu}

\author{Jos\'e Mar{\'\i}a Martell}

\address{%Jos\'e Mar{\'\i}a Martell\\
Instituto de Ciencias Matem\'aticas CSIC-UAM-UC3M-UCM\\
Consejo Superior de Investigaciones Cient{\'\i}ficas\\
C/ Nicol\'as Cabrera, 13-15\\
E-28049 Madrid, Spain} \email{chema.martell@icmat.es}

\author{Kaj Nystr\"om}
\address{Department of Mathematics, Uppsala University, S-751 06 Uppsala, Sweden}
\email{kaj.nystrom@math.uu.se}

\thanks{S.B.~was supported by the Simons Foundation’s Travel Support for Mathematicians program.  S.H.~was supported by NSF grants DMS 2000048 and DMS 2349846.
J.M.M.~acknowledges financial support from 
MCIN/AEI/ 10.13039/501100011033 grants CEX2023-001347-S and PID2022-141354NB-I00.
K.N. was partially supported by grant  2022-03106 from the 
Swedish research council (VR)}

\date{August 19, 2023. \textit{Revised}: \today.} 

\subjclass[2000]{ %31B05, 35J08, 35J25, 
35K05, 35K20, 35R35,
42B25, 42B37}

\keywords{Caloric measure, Dirichlet problem, free boundary, square function, Green function, level sets}

\begin{abstract}
We prove that if a parabolic Lipschitz (i.e., Lip(1,1/2)) graph domain has the property that its caloric measure is parabolic $A_\infty$ with respect to surface measure (which property is in turn equivalent to $\mathrm{L}^p$ solvability of the Dirichlet problem for some finite $p$),  then the function defining the graph has a half-order time derivative in the space of (parabolic) bounded mean oscillation. Equivalently, we prove that the $A_\infty$ property of caloric measure implies, in this case, 
that the boundary is parabolic uniformly rectifiable. Consequently, by combining our result with the work of Lewis and Murray we resolve a long standing open problem in the field by characterizing those
parabolic Lipschitz graph domains for which one has $\mathrm{L}^p$ solvability (for some $p <\infty$) of the Dirichlet problem for the heat equation.  The 
key idea of our proof is to view the level sets of the
Green function as extensions of the original boundary graph for 
which we can prove (local) square function
estimates of Littlewood-Paley type.
\end{abstract}

\maketitle
\tableofcontents

%\newpage
\section{Introduction}

In this paper we resolve a long standing open problem, in domains defined as regions above  graphs of parabolic Lipschitz functions (Lip(1,1/2) functions), concerning necessary and sufficient conditions for $\mathrm{L}^p$ solvability (for some $p <\infty $) of the Dirichlet problem for the heat equation. To be precise, we prove that $\mathrm{L}^p$ solvability is equivalent to  the function defining the boundary having a half-order time derivative in the space of (parabolic) bounded mean oscillation. In the setting of parabolic Lipschitz graphs, the latter is equivalent to the boundary of the domain being parabolic uniformly rectifiable.  We emphasize that in general, parabolic Lipschitz graphs do {\em not} have this property; we shall return to this point momentarily.

To put our result into context, we recall that in 1977, Dahlberg \cite{DahlbergRH2} proved 
that, for a Lipschitz domain $\Omega\subset \mathbb{R}^n$, 
harmonic measure $\omega$ is mutually absolutely continuous with respect to the surface measure $\sigma$ on $\pom$, and more precisely that the Poisson kernel $d\omega/d\sigma$
is an $A_\infty$ weight with respect to surface 
measure. In fact, Dahlberg proved more, as he established that the Poisson kernel satisfies a scale-invariant reverse H\"older estimate in $\mathrm{L}^2$, and that the $\mathrm{L}^p$ Dirichlet for the Laplace equation in a bounded Lipschitz domain is solvable for all
$p\in (2-\epsilon,\infty)$. At the time,  the problem of finding  the analogue
of Dahlberg’s result for the heat equation, in domains whose boundaries are given locally as graphs
of functions which are Lipschitz in the space variable, was proposed. It was conjectured by Hunt (see \cite[p 2]{KWu}), on the basis of natural homogeneity, that a sufficient regularity condition in the time variable should be Lipschitz of order $1/2$, and hence that the appropriate geometric setting for the parabolic analogue of Dahlberg’s result should be that of Lip(1,1/2) domains. However, subsequent counterexamples of Kaufman and Wu \cite{KWu} showed that the Lip(1,1/2) condition does not suffice even for (qualitative) mutual absolute continuity of caloric measure and parabolic surface measure.

A major breakthrough in the field occurred in 1995, when  Lewis and Murray \cite{LewMur} proved  that if the function defining the graph domain is Lip(1,1/2), and in addition has a half-order time derivative in the space of parabolic BMO, then the caloric measure is parabolic $A_\infty$ (in a local, scale invariant way) with respect to parabolic surface measure on the boundary.  Consequently, Lewis and Murray \cite{LewMur} obtained solvability of the Dirichlet problem for the heat equation with data in $\mathrm{L}^p$, for $p<\infty$ sufficiently large,
but unspecified. Hence, Lewis and Murray \cite{LewMur} established a sufficient condition, in the context of Lip(1,1/2) graph domains,  for the 
$\mathrm{L}^p$ solvability, for some $p <\infty $, of the Dirichlet problem for the heat equation (such solvability is equivalent to caloric measure being parabolic $A_\infty$ with respect to surface measure, in an appropriate scale invariant local sense). We will frequently refer to a Lip(1,1/2) function having this additional regularity as a {\em regular} Lip(1,1/2) function.

Subsequently, in 1996 Hofmann and Lewis \cite{HL96} were able to prove
the solvability of the $\mathrm{L}^2$ Dirichlet
problem (and of the $\mathrm{L}^2$ Neumann and regularity problems)  for the heat equation by the way of layer potentials, in domains given by the region above a regular Lip(1,1/2) graph. They established the $\mathrm{L}^2$ results under the restriction that the half-order time derivative (measured in BMO) of the function defining the graph is small. The smallness is sharp in the
sense that in \cite{HL96} it is proved that there are regular Lip(1,1/2) graph domains for which the $\mathrm{L}^2$ Dirichlet problem is not solvable.

The works of Lewis and Murray \cite{LewMur}  and Hofmann and Lewis \cite{HL96} jointly give the parabolic analogue of the result of \cite{DahlbergRH2} by establishing sufficient conditions on the defining graph for the conclusions. The main result of this paper is that we prove that the condition found by Lewis and Murray \cite{LewMur}, i.e., that the defining function for the domain is a regular Lip(1,1/2) function, is not only {\it sufficient} for the conclusion that caloric measure is parabolic $A_\infty$ (locally) with respect to surface measure, but also {\it necessary}.  Equivalently, we characterize those parabolic Lipschitz domains for which one has L$^p$ solvability of the Dirichlet problem, for some $p<\infty$; thus a necessary and sufficient criterion for solvability with singular data.
In particular, we prove the following theorem. We refer to the sequel for precise definitions, and explanations of notation; especially, as regards the notions of parabolic uniform rectifiability, and of the $A_\infty$ property of caloric measure, 
see Subsection \ref{sspUR} and Remark \ref{PUR} for the former, and
Definition \ref{defainfty} for the latter.

\begin{theorem}\label{mainthrm.thrm}
Suppose $n \ge 2$, let $\psi(x,t): \mathbb{R}^{n-1} \times \re \to \re$ be a Lip(1,1/2) function and let
\[\Omega:= \{\mbf{X} = (x_0,x,t) \in \re \times \mathbb{R}^{n-1} \times \re : x_0 > \psi(x,t)\}. \]
Let $\hm$ denote the caloric measure for $\Omega$ and let $\sigma = \cH_p^{n+1}|_{\partial\Omega}$ be the parabolic surface measure on $\pom$. If $\hm$ is parabolic $A_\infty$ with respect to $\sigma$, then $\psi$ is regular Lip(1,1/2), i.e.,
it has a half-order time derivative in BMO, with norm bounded by a constant depending only on the dimension, the Lip(1,1/2) constant of $\psi$, and the $A_\infty$ constants of $\hm$. In particular, $\partial\Omega$ is parabolic uniformly rectifiable.
\end{theorem}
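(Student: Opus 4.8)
The plan is to establish the contrapositive-free direction directly: assuming caloric measure $\hm$ is parabolic $A_\infty$ with respect to $\sigma$, produce the square function estimate that characterizes a half-order time derivative in parabolic BMO for $\psi$. The organizing idea, as announced in the abstract, is to treat the level sets $\Sigma_\lambda := \{X \in \Omega : G(X_0, X) = \lambda\}$ of the Green function (with a suitably chosen far-away pole $X_0$, localized to a parabolic Carleson box) as ``regularized copies'' of the boundary graph $\partial\Omega$, and to prove \emph{uniform} (in $\lambda$) local Littlewood–Paley/square function estimates for these level sets, then pass to the limit $\lambda \to 0$. First I would set up the comparison machinery: by the $A_\infty$ hypothesis and standard elliptic-parabolic theory (Harnack, boundary Harnack, the CFMS-type estimates relating $G$, $\hm$, and $\sigma$ in this graph setting), one gets that on a Carleson box $\mbf{R}$ of scale $r$, $G(X_0,\cdot) \approx \hm(\Delta)/\sigma(\Delta) \cdot \dist_p(\cdot,\partial\Omega)$ in an averaged sense, and the $A_\infty$ weight $k = d\hm/d\sigma$ satisfies a reverse-Hölder inequality that upgrades to Carleson-measure control on $\nabla G$ via the standard integration-by-parts (Rellich-type) identities for the heat operator.

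The core of the argument is the square-function estimate on the level sets. For a fixed small $\lambda$, $\Sigma_\lambda$ is (locally, after the CFMS comparison) a Lip(1,1/2) graph $x_0 = \psi_\lambda(x,t)$ that agrees with $\psi$ in the large-scale geometry but is smoother; the goal is to show $\iint_{\mbf{R}} |D^{1/2}_t \psi_\lambda|^2$-type Carleson sums are bounded independently of $\lambda$ and of the box, with constant depending only on $n$, the Lip(1,1/2) constant, and the $A_\infty$ constants. Here the natural route is: (i) express the half-order time derivative control through a square function of the Green function's gradient restricted to the region between two nearby level sets, using that $\partial_t$ and the tangential spatial gradient of $G$ are linked along $\Sigma_\lambda$ by the equation $\partial_t G = \Delta G$; (ii) run a local Littlewood–Paley estimate — essentially a parabolic analogue of the good-$\lambda$/Carleson-measure arguments of Lewis–Murray and Hofmann–Lewis, but now \emph{forward} (square function controlled by the $A_\infty$/reverse-Hölder data) rather than using regularity as an input — exploiting the $A_\infty$ property to bound $\iint |\nabla G|^2 \, \delta \, dX$ over the Carleson box by $\hm(\Delta)$, and then transferring this to the level-set language; (iii) pass $\lambda \downarrow 0$ using lower semicontinuity of the square function and the fact that $\Sigma_\lambda \to \partial\Omega$ locally, obtaining the BMO bound for $D^{1/2}_t \psi$ itself. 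Finally, by the characterization of parabolic uniform rectifiability for Lip(1,1/2) graphs (the regularity of $\psi$ being equivalent to parabolic UR of $\partial\Omega$, cited in Subsection~\ref{sspUR} and Remark~\ref{PUR}), the last sentence of the theorem follows immediately.

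The main obstacle I anticipate is step (ii): making the level sets genuinely usable requires showing they are uniformly Lip(1,1/2) graphs with constants depending only on the data, and that the differential relations used to extract the half-time-derivative square function survive the passage between level sets without accumulating $\lambda$-dependent losses — in particular controlling the ``parabolic'' (time) direction, where the equation is only first order and the natural scaling is anisotropic, so there is no clean elliptic-type non-tangential access and one must work with parabolic Whitney regions and the half-order fractional derivative directly. A secondary difficulty is that $\psi$ is \emph{a priori} only Lip(1,1/2), so $D^{1/2}_t\psi$ need not even be a function until the estimate is proven; one must phrase everything through smooth truncations/mollifications (e.g. of the defining function, or via the level sets themselves playing the role of the mollification) and only at the end remove the regularization, keeping all constants uniform. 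I expect the bulk of the paper to be devoted precisely to establishing the uniform geometry of the level sets and the local square function estimate with the correct (data-dependent, $\lambda$-independent) constants.
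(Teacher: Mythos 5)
Your high-level intuition — treat the level sets of the Green function as smoothed copies of the graph, derive square-function control from the $A_\infty$ data, and transfer this to a BMO bound for $\mathcal{D}_t\psi$ — matches the announced strategy of the paper. However, there is a genuine gap in the mechanism you propose for closing the argument, and it is precisely where the paper's technical novelty lives.

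You propose to prove, for each fixed $\lambda$, a uniform Littlewood--Paley/BMO estimate on the level-set graph $\psi_\lambda$, and then ``pass to the limit $\lambda\to 0$ using lower semicontinuity.'' This step does not go through. The difficulty is that the $A_\infty$ hypothesis only gives control of the density $k=d\omega/d\sigma$ in an averaged (reverse H\"older) sense; it does not give a pointwise two-sided bound $k\approx 1$, nor does it give $G\approx \delta$ down to the boundary. Consequently one cannot establish the required Carleson bound for a single $\Sigma_\lambda$ uniformly in $\lambda$: the comparison $u\approx\delta$, which you implicitly need in step (i)--(ii), simply fails near scales and locations where the averaged density degenerates. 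The paper handles this by first extracting from the $A_\infty$ hypothesis (via a weak-type maximal function estimate and the $A_p$/quantitative absolute continuity) a \emph{good closed set} $F_\star\subset\Sigma$ of nearly full measure on which $\omega(\Delta_r)/\sigma(\Delta_r)\approx 1$ at all scales, and then forms the \emph{sawtooth} $\mathcal{S}$ above a regularized parabolic distance $h(\mbf{y})\approx\dist(\mbf{y},F)$ to the projection $F=\Pi(F_\star)$. The square-function estimate (Proposition \ref{prop:CM-ar}) is proved only over this sawtooth, where $u\approx\delta$ genuinely holds. There is no limit in $\lambda$: instead one forms the composite function $\psi^h(\mbf{y})=\psi(h(\mbf{y});\mbf{y})$, which agrees with $\psi$ on $F$ and lies in $\Omega$ off $F$, and one shows (via a Marcinkiewicz integral, using $|\psi^h-\psi|\lesssim\dist(\cdot,F)$) that $\mathcal{D}_t^R\psi$ and $\mathcal{D}_t^R\psi^h$ differ by $O(1)$ on $F\cap Q_R$. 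The Littlewood--Paley machinery is then run on $\psi^h$, and the conclusion is obtained through the \emph{John--Str\"omberg} characterization of BMO (one only needs to beat the measure of an exceptional set, which is exactly what the construction of $F_\star$ provides). Your step (iii), passing to the limit, is replaced by this stopping-time/good-set device, and that replacement is essential.

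Your step (i) — linking $\partial_t G$ and $\Delta G$ through the equation and running an integration-by-parts — is on the right track and corresponds to the Lewis--Nystr\"om identity (Lemma \ref{sqrf}). But note that in the paper this identity does not itself use the $A_\infty$ hypothesis; the $A_\infty$ input enters earlier, in the construction of $F_\star$ (Lemma \ref{CreateFstarlem.lem}) and in the comparison $u\approx\delta$ inside the sawtooth (Lemma \ref{gislikedeltainsss.eq}). Also worth noting: implicit differentiation of the level-set relation $u(\psi_r(\mbf{y}),\mbf{y})=r$ (used to convert the Green-function square function into the $\psi_r$ square function) works precisely because the quantitative non-degeneracy $\partial_{y_0}u\approx u/\delta\approx 1$ (Lemma \ref{lemma:CS-3}) holds in the sawtooth, and your proposal correctly flags that the uniform Lip(1,1/2) geometry of the level sets needs to be established — that is indeed done, but with constants that are uniform only because of the good-set/sawtooth restriction. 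In short: keep the level-set idea and the IBP identity, drop the limit $\lambda\to 0$, and replace it with the good-set / regularized-distance / John--Str\"omberg scheme.
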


We note that Theorem \ref{mainthrm.thrm} treats a version of a classical 
1-phase caloric free boundary problem.  We shall return to this point in more detail momentarily.

The theorem has important implications which we summarize as follows.

\begin{theorem}\label{mainconsq.thrm}
Suppose $n \ge 2$, let $\psi(x,t): \mathbb{R}^{n-1} \times \re \to \re$ be a Lip(1,1/2) function and let
\[\Omega:= \{\mbf{X} = (x_0,x,t) \in \re \times \mathbb{R}^{n-1} \times \re : x_0 > \psi(x,t)\}. \]
Let $\sigma = \cH_p^{n+1}|_{\partial\Omega}$ denote  
the parabolic surface measure on $\pom$. The following are equivalent.
\begin{align*}
\mathrm{(i)}&\ \mbox{ The caloric measure for $\Omega$ is a parabolic $A_\infty$ weight with respect to $\sigma$.}\\
\mathrm{(ii)}&\ \mbox{ The function $\psi$ is {\em \bf regular} Lip(1,1/2), i.e., it
has a half-order time derivative in $BMO$.}\\
\mathrm{(iii)}&\ \mbox{ The adjoint caloric measure for $\Omega$ is a parabolic $A_\infty$ weight with respect to  $\sigma$.}\\
\mathrm{(iv)}&\ \mbox{ The $\mathrm{L}^p$ Dirichlet problem for the heat equation is solvable in $\Omega$, for some $p<\infty$.}\\
\mathrm{(v)}&\ \mbox{ The $\mathrm{L}^p$ Dirichlet problem for the adjoint heat equation is solvable in $\Omega$, for some $p<\infty$.}\\
\mathrm{(vi)}&\ \mbox{ $\partial\Omega$ is parabolic uniformly rectifiable.}
\end{align*}
\end{theorem}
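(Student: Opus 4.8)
The plan is to derive Theorem \ref{mainconsq.thrm} from Theorem \ref{mainthrm.thrm} together with previously known results, treating each of the implications as a link in a cycle. First I would observe that (i) $\Leftrightarrow$ (iv) and (iii) $\Leftrightarrow$ (v) are, in the appropriate scale-invariant local sense, essentially definitional: the $A_\infty$ property of caloric (resp.\ adjoint caloric) measure with respect to surface measure on a parabolic Lipschitz graph domain is equivalent to $\mathrm{L}^p$ solvability of the Dirichlet problem for the heat (resp.\ adjoint heat) equation for some $p<\infty$ — this is the parabolic analogue of the classical elliptic equivalence and is available from the literature on the parabolic measure (the $A_\infty$/$\mathrm{RH}_{p'}$ duality applied to the density $d\omega/d\sigma$), so I would simply cite it. Likewise, in the parabolic Lipschitz graph setting, (ii) $\Leftrightarrow$ (vi), i.e., the defining function $\psi$ being regular Lip$(1,1/2)$ is equivalent to $\partial\Omega$ being parabolic uniformly rectifiable; I would point to Subsection \ref{sspUR} and Remark \ref{PUR} where this equivalence is recorded.

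With those equivalences in hand, the genuine content reduces to two implications: (ii) $\Rightarrow$ (i) and (i) $\Rightarrow$ (ii). The forward direction (ii) $\Rightarrow$ (i) is precisely the theorem of Lewis and Murray \cite{LewMur}: if the graph function is regular Lip$(1,1/2)$, then caloric measure is parabolic $A_\infty$ with respect to surface measure, in a local scale-invariant way. The reverse direction (i) $\Rightarrow$ (ii) is exactly Theorem \ref{mainthrm.thrm}, the main new result of the paper. Thus the cycle (i) $\Rightarrow$ (ii) $\Rightarrow$ (i) closes, which — combined with the definitional equivalences (i) $\Leftrightarrow$ (iv) and (ii) $\Leftrightarrow$ (vi) — gives that (i), (ii), (iv), (vi) are all equivalent.

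It remains to fold in (iii) and (v). The cleanest route is to note that the class of regular Lip$(1,1/2)$ functions, and hence the geometric condition (vi), is invariant under the time reversal $t \mapsto -t$: if $\psi(x,t)$ is regular Lip$(1,1/2)$ then so is $\psi(x,-t)$, since the half-order time derivative in parabolic BMO transforms in the obvious way under this reflection, and Lip$(1,1/2)$ is manifestly preserved. Under time reversal the heat equation becomes the adjoint (backward) heat equation and caloric measure for $\Omega$ becomes adjoint caloric measure for the time-reversed domain, which is again a parabolic Lipschitz graph domain of the same type with the same constants. Therefore applying the already-established equivalence (ii) $\Leftrightarrow$ (i) to the time-reversed domain yields (ii) $\Leftrightarrow$ (iii), and applying (i) $\Leftrightarrow$ (iv) there yields (iii) $\Leftrightarrow$ (v). Chaining everything, all six statements are equivalent.

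The main obstacle in this argument is not in the bookkeeping of the cycle but in the single hard input (i) $\Rightarrow$ (ii), i.e., Theorem \ref{mainthrm.thrm} itself, whose proof — built on viewing level sets of the Green function as extensions of the boundary graph and establishing local Littlewood--Paley square function estimates — is carried out separately. Here I would only need to be careful that the various equivalences I invoke (the $A_\infty \Leftrightarrow \mathrm{L}^p$-solvability duality, and the regular Lip$(1,1/2)$ $\Leftrightarrow$ parabolic uniform rectifiability identification) are each stated with constants depending only on dimension, the Lip$(1,1/2)$ constant, and the relevant $A_\infty$ or BMO data, so that the quantitative tail of Theorem \ref{mainthrm.thrm} propagates correctly through the cycle; this is a routine but necessary check.
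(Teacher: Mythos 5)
Your proposal is correct and follows essentially the same route the paper takes: it cites (i)\,$\Leftrightarrow$\,(iv), (iii)\,$\Leftrightarrow$\,(v) as standard, (ii)\,$\Leftrightarrow$\,(vi) from the parabolic uniform rectifiability literature (Remark~\ref{PUR}), (ii)\,$\Rightarrow$\,(i) from Lewis--Murray, (i)\,$\Rightarrow$\,(ii) from Theorem~\ref{mainthrm.thrm}, and brings in (iii), (v) via the time reversal $t\mapsto -t$ applied to the already-established cycle. The only point worth emphasizing, which you do flag, is that the time-reversed boundary function $\psi(x,-t)$ is regular Lip$(1,1/2)$ with the same constants, so the quantitative conclusion of Theorem~\ref{mainthrm.thrm} propagates unchanged.
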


We remark that our results in Theorems \ref{mainthrm.thrm} and \ref{mainconsq.thrm} continue to hold in the case $n=1$, as the interested reader may verify, making the natural adjustments.

Our work proves that  the existence of a half-order time derivative in BMO for the graph function is {\it precisely} the extra ingredient needed to obtain $\mathrm{L}^p$ estimates for solutions to the Dirichlet problem, and (equivalently) quantitative absolute continuity for the caloric measure. 
As noted above, in the context of Theorem \ref{mainconsq.thrm},
the fact that (ii) implies (i) (and hence also (iii), by the 
change of variable $t\mapsto - t$), is due to Lewis and Murray \cite{LewMur}.
Our new contribution is that $\mathrm{(i)}$
(or $\mathrm{(iii)}$) implies $\mathrm{(ii)}$, and hence also that 
$\mathrm{(iii)}\!\!\iff\!\!\mathrm{(ii)}$. %In fact, the equivalence between 
% $\mathrm{(ii)}$ and  $\mathrm{(iii)}$ follows from the equivalence between 
% $\mathrm{(i)}$ and  $\mathrm{(ii)}$ by noting that $\mathrm{(ii)}$ is 
% invariant under the map $t \mapsto -t$ and that (suitably interpreted) 
% this map takes the adjoint caloric measure for $\Omega$ to the 
% caloric measure for $-\Omega$ (with the pole moved as well). 
In the context of Lip(1,1/2) domains, the equivalences between $\mathrm{(i)}$ and $\mathrm{(iv)}$, and between $\mathrm{(iii)}$ and $\mathrm{(v)}$, are 
standard and well-known, and may be derived as consequences of the theory of Muckenhoupt weights and boundary estimates for non-negative solutions and caloric measure, see, e.g., \cite{LewMur,N1997} for details, and also \cite[Theorem 2.10]{GH} for a more general result. Concerning $\mathrm{(vi)}$, the notion of parabolic uniformly rectifiable sets was introduced by Hofmann, Lewis, and Nyström 
in \cite{HLN1}, \cite{HLN2}, and % Parabolic uniform rectifiability 
is the dynamic counterpart of the notion of uniform rectifiability developed 
in the  monumental works of G. David and S. Semmes \cite{DS1}, \cite{DS2}.  
The parabolic version of this 
theory concerns time-varying boundaries 
which are locally not necessarily given by graphs, and
which are minimally smooth from the point of view of, e.g., 
parabolic singular integrals\footnote{That all ``sufficiently nice" parabolic
SIOs are $L^2$ bounded on any parabolic uniformly rectifiable set is observed in
\cite[Corollary 4.9]{BHHLN-BP}. The converse to that result, in general, 
remains open for now, 
but in the present context (the case of Lip(1,1/2) graphs), 
boundedness of parabolic SIOs implies parabolic uniform rectifiability, i.e., that 
the graph is regular Lip(1,1/2).  We include a proof of this observation in an appendix to
the present paper (Appendix \ref{appendixB}).}. 
As in the classical (elliptic, or steady-state) case treated in 
\cite{DS1}, \cite{DS2},
geometry is controlled by a local geometric square function
(the parabolic analogue of the ``$\beta$-numbers" of P. Jones), from which key geometric information and structure can be extracted.
% : this is captured in the notion of parabolic uniformly rectifiable sets.   
The notions of parabolic uniformly rectifiable sets and parabolic uniform rectifiability extract the geometrical theoretical essence of the (time-dependent) (regular) parabolic Lipschitz graphs introduced in \cite{H95}, \cite{H}, \cite{HL96}, \cite{LewMur}\footnote{We remark that
the regular Lip(1,1/2) condition in \cite{LewMur} appears slightly different to the one considered in the present paper (which is the same as that in
\cite{H95}, \cite{H}, and \cite{HL96}), owing to a different choice of half-order time derivative, but in fact, the conditions are equivalent, as shown in
\cite{HL96}.}, \cite{LS}.  
In particular, in the context of Lip(1,1/2) graphs, a 
graph being {\em regular} Lip(1,1/2) is equivalent to 
the graph being parabolic uniformly rectifiable, i.e., in the statement of Theorem \ref{mainconsq.thrm},  $\mathrm{(ii)}$ is equivalent to 
$\mathrm{(vi)}$. For recent progress on equivalent 
formulations of parabolic uniform rectifiability, 
reminiscent of the ones concerning  uniform 
rectifiability in \cite{DS1}, \cite{DS2}, we refer to \cite{BHHLN-BP,BHHLN-Corona}.
We remark that regular Lip(1,1/2) graphs may be characterized by $L^2$ boundedness of singular integral operators (see Appendix \ref{appendixB}).

As noted above, Theorem \ref{mainthrm.thrm} 
can be viewed in the context of a 
1-phase caloric free boundary problem. Indeed, consider a  
solution $u$ of the heat equation (or adjoint heat equation), 
and a domain $\om$ such that
\begin{equation}\label{upos}
\om = \{ u>0\}.
\end{equation}
Then $u$ vanishes on $\pom$ (the free boundary), hence
\[|\nabla u| = \frac{\partial u}{\partial \nu} \,\quad \text{on} \,\, \pom\,,
\]
where $\partial u/\partial \nu$ is the inward normal derivative.  Since $u$ vanishes
on $\pom$, the problem will be overdetermined if we are 
given information about
$\partial u/\partial \nu$; 
thus in principle, prescribing regularity of $|\nabla u|$ on $\pom$ should 
imply some regularity of the free boundary.   Theorem \ref{mainthrm.thrm} 
is a particular case of the adjoint caloric version of this 
free boundary problem, in 
which $u$ is a Green function $G(X_0,t_0,-,-)$
with some fixed pole 
(in this case, we interpret \eqref{upos} locally, away from the pole), 
and the assumed regularity of $|\nabla u|$ is the $A_\infty$ hypothesis
for the caloric measure.  We observe that by the comparison principle (aka boundary Harnack principle), valid in Lip(1,1/2) domains (see \cite{FGS}), there is no loss of generality in taking $u$ to be a Green function.
The regularity that we 
deduce for the free boundary is that it is a {\em regular} Lip(1,1/2) graph
(i.e., in light of the preceeding remarks, that it is parabolic uniformly rectifiable).

Some historical remarks are in order.
A ``small constant" version of the free boundary problem
described in the preceding paragraph,
may be formulated either above the continuous threshold ($\log k \in C^\alpha$, where $k=d\omega/d\sigma$), or just below that threshold
($\log k \in$ VMO); in the presence of suitable background hypotheses (e.g.,
Reifenberg flatness, and Ahlfors regularity of the boundary), one seeks, in the former case, 
to show that
$\pom \in C^{1,\alpha}$, and in the latter case, effectively that $\pom$ is uniformly rectifiable with ``vanishing constant" (see \cite[Remark, pp 383-384]{HLN1}).
The present paper, and the forthcoming work \cite{BHMN}, can be viewed as treating the  ``large constant" version of this problem:
indeed, our assumption that $\omega \in A_\infty$ is ``almost" the same as assuming 
that $\log k \in$ BMO, and we seek to establish (parabolic) uniform rectifiability of $\pom$.
In the elliptic (i.e., harmonic) setting, the small constant version of the problem has been treated above the continuous threshold in \cite{AC} and \cite{Jerison-Calpha}, and below the continuous threshold in the series of papers \cite{KT1,KT2,KT3}.  The large constant
case appears in restricted form (i.e., assuming that $k\approx 1$) in \cite{LV}, and in full generality in \cite{HLMN}; an alternative proof is given in \cite{MT}.
In the parabolic setting, small constant results were obtained as follows:
in \cite{HLN2}
(a partial result, with an extra hypothesis, below the continuous threshold); in \cite{N2012}
(below the continuous threshold, in the graph case); and in \cite{Eng} (in full generality,
 both above and below the continuous threshold).  In the large constant case, 
 only a weak version of our Theorem \ref{mainthrm.thrm}, under the much more restrictive hypothesis that $k\approx 1$, had hitherto been known \cite{LeNy,N2006}.
It is worthwhile to emphasize that the {\em conclusion} in the large constant case
(namely that $\pom$ is uniformly rectifiable), is a {\em hypothesis} in all the works treating the small constant case.  This hypothesis is imposed
implicitly in the elliptic case (where 
uniform rectifiability is a consequence of the fact that a Reifenberg flat domain with Ahlfors regular boundary is, in particular, a chord-arc domain, and thus has a uniformly rectifiable boundary by the results of \cite{DJ}) and
explicitly in the parabolic case in \cite{HLN2,N2012,Eng}, to rule out the case of a non-regular Lip(1,1/2) graph with vanishing constant (see the example in \cite[p 384]{HLN1}).
Given our results here, and in our forthcoming paper \cite{BHMN}, one expects that the
hypothesis of parabolic uniform rectifiability in \cite{HLN2,N2012,Eng} can be removed.

% However, the fact that in our case our estimates 
% only hold in  local sawtooth domains complicates 
% matters and forces us to introduce complicated and novel techniques.

% While 
As noted above,
Theorem \ref{mainthrm.thrm} and Theorem \ref{mainconsq.thrm} resolve
a long-standing problem in this subject,
and are thus of stand-alone interest, 
but in addition, Theorem \ref{mainthrm.thrm} is an essential ingredient in extending this type of free boundary problem 
to more general (non-graph) settings.
% \ref{mainconsq.thrm} represent fundamental contributions to the understanding of the Dirichlet problem for the
% heat and adjoint heat equations in a natural class of regions above  time-% varying graphs, Theorem \ref{mainthrm.thrm} is also crucial
% when considering this type of free boundary problem 
% in more general settings. 
Indeed, the results proved in the present work
will play a key role in our forthcoming paper 
\cite{BHMN}, in which we plan to treat similar % free boundary 
problems in a space-time domain $\Omega$ satisfying an interior 
corkscrew condition, whose boundary 
$\pom=: \Sigma$ % where $\Sigma$ 
is a closed subset of $\mathbb R^{n+1}$ which is (only) Ahlfors-David regular
in an appropriate parabolic sense. In \cite{BHMN} we shall prove the caloric version of
\cite{HLMN}; i.e., we shall prove that if caloric measure has the weak-$A_\infty$ property (i.e., $A_\infty$ minus doubling) with respect to the surface measure on $\Sigma$, then $\Sigma$ is parabolic uniformly rectifiable.  The strategy of the proof in \cite{BHMN} is, first, to establish, via some elaborate geometric constructions exploiting the 
weak-$A_\infty$ property, a
Corona approximation in terms
of Lip(1,1/2) graph domains; then, second, 
to obtain parabolic uniform rectifiability of $\Sigma$ by showing that
the constructed approximating Lip(1,1/2) graphs are in fact {\em regular} 
Lip(1,1/2) graphs, as in Theorem \ref{mainthrm.thrm} (ii).  This is achieved
by pushing the $A_\infty$ property of the caloric measure to these graph domains, at which point Theorem \ref{mainthrm.thrm} applies, 
thus establishing
the desired regularity of the graphs. 
% In general we believe that the results and techniques of this paper, and 
% \cite{BHMN}, will find applications in several parabolic problems 
% concerning free boundaries. 

% A key initial step in \cite{BHMN} is to prove that the 
% stated hypotheses implies that  $\Sigma\subset \ree$
% satisfies the parabolic {Weak  Half-Space Approximation} property,
%  parabolic WHSA for short. The proof of this, as well as an important  
% argument in this paper, rely on  square function/Carleson measure type 
% estimates for non-negative solutions to heat/adjoint heat equation, originally % proved in \cite{LeNy}, but now in certain dyadic sawtooth/Whitney regions. 
% In \cite{BHMN}, starting from the parabolic WHSA we are then able to device % a refined stopping-time and 

In the present work, the key idea of our proof, and the main novelty, 
is to view the level sets of a
(normalized) Green function (which we show are graphs, locally), 
as extensions of the original graph, for 
which we can prove (local) estimates of Littlewood-Paley type.
Using implicit differentiation, we derive
the latter from local square function estimates for the Green function, 
which are in turn a consequence of the $A_\infty$ property of caloric measure, by a refinement (due to \cite{LeNy}) of the 
standard integration by parts argument.
Of course, consideration of the level sets of a solution, per se, 
is not new in free boundary theory (see, e.g., \cite{KindNir, Jerison-Calpha, Eng} for some similar ideas)
but our work seems to be the first to exploit Littlewood-Paley theory 
for the level sets.  Finally, we use the Littlewood-Paley estimates for the level sets to establish the regularity of the function $\psi$ whose graph defines the boundary (i.e., to show that $\psi$ has half a time derivative in parabolic BMO).

The rest of the paper is organized as follows. Section \ref{Prelim} is of a preliminary nature and we here introduce notation and some of the basic terminology, in the context of Lip(1,1/2) domains, to be used in the forthcoming sections. We here also define precisely regular Lip(1,1/2) graph domains. In Section \ref{bbest} we outline and state the results/estimates concerning non-negative solutions to the heat/adjoint heat equation that we will use. All estimates stated are essentially known and can be extracted from the literature,  
although  for the reader's convenience, in Appendix \ref{appendixA}
we shall provide a proof,
 which simplifies existing arguments in the case considered here, of Lemma \ref{lemma:CS-3}. 
In Section \ref{mainthrmsetup.sect} 
we take some initial steps towards the proof of
Theorem \ref{mainthrm.thrm} by exploring the $A_\infty$ condition, by introducing sawtooths, by studying the level sets for the normalized Green function, and by introducing a regularized distance function $h$ which will be an important tool for us. 
The Littlewood-Paley estimates for the level sets, 
mentioned above, % in \eqref{intro3} 
are proved in Section \ref{SecSq}. In Section \ref{final} all 
ingredients developed are combined and the proof of Theorem \ref{mainthrm.thrm} is completed.

\smallskip

\noindent{\bf Acknowledgements}.  The authors thank the referees for helpful suggestions to improve the exposition, and to clarify prior history of related work.

\section{Preliminaries}\label{Prelim}

\subsection{Notation}
 Throughout the paper, $n \ge 2$ is a natural number and
we let $d:=n+1$ denote the  natural parabolic homogeneous dimension of space-time $\ren$. The ambient space we work in is $\ree:=\re\times \re^{n-1}\times \re$,
\[
\ree = \big\{\xbf= (X,t)=(x_0,x,t) \in \re\times \re^{n-1}\times\re\big\}.
\]
Here we have distinguished the last coordinate as the time coordinate and the first spatial coordinate as the graph coordinate. We also work with
\[
\rn = \big\{ \mbf{x}=(x,t)  \in \re^{n-1}\times\re\big\}.
\]
To help the reader identify the nature of points used in the paper, we use the notation employed above, which we here describe in detail. We use lower case letters
(e.g. $x$, $y$, $z$) to denote spatial points in $\re^{n-1}$, and capital letters (e.g $X = (x_0, x)$, $Y=(y_0, y)$, $Z = (z_0, z)$), to denote points in $\re^{n}=\re\times\re^{n-1}$.
We also use boldface capital letters (e.g. $\mbf{X}=(X,t)$, $\mbf{Y}=(Y,s)$, $\mbf{Z}=(Z,\tau)$), to denote points in 
$\ree$ and boldface lowercase letters (e.g. $\mbf{x}=(x,t)$, $\mbf{y}=(y,s)$, $\mbf{z}=(z,\tau)$) to denote points in $n$-dimensional space-time. In accordance with this notation, given $\mbf{X}=(X,t)\in \ree$ (resp. $ \mbf{x}=(x,t)\in\ren$) we use the notation $t(\mbf{X})$ (resp. $t=t(\mbf{x})$) to denote its time component, that is,  if $\mbf{X}=(X,t)$ then $t=t(\mbf{X})$ (resp. if $\mbf{x}=(x,t)$ then $t=t(\mbf{x})$).

We denote the parabolic length by
\begin{align*}
\|\mbf{X}\|&=\|(X,t)\|:=|X|+|t|^{\frac12}, \quad \mbf{X}=(X,t)\in\ree= \ren\times \mathbb{R}\,,
\\ \|\mbf{x}\|&=\|(x,t)\|:=|x|+|t|^{\frac12}, \quad \mbf{x}=(x,t)\in\ren=\mathbb{R}^{n-1}\times \mathbb{R}\,.
\end{align*}
All distances will be measured with respect to the parabolic metric
\[
\dist(\mbf{X},\mbf{Y}):=\|\mbf{X}-\mbf{Y}\|:=|X-Y|+|t-s|^{\frac12},
\qquad \mbf{X}=(X,t),\ \mbf{Y}=(Y,s)\in\ree,
\]
and
\[
\dist(\mbf{x},\mbf{y}):=\|\mbf{x}-\mbf{y}\|:=|x-y|+|t-s|^{\frac12},
\qquad \mbf{x}=(x,t),\ \mbf{y}=(y,s)\in\rn.
\]
It is sometimes convenient to use a different (smooth) parabolic distance. Given $\mbf{x}=(x,t)\in\ren\setminus\{0\}$, % or $\mbf{X} =(X,t)\in \ree\setminus 0$, 
we let $\vertiii{\mbf{x}}$ %(resp. $\vertiii{\mbf{X}}$) 
be defined as the unique positive solution of the equation
\begin{equation}\label{eqpdist}
\frac{|x|^2}{\vertiii{\mbf{x}}^2}+\frac{t^2}{\vertiii{\mbf{x}}^4}=1, 
% \,\, \mbf{x} \in \ren\,,\qquad 
% \frac{|X|}{\vertiii{\mbf{X}}^2}+\frac{t^2}{\vertiii{\mbf{X}}^4}=1, \,\,
% \mbf{X} \in \ree\,,
\end{equation}
that is,
\begin{equation}\label{eqn:parabolic-dist-new}
	\vertiii{\mbf{x}}= 2^{-\frac12}\big(\sqrt{|x|^4+4t^2}+|x|^2\big)^{\frac12}, \quad  \mbf{x}=(x,t)\in\ren\,.
\end{equation}
% and similarly for $\vertiii{\mbf{X}}$.
Then $\vertiii{\mbf{x}}\approx \|\mbf{x}\|$ 
% and $\vertiii{\mbf{X}}\approx \|\mbf{X}\|$ 
with implicit constants depending 
only on $n$.

Given $\mbf{x}=(x,t)\in\ren$ and $R>0$, we introduce the parabolic
cube, centered at $\mbf{x}$ and of size $R$, as
 \[
Q_R(\mbf{x})
:=
\big\{
\mbf{y}=(y,s)\in\ren: |y_i-x_i|<R, \, 1\leq i\leq n-1,\,\ |t-s|< R^2
\big\}.
\]
Given $\mbf{x}=(x,t)\in\ren$ and $R>0$, 
% we will at instances, in particular in Section \ref{SecSq},  
% also use the notation
we denote a closed parabolic cube in $\ree$ by
\begin{equation}\label{cuba}\mathcal{J}_{R}(\mbf{X}) = \mathcal{J}_{R}(x_0, x,t):=[x_0-R, x_0+R]\times \overline{Q_R(\mbf{x})}\,.
\end{equation}
For $\mathcal{J} = \mathcal{J}_{R}(\mbf{X})$, we let $\ell(\mathcal{J}) := \ell(\mathcal{J}_{R}(\mbf{X}))= 2R$
denote the parabolic side length of $\mathcal{J}$.

\subsection{{Parabolic} Hausdorff measure}
Given $\eta \geq 0$, we let $\cH^\eta$ denote
 standard $\eta$-dimensional Hausdorff measure.
  We also define a {parabolic} Hausdorff measure of {homogeneous}
  dimension $\eta$, denoted
  $\cH_{\text p}^\eta$, in the same way that one defines standard Hausdorff measure, but instead using coverings
  by {parabolic} cubes. I.e., for $\delta>0$, and for $E\subset \mathbb R^{n+1}$, we set
  \[ \cH_{\text{p},\delta}^\eta(E):= \inf \sum_k \diam(E_k)^\eta\,,
  \]
  where the infimum runs over all countable such coverings of $E$, $\{E_k\}_k$, with $\diam(E_k)\leq \delta$ for all $k$. Of course, the diameter is measured in the parabolic metric.  We then define
  \[
  \cH_{\text p}^\eta (E) := \lim_{\delta\to 0^+} \cH_{\text{p},\delta}^\eta(E)\,.
  \]
As is the case for classical Hausdorff measure, $ \cH_{\text{p}}^\eta$ is a Borel regular measure.
We refer the reader to \cite[Chapter 2]{EG} for a discussion of the basic properties of standard
Hausdorff measure.  The arguments in \cite{EG} adapt readily to treat $  \cH_{\text{p}}^\eta$.
In particular, one obtains a measure equivalent to   $\cH_{\text{p}}^\eta$ if one defines
$\cH_{\text{p},\delta}^\eta$ in terms of coverings by arbitrary sets of parabolic diameter at most $\delta$, rather than
cubes.

\subsection{Lip(1,1/2) graph domains, and surface measure on the boundary}\label{Lipnot}  A function $\psi:\mathbb R^{n-1}\times\mathbb R\to \mathbb R$ is called Lip(1,1/2) with constant $C$, if
\begin{align}\label{1.1}
|\psi(x,t) - \psi(y,s)| \le C(|x - y| + |t - s|^{1/2}) = C\|(x,t)-(y,s)\|, \quad \forall (x,t), (y,s) \in \rn.
\end{align}
We define $\|\psi\|_{\Lip(1,1/2)}$ to be the infimum of all constants $C$ as in \eqref{1.1}.  If we set
\begin{eqnarray}\label{1.1++}
\Sigma:=
\big\{(\psi(x,t), x, t): (x,t)\in\ren\big\}
=
\big\{(\psi(\mbf{x}), \mbf{x}): \mbf{x}\in\ren\big\} =:
\big\{\mbf{\Psi}(\mbf{x}): \mbf{x}\in\ren\big\},
\end{eqnarray}
 then we say that $\Sigma$ is a Lip(1,1/2) graph. The set $\Sigma$ is the boundary of the domain
\begin{eqnarray}\label{1.1+++}
\Omega:=\big\{\mbf{X}=(x_0,x,t)\in\ree: x_0>\psi(x,t)\big\},
\end{eqnarray}
We refer to $\Omega\subset\mathbb R^{n+1}$  as an
(unbounded) Lip(1,1/2) graph
domain with constant $\|\psi\|_{\Lip(1,1/2)}$. Given the closed set $\Sigma \subset \mathbb R^{n+1}$
  of  homogeneous dimension $\cH_{\text{p}, \text{dim}}(\Sigma)=n+1$, we
define  a surface measure on $\Sigma$ as the restriction of $\cH_{\text{p}}^{n+1}$ to $\Sigma$, i.e.,
\begin{equation}\label{sigdef}
\sigma = \sigma_\Sigma:=  \cH_{\text{p}}^{n+1}|_\Sigma\,.
\end{equation}

We remark that for Lip(1,1/2) graphs, $\sigma$ as defined in \eqref{sigdef} is equivalent to $d\sigma^{\bf s}:= d\sigma_t \,dt$, where $d\sigma_t$ is 
standard $(n-1)$-dimensional Hausdorff measure $\cH^{n-1}$, 
restricted to the cross section 
$\Sigma_t:= \{x: (x,t) \in \Sigma\}$.  We refer the reader to 
\cite[Remark 2.8 and Appendix B]{BHHLN-Corona} for details.

\subsection{Surface cubes and reference points}\label{scuberef} We let
\begin{eqnarray}\label{1.1+}
M_0 := 1 + \|\psi\|_{\Lip(1,1/2)}.
\end{eqnarray}
For every $\mbf{X}=(x_0,\mbf{x})=(x_0,x,t) \in\ree$ and $R>0$, we introduce vertically elongated open ``cubes" 
\begin{equation}\label{eq3.1}
 I_R(\mbf{X}):=(x_0-3M_0\sqrt{n}R, x_0+3M_0\sqrt{n}R)\times Q_R(\mbf{x})
\end{equation}
and set
$$\Delta_R(\mbf{X}):=I_R(\mbf{X})\cap\Sigma.$$ We will refer to $\Delta_R(\mbf{X})$ as a surface box or cube of size $R>0$ and centered at $\mbf{X}$. Unless otherwise specified, we implicitly assume that the center
$\mbf{X}=(x_0,\mbf{x})=(x_0,x,t)$, of any surface box 
$\Delta_R(\mbf{X})$, is in $\Sigma$, that is, $x_0=\psi(x,t)$.

Note the crude estimate
\begin{equation}\label{crdIrest.eq}
\mbf{Y} \in  I_R(\mbf{X}) \implies \|\mbf{Y} - \mbf{X}\| \le 5M_0\sqrt{n} R,
\end{equation}
and that by construction,
\begin{equation}\label{surface-box:Lip}
	\Delta_R(\mbf{X})
=
\big\{ \bfpsi(\mbf{y}): \mbf{y}\in Q_R(\mbf{x})\big\},
\qquad
\forall\,\mbf{X}=(x_0,\mbf{x})\in\Sigma,
\end{equation}
where we recall that $\mbf{y}\mapsto \bfpsi(\mbf{y}):=(\psi(\mbf{y}),\mbf{y})$
is the graph parametrization of $\Sigma$ (see \eqref{1.1++}).
Indeed, if $\mbf{y}\in Q_R(\mbf{x})$, 
then $\|\mbf{x} - \mbf{y}\| \le (\sqrt{n}R + R)$ and hence
\[|\psi(\mbf{x}) - \psi(\mbf{y})| \le M_0 (\sqrt{n}R + R) \le 2\sqrt{n}M_0 R.\]
We also note, by the same reasoning, that if $R > 0$, $\mbf{X}=(x_0,\mbf{x})\in\Sigma$ and $\mbf{y}\in Q_R(\mbf{x})$, then
\[
(\psi(\mbf{y}) + a,\mbf{y}) \in I_R(\mbf{X}) \quad \forall a \in (-M_0\sqrt{n}R, M_0\sqrt{n}R)\,.
\]
Given $\tau>0$, we define the parabolic dilation
$\tau\Delta_R(\mbf{X}):=\Delta_{\tau R}(\mbf{X})$.

We introduce time forward and time backwards corkscrew points relative to $\Delta_R(\mbf{X})$,
\begin{equation}\label{CSpm}
\cA^\pm_R(\mbf{X})  := \left(x_0+ 2 M_0R, x, t\pm 2 R^2\right),   \quad  \mbf{X}=(x_0,x,t)= (x_0,\mbf{x})=(\psi(\mbf{x}),\mbf{x}) \in \Sigma,
\end{equation}
and we let
\begin{equation}\label{CSnopm}
\cA_R(\mbf{X})  := \left(x_0+ 2 M_0R, x, t\right),   \quad  \mbf{X}=(x_0,x,t)= (x_0,\mbf{x})=(\psi(\mbf{x}),\mbf{x}) \in \Sigma.
\end{equation}
Note that for $\mbf{X}\in\Sigma$,
\begin{equation}\label{CS2}
\cA^\pm_R(\mbf{X}),\ \cA_R(\mbf{X}) \in \Omega_{2R}
=\Omega_{2R}(\mbf{X}) := I_{2R}(\mbf{X}) \cap \Omega,
\end{equation}
and that $$\dist\left(\cA^\pm_R(\mbf{X}), \partial \Omega_{2R}\right)\approx \dist\left(\cA^\pm_R(\mbf{X}), \partial \Omega_{3R}\right) \approx R,$$
where the implicit constants depend 
only on $n$ and $\|\psi\|_{\Lip(1,1/2)}$.  Furthermore,
\[
|\psi(x_0, x, t\pm 2 R^2) - \psi(x_0, x, t)| \le \sqrt{2}\|\psi\|_{\Lip(1,1/2)}\,R\,,
\]
and hence
\begin{equation}\label{morecslocinfo.eq}
\dist(\cA^\pm_R(\mbf{X}), \partial \Omega) \,\ge\, 2M_0R - 
\sqrt{2}\|\psi\|_{\Lip(1,1/2)}\,R\, \ge\, 2R.
\end{equation}
The same argument and conclusions apply to $\cA_R(\mbf{X})$.

%\begin{remark}
%Note that our definition of these reference points differs slightly from the literature, where one might expect to find $\cA^\pm_R(\mbf{X})$ in $I_R$. The definition allows us to state the lemmas in the next section in a more convenient fashion.
%\end{remark}

We frequently use (sometimes without mention) the following elementary consequence of our definitions.
\begin{lemma}\label{distgraphlem.lem} Assume that $\mbf{X}=(x_0,x,t)\in \Omega$. Then
\[(x_0 - \psi(x,t))M_0^{-1} \le \dist(\mbf{X}, \Sigma) \le x_0 - \psi(x,t).\]
\end{lemma}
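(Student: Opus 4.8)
The plan is to prove the two inequalities separately, each by a direct estimate using only the definition of the parabolic metric; throughout, set $D := x_0 - \psi(x,t)$, which is strictly positive since $\mbf{X}=(x_0,x,t)\in\Omega$.

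For the upper bound I would simply test the distance against the vertical projection $\bfpsi(x,t)=(\psi(x,t),x,t)\in\Sigma$. Because this point shares the time coordinate $t$ with $\mbf{X}$, the parabolic distance between them reduces to the Euclidean distance in the first spatial slot, namely $|x_0-\psi(x,t)|=D$. Taking the infimum over $\Sigma$ gives $\dist(\mbf{X},\Sigma)\le D$.

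For the lower bound I would fix an arbitrary boundary point $\bfpsi(\mbf{y})=(\psi(y,s),y,s)$ with $\mbf{y}=(y,s)\in\ren$ and set
\[
\rho := \|\mbf{X}-\bfpsi(\mbf{y})\| = \big(|x_0-\psi(y,s)|^2+|x-y|^2\big)^{1/2} + |t-s|^{1/2}.
\]
Reading off the two nonnegative summands one gets $|x_0-\psi(y,s)|\le \rho-|t-s|^{1/2}$ and $|x-y|+|t-s|^{1/2}\le\rho$. Then I would split $D = \big(x_0-\psi(y,s)\big)+\big(\psi(y,s)-\psi(x,t)\big)$, bound the first term by $\rho-|t-s|^{1/2}$, and bound the second, via the Lip(1,1/2) condition \eqref{1.1}, by $\|\psi\|_{\Lip(1,1/2)}\big(|x-y|+|t-s|^{1/2}\big)\le \|\psi\|_{\Lip(1,1/2)}\,\rho$. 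Adding and discarding the nonnegative term $|t-s|^{1/2}$ yields $D\le(1+\|\psi\|_{\Lip(1,1/2)})\,\rho = M_0\,\rho$. Since $\bfpsi(\mbf{y})$ was an arbitrary point of $\Sigma$, infimizing over $\mbf{y}$ gives $D\le M_0\,\dist(\mbf{X},\Sigma)$, i.e.\ $\dist(\mbf{X},\Sigma)\ge M_0^{-1}D$, as desired.

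I do not expect any serious obstacle: the statement is essentially a bookkeeping exercise with the parabolic norm and the Lipschitz bound on $\psi$. The one point that needs a little care is to retain the subtracted quantity $-|t-s|^{1/2}$ in the bound for $|x_0-\psi(y,s)|$ until the end, rather than crudely replacing it by $\rho$, so that the constant works out to be exactly $M_0^{-1}$ and not something weaker.
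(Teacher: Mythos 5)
Your proof is correct and both bounds are established with the sharp constants. The upper bound is the same trivial vertical-projection argument as in the paper. For the lower bound, however, your argument is organized differently: the paper normalizes to $(x,t)=(0,0)$, $\psi(0,0)=0$, takes a (near-)minimizing boundary point $\mbf{Y}=(y_0,\mbf{y})$, and runs a case analysis on whether $\|\mbf{y}\|\ge x_0/(1+L)$ (in which case the horizontal--temporal part of $\|\mbf{Y}-\mbf{X}\|$ already dominates) or $\|\mbf{y}\|<x_0/(1+L)$ (in which case the Lipschitz bound forces $|y_0|$ small, so the vertical part $|x_0-y_0|$ dominates). You instead work with an arbitrary boundary point $\bfpsi(\mbf{y})$, split $D=(x_0-\psi(y,s))+(\psi(y,s)-\psi(x,t))$, and bound each piece directly against $\rho=\|\mbf{X}-\bfpsi(\mbf{y})\|$, using the two elementary facts $|x_0-\psi(y,s)|\le\rho-|t-s|^{1/2}$ and $|x-y|+|t-s|^{1/2}\le\rho$. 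This avoids both the normalization and the dichotomy, and it also avoids the (harmless but unnecessary) assumption in the paper that the infimum defining $\dist(\mbf{X},\Sigma)$ is attained, since you infimize over arbitrary $\mbf{y}$ at the very end. The payoff is a single-stream computation rather than two cases; the tradeoff is negligible. Your cautionary remark about not crudely replacing $\rho-|t-s|^{1/2}$ by $\rho$ is well taken --- doing so would give the constant $1+2L$ in place of $1+L=M_0$, which would be weaker (though still finite).
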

\begin{proof}
Let $L = \|\psi\|_{\Lip(1,1/2)}$. We assume, 
without loss of generality, that $(x,t)= 0$ and $\psi(0,0) = 0$ 
so that $x_0 = (x_0 - \psi(x,t))$. 
Let $\mbf{Y} = (y_0, \mbf{y})\in\Sigma$ 
be such that $\dist(\mbf{X}, \Sigma) = \|\mbf{Y} - \mbf{X}\|$. If $\|\mbf{y}\| \ge x_0(1 + L)^{-1}$ then $\|\mbf{Y} - \mbf{X}\| \ge \|\mbf{y}\| \ge (x_0 - \psi(x,t))(1 + L)^{-1}$ and we are done. So we may assume that $\|\mbf{y}\| \le (1 + L)^{-1}x_0$. Since $\psi$ is Lip(1,1/2) it holds $|y_0| \le \tfrac{L}{1 + L} x_0$. Thus,
\[\|\mbf{Y} - \mbf{X}\| \ge |x_0 - y_0| \ge x_0\left(1 - \frac{L}{1 +L} \right) = \frac{x_0}{1 + L} = \frac{x_0 - \psi(x,t)}{1 +L} .\]
As $M_0=1+L$ this proves the lemma as the inequality $\dist(\mbf{X}, \Sigma) \le x_0 - \psi(x,t)$ is trivial.
\end{proof}

\subsection{(Parabolic) $BMO$ and fractional integral operators} Given a function $f:\ren\to \re$, which is locally integrable with respect to the $n$-dimensional Lebesgue measure, we say that $f\in\BMO(\ren)$, the parabolic BMO-space, if
\[
\|f\|_{\BMO(\ren)}
:=
\sup_{Q\subset\re^n}
\bariint_{Q} |f(x,t)-f_Q|\,\d x\d t<\infty,
\]
where the supremum runs over all parabolic cubes $Q=Q_R(\mbf{x})$, with $\mbf{x}\in\ren$ and $R>0$. Here $f_Q$ denotes the average of $f$ on $Q$.

We introduce $\mathrm{I_P}$, the fractional integral operator of parabolic order 1 on $\ren$,
by means of the Fourier transform,
\begin{equation}\label{def-IP:hat}
	(\mathrm{I_P} \psi)^{\,\widehat{}}\, (\xi,\tau):= \vertiii{(\xi,\tau)}^{-1}\,\widehat{\psi}(\xi,\tau),
	\qquad
	(\xi,\tau)\in\re^n.
	\end{equation}
Then
\begin{equation}\label{def-IP}
\mathrm{I_P}\psi(\mbf{x})=\iint_{\re^n} V(\mbf{x}-\mbf{y})\,\psi(\mbf{y})\,\d\mbf{y},
\qquad
\mbf{x}\in\re^n,
\end{equation}
for a kernel $V$ satisfying $0\le V(\mbf{y})\lesssim \|\mbf{y}\|^{1-d}$, where 
$d=n+1$ is the homogeneous dimension of parabolic $\rn$.

Using $\mathrm{I_P}$ we introduce a (parabolic) half-order time derivative as
\[
\mathcal{D}_t \psi(\mbf{x})
:=
\partial_t \circ\mathrm{I_P} \psi(\mbf{x})
=
\iint_{\re^n} \partial_t\big(V(\mbf{x}-\mbf{y})\big)\,\psi(\mbf{y})\,\d\mbf{y},
\qquad
\mbf{x}=(x,t)\in\re^n.
\]
This operator should be viewed as a principal value operator, or one should consider $\partial_t$ in the weak sense. Note that
the Fourier symbol for $\mathcal{D}_t $ is $2\pi i\tau/\vertiii{(\xi,\tau)}$.

Another half-order time derivative, $D_{1/2}^t$, can be introduced through the Fourier multiplier $|\tau|^{1/2}$ or by
\begin{eqnarray} \label{1.8gg}
 D_{1/2}^t  \psi (\mbf{x})=D_{1/2}^t  \psi (x,t):=c \int_{ \mathbb R }
\, \frac{ \psi ( x, s ) - \psi ( x, t ) }{ | s - t |^{3/2} } \, \d s,
\end{eqnarray} for properly chosen $c$.

\subsection{Regular Lip(1,1/2) graph domains}\label{Lipnotreg} Let $\psi:\mathbb R^{n-1}\times\mathbb R\to \mathbb R$ be a Lip(1,1/2) function
with norm $\|\psi\|_{\Lip(1,1/2)}$ in the sense of \eqref{1.1}.  Such a function is said to be a {\em regular} Lip(1,1/2) function if, in addition,
 \begin{eqnarray} \label{1.7}
 \mathcal{D}_t \psi\in \BMO(\ren).
\end{eqnarray}
If $\psi$ is a {regular} Lip(1,1/2)  function, and if  we define $\Sigma$ and $\Omega$ as in \eqref{1.1++} and \eqref{1.1+++}, then we say that
$\Sigma$ is a {\em regular} Lip(1,1/2) graph, and  that $\Omega\subset\mathbb R^{n+1}$  is an
(unbounded) regular Lip(1,1/2)  graph
domain. In both cases the regularity is determined by $\|\psi\|_{\Lip(1,1/2)}$ and $\|\mathcal{D}_t \psi\|_{\BMO(\ren)}$. In \cite{HL96} it is proved that
\begin{equation}\label{eqnormequiv}
\|\psi\|_{\text{R-Lip}}:=
\|\mathcal{D}_t \psi\|_{\BMO(\ren)}+\|\nabla_x\psi\|_{\infty}\approx \|D_{1/2}^t \psi\|_{\BMO(\ren)}+\|\nabla_x\psi\|_{\infty}\,,
\end{equation}
where $D_{1/2}^t$ was introduced in \eqref{1.8gg}. In particular, that a function is a {regular} Lip(1,1/2)  function can be equivalently
formulated using $D_{1/2}^t$ instead of $\mathcal{D}_t$, but the latter 
will be considerably more convenient for us to work with in this paper.

\begin{remark}  One can prove that in general, the class of regular Lip(1,1/2) functions is strictly contained in Lip(1,1/2), i.e., there are examples of
functions $\psi$ which are  Lip(1,1/2) but not regular Lip(1,1/2), see \cite{LS},
\cite{KWu}.  Moreover, it follows from the arguments of Strichartz
\cite{Stz} that for a regular Lip(1,1/2) function $\psi$, the assumption that 
$\psi$ is Lip(1/2) in the 
time variable is redundant: it 
follows from the finiteness of the R-Lip norm in \eqref{eqnormequiv}.
\end{remark}

\subsection{Parabolic uniform rectifiability} \label{sspUR}
Let $\Sigma \subset \mathbb R^{n+1}$ be a closed set, and define surface measure $\sigma$ on $\Sigma$ as in \eqref{sigdef}.
Assume that $\Sigma$ is parabolic Ahlfors-David regular, i.e., for all $\mbf{X}\in \Sigma$, and for $0<r<\diam(\Sigma)$ (with diameter measured in the parabolic metric), 
$\sigma\left({\tt D}_r(\mbf{X})\right) \approx r^{n+1}$, with uniform implicit constants, where
${\tt D}_r(\mbf{X}):=\{\mbf{Y}\in\Sigma: \|\mbf{Y}-\mbf{X}\|<r\}$.
For $\mbf{X}, r$ as above, set
$$
	\widetilde{\beta}(\mbf{X}, r  ):=\inf_{P \in \mathcal{P}}  \biggl ( \, \bariint_{{\tt D}_r(\mbf{X})}  \, \biggl (\frac {\dist ( \mbf{Y}, P )}{r}\biggr )^2  \d \sigma (\mbf{Y} )\biggr )^{1/2}, 
$$
 where  $\mathcal{P}$  is the set of all $ n $-dimensional hyperplanes $ P $ containing a line
parallel to the $ t $ axis (that is, $t$-independent planes).
Define 
$$
\d \widetilde{\nu} ( \mbf{X},r  )  := \widetilde{\beta}^{\,2}( \mbf{X}, r)\, \d \sigma (  \mbf{X}) \, r^{ - 1 }
\d r\,.
$$  
We then say that $\Sigma$ is {\em parabolic uniformly rectifiable} if $\widetilde{\nu}$
is a (parabolic) Carleson measure on  
$\Sigma\times(0,\diam(\Sigma))$, i.e., if 
\begin{align*}
\|\widetilde{\nu}\|_{\mathcal{C}}:=\sup_{\mbf{X}\in\Sigma,\, 0<r<\diam(\Sigma) }
  r^{ -(n+1) }\,
\widetilde{\nu} \big({\tt D}_r(\mbf{X}) \times ( 0, r) \big) <\infty
\end{align*}

\begin{remark}\label{PUR} One can prove, in the context of Lip(1,1/2) graphs, that the graph being regular Lip(1,1/2) is equivalent to the graph being parabolic uniform rectifiable, see \cite{HLN2} for a proof. In particular, 
let $\Sigma$ be the graph of a function $\psi(x,t)$, and set
\[\beta(r,x,t) := \inf_{L} \left[\bariint_{Q_r(x,t)} \left(\frac{\psi(y,s) - L(y)}{r} \right)^2 \,\d\sigma(y,s)\right]^{1/2}, \quad (r,x,t) \in \ree_+ \]
where the infimum is taken over all affine functions $L$ of $y$ only.
 If we let
\[\d\nu:= \d\nu_\psi := \beta^2(r,x,t) \frac{\, \d r\, \d x\, \d t}{r},\]
and assume that $\psi(x,t)$ is Lipschitz in the space variable
$x$, uniformly in $t$, then
 the condition that $\mathcal{D}_t \psi\in \BMO(\ren)$ is 
equivalent to saying that $\d\nu$ is a Carleson measure on $\ree_+$, i.e., 
\begin{equation}\label{Carlestforh.eq}
\|\nu\|:=\sup_{(z,\tau)\in\mathbb R^n,\, R>0}\int_0^R\bariint_{Q_R(z,\tau)}\beta^2(r,x,t) \frac{\, \d r\, \d x\, \d t}{r}<\infty\,.
\end{equation}
Moreover, since $\psi$ is Lipschitz in $x$, uniformly in $t$, it is not hard to see that 
(suitably interpreted) $\beta \approx \widetilde{\beta}$ 
and $\nu\approx \widetilde{\nu}$ on such graphs.
Since the further property that
$\mathcal{D}_t \psi\in \BMO(\ren)$ implies in particular that
 $\psi(x,t)$ is Lip(1/2) in $t$, uniformly in $x$, we see that
in the context of Lip(1,1/2) graphs, \eqref{Carlestforh.eq} with $\|\nu\|$ finite, is the very definition of $\Sigma$ being parabolic uniform rectifiable.
\end{remark}

\subsection{Convention concerning constants}\label{coo} We refer to $n$, the $\Lip(1,1/2)$ constant of the function defining the boundary of our domain, and the constants $C_\ast$ and $q  > 1$ appearing in Definition \ref{defainfty} below, as the {\it structural constants}. For all constants $A,B\in \mathbb R_+$, the notation $A\lesssim B$  means, unless otherwise stated, that $A/B$ is bounded from above by a positive constant depending at most on the structural constants; $A\gtrsim B$ of course means $B\lesssim A$. We write $A\approx B$  if $A\lesssim B$ and  $B\lesssim A$, while for a given constant $\eta$,
$A\lesssim_\eta B$  means, unless otherwise stated, that $A/B$ is bounded from above by a positive constant depending at most on the structural constants and 
$\eta$.

\section{Boundary estimates in $\Lip(1,1/2)$ domains}\label{bbest} The Dirichlet problem, parabolic measure, and the boundary behaviour of non-negative solutions,  for the heat equation but also for more general linear uniformly parabolic equations with space and time dependent coefficients,  have been studied intensively in Lipschitz cylinders and in Lip(1,1/2) domains over the years, see \cite{ % FGSit,
FGS,FS,FSY,LewMur,N1997}.
Results include Carleson type estimates, the relation between the associate parabolic measure and the Green function, the backward in time Harnack inequality, the doubling of parabolic measure, boundary Harnack principles (local and global) and  H\"older continuity up to the boundary of quotients of non-negative solutions vanishing on the lateral boundary. We here only state  the results/estimates for the heat equation that we will use. All estimates stated are known and also apply for solutions to the adjoint heat equation subject to the appropriate changes (typically just exchanging $\cA_R^+$ with $\cA_R^-$) induced by the change of variables $t \to -t$.

In the following we consider a  Lip(1,1/2) domain $\Omega$ as in \eqref{1.1+++} with boundary $\Sigma$.  It is well known that the bounded continuous Dirichlet
problem for the heat
equation always has a unique solution in $\Omega$. Given $\mbf{Y}\in \Omega$ we let $G(\cdot)=G ( \cdot, \mbf{Y})$
 denote Green's function for the heat
equation in $ \Omega$ with pole at $ \mbf{Y}$, i.e.
 \begin{eqnarray}\label{1.4a}
(\partial_t-\Delta)G (\mbf{X}, \mbf{Y})   = \delta_{\mbf{Y}} (  \mbf{X} ) \mbox{ in }
\Omega \mbox{ and } G \equiv 0 \mbox{ on }
\Sigma.
\end{eqnarray}
Here
$\delta_{\mbf{Y}}$ is the Dirac  delta function at $\mbf{Y}$ and $ \Delta
$ is the Laplacian in $ X$. Furthermore, we note that $ G
( \mbf{Y}, \cdot ) $ is the Green's function for the adjoint heat
equation with pole at $ \mbf{Y} \in \Omega $, i.e.
 \begin{eqnarray}\label{1.4b} (-\partial_t - \Delta)G (\mbf{Y},\mbf{X})  = \delta_{\mbf{Y}} (  \mbf{X} ) \mbox{ in }
\Omega \mbox{ and } G \equiv 0 \mbox{ on }
\Sigma.
\end{eqnarray}
We  let $ \omega^{\mbf{Y}}(\cdot)$ and $\widetilde{\omega}^{\mbf{Y}}( \cdot) $ be the caloric and adjoint caloric measures,
at $\mbf{Y}\in\Omega$, associated to the heat and  adjoint heat
equation in $ \Omega$. Given $\mbf{Y}\in\Omega$ we let $G(\mbf{X},\mbf{Y})\equiv 0$ whenever
$\mbf{X}\in (\mathbb R^{n}\times (s,\infty))\setminus\Omega$ and $G(\mbf{Y},\mbf{X})\equiv 0$ whenever
$\mbf{Y}\in (\mathbb R^{n}\times (-\infty,s))\setminus\Omega$. Then,
 \begin{align}\label{1.5}
 \iiint G ( \mbf{Y}, \mbf{X} ) (  \Delta -\partial_t)\phi  \, \d \mbf{X} &= \iint\phi  \, \d \omega^{\mbf{Y}},\notag\\
 \iiint G (\mbf{X},\mbf{Y}) (  \Delta +\partial_t)\phi   \, \d \mbf{X} &= \iint \phi  \, \d \widetilde{\omega}^{\mbf{Y}},
\end{align}
for all $ \phi \in
C_0^\infty ( \mathbb R^{ n + 1 } \setminus \{  \mbf{Y} \} ). $

 For the proofs of Lemmas \ref{HCatBdry.lem}, \ref{carlest.lem}, \ref{strongharnack4gf.lem}, \ref{CFMS},  and  \ref{calisdoubpre.lem} below  we refer to \cite{FGS,FS,FSY,LewMur,N1997}.  In \cite{N1997}
all relevant estimates are stated and proved in the general setting of second order
parabolic equations in divergence form in Lip(1,1/2) domains. We also note that all of these lemmas remain valid in more general settings, see e.g., \cite{HLN2}.   To give specific references in the case of the heat/adjoint heat equation, 
 we refer the reader to \cite[Chapter 3, Section 6]{LewMur} for  
 Lemmas \ref{HCatBdry.lem} and \ref{carlest.lem},
 and to \cite{FGS} %(or, e.g., \cite[Section 3]{HLN2})  
 for Lemmas  \ref{strongharnack4gf.lem}, 
 \ref{CFMS},  and  \ref{calisdoubpre.lem}. 

In this section all implicit constants depend only on $n$ and $\|\psi\|_{\Lip(1,1/2)}$. Given $Y\in\Omega$ we let $\delta(\mbf{Y}) := \inf_{\mbf{Z} \in \Sigma}\|\mbf{Y} - \mbf{Z}\|$.

\begin{lemma}\label{HCatBdry.lem}
Let $\mbf{X}\in\Sigma$ and $R>0$. Assume that
	$0 \le u\in C(\overline{I_{2R}(\mbf{X})\cap\Omega})$  satisfies
	$\partial_tu -\Delta u=0$ in $I_{2R}(\mbf{X})\cap\Omega$, with
	$u=0$ in $\Delta_{2R}(\mbf{X})$.
Then there exists $\alpha \in (0, 1/2)$, depending only on $n$ and $\|\psi\|_{\Lip(1,1/2)}$, such that
\[u(\mbf{Y}) \lesssim \left(\frac{\delta(\mbf{Y})}{R}\right)^{\alpha} \sup_{\mbf{Z} \in I_{2R}(\mbf{X})\cap\Omega} u(\mbf{Z}),\]
whenever $Y \in  I_{R}(\mbf{X}) \cap \Omega$.
\end{lemma}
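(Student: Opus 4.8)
The plan is to obtain the estimate by iterating a one‑scale boundary oscillation decay, starting at scale $R$ and descending to scale $\delta(\mbf{Y})$. At the outset we may assume $\delta(\mbf{Y}) \le \eps_0 R$ for a small structural constant $\eps_0 \le 1/2$: otherwise $u(\mbf{Y}) \le \sup_{I_{2R}(\mbf{X})\cap\Omega} u \lesssim (\delta(\mbf{Y})/R)^{\alpha}\sup_{I_{2R}(\mbf{X})\cap\Omega} u$ and we are done. We may also prove the bound for \emph{some} $\alpha>0$ only: since we are then in the regime $\delta(\mbf{Y})/R \le 1$, any smaller positive exponent is admissible as well, so in particular one can take $\alpha<1/2$.

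The crux is the following one‑scale estimate. Put $M := \sup_{I_{2R}(\mbf{X})\cap\Omega} u$ and let $\bar u$ be the extension of $u$ to $I_{2R}(\mbf{X})$ by zero outside $\Omega$. Because $u\ge 0$ is caloric in $I_{2R}(\mbf{X})\cap\Omega$, continuous up to the closure, and vanishes on the lateral portion $\Delta_{2R}(\mbf{X}) = I_{2R}(\mbf{X})\cap\pom$, a standard comparison argument (compare $\bar u$, on subcylinders, with the caloric function carrying its own boundary data, and invoke the parabolic maximum principle) shows that $\bar u$ is a nonnegative continuous \emph{subsolution} of $\partial_t - \Delta$ in $I_{2R}(\mbf{X})$, with $\sup_{I_{2R}(\mbf{X})}\bar u = M$. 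Since $\mathbb{R}^{n+1}\setminus\Omega = \{x_0\le\psi(x,t)\}$ is the subgraph of a Lipschitz function, it has uniform parabolic density: $|\mathcal{J}_\rho(\mbf{X}_0)\setminus\Omega| \gtrsim \rho^{n+2} \approx |\mathcal{J}_\rho(\mbf{X}_0)|$ for all $\mbf{X}_0\in\Sigma$, $\rho>0$ (and likewise on appropriately placed sub‑cubes). Applying the parabolic weak Harnack inequality to the nonnegative supersolution $w := M-\bar u$ — which is identically $M$ on the fat set $\mathbb{R}^{n+1}\setminus\Omega$, and that set meets the relevant earlier‑time sub‑cube in measure comparable to its full measure — yields structural constants $\theta\in(0,1)$ and $K\ge 2$ with
\[
\sup_{\mathcal{J}_{\rho}(\mbf{X}_0)\cap\Omega} u \;\le\; \theta \sup_{\mathcal{J}_{K\rho}(\mbf{X}_0)\cap\Omega} u
\qquad\text{whenever } \mbf{X}_0\in\Sigma,\ \mathcal{J}_{K\rho}(\mbf{X}_0)\subset I_{2R}(\mbf{X}).
\]
(Alternatively, the same decay follows from the parabolic maximum principle applied in $\mathcal{J}_{K\rho}(\mbf{X}_0)\cap\Omega$, together with a lower bound — valid at points of $\mathcal{J}_\rho(\mbf{X}_0)\cap\Omega$ — for the caloric measure of that domain of its lateral boundary $\Delta_{K\rho}(\mbf{X}_0)$; such a bound reflects the parabolic capacity density of $\mathbb{R}^{n+1}\setminus\Omega$ and can be produced by an explicit subsolution barrier.)

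To finish, let $\mbf{Y}^*\in\Sigma$ attain $\delta(\mbf{Y}) = \|\mbf{Y}-\mbf{Y}^*\|$. Since $\mbf{Y}\in I_R(\mbf{X})$ and $\delta(\mbf{Y})\le\eps_0 R$, one checks that $\mbf{Y}^*\in\Delta_{3R/2}(\mbf{X})$ and $\mathcal{J}_{cR}(\mbf{Y}^*)\subset I_{2R}(\mbf{X})$ for a small structural $c>0$ (and we may take $\eps_0\le c/2$). With $\rho_j := K^{-j}cR$, pick the integer $N\ge 0$ with $2\,\delta(\mbf{Y})\le\rho_N<2K\,\delta(\mbf{Y})$, and telescope the one‑scale estimate over $\rho_0:=cR>\rho_1>\dots>\rho_N$ (each application being legitimate since $\mathcal{J}_{K\rho_j}(\mbf{Y}^*) = \mathcal{J}_{\rho_{j-1}}(\mbf{Y}^*)\subset\mathcal{J}_{cR}(\mbf{Y}^*)\subset I_{2R}(\mbf{X})$). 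This gives
\[
u(\mbf{Y}) \;\le\; \sup_{\mathcal{J}_{\rho_N}(\mbf{Y}^*)\cap\Omega} u \;\le\; \theta^{N}\sup_{\mathcal{J}_{cR}(\mbf{Y}^*)\cap\Omega} u \;\le\; \theta^{N} M,
\]
where the first inequality uses $\mbf{Y}\in\mathcal{J}_{\rho_N}(\mbf{Y}^*)\cap\Omega$ (which holds since $\delta(\mbf{Y})\le\tfrac12\rho_N$). As $\theta^{N}\approx(\delta(\mbf{Y})/R)^{\alpha}$ with $\alpha:=\log(1/\theta)/\log K>0$, this is the desired bound (with $\alpha<1/2$ after the reduction above).

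The main difficulty lies in the second paragraph, where two points require care: verifying rigorously that the zero‑extension of $u$ is a subsolution across the merely Lipschitz boundary portion $\Delta_{2R}(\mbf{X})$, and the time‑direction bookkeeping needed to convert the essentially one‑sided‑in‑time weak Harnack inequality (equivalently, the caloric‑measure lower bound) into the clean, scale‑matched one‑scale estimate above — so that the iterates chain together down to $\mbf{Y}$ with all the cubes involved remaining inside $I_{2R}(\mbf{X})$.
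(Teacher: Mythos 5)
The paper does not actually prove this lemma: it is stated as a known boundary-H\"older estimate for caloric functions in Lip(1,1/2) domains, with a citation to \cite[Chapter 3, Section 6]{LewMur} (and the surveys \cite{FGS,FS,FSY,N1997}). Your proof is the classical argument found in those references: extend $u$ by zero to a continuous nonnegative subsolution across the Lipschitz boundary portion, use that the complement $\{x_0\le\psi\}$ has uniform parabolic measure/capacity density at every boundary point, apply the parabolic weak Harnack inequality (or, as you note in the parenthetical, the Bourgain-type lower bound on caloric measure of $\Delta_{K\rho}(\mbf{X}_0)$) to $M-\bar u$ to obtain the one-scale geometric decay, and iterate down to scale $\delta(\mbf{Y})$. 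This is correct, and you have rightly flagged the two places where care is needed: the subsolution verification for the zero-extension across a merely Lip(1,1/2) boundary, and the time-lag in the parabolic weak Harnack inequality, which forces a slightly shifted choice of sub-cylinders (or a retreat to the caloric-measure formulation) so that the one-scale estimate chains cleanly with the cubes staying inside $I_{2R}(\mbf{X})$. Your reduction to $\alpha<1/2$ by shrinking the exponent when $\delta(\mbf{Y})/R\le1$ is also fine. In short: the approach is the standard one, the route matches the proofs in the cited literature, and the argument is sound.
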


\begin{lemma}\label{carlest.lem} Let $\mbf{X}$, $R$, $u$, and $\alpha$ be as in the statement of
Lemma \ref{HCatBdry.lem}. Then
\begin{equation}\label{carlest.eq}
u(\mbf{Y}) \lesssim u(\cA^+_R(\mbf{X})),
\end{equation}
whenever $Y \in  I_{R}(\mbf{X}) \cap \Omega$. In particular,
\begin{equation}\label{carlhcest.eq}
u(\mbf{Y}) \lesssim \left(\frac{\delta(\mbf{Y})}{R}\right)^{\alpha} u(\cA^+_R(\mbf{X})),
\end{equation}
whenever $Y \in  I_{R/2}(\mbf{X}) \cap \Omega$.
\end{lemma}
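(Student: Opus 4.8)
The plan is to obtain \eqref{carlhcest.eq} from \eqref{carlest.eq} essentially for free, and to prove \eqref{carlest.eq} by the classical Harnack-chain-plus-boundary-decay argument (carried out for the heat equation in \cite{FGS,LewMur}), keeping careful track of the time orientation imposed by the parabolicity of the equation. For \eqref{carlhcest.eq}: apply Lemma \ref{HCatBdry.lem} with $R$ replaced by $R/2$, which gives $u(\mbf{Y})\lesssim\big(\delta(\mbf{Y})/R\big)^{\alpha}\,\sup_{\mbf{Z}\in I_{R}(\mbf{X})\cap\Omega}u(\mbf{Z})$ for $\mbf{Y}\in I_{R/2}(\mbf{X})\cap\Omega$; the supremum here is $\lesssim u(\cA^+_R(\mbf{X}))$ because \eqref{carlest.eq} holds at every point of $I_R(\mbf{X})\cap\Omega$ with a uniform constant. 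So everything reduces to \eqref{carlest.eq}.

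Write $P:=\cA^+_R(\mbf{X})$. Two features of $P$ make the argument run. First, by \eqref{morecslocinfo.eq} and the corkscrew constructions of \S\ref{scuberef}, $\dist(P,\Sigma)\approx R$. Second, and crucially, $P$ lies strictly in the future of $I_R(\mbf{X})$: points of $I_R(\mbf{X})$ have time coordinate in $(t(\mbf{X})-R^2,t(\mbf{X})+R^2)$, whereas $t(P)=t(\mbf{X})+2R^2$. This is precisely why the conclusion is asserted on $I_R(\mbf{X})$ and not on the whole of $I_{2R}(\mbf{X})$, where $u$ is assumed caloric and to vanish: $I_{2R}(\mbf{X})$ contains points \emph{later} than $P$, which no forward Harnack chain can reach. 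Using the Lip(1,1/2) geometry (Lemma \ref{distgraphlem.lem} together with \S\ref{scuberef}), whenever two points of $\Omega$ lie within parabolic distance $\lesssim\rho$ of one another, are separated in time by $\approx\rho^2>0$, and each stay at distance $\gtrsim\rho$ from $\Sigma$, they can be joined by a chain of $O(1)$ parabolic balls inside $\Omega$ along which time increases; the interior Harnack inequality then controls $u$ at the earlier point by a structural multiple of $u$ at the later one.

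To prove \eqref{carlest.eq}, fix $\mbf{Y}\in I_R(\mbf{X})\cap\Omega$ and put $d:=\delta(\mbf{Y})$. If $d\ge\epsilon_0 R$ for a small structural $\epsilon_0$, then $\mbf{Y}$ and $P$ are within parabolic distance $\lesssim R$, $P$ is in the future of $\mbf{Y}$, and both have clearance $\gtrsim\epsilon_0 R$ from $\Sigma$, so a forward chain of bounded length gives $u(\mbf{Y})\lesssim u(P)$. If $d<\epsilon_0 R$, pick a nearest boundary point $\hat{\mbf{X}}\in\Sigma$; then $\hat{\mbf{X}}$ and a fixed dilate of it lie inside $\Delta_{2R}(\mbf{X})$, where $u$ vanishes, a forward chain of bounded length at scale $\approx d$ joins $\mbf{Y}$ to the future corkscrew point $\cA^+_d(\hat{\mbf{X}})$, so $u(\mbf{Y})\lesssim u(\cA^+_d(\hat{\mbf{X}}))$, and it now suffices to bound $u(\cA^+_\rho(\hat{\mbf{X}}))$ for $0<\rho\le R$ by $u(P)$. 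For this one runs the standard scale-induction: bounded forward Harnack chains relate $u$ at consecutive dyadic corkscrew scales, $u(\cA^+_\rho(\hat{\mbf{X}}))\lesssim u(\cA^+_{2\rho}(\hat{\mbf{X}}))$ for $2\rho\lesssim R$ (the shallower corkscrew being in the future of the deeper), which on its own would lose a power of the scale ratio on iteration; this is played against the boundary decay of Lemma \ref{HCatBdry.lem}, which at a scale comparable to $R$ around $\hat{\mbf{X}}$ gives $u(\cA^+_\rho(\hat{\mbf{X}}))\lesssim(\rho/R)^{\alpha}\,\sup_{\Omega\cap I_{2R}(\mbf{X})}u$, i.e., the opposite dependence on $\rho$. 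Interleaving the two, exactly as in \cite{FGS,LewMur}, defeats the loss and yields $u(\cA^+_\rho(\hat{\mbf{X}}))\lesssim u(P)$ for all $0<\rho\le R$, with a structural constant. Combining the two cases gives \eqref{carlest.eq}.

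The main obstacle, throughout, is the irreversibility in time. It forces the reference point to be the \emph{future} corkscrew $\cA^+_R(\mbf{X})$; it is the reason the conclusion can only be asserted on $I_R(\mbf{X})$, even though $u$ is assumed caloric and vanishing on the larger set $I_{2R}(\mbf{X})$; and it means that every chain used above --- from a point to its local corkscrew, from a deep corkscrew up the ``carrot'' to a unit-scale one, and from a unit-scale corkscrew to $P$ --- has to be realised explicitly with time strictly increasing while remaining inside $\Omega$ at controlled distance from $\Sigma$, which near the parabolic ``corners'' of the cubes $I_R(\mbf{X})$ is exactly where the Lip(1,1/2) structure is used. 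The second, more delicate point is the balance in the scale-induction: the upward chaining loses a fixed power of the scale ratio that blows up as $\rho\to0$, so it must be interleaved with the Hölder decay of Lemma \ref{HCatBdry.lem} in such a way that only a bounded number of loss-producing steps ever occurs, which is why the estimate has to be organised as an induction over scales rather than a single chain.
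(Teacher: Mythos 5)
The paper does not actually prove this lemma; it refers to \cite{FGS,LewMur,N1997}, and in particular to \cite[Chapter 3, Section 6]{LewMur}, for the argument. Your proof is a faithful sketch of exactly that standard route: deduce \eqref{carlhcest.eq} from \eqref{carlest.eq} plus Lemma \ref{HCatBdry.lem} at scale $R/2$, and prove \eqref{carlest.eq} by a forward Harnack chain when $\delta(\mbf{Y})\gtrsim R$, and, when $\delta(\mbf{Y})$ is small, by a bootstrap that plays the boundary H\"older decay against the polynomial loss accumulated by chaining up through dyadic corkscrew scales. You also correctly isolate the role of time irreversibility (the reference point must be the \emph{future} corkscrew, and the conclusion can only be asserted on $I_R$ rather than $I_{2R}$) and where the Lip(1,1/2) geometry is invoked. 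So the approach is correct and consistent with what the paper cites.

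One small slip worth fixing: you write that the inequality $u(\cA^+_\rho(\hat{\mbf{X}}))\lesssim u(\cA^+_{2\rho}(\hat{\mbf{X}}))$ holds because ``the shallower corkscrew [is] in the future of the deeper.'' In fact the time ordering is the reverse. By \eqref{CSpm}, $t\big(\cA^+_\rho(\hat{\mbf{X}})\big)=t(\hat{\mbf{X}})+2\rho^2$ while $t\big(\cA^+_{2\rho}(\hat{\mbf{X}})\big)=t(\hat{\mbf{X}})+8\rho^2$, so the \emph{deeper} corkscrew $\cA^+_{2\rho}$ lies in the future of the shallower one $\cA^+_\rho$; the forward chain therefore runs from $\cA^+_\rho$ to $\cA^+_{2\rho}$, which is what is needed for the displayed inequality. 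Since the inequality itself is stated in the correct direction, the argument survives; just correct the parenthetical justification.
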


For $\mbf{X}= (x_0, x, t) \in \Sigma$,  $r > 0$, and
 $\kappa =\kappa(n,M_0)$, a sufficiently large constant to be fixed, we introduce the space-time parabolas
\[T^\pm_{\kappa, r}(\mbf{X}) = \{(y_0,y,s) \in \Omega: |(x_0,x) - (y_0,y)| \le \kappa |t-s|^{1/2}, \, \pm(s-t)\ge 16r^{2}\}.\]
Recall that $M_0 = 1 + \|\psi\|_{\Lip(1,1/2)}$. The parabola $T^+_{\kappa, r}$ is the forward in time parabola and $T^-_{\kappa, r}$ is the backward in time parabola. Note that
if $\mbf{Y} \in T^\pm_{\kappa, r}(\mbf{X})$, then $\mbf{Y} \in T^\pm_{\kappa, r'}(\mbf{X})$ for all $r' \in (0,r)$. Concerning $\kappa$,
we may take this constant as large as we like but we will choose
\begin{equation}\label{kappadef}\kappa:=  40M_0\sqrt{n}.
\end{equation}
Hence $\kappa$ only depends on  $n$ and $\|\psi\|_{\Lip(1,1/2)}$.

Lemma \ref{strongharnack4gf.lem} below states the strong Harnack inequality, or backwards in time Harnack inequality, for the Green function. 
Lemma \ref{CFMS} gives the relation between the Green function and caloric/ad\-joint caloric measure, and Lemma \ref{calisdoubpre.lem} formulates the doubling property
of the latter measures.

\begin{lemma}[\cite{FGS}]\label{strongharnack4gf.lem} If $\mbf{X} \in \Sigma$ and $\mbf{Y} \in T^+_{\kappa, R}(\mbf{X})$, then
\[G(\mbf{Y}, \cA_R^-(\mbf{X})) \approx G(\mbf{Y}, \cA_R(\mbf{X}))\approx  G(\mbf{Y}, \cA_R^+(\mbf{X})).\]
Similarly, if $\mbf{X} \in \Sigma$ and  $\mbf{Y} \in T^-_{\kappa, R}(\mbf{X})$,  then
\[  G(\cA_R^+(\mbf{X}),\mbf{Y})\approx G(\cA_R(\mbf{X}),\mbf{Y})\approx G(\cA_R^-(\mbf{X}),\mbf{Y}).\]
\end{lemma}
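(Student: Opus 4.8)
The plan is to reduce the two displayed chains of comparabilities to a single one, dispatch the ``causal'' half by an interior Harnack chain, and obtain the reverse half from the parabolic backward Harnack inequality of Fabes--Garofalo--Salsa \cite{FGS}, which is the real content. First I would observe that it suffices to prove the first statement: the reflection $t\mapsto-t$ carries $\Omega$ to another Lip(1,1/2) graph domain with the same constant, interchanges the heat and adjoint heat equations, interchanges $\cA_R^+(\mbf{X})$ with $\cA_R^-(\mbf{X})$, and interchanges $T^+_{\kappa,R}(\mbf{X})$ with $T^-_{\kappa,R}(\mbf{X})$, so the second statement is the image of the first under this map.

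Fix $\mbf{X}=(x_0,x,t)\in\Sigma$ and $\mbf{Y}=(y_0,y,s)\in T^+_{\kappa,R}(\mbf{X})$, so $s-t\ge 16R^2$ and $|(x_0,x)-(y_0,y)|\le\kappa|s-t|^{1/2}$, and set $v(\mbf{Z}):=G(\mbf{Y},\mbf{Z})$, a nonnegative solution of the adjoint heat equation in $\Omega\setminus\{\mbf{Y}\}$ that vanishes on $\Sigma$. The two conditions put $\mbf{Y}$ strictly above, and inside a cone over, $\mbf{X}$; hence $\mbf{Y}\notin I_{2R}(\mbf{X})$ and $I_{2R}(\mbf{X})\subset\{t(\mbf{Z})<s\}$, so $v$ is a bona fide nonnegative adjoint-caloric function on $\Omega_{2R}(\mbf{X})$ vanishing on $\Delta_{2R}(\mbf{X})$, and the three points $\cA_R^-(\mbf{X}),\cA_R(\mbf{X}),\cA_R^+(\mbf{X})$, whose time coordinates are $t-2R^2,\,t,\,t+2R^2$, all lie in $\Omega_{2R}(\mbf{X})$ and strictly below $s$. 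These points share the spatial coordinate $(x_0+2M_0R,x)$, sit at parabolic distance $\approx R$ from $\Sigma$, are mutually within parabolic distance $\lesssim R$, and have a neighbourhood of radius $\approx R$ on which $v>0$; since the adjoint heat equation is the backward heat equation, its interior Harnack inequality propagates from earlier to later times, and chaining it along these three points gives the causal half $v(\cA_R^-(\mbf{X}))\gtrsim v(\cA_R(\mbf{X}))\gtrsim v(\cA_R^+(\mbf{X}))$.

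The hard part will be the reverse inequality $v(\cA_R^-(\mbf{X}))\lesssim v(\cA_R^+(\mbf{X}))$ --- the backward Harnack inequality --- where the direction of time must be overcome and the vanishing of $v$ on $\Delta_{2R}(\mbf{X})$ is essential. The first step is to invoke the comparison principle (boundary Harnack principle), valid in Lip(1,1/2) domains by \cite{FGS,FSY}, and apply it to $v$ together with the reference function $w:=G(\mbf{B},\cdot)$ whose pole $\mbf{B}:=\cA^+_{KR}(\mbf{X})$ is a deep interior point at scale $KR$ ($K$ a large structural constant). At a scale $\rho$ slightly below $2R$ --- admissible precisely because the temporal separation built into the parabola is $16R^2=4(2R)^2$, and large enough to contain the corkscrew points, whose temporal offsets from $t$ are $\pm2R^2$ --- the comparison principle yields $v(\cA_R^-(\mbf{X}))/w(\cA_R^-(\mbf{X}))\approx v(\cA_R^+(\mbf{X}))/w(\cA_R^+(\mbf{X}))$, so everything reduces to proving the single comparability $w(\cA_R^-(\mbf{X}))\approx w(\cA_R^+(\mbf{X}))$.

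For $w$ this is tractable, because its pole $\mbf{B}$ lies at parabolic distance $\approx KR\gg R$ from $\Sigma$, which legitimizes the CFMS-type relation between the adjoint Green function and caloric measure (Lemma \ref{CFMS}) at scale $R$: it gives $w(\cA_R^-(\mbf{X}))\approx c_R\,\omega^{\mbf{B}}(\Delta_R(\mbf{X}))$, where $c_R$ is a fixed power of $R$; and since $\cA_R^+(\mbf{X})$ is, after one short interior Harnack step, a backward corkscrew at scale $\approx R$ for the boundary point $\mbf{X}':=(\psi(x,t+4R^2),x,t+4R^2)\in\Sigma$, which satisfies $\|\mbf{X}'-\mbf{X}\|\lesssim R$, the same lemma gives $w(\cA_R^+(\mbf{X}))\approx c_R\,\omega^{\mbf{B}}(\Delta_R(\mbf{X}'))$ with the same $c_R$. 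The doubling property of caloric measure (Lemma \ref{calisdoubpre.lem}), applicable because $\mbf{B}$ is far from every surface box of radius $\lesssim R$ based near $\mbf{X}$, then yields $\omega^{\mbf{B}}(\Delta_R(\mbf{X}))\approx\omega^{\mbf{B}}(\Delta_R(\mbf{X}'))$; combining, $w(\cA_R^-(\mbf{X}))\approx w(\cA_R^+(\mbf{X}))$, hence $v(\cA_R^-(\mbf{X}))\approx v(\cA_R^+(\mbf{X}))$, and with the causal half the first assertion follows --- the middle point $\cA_R(\mbf{X})$ being handled identically or by sandwiching, and the second assertion by reversing the reflection of the first paragraph. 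The only delicate bookkeeping is keeping the various auxiliary scales mutually consistent; the substantive ingredients --- the comparison principle, the Green-function/caloric-measure relation, and the doubling of caloric measure --- are precisely the tools assembled for this purpose in \cite{FGS}.
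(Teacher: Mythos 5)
The paper does not actually prove this lemma: it is stated as a black box, with the authors citing \cite{FGS} for the proof (see the paragraph immediately preceding the statement). So your task was to supply a proof from scratch, and in that light your general strategy --- reduce to the backward inequality, then use the relation between Green's function and caloric measure together with doubling --- is exactly the correct circle of ideas, and the ``causal'' half via the interior Harnack inequality is fine. However, the pivotal step has a circularity problem. You write that ``the comparison principle yields $v(\cA_R^-(\mbf{X}))/w(\cA_R^-(\mbf{X}))\approx v(\cA_R^+(\mbf{X}))/w(\cA_R^+(\mbf{X}))$.'' This is the \emph{symmetric} boundary comparison principle. But the only version of the comparison principle that is available without already knowing backward (strong) Harnack is the time-asymmetric one, Lemma \ref{BdryHarn.lem}, which controls $u(\mbf{Y})/v(\mbf{Y})$ by the \emph{asymmetric} reference ratio $u(\cA_R^+)/v(\cA_R^-)$. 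If you try to symmetrize by also applying it to $w/v$ and multiplying, you find yourself needing bounds on $v(\cA_\rho^-)/v(\cA_\rho^+)$ and $w(\cA_\rho^-)/w(\cA_\rho^+)$ at the comparison scale $\rho$ --- i.e., exactly the backward Harnack estimate you are trying to establish. The paper's own symmetric version, Lemma \ref{comparisonlemma}, explicitly carries the strong Harnack inequality \eqref{backharnack} as a \emph{hypothesis} for precisely this reason. Thus your reduction from $v$ to $w$ is circular unless you import the symmetric comparison principle from \cite{FGS} as a further black box --- but in \cite{FGS} that result is itself derived from the backward Harnack, so one would be hiding, not proving, the content of the lemma. (There is also a smaller bookkeeping error: a scale $\rho$ ``slightly below $2R$'' cannot simultaneously put $\cA_R^\pm(\mbf{X})$ inside $I_{\rho/2}(\mbf{X})$ --- which requires $\rho>2\sqrt 2\,R$ --- and keep the pole $\mbf{Y}$ outside $I_{2\rho}(\mbf{X})$ --- which requires $\rho\le 2R$ --- so a single application of the BHP at a fixed scale does not even fit.)

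What makes the detour through $w$ unnecessary is that the function you care about, $v=G(\mbf{Y},\cdot)$, is \emph{itself} a Green function whose pole $\mbf{Y}$ lies in the parabola $T^+_{\kappa,R}(\mbf{X})$. That is precisely the hypothesis of Lemma \ref{CFMS}, which directly asserts
\[
R^n\, G(\mbf{Y},\cA_R^+(\mbf{X}))\ \approx\ \omega^{\mbf{Y}}(\Delta_R(\mbf{X}))\ \approx\ R^n\, G(\mbf{Y},\cA_R^-(\mbf{X}))\,,
\]
which is exactly the backward half of the present lemma. Combined with your (correct) causal chain
$G(\mbf{Y},\cA_R^+(\mbf{X}))\lesssim G(\mbf{Y},\cA_R(\mbf{X}))\lesssim G(\mbf{Y},\cA_R^-(\mbf{X}))$ from the interior Harnack inequality for the adjoint heat equation, all three quantities are pairwise comparable, and the second assertion follows by the $t\mapsto -t$ reflection exactly as you observe. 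In other words, the CFMS lemma already encodes the whole content; the auxiliary pole $\mbf{B}$, the symmetric comparison step, and the passage through $\mbf{X}'$ are dispensable --- and eliminating them removes the circularity.
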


\begin{lemma}[\cite{FGS}]\label{CFMS}
If $\mbf{X} \in \Sigma$ and $\mbf{Y} \in T^+_{\kappa, R}(\mbf{X})$, then
\[R^n G(\mbf{Y}, \cA_R^+(\mbf{X})) \approx \omega^{\mbf{Y}}(\Delta_{R}(\mbf{X})) \approx R^nG(\mbf{Y}, \cA_R^-(\mbf{X})).\]
Similarly, if $\mbf{X} \in \Sigma$ and $\mbf{Y} \in T^-_{\kappa, R}(\mbf{X})$, then
\[R^n G(\cA_R^-(\mbf{X}),\mbf{Y}) \approx \widetilde{\omega}^{\mbf{Y}}(\Delta_{R}(\mbf{X})) \approx R^nG(\cA_R^+(\mbf{X}),\mbf{Y}).\]
\end{lemma}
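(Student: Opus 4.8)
The statement is the parabolic analogue of the Caffarelli--Fabes--Mortola--Salsa estimate, due to Fabes--Garofalo--Salsa, and the plan is to reproduce its proof using only the estimates already recorded above --- the boundary H\"older decay (Lemma~\ref{HCatBdry.lem}), the Carleson estimate (Lemma~\ref{carlest.lem}) and the backward-in-time Harnack inequality for the Green function (Lemma~\ref{strongharnack4gf.lem}) --- together with interior Harnack inequalities, the parabolic maximum principle, and interior/boundary Cauchy estimates. The change of variables $t\mapsto -t$ interchanges the heat and adjoint heat equations, $\omega$ with $\widetilde{\omega}$, and $\cA_R^+$ with $\cA_R^-$, so it suffices to prove the first chain of comparabilities; and since Lemma~\ref{strongharnack4gf.lem} already gives $G(\mbf{Y},\cA_R^-(\mbf{X}))\approx G(\mbf{Y},\cA_R(\mbf{X}))\approx G(\mbf{Y},\cA_R^+(\mbf{X}))$ for $\mbf{Y}\in T^+_{\kappa,R}(\mbf{X})$, the task reduces to the single estimate
\[
\omega^{\mbf{Y}}\big(\Delta_R(\mbf{X})\big)\;\approx\; R^n\,G\big(\mbf{Y},\cA_R^+(\mbf{X})\big),\qquad \mbf{Y}\in T^+_{\kappa,R}(\mbf{X}).
\]

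For the lower bound I would apply the parabolic maximum principle to $\mbf{Z}\mapsto \omega^{\mbf{Z}}(\Delta_R(\mbf{X}))-c\,R^n G(\mbf{Z},\cA_R^+(\mbf{X}))$ on $\Omega\cap\{t(\mbf{Z})>t(\cA_R^+(\mbf{X}))\}$, after excising a small closed box $\overline{\mathcal{J}_{\varepsilon R}(\cA_R^+(\mbf{X}))}$ about the pole ($\varepsilon$ a small structural constant, so that the box is interior to $\Omega$ by \eqref{morecslocinfo.eq}, and $c>0$ small). On the lateral boundary $\Sigma$ and on the initial time-slice $\{t(\mbf{Z})=t(\cA_R^+(\mbf{X}))\}$ the Green-function term vanishes, so the difference is $\ge 0$; on $\partial\mathcal{J}_{\varepsilon R}(\cA_R^+(\mbf{X}))$ one has $R^n G(\mbf{Z},\cA_R^+(\mbf{X}))\lesssim 1$ by comparison with the fundamental solution at parabolic distance $\approx R$ from the pole, while $\omega^{\mbf{Z}}(\Delta_R(\mbf{X}))\gtrsim 1$, since $\cA_R^+(\mbf{X})$ lies strictly in the future of $\Delta_R(\mbf{X})$ at distance $\approx R$ and a bounded interior Harnack chain joins $\mbf{Z}$ to a point seeing a fixed proportion of the caloric measure of $\Delta_R(\mbf{X})$ (a barrier estimate using only the Lipschitz character). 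For $c$ small the difference is $\ge 0$ on the parabolic boundary, hence --- after a routine Phragm\'en--Lindel\"of argument as $t\to+\infty$ --- throughout; evaluating at $\mbf{Y}\in T^+_{\kappa,R}(\mbf{X})$ gives the bound.

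The upper bound is the delicate direction and I expect it to be the main obstacle, because the maximum-principle comparison cannot simply be reversed: $\omega^{\mbf{Z}}(\Delta_R(\mbf{X}))\to 1$ near $\Delta_R(\mbf{X})$ while $G(\cdot,\cA_R^+(\mbf{X}))$ vanishes on all of $\Sigma$. Instead I would start from the Poisson-kernel representation $\omega^{\mbf{Y}}(\Delta_R(\mbf{X}))=-\int_{\Delta_R(\mbf{X})}\partial_\nu G(\mbf{Y},\cdot)\,\d\sigma$, where $\mbf{Z}\mapsto G(\mbf{Y},\mbf{Z})$ is adjoint-caloric and vanishes on $\Sigma$, and deform the flux out of the surface cube; applying the divergence theorem on $I_R(\mbf{X})\cap\Omega$ and using the vanishing of $G(\mbf{Y},\cdot)$ along $\Sigma$ gives
\[
\omega^{\mbf{Y}}(\Delta_R(\mbf{X}))\;=\;\int_{I_R(\mbf{X})\cap\Omega}\partial_t G(\mbf{Y},\mbf{Z})\,\d\mbf{Z}\;+\;\int_{\partial I_R(\mbf{X})\cap\Omega}\partial_\nu G(\mbf{Y},\mbf{Z})\,\d S(\mbf{Z}).
\]
By the fundamental theorem of calculus in time and the vanishing of $G(\mbf{Y},\cdot)$ on $\Sigma$, the volume term reduces to the difference of integrals of $G(\mbf{Y},\cdot)$ over the top and bottom time-slices of $I_R(\mbf{X})\cap\Omega$, each of measure $\approx R^n$; the surface term is controlled by Cauchy estimates for the adjoint-caloric $G(\mbf{Y},\cdot)$, interior away from $\Sigma$ and --- near $\Sigma$, where $G(\mbf{Y},\cdot)$ vanishes and the gradient can blow up only like the integrable $\delta(\mbf{Z})^{\alpha-1}$, $\alpha$ being the H\"older exponent of Lemma~\ref{HCatBdry.lem} --- boundary Cauchy estimates, which together yield $\big|\int_{\partial I_R(\mbf{X})\cap\Omega}\partial_\nu G(\mbf{Y},\cdot)\,\d S\big|\lesssim R^n\sup_{I_{2R}(\mbf{X})\cap\Omega}G(\mbf{Y},\cdot)$. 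In all the terms one then invokes the adjoint form of the Carleson estimate (Lemma~\ref{carlest.lem} under $t\mapsto -t$) to bound $G(\mbf{Y},\cdot)$ on $I_{2R}(\mbf{X})\cap\Omega$ by $G(\mbf{Y},\cA_{2R}^-(\mbf{X}))$, and Lemma~\ref{strongharnack4gf.lem} (plus a short interior Harnack chain) to replace this by $\lesssim G(\mbf{Y},\cA_R^+(\mbf{X}))$, producing $\omega^{\mbf{Y}}(\Delta_R(\mbf{X}))\lesssim R^n G(\mbf{Y},\cA_R^+(\mbf{X}))$. The most technical steps are the boundary gradient estimate near $\partial I_R(\mbf{X})\cap\Sigma$ and carrying the Carleson/backward-Harnack comparison through with constants independent of the (arbitrarily large) time-lag $t(\mbf{Y})-t(\mbf{X})$.
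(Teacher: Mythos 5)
The paper does not prove Lemma~\ref{CFMS}: it is cited to \cite{FGS} (and to \cite{N1997} for general divergence-form operators), so there is no argument in the paper to compare against. Your sketch correctly reconstructs the standard parabolic CFMS proof --- the reduction to a single estimate via $t\mapsto -t$ and Lemma~\ref{strongharnack4gf.lem}, the maximum-principle comparison with an excised box at the pole for the lower bound, and a flux/representation argument combined with Carleson and backward Harnack for the upper bound. The lower-bound argument is sound.

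In the upper bound, the step ``Lemma~\ref{strongharnack4gf.lem} plus a short interior Harnack chain,'' used to pass from $G(\mbf{Y},\cA_{2R}^-(\mbf{X}))$ to $\lesssim G(\mbf{Y},\cA_R^+(\mbf{X}))$, hides a genuine issue, and the technicality you flag at the end (large time-lag) is not the right one; the delicate regime is the \emph{small} lag $t(\mbf{Y})-t(\mbf{X})\approx 16R^2$. For the adjoint-caloric function $\mbf{Z}\mapsto G(\mbf{Y},\mbf{Z})$, interior Harnack bounds the \emph{later} of two interior points by the \emph{earlier}, so it yields $G(\mbf{Y},\cA_R^+(\mbf{X}))\lesssim G(\mbf{Y},\cA_{2R}^-(\mbf{X}))$ --- the wrong direction. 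Reversing it by Lemma~\ref{strongharnack4gf.lem} at scale $2R$ (the natural one-step bridge across the $10R^2$ time gap from $\cA_{2R}^-(\mbf{X})$ to $\cA_R^+(\mbf{X})$) would require $\mbf{Y}\in T^+_{\kappa,2R}(\mbf{X})$, i.e.\ $t(\mbf{Y})-t(\mbf{X})\ge 64R^2$, which is strictly more restrictive than the hypothesis $\mbf{Y}\in T^+_{\kappa,R}(\mbf{X})$. To close the argument one must either iterate Lemma~\ref{strongharnack4gf.lem} at scale $R$ along translated surface points, stepping forward $4R^2$ at a time (with a slightly adjusted $\kappa$), or --- more cleanly --- replace the sharp boundary term by a smooth cutoff $\mathbbm{1}_{I_{R/2}(\mbf{X})}\le\phi\le\mathbbm{1}_{I_R(\mbf{X})}$ in the weak representation \eqref{1.5} (this also sidesteps the a priori unjustified classical normal-derivative formula on a Lip(1,1/2) boundary and your boundary gradient estimate), apply the adjoint Carleson estimate and Lemma~\ref{strongharnack4gf.lem} at scale $R$ --- exactly what $T^+_{\kappa,R}(\mbf{X})$ permits --- to get $\omega^{\mbf{Y}}(\Delta_{R/2}(\mbf{X}))\lesssim R^nG(\mbf{Y},\cA_R^+(\mbf{X}))$, and then invoke doubling (Lemma~\ref{calisdoubpre.lem}) to recover $\Delta_R(\mbf{X})$. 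Without one of these, the final comparison does not close as stated.
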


\begin{lemma}[\cite{FGS}]\label{calisdoubpre.lem}
If $\mbf{X} \in \Sigma$ and $\mbf{Y} \in T^+_{\kappa, R}(\mbf{X})$, then
\[\omega^{\mbf{Y}}(\Delta_{R}(\mbf{X})) \approx \omega^{\mbf{Y}}(\Delta_{R/2}(\mbf{X})).\]
Similarly, if $\mbf{X} \in \Sigma$ and $\mbf{Y} \in T^-_{\kappa, R}(\mbf{X})$, then
\[\widetilde{\omega}^{\mbf{Y}}(\Delta_{R}(\mbf{X})) \approx \widetilde{\omega}^{\mbf{Y}}(\Delta_{R/2}(\mbf{X})).\]
\end{lemma}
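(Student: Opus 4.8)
The plan is to reduce the doubling inequality to an interior Harnack comparison for the Green function $G(\mbf{Y},\cdot)$, via Lemma \ref{CFMS}, which relates caloric measure at surface cubes to the Green function at interior corkscrew points. The lower bounds $\omega^{\mbf{Y}}(\Delta_{R/2}(\mbf{X})) \le \omega^{\mbf{Y}}(\Delta_{R}(\mbf{X}))$ and $\widetilde\omega^{\mbf{Y}}(\Delta_{R/2}(\mbf{X})) \le \widetilde\omega^{\mbf{Y}}(\Delta_{R}(\mbf{X}))$ are immediate from $\Delta_{R/2}(\mbf{X}) \subset \Delta_{R}(\mbf{X})$ and the monotonicity of measures, so all the content lies in the reverse inequalities; and I would prove only the one for $\omega$, deducing the one for $\widetilde\omega$ from it via the change of variables $t\mapsto -t$, which interchanges $\Omega$ with a $\Lip(1,1/2)$ graph domain with the same constant, the heat and adjoint heat equations, the points $\cA_R^\pm$, and the parabolas $T_{\kappa,R}^\pm$.

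To treat $\omega$, I would first note that $\mbf{Y}\in T^+_{\kappa,R}(\mbf{X})$ implies $\mbf{Y}\in T^+_{\kappa,R/2}(\mbf{X})$ (recorded just above \eqref{kappadef}), so Lemma \ref{CFMS} is available at both scales $R$ and $R/2$. Choosing at each scale whichever of the two equivalent forms is convenient, and using $(R/2)^n\approx R^n$, this gives
\[
\omega^{\mbf{Y}}(\Delta_{R}(\mbf{X}))\approx R^n\,G\big(\mbf{Y},\cA^+_R(\mbf{X})\big),
\qquad
\omega^{\mbf{Y}}(\Delta_{R/2}(\mbf{X}))\approx R^n\,G\big(\mbf{Y},\cA^-_{R/2}(\mbf{X})\big),
\]
so the lemma would follow from the purely interior estimate $G(\mbf{Y},\cA^+_R(\mbf{X}))\lesssim G(\mbf{Y},\cA^-_{R/2}(\mbf{X}))$. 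The choice of these two corkscrew points is dictated by a time ordering: writing $\mbf{X}=(x_0,x,t)$, by \eqref{CSpm} the point $\cA^+_R(\mbf{X})$ sits at time $t+2R^2$ while $\cA^-_{R/2}(\mbf{X})$ sits at time $t-R^2/2$; and $G(\mbf{Y},\cdot)$ is adjoint-caloric and positive throughout $\{\mbf{Z}\in\Omega:t(\mbf{Z})<t(\mbf{Y})\}$, so along an interior Harnack chain its value at a later-time point is controlled by its value at an earlier-time one.

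The interior estimate I would obtain as follows. Both corkscrew points lie in $\Omega$ at distance $\approx R$ from $\Sigma$ (from Lemma \ref{distgraphlem.lem} and the $\Lip(1/2)$-in-time bound for $\psi$, as in the derivation of \eqref{morecslocinfo.eq}) and within parabolic distance $\approx R$ of one another; and since $\mbf{Y}\in T^+_{\kappa,R}(\mbf{X})$ forces $t(\mbf{Y})\ge t+16R^2$, both lie at parabolic distance $\gtrsim R$ from $\mbf{Y}$, well inside the positivity set of $G(\mbf{Y},\cdot)$. I would then join $\cA^+_R(\mbf{X})$ to $\cA^-_{R/2}(\mbf{X})$ by a Harnack chain of length bounded in terms of $n$ and $\|\psi\|_{\Lip(1,1/2)}$, staying at distance $\gtrsim R$ from $\Sigma$ and from $\mbf{Y}$ and \emph{monotone decreasing in time} --- first decreasing the time coordinate while holding the first coordinate equal to that of $\cA^+_R(\mbf{X})$ (which stays deep inside $\Omega$ over this range of times), and descending the first coordinate to that of $\cA^-_{R/2}(\mbf{X})$ only once the time coordinate is within $O(R^2)$ of $t$, where $\psi$ can have varied by at most $O(R)$. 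Applying the interior adjoint Harnack inequality step by step along this chain, each step passing from a later-time point to an earlier-time one --- hence bounding the value at the later point by the value at the earlier point --- would yield $G(\mbf{Y},\cA^+_R(\mbf{X}))\lesssim G(\mbf{Y},\cA^-_{R/2}(\mbf{X}))$ with constant depending only on $n$ and $\|\psi\|_{\Lip(1,1/2)}$, completing the proof.

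The step I expect to require genuine care is the construction of this interior chain. It has to be time-monotone, since a chain that oscillated in time would give only a two-sided bound relative to an auxiliary interior point, not the one-sided bound needed for the adjoint-caloric function $G(\mbf{Y},\cdot)$; and it has to stay quantitatively inside $\Omega$, which forces the descent toward $\cA^-_{R/2}(\mbf{X})$ to follow the graph rather than run along a straight segment. Both are standard consequences of the parabolic NTA-type structure of $\Lip(1,1/2)$ graph domains, so in the end this is bookkeeping; alternatively one could route the argument through the backward-in-time Harnack inequality for the Green function (Lemma \ref{strongharnack4gf.lem}) to move the time coordinate freely within each fixed scale, though that by itself would not remove the need for a time-monotone interior chain joining the two scales.
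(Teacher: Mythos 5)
The paper does not give its own proof of this lemma; it simply cites \cite{FGS} (see the paragraph preceding Lemma \ref{HCatBdry.lem}). Your argument is the standard one underlying that reference, and it is correct: the lower bound is trivial by monotonicity, and the substantive bound is reduced via Lemma \ref{CFMS} (applied at scales $R$ and $R/2$, using that $T^+_{\kappa,R}(\mbf{X})\subset T^+_{\kappa,R/2}(\mbf{X})$) to the interior estimate $G(\mbf{Y},\cA^+_R(\mbf{X}))\lesssim G(\mbf{Y},\cA^-_{R/2}(\mbf{X}))$. You correctly identify that the choice of the forward corkscrew at scale $R$ and the backward corkscrew at scale $R/2$ is forced by the time-directionality of the adjoint Harnack inequality (earlier-time values control later-time values), and that the condition $t(\mbf{Y})\ge t+16R^2$ ensures the entire chain stays strictly below the pole and inside the positivity set of $G(\mbf{Y},\cdot)$. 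There is no circularity in using Lemma \ref{CFMS} here, since the paper takes both lemmas as given from \cite{FGS}.

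Two small points of care in the chain construction, both of which you acknowledge but which are worth making precise. First, since the spatial separation of the two corkscrews is $\approx M_0 R$ while the available time budget is only $\approx R^2$, a single Harnack step fails for large $M_0$; one needs a chain of $\approx M_0^2$ steps of parabolic size $\approx R/M_0$, which yields a constant depending (exponentially) on $M_0$, but that is harmless since only dependence on $n$ and $M_0$ is claimed. Second, your observation that one should hold the first coordinate at $x_0+2M_0R$ until the time coordinate is close to $t$, and only then descend to $x_0+M_0R$, is genuinely needed: the straight segment from $\cA^+_R(\mbf{X})$ to $\cA^-_{R/2}(\mbf{X})$ can exit $\Omega$ when $M_0$ is large, because $(x_0+M_0R,x,s)$ need not lie above the graph for $|s-t|$ as large as $2R^2$. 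The reduction of the $\widetilde\omega$ case to the $\omega$ case by $t\mapsto -t$ is also handled correctly.
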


We will use the following variation of Lemma \ref{calisdoubpre.lem}.

\begin{lemma}\label{calisdoub.lem}
Let $\kappa$ be as in \eqref{kappadef} and consider  $\mbf{X} \in \Sigma$ and $r > 0$. Then,
\[\cA^+_{4r}(\mbf{X}) \in T^+_{\kappa, 2\rho}(\mbf{Z}),\]
for all $\mbf{Z} \in \Sigma$ and $\rho > 0$ such that $\Delta_{2\rho}(\mbf{Z}) \subseteq \Delta_r(\mbf{X})$. In particular,
\[\omega^{\cA^+_{4r}(\mbf{X})}(\Delta_{2\rho}(\mbf{Z})) \approx \omega^{\cA^+_{4r}(\mbf{X})} (\Delta_{\rho}(\mbf{Z})),\]
for all $\mbf{Z} \in \Sigma$ and $\rho > 0$ such that $\Delta_{2\rho}(\mbf{Z}) \subseteq \Delta_r(\mbf{X})$.
\end{lemma}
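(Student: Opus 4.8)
The plan is to verify directly that $\cA^+_{4r}(\mathbf{X}) \in T^+_{\kappa, 2\rho}(\mathbf{Z})$ by checking the three defining requirements of the forward parabola (membership in $\Omega$, the parabolic cone inequality, and the time separation), after which the stated doubling estimate is an immediate application of Lemma \ref{calisdoubpre.lem} with center $\mathbf{Z}$, scale $2\rho$, and pole $\cA^+_{4r}(\mathbf{X})$. Write $\mathbf{X}=(x_0,\mathbf{x})=(x_0,x,t)$ with $x_0=\psi(\mathbf{x})$ and $\mathbf{Z}=(z_0,\mathbf{z})=(z_0,z,\tau)$ with $z_0=\psi(\mathbf{z})$, so that by \eqref{CSpm} the candidate pole is $\mathbf{Y}:=\cA^+_{4r}(\mathbf{X})=(x_0+8M_0r,\,x,\,t+32r^2)$.

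The first step, and the only one requiring a little care, is to convert the hypothesis $\Delta_{2\rho}(\mathbf{Z}) \subseteq \Delta_r(\mathbf{X})$ into quantitative control on the parameters. Using \eqref{surface-box:Lip} together with the injectivity of the graph map $\mathbf{w}\mapsto\bfpsi(\mathbf{w})=(\psi(\mathbf{w}),\mathbf{w})$, this set inclusion is equivalent to the cube inclusion $Q_{2\rho}(\mathbf{z})\subseteq Q_r(\mathbf{x})$, which in turn forces $|z_i-x_i|+2\rho\le r$ for $1\le i\le n-1$ and $|\tau-t|+4\rho^2\le r^2$. In particular $\rho\le r/2$, $|t-\tau|\le r^2$, $|z-x|\le\sqrt{n}\,r$, and (since then $\mathbf{Z}\in\Delta_r(\mathbf{X})\subseteq I_r(\mathbf{X})$, see \eqref{eq3.1}) $|z_0-x_0|\le 3M_0\sqrt{n}\,r$.

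With these bounds in hand I would check the three conditions for membership in $T^+_{\kappa,2\rho}(\mathbf{Z})$, where $\kappa=40M_0\sqrt{n}$ as in \eqref{kappadef}. That $\mathbf{Y}\in\Omega$ is precisely \eqref{morecslocinfo.eq} with $R=4r$. For the time separation, $t(\mathbf{Y})-\tau=32r^2+(t-\tau)\ge 31r^2\ge 16(2\rho)^2$, using $\rho\le r/2$. For the parabolic cone condition, $t(\mathbf{Y})-\tau\in[31r^2,33r^2]$, hence $|\tau-t(\mathbf{Y})|^{1/2}\ge\sqrt{31}\,r>5r$, while
\[
\big|(z_0,z)-(x_0+8M_0r,\,x)\big|\le |z_0-x_0|+8M_0r+|z-x|\le(4\sqrt{n}+8)M_0r\le 12M_0\sqrt{n}\,r\le\kappa\,|\tau-t(\mathbf{Y})|^{1/2},
\]
where we used $M_0\ge 1$ and $\sqrt{n}\ge 1$. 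This gives $\mathbf{Y}\in T^+_{\kappa,2\rho}(\mathbf{Z})$, and the ``in particular'' assertion then follows from Lemma \ref{calisdoubpre.lem}. There is no serious obstacle here: the one point deserving attention is the passage from the set inclusion $\Delta_{2\rho}(\mathbf{Z})\subseteq\Delta_r(\mathbf{X})$ to the numerical inequalities on $\rho$, $z-x$, $\tau-t$, and $z_0-x_0$ in terms of $r$ (via the graph parametrization); the choice of $\kappa$ is deliberately generous, so once those bounds are secured everything reduces to bookkeeping with the structural constants.
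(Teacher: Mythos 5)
Your proof is correct and follows essentially the same route as the paper's: in both cases one verifies directly that $\cA^+_{4r}(\mathbf{X})$ lies in the forward parabola $T^+_{\kappa,2\rho}(\mathbf{Z})$ by checking the time-separation and cone conditions (the paper deduces $s-\tau\ge 16r^2$ from $\tau+(2\rho)^2\le t+r^2$ and $2\rho\le r$, then bounds the spatial distance using $\mathbf{Z}\in\Delta_r(\mathbf{X})$), and then invokes Lemma~\ref{calisdoubpre.lem}. Your version is a slightly more careful bookkeeping of the same estimates — in particular, passing explicitly through the cube inclusion $Q_{2\rho}(\mathbf{z})\subseteq Q_r(\mathbf{x})$ via \eqref{surface-box:Lip} is cleaner than the paper's somewhat compressed (and slightly garbled in print) spatial estimate.
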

\begin{proof}
We only prove  that $\cA^+_{4r}(\mbf{X}) \in T^+_{\kappa, 2\rho}(\mbf{Z})$, as once this is done the statement
\[\omega^{\cA^+_{4r}(\mbf{X})}(\Delta_{2\rho}(\mbf{Z})) \approx \omega^{\cA^+_{4r}(\mbf{X})} (\Delta_{\rho}(\mbf{Z})),\]
follows from Lemma \ref{calisdoubpre.lem}.  Write $\mbf{X} = (X,t)$ and $\mbf{Z} = (Z, \tau)$. Then the inclusion $\Delta_{2\rho}(\mbf{Z}) \subseteq \Delta_r(\mbf{X})$ ensures that
\[\tau + (2\rho)^2 \le t + r^2, \quad \text{and} \quad 2\rho \le r.\]
Therefore it holds that
\begin{equation}\label{ttacptr.eq}
\tau \le \tau + 64\rho^2 \le t + 60\rho^2 + r^2 \le t + 16r^2.
\end{equation}
Noting that the $t$-coordinate of $\cA^+_{4r}(\mbf{X})$, call it $s$, is equal to $t + 2(4r)^2$ we have from \eqref{ttacptr.eq}
\begin{equation}\label{ttacptr2.eq}
(s - \tau) = t + 32r^2 - \tau \ge 16 r^2 \ge 16\rho^2.
\end{equation}
Then the time coordinate of $\cA^+_{4r}(\mbf{X})$ satisfies the condition in the definition of $T^+_{\kappa, 2\rho}(\mbf{Z})$. Since $\mbf{Z} \subseteq \Delta_r(\mbf{X})$ writing $X = (x_0,x)$, it holds that
\[|(x_0 + 2 M_0(4r),x) - Z | \le |(x_0,x) - Z | +  2 M_0(4r) \lesssim 10M_0\sqrt{n}(4r).\]
Thus, from \eqref{ttacptr2.eq} we conclude
\begin{align*}
|(x_0 + 2 M_0(4r),x) - Z| \lesssim 2 M_0(4r) &\lesssim 10M_0\sqrt{n}(4r)\\
&\lesssim 2 M_0(4r) \lesssim 10M_0\sqrt{n}(4r) (s - \tau)^{1/2}.
\end{align*}
As $(x_0 + 2 M_0(4r),x)$ is the spatial coordinate of $\cA^+_{4r}(\mbf{X})$, the previous inequality shows that for $\kappa=  40M_0\sqrt{n}$ we have $\cA^+_{4r}(\mbf{X}) \in T^+_{\kappa, 2\rho}(\mbf{Z})$.
\end{proof}

We will also need the following quantitative non-degeneracy result for $\partial_{y_0}G$. The following lemma is essentially Lemma 2.12 in \cite{N2006}.  On the other hand, the proof given in \cite{N2006}, based on the arguments in \cite{ACS}, \cite{CS}, is not 
complete, so for the reader's convenience, we give the complete argument 
in an appendix to this paper.  

\begin{lemma}\label{lemma:CS-3}
Let $\mbf{X}\in\Sigma$ and $R>0$. Then there exists $\eta\in (0,1/2)$, depending only on $n$ and $\|\psi\|_{\Lip(1,1/2)}$, such that the following holds. If $\mbf{Y}^\ast \in T^+_{\kappa, R}(\mbf{X})$, and $u(\mbf{Y}):=G(\mbf{Y}^\ast,\mbf{Y})$, then
\begin{equation}\label{eq3.25}
	|\nabla_Y u(\mbf{Y})|\approx \langle\nabla_Y u(\mbf{Y}),e_0\rangle\approx\frac{u(\mbf{Y})}{\delta(\mbf{Y})},
	\end{equation}
for every $\mbf{Y}\in I_{R/4}(\mbf{X})\cap\Omega$ with $\delta(\mbf{Y})<\eta\,R$. If $\mbf{Y}^\ast \in T^-_{\kappa, R}(\mbf{X})$, and $u(\mbf{Y}):=G(\mbf{Y},\mbf{Y}^\ast)$, then
\begin{equation}\label{eq3.25a}
	|\nabla_Y u(\mbf{Y})|\approx \langle\nabla_Y u(\mbf{Y}),e_0\rangle\approx\frac{u(\mbf{Y})}{\delta(\mbf{Y})},
	\end{equation}
for every $\mbf{Y}\in I_{R/4}(\mbf{X})\cap\Omega$ with $\delta(\mbf{Y})<\eta\,R$.
\end{lemma}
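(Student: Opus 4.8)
The plan is to reduce \eqref{eq3.25} to a statement about the \emph{normalized} Green function and then exploit the boundary Harnack principle together with a lower bound for the normal derivative. I work only with the first assertion, concerning $\mbf{Y}^\ast\in T^+_{\kappa,R}(\mbf{X})$ and $u(\mbf{Y})=G(\mbf{Y}^\ast,\mbf{Y})$; the adjoint statement follows by the change of variables $t\mapsto -t$. Fix $\mbf{X}_0=\cA_R^+(\mbf{X})$ (or a comparable corkscrew point), so that by Lemma \ref{strongharnack4gf.lem} and Lemma \ref{CFMS} we have $u(\mbf{X}_0)\approx R^{-n}\hm^{\mbf{Y}^\ast}(\Delta_R(\mbf{X}))$, and more generally $u$ is comparable on $T^+_{\kappa,\rho}(\mbf{X})$-type sets. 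The upshot is that after multiplying by the constant $u(\mbf{X}_0)^{-1}$ we may assume $u$ is a nonnegative adjoint-caloric function in $I_{2R}(\mbf{X})\cap\Omega$, vanishing on $\Delta_{2R}(\mbf{X})$, and normalized so that $u(\cA_R^+(\mbf{X}))\approx 1$.

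The heart of the matter is a one-sided estimate: $\langle \nabla_Y u(\mbf{Y}),e_0\rangle \gtrsim u(\mbf{Y})/\delta(\mbf{Y})$ for $\mbf{Y}$ close to the boundary. The inequality $|\nabla_Y u(\mbf{Y})|\lesssim u(\mbf{Y})/\delta(\mbf{Y})$ is the standard interior gradient estimate for solutions of the (adjoint) heat equation combined with the Carleson estimate (Lemma \ref{carlest.lem}): on the parabolic ball of radius $\delta(\mbf{Y})/2$ about $\mbf{Y}$, which lies in $\Omega$, interior estimates give $|\nabla_Y u(\mbf{Y})|\lesssim \delta(\mbf{Y})^{-1}\sup_{B}u \lesssim \delta(\mbf{Y})^{-1}u(\cA^+_{c\delta(\mbf{Y})}(\widehat{\mbf{Y}}))\lesssim \delta(\mbf{Y})^{-1}u(\mbf{Y})$, using the boundary Harnack / backward Harnack principle to pass from a dilated corkscrew value back to $u(\mbf{Y})$ itself and to control $u$ on $B$ by its value at $\mbf{Y}$. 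Since $\langle\nabla_Y u,e_0\rangle\le|\nabla_Y u|$, it then suffices to prove $\langle\nabla_Y u(\mbf{Y}),e_0\rangle \gtrsim u(\mbf{Y})/\delta(\mbf{Y})$, which automatically forces $|\nabla_Y u|\approx\langle\nabla_Y u,e_0\rangle\approx u/\delta$.

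For the lower bound I would follow the blow-up / barrier scheme of \cite{ACS}, \cite{CS}, adapted to the parabolic setting: compare $u$ from below with a caloric function on a suitable sub-domain, and use that in a Lip(1,1/2) graph domain the distance $\delta(\mbf{Y})$ is comparable to $x_0-\psi(x,t)$ (Lemma \ref{distgraphlem.lem}), so the $e_0$-direction is genuinely transverse to the boundary. Concretely: for $\mbf{Y}=(y_0,y,s)$ with $\delta(\mbf{Y})$ small, consider the segment from $(\psi(y,s),y,s)$ to $\mbf{Y}$ in the $e_0$-direction; along this segment $u$ goes from $0$ to $u(\mbf{Y})$, so there is a mean-value point with $\partial_{y_0}u \approx u(\mbf{Y})/\delta(\mbf{Y})$, but to transfer this to $\mbf{Y}$ one needs Harnack-type control on $\partial_{y_0}u$, which in turn requires knowing $\partial_{y_0}u\ge 0$ and that it is itself a (essentially) nonnegative solution. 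The sign $\partial_{y_0}u\ge 0$ near $\Sigma$ comes from the fact that the domain is a graph region above $x_0=\psi$: translating $\Omega$ upward in $e_0$ shrinks it, so by the maximum principle $u(\mbf{Y}+he_0)\ge u(\mbf{Y})$ for small $h>0$ when $\mbf{Y}$ is near $\Sigma$ (this is where $\delta(\mbf{Y})<\eta R$, with $\eta$ small depending on the Lipschitz constant, is used — it guarantees the relevant translates stay inside the region where $u$ is caloric and the comparison is valid). Then $\partial_{y_0}u$ is a nonnegative adjoint solution (a limit of nonnegative difference quotients), vanishing on $\Delta_{2R}(\mbf{X})$ is \emph{not} quite true, but one works instead with the boundary Harnack principle (Lemma \ref{carlest.lem} applied to $u$) to get the quantitative non-degeneracy: iterating the Carleson estimate on a chain of balls from $\cA_R^+(\mbf{X})$ down to $\mbf{Y}$ gives $u(\mbf{Y})\gtrsim (\delta(\mbf{Y})/R)^{C}u(\cA_R^+(\mbf{X}))$, and pairing the upper Hölder bound of Lemma \ref{HCatBdry.lem} with a matching \emph{lower} Hölder bound (valid for the Green function by \cite{FGS}, \cite{N2006}) pins down the decay rate enough to extract $\partial_{y_0}u(\mbf{Y})\gtrsim u(\mbf{Y})/\delta(\mbf{Y})$.

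The main obstacle is exactly this last step — proving the non-degeneracy lower bound $\partial_{y_0}u(\mbf{Y})\gtrsim u(\mbf{Y})/\delta(\mbf{Y})$ rather than merely $|\nabla u|\lesssim u/\delta$. The difficulty is that $\partial_{y_0}u$, while nonnegative near $\Sigma$, does not vanish on $\Sigma$, so one cannot directly apply the boundary comparison lemmas to it; instead one must run a compactness/blow-up argument (rescaling $u$ by $u(\cA^+_{\delta(\mbf{Y})}(\widehat{\mbf{Y}}))^{-1}$ and $\delta(\mbf{Y})^{-1}$, extracting a limit that is a global nonnegative adjoint-caloric function on a half-space-like region vanishing on a Lip(1,1/2) graph, and invoking a Liouville-type statement to force the limit to be, up to normalization, a fixed nondegenerate profile), together with a quantitative Hopf-type lemma for the heat equation in Lip(1,1/2) domains. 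Because this argument is somewhat delicate and the version in \cite{N2006} based on \cite{ACS}, \cite{CS} is incomplete, I would defer the full details to Appendix \ref{appendixA}, which is precisely what the paper announces it will do.
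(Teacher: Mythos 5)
Your overall map of the problem is right: the upper bound $|\nabla_Y u|\lesssim u/\delta$ is soft (interior estimates plus Carleson plus the backward Harnack for the Green function), and the real content is the non-degeneracy $\partial_{y_0}u\gtrsim u/\delta$, for which the essential step is to establish the sign $\partial_{y_0}u\ge 0$ near $\Sigma$. Where you go wrong is in how you try to establish that sign.

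Your claim that ``translating $\Omega$ upward in $e_0$ shrinks it, so by the maximum principle $u(\mbf{Y}+he_0)\ge u(\mbf{Y})$'' does not hold for the Green function $u$. Set $u_h(\mbf{Y}):=u(\mbf{Y}+he_0)$. On $\Sigma$ you indeed have $u_h\ge 0 = u$, but to conclude by the maximum principle in $\Omega_{2R}=I_{2R}(\mbf{X})\cap\Omega$ you also need $u_h\ge u$ on the rest of $\partialp\Omega_{2R}$ — in particular on the ``ceiling'' $\{y_0 = x_0 + 6M_0\sqrt{n}R\}$ and on the lateral faces of $I_{2R}$ — and there you have no control whatsoever. (Working on the unbounded domain does not help either: near the pole $\mbf{Y}^\ast$ the Green function blows up and $u_h < u$ on the side $y_0 > y_0^\ast$, so the global comparison fails.) The translation/monotonicity argument is correct only for a function with boundary data that is monotone in $e_0$, such as the caloric measure of the ``far'' boundary $F=\partialp\Omega_{2R}\setminus\Sigma$; that is exactly the role of the auxiliary function $v=ch$ in Lemma~\ref{lemma:CS-2}. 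The paper does prove $\partial_{y_0}h\ge 0$ by your translation argument, then \emph{transfers} the sign to $u$ via the H\"older continuity of the ratio $u/v$ up to the boundary (Lemma~\ref{comparisonlemma}), showing $\partial_{y_0}u \ge (\Theta - C\eta^\alpha)\partial_{y_0}v\ge 0$ once $\eta$ is small. You have omitted this transfer mechanism entirely, and it is the crux of the proof.

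The second, subtler ingredient you are missing is the backward (strong) Harnack inequality of Lemma~\ref{backharnacklemma}, valid for solutions vanishing only on $\Sigma\cap\s$ and \emph{not} on the whole lateral boundary. The boundary Harnack comparison Lemma~\ref{comparisonlemma} that you lean on (and that the paper uses to transfer the sign) requires precisely this form of the backward Harnack for \emph{both} $u$ and the auxiliary solution $v$; the classical statement in \cite{FGS} only covers solutions vanishing on all of $\s$, so it does not directly apply to $v$, and this gap in \cite{N2006} is exactly what Appendix~\ref{appendixA} is designed to close. Your fallback proposal — a compactness/blow-up argument with a Liouville theorem — also does not go through here: blow-ups of a Lip(1,1/2) graph at a boundary point are again only Lip(1,1/2) graphs (not half-spaces), so there is no Liouville classification of the limiting profile available at this level of regularity. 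In short, the structure you sketched is consistent with the paper's, but the proof of the key sign condition is incorrect as stated, and the genuinely new lemma (the backward Harnack for solutions vanishing only on the graph part of the boundary) is absent from your argument.
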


Next we give our definition of the $A_\infty$ property for caloric measure.

\begin{definition}\label{defainfty}  We say $\omega$ is in $A_\infty$, or that its density
$d\omega/d\sigma$ is a parabolic $A_\infty$ weight, if there
exist $C_\ast$ and $q  > 1$ such that the following holds. If $\mbf{X} \in \Sigma$, $r > 0$, and $\mbf{Y} \in T^+_{\kappa, 2r}(\mbf{X})$, then $\omega^{\mbf{Y}} \ll \sigma$ on $\Delta_{2r}(\mbf{X})$ and $k_{\mbf{Y}} := {\d\omega^{\mbf{Y}}}/{\d\sigma}$ satisfies the reverse-H\"older inequality
\begin{equation}\label{rhq}
\iint_{\Delta_{r}(\mbf{X})} k_\mbf{Y}^q \, \d\sigma \le C_\ast \sigma(\Delta_{r}(\mbf{X}))^{1-q} (\omega^{\mbf{Y}}(\Delta_{r}(\mbf{X})))^q.
\end{equation}
\end{definition}

\begin{remark}\label{whnweusrhq.rmk}
Consider $\mbf{X}^0 \in \Sigma$ and $R_\ast > 0$, Using  Lemma \ref{calisdoub.lem} we have that
\[\cA^+_{4R_\ast}\left(\mbf{X}^0\right) \in T^+_{\kappa, 2\rho}(\mbf{Z}),\]
for all $\mbf{Z} \in \Sigma$ and $\rho > 0$ such that $\Delta_{2\rho}(\mbf{Z}) \subseteq \Delta_{R_\ast}(\mbf{X}^0)$. As a consequence,  \eqref{rhq} is valid with $\mbf{Y} = \cA^+_{4R_\ast}(\mbf{X}^0)$, i.e.,
\[\iint_{\Delta_{r}(\mbf{Z})} k_{\cA^+_{4R_\ast}(\mbf{X}^0)} ^q \, \d\sigma \le C_\ast \sigma(\Delta_{r}(\mbf{Z}))^{1-q} (\omega^{\cA^+_{4R_\ast}(\mbf{X}^0)}(\Delta_{r}(\mbf{Z})))^q,\]
for all $\mbf{Z} \in \Delta_{R_*/2}(\mbf{X}^0)$ and $r < R_*/2$.
\end{remark}

\section{Preliminary arguments for the proof of Theorem \ref{mainthrm.thrm} }\label{mainthrmsetup.sect}

In this section we develop a number of preliminary arguments for the proof of Theorem \ref{mainthrm.thrm}. Throughout the section we will assume the hypotheses of Theorem \ref{mainthrm.thrm} and the constants appearing are allowed to depend (implicitly) on $n$, $\|\psi\|_{\Lip(1,1/2)}$ and the constants $C_\ast$ and $q$ in Definition \ref{defainfty}, i.e., on the what we have coined the structural constants. We will sometimes stress this dependence, but otherwise it can be assumed by the reader. To prove Theorem \ref{mainthrm.thrm}, our strategy is to show that there exists a constant $N_\star$, depending only on the structural constants,  such that for each parabolic cube $Q_R\subset\ree$ we have
\begin{equation}\label{JS-lemma:1}
	\inf_C \big|\big\{
	\mbf{y}\in Q_R: |\mathcal{D}_t\psi(\mbf{y})-C|>N_\star
	\big\}\big|
	\le
	(1/4)\,|Q_R|.
\end{equation}
Indeed, if this is true then the parabolic version of the John-Str{\"o}mberg lemma implies 
\[
\|\mathcal{D}_t \psi\|_{\BMO(\re^n)}\le CN_\star\,.
\] 
Thus, we will be focused on establishing \eqref{JS-lemma:1} with $N_\star$ depending only on the structural constants.

To this end, we fix $\mbf{x}^0=(x^0,t^0)\in \re^n$ and we let 
$Q_R:=Q_R(\mbf{x}^0)$. We define
\[
\mbf{X}^0:=(x_0^0, \mbf{x}^0):=(\psi(\mbf{x}^0), \mbf{x}^0)\in \Sigma\,,
\] 
and set $I_R:=I_R(\mbf{X}^0)$, 
$\Delta:=\Delta_{R}(\mbf{X}^0)=I_{R}(\mbf{X}^0)\cap\Sigma$. 
With $\mbf{X}^0$ fixed,  we will often simply write $\Delta_{cR}$
instead of $\Delta_{cR}(\mbf{X}^0)$.  We also introduce 
% $\Delta_\star:=M_1\Delta$, where
\begin{equation}\label{wtMdef.eq}
\Delta_\star:=M_1\Delta\,,\qquad 
\text{where }\, M_1:=32000M_0^2\eta^{-1}n,
\end{equation}
and where $\eta$ is the constant appearing in Lemma~\ref{lemma:CS-3}. Using this notation, we set 
$\mbf{X}_\star:=\cA^+_{4M_1R}(\mbf{X}^0)$, a time forward reference point defined relative to the surface box $4\Delta_\star$. We let 
\begin{equation}\label{hmnorm}
\omega(\cdot) :=\sigma(\Delta_\star)\,\omega^{\mbf{X}_\star}(\cdot)
\end{equation}
denote  normalized caloric measure, and we let
\begin{equation}\label{greennorm}
u(\cdot) =\sigma(\Delta_\star)\,G(\mbf{X}_\star,\cdot).
\end{equation}
This normalized Green function, with pole at $\mbf{X}_\star$, is a solution to the adjoint heat equation outside of its pole. On any set where $\omega\ll \sigma$,  we let $k=\d\omega/\d\sigma$. Applying Lemma \ref{HCatBdry.lem} to the function
$\mbf{X}\to 1-\omega^{\mbf{X}}(\tfrac{1}{4}\Delta_\star)$, which is a non-negative solution to the heat equation which vanishes on $\tfrac{1}{4}\Delta_\star$, and subsequently using the Harnack inequality, we deduce that
\begin{equation}\label{Bourgain:aver}
	1
	\approx
	\omega^{\mbf{X}_\star}(\tfrac{1}{4}\Delta_\star) = \omega^{\mbf{X}_\star}(\Delta_{M_1R/4}(\mbf{X}^0))
	=
	\frac{\omega(\tfrac{1}{4}\Delta_\star)}{\sigma(\Delta_\star)}
	\lesssim
	\frac{\omega(\Delta_\star)}{\sigma(\Delta_\star)}
	\approx
	\omega^{\mbf{X}_\star}(\Delta_\star)
	\le
	1.
\end{equation}
We will refer  to \eqref{Bourgain:aver} frequently.  For future reference, we note that, in fact, we have more generally that
\begin{equation}\label{Bourgain2}
\frac{\omega(\Delta_{NR}(\mbf{X}^0))}{\sigma(\Delta_{NR}(\mbf{X}^0))} \, 
\approx \, 1\,,
 \qquad  1\leq N \leq M_1 \,. %\,,\,\, \mbf{Z} \in Q_{100R}(\mbf{X}_0)\,.
\end{equation}
This fact is standard for Lip(1,1/2) graph domains, but see, e.g., 
\cite[Lemma 2.2]{GH} for a more general result (which yields \eqref{Bourgain2}
in our setting by the use of Harnack's inequality).

\subsection{Constructing the base for a sawtooth from  the $A_\infty$ assumption} That  $\omega$ is in $A_\infty$ (see Definition \ref{defainfty}) implies that there exists 
$p > 1$, depending only on $n$, $M_0$,
and the constants $C_\ast$ and $q$ in Definition \ref{defainfty}, such that
\begin{equation}\label{Apatsclr.eq}
\left(\bariint_{\Delta_{M_1R/4}(\mbf{X}^0)} k  \, \d\sigma \right)\left(\bariint_{\Delta_{M_1R/4}(\mbf{X}^0)} k^{1-p'} \, \d\sigma \right)^{p-1} \lesssim_p 1,
\end{equation}
which further implies that
\begin{equation}\label{quantctyrev.eq}
\frac{\sigma(E)}{\sigma(\Delta_{M_1R/4}(\mbf{X}^0))} \lesssim_p\left(\frac{\hm(E)}{\hm(\Delta_{M_1R/4}(\mbf{X}^0))}\right)^{1/p},
\end{equation}
whenever $E \subseteq \Delta_{M_1R/4}(\mbf{X}^0)$. Indeed, given
\eqref{Apatsclr.eq},
\begin{align*}
		\Big(\frac{\sigma(E)}{\sigma(\Delta_{M_1R/4}(\mbf{X}^0))}\Big)^p
		&=
		\Big(\bariint_{\Delta_{M_1R/4}(\mbf{X}^0)} \mathbf{1}_{E}\,k^{\frac1p}\,k^{-\frac1p}\,\d\sigma\Big)^p
		\\ &\le
		\Big(\bariint_{\Delta_{M_1R/4}(\mbf{X}^0)} \mathbf{1}_{E}\,k\,\,\d\sigma\Big)
		\Big(\bariint_{\Delta_{M_1R/4}(\mbf{X}^0)} k^{1-p'}\d\sigma\Big)^{p-1}
		\\
		&\lesssim_p
		\Big(\frac{\hm(E)}{\sigma(\Delta_{M_1R/4}(\mbf{X}^0))}\Big)\,\Big(\bariint_{\Delta_{M_1R/4}(\mbf{X}^0)} k\,\d\sigma\Big)^{-1}
		=
		\frac{\hm(E)}{\hm(\Delta_{M_1R/4}(\mbf{X}^0))}.
	\end{align*}

\begin{lemma}\label{CreateFstarlem.lem} Given $\epsilon > 0$, there exists a constant $M\geq 1$, depending only on the structural constants and $\epsilon$, and  a closed set $F_\star$, such that
\begin{equation}\label{Fstarprops.eq}
F_\star \subseteq \widetilde{F}_\star :=
\left\{\mbf{X} \in \Delta_{50R}(\mbf{X}^0): \frac{1}{M} \le \frac{\hm(\Delta_r(\mbf{X}))}{\sigma(\Delta_r(\mbf{X}))} \le M  , \quad \forall r \in (0, M_1R/8) \right\},
\end{equation}
and such that
\begin{equation}\label{Fstarampleeqn.eq}
\sigma( \Delta_{50R}(\mbf{X}^0)\setminus F_\star) \le \epsilon \sigma( \Delta_{50R}(\mbf{X}^0)).
\end{equation}
\end{lemma}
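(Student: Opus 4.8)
\emph{Plan of proof.} The set $\widetilde{F}_\star$ is exactly the complement, within $\Delta_{50R}(\mbf{X}^0)$, of the union of a ``too large'' set $E_1$ and a ``too small'' set $E_2$, where $E_1$ (resp.\ $E_2$) consists of those $\mbf{X}\in\Delta_{50R}(\mbf{X}^0)$ for which $\omega(\Delta_r(\mbf{X}))> M\,\sigma(\Delta_r(\mbf{X}))$ (resp.\ $\omega(\Delta_r(\mbf{X}))< M^{-1}\sigma(\Delta_r(\mbf{X}))$) for some $r\in(0,M_1R/8)$, with $\omega$ the normalized caloric measure of \eqref{hmnorm}. Thus it suffices to show $\sigma(E_1\cup E_2)\le \tfrac{\epsilon}{2}\,\sigma(\Delta_{50R}(\mbf{X}^0))$ once $M$ is chosen large enough, after which inner regularity of the Radon measure $\sigma$ produces a closed $F_\star\subseteq\widetilde{F}_\star$ with $\sigma\big(\widetilde{F}_\star\setminus F_\star\big)\le \tfrac{\epsilon}{2}\,\sigma(\Delta_{50R}(\mbf{X}^0))$, giving \eqref{Fstarprops.eq}--\eqref{Fstarampleeqn.eq}. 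Before estimating $E_1,E_2$ I would record the uniform tools available with the \emph{fixed} pole $\mbf{X}_\star=\cA^+_{4M_1R}(\mbf{X}^0)$: since $M_1$ is large, for every $\mbf{X}\in\Delta_{50R}(\mbf{X}^0)$ and $0<r<M_1R/8$ one has $\Delta_{2r}(\mbf{X})\subseteq\Delta_{M_1R/2}(\mbf{X}^0)$, so Remark \ref{whnweusrhq.rmk} (applied with $R_\ast=M_1R$) gives $\omega\ll\sigma$ there, hence $\omega(\Delta_r(\mbf{X}))=\iint_{\Delta_r(\mbf{X})}k\,\d\sigma$ together with the scale-invariant reverse H\"older bound for $k$, while the quantitative absolute continuity \eqref{quantctyrev.eq} applies on $\Delta_{M_1R/4}(\mbf{X}^0)\supseteq\Delta_{50R}(\mbf{X}^0)$. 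Likewise Lemma \ref{calisdoub.lem} (with center $\mbf{X}^0$ and radius $M_1R$) yields the doubling property $\omega(\Delta_{2\rho}(\mbf{Z}))\approx\omega(\Delta_\rho(\mbf{Z}))$ for every box with $\Delta_{2\rho}(\mbf{Z})\subseteq\Delta_{M_1R}(\mbf{X}^0)$, \eqref{Bourgain2} gives $\omega(\Delta_{NR}(\mbf{X}^0))\approx\sigma(\Delta_{NR}(\mbf{X}^0))$ for $1\le N\le M_1$, and $\sigma$ itself is doubling (parabolic Ahlfors--David regular).

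For $E_1$ I would use a maximal function estimate. Let $\mathcal{M}_\sigma$ be the uncentered Hardy--Littlewood maximal operator on $(\Sigma,\sigma)$ over the surface boxes $\Delta_r(\cdot)$, of weak type $(1,1)$ since $\sigma$ is doubling. Putting $g:=k\,\mathbf{1}_{\Delta_{M_1R/2}(\mbf{X}^0)}$, the bookkeeping above shows $\omega(\Delta_r(\mbf{X}))/\sigma(\Delta_r(\mbf{X}))=\bariint_{\Delta_r(\mbf{X})}g\,\d\sigma\le\mathcal{M}_\sigma g(\mbf{X})$ for $\mbf{X}\in\Delta_{50R}(\mbf{X}^0)$ and $0<r<M_1R/8$, so that $E_1\subseteq\{\mathcal{M}_\sigma g>M\}$ and
\[
\sigma(E_1)\ \lesssim\ M^{-1}\|g\|_{L^1(\sigma)}\ =\ M^{-1}\,\omega(\Delta_{M_1R/2}(\mbf{X}^0))\ \approx\ M^{-1}\,\sigma(\Delta_{50R}(\mbf{X}^0)),
\]
the last step using \eqref{Bourgain2} and the doubling of $\sigma$.

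For $E_2$ no maximal-function argument is available (the offending scales enter through an infimum), so I would run a Vitali covering argument: each $\mbf{X}\in E_2$ comes with a box $\Delta_{r(\mbf{X})}(\mbf{X})$, $r(\mbf{X})\in(0,M_1R/8)$, on which $\omega< M^{-1}\sigma$; extract a pairwise disjoint subfamily $\{\Delta_{r_i}(\mbf{X}_i)\}_i$ with $E_2\subseteq\bigcup_i\Delta_{5r_i}(\mbf{X}_i)$. Since all these boxes lie in $\Delta_{M_1R}(\mbf{X}^0)$, the doubling above gives
\[
\omega(E_2)\ \le\ \sum_i\omega(\Delta_{5r_i}(\mbf{X}_i))\ \lesssim\ \sum_i\omega(\Delta_{r_i}(\mbf{X}_i))\ <\ M^{-1}\sum_i\sigma(\Delta_{r_i}(\mbf{X}_i))\ \le\ M^{-1}\sigma(\Delta_{M_1R/2}(\mbf{X}^0))\ \approx\ M^{-1}\omega(\Delta_{M_1R/4}(\mbf{X}^0)),
\]
using disjointness and \eqref{Bourgain2}. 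Feeding $E=E_2\subseteq\Delta_{M_1R/4}(\mbf{X}^0)$ into \eqref{quantctyrev.eq} then yields $\sigma(E_2)\lesssim_p M^{-1/p}\,\sigma(\Delta_{M_1R/4}(\mbf{X}^0))\approx M^{-1/p}\,\sigma(\Delta_{50R}(\mbf{X}^0))$.

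Combining, $\sigma(E_1\cup E_2)\lesssim_p M^{-1/p}\,\sigma(\Delta_{50R}(\mbf{X}^0))$, so taking $M$ depending only on the structural constants, $p$, and $\epsilon$ makes this $\le\tfrac{\epsilon}{2}\sigma(\Delta_{50R}(\mbf{X}^0))$, and the inner-regularity step finishes the proof. The main obstacle is essentially organizational: every invocation of the $A_\infty$, CFMS, and doubling estimates must be made with the \emph{single} pole $\mbf{X}_\star$ for all centers in $\Delta_{50R}(\mbf{X}^0)$ and scales up to $M_1R/8$, which is precisely why $M_1$ is taken so large---so that every dilated box appearing in the Vitali and maximal-function arguments stays inside $\Delta_{M_1R}(\mbf{X}^0)$, where Remark \ref{whnweusrhq.rmk} and Lemma \ref{calisdoub.lem} apply. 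The only estimate that is not essentially automatic is the bound on $E_2$, which genuinely requires \eqref{quantctyrev.eq} rather than boundedness of a maximal operator.
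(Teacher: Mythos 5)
Your argument is correct and is essentially the paper's argument: the paper bounds the complement of $\widetilde F_\star$ by the bad sets of the two truncated maximal operators $\mathcal M_\sigma\omega$ and $\mathcal M_\omega\sigma$, using the weak-type $(1,1)$ bound for each (the first in measure $\sigma$, the second in measure $\omega$), then converts the $\omega$-smallness of the second bad set to $\sigma$-smallness via \eqref{quantctyrev.eq} and finishes by inner regularity of $\sigma$, exactly as you do. One small remark: your statement that ``no maximal-function argument is available'' for $E_2$ is not quite right --- $E_2$ is precisely where the truncated maximal operator $\mathcal M_\omega\sigma(\mbf{X}):=\sup_{0<r<M_1R/8}\sigma(\Delta_r(\mbf{X}))/\omega(\Delta_r(\mbf{X}))$ exceeds $M$, and the weak-type $(1,1)$ bound for $\mathcal M_\omega$ with respect to the doubling measure $\omega$ gives the estimate $\omega(E_2)\lesssim M^{-1}\sigma(\Delta_{M_1R/4}(\mbf{X}^0))$ directly; your Vitali covering argument is simply the standard proof of that weak-type bound, unrolled, so the content is the same.
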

\begin{proof}
Fix $\epsilon> 0$, and let $M$ be a degree of freedom eventually
to be chosen depending only on the structural constants and $\epsilon$. We introduce the truncated maximal operators
	\begin{align*}
	\mathcal{M}_\sigma \hm(\mbf{X})
	&:=
	\sup_{0<r< M_1R/8} \frac{\hm(\Delta_r(\mbf{X}))}{\sigma(\Delta_r(\mbf{X}))}, \qquad \mbf{X}\in \Delta_{50R}(\mbf{X}^0),
	\\
	\mathcal{M}_\hm \sigma(\mbf{X})
	&:=
\sup_{0<r< M_1R/8} 
\frac{\sigma(\Delta_r(\mbf{X}))}{\hm(\Delta_r(\mbf{X}))},
	\qquad \mbf{X}\in \Delta_{50R}(\mbf{X}^0).
	\end{align*}
Note that if $\mbf{X}\in\Delta_{50R}(\mbf{X}^0)$ 
	and $0<r<M_1R/8$, then
\[
\Delta_r(\mbf{X})\subset \Delta_{M_1R/8}(\mbf{X}) 
\subseteq % \Delta_{650M_0n\eta^{-1}R}(\mbf{X}^0)
	% \subseteq \Delta_{800R}(\mbf{X}^0) \subseteq 
\Delta_{M_1R/4}(\mbf{X}^0)\,.\]
% since $M_1\gg 50$.
Since $\sigma$ and $\omega$ are both doubling measures, by the weak-type (1,1) bound for the maximal function (or directly, by a standard covering argument), we have
\begin{equation}\label{weaktype:M:both}
	\begin{split}
		\sigma\big(\big\{
		\mbf{X}\in\Delta_{50R}(\mbf{X}^0): 
\mathcal{M}_\sigma \hm(\mbf{X})> N 
		\big\}\big)
		&\le
		\frac{C_n}{N}\, \hm(\Delta_{M_1R/4}(\mbf{X}^0)),
		\\
		\hm\big(\big\{
		\mbf{X}\in\Delta_{50R}(\mbf{X}^0): \mathcal{M}_\hm \sigma(\mbf{X})>N
		\big\}\big)
		&\le
		\frac{C_n}{N}\, \sigma(\Delta_{M_1R/4}(\mbf{X}^0)),
	\end{split}
	\end{equation}
for all $N>0$, and where $C_n$ depends only on dimension (and in the second inequality also on the doubling constant for $\omega$, which in turn depends only on $n$ and $M_0$). 
We introduce
% \[A_\star := A^1_\star \cup A^2_\star,\] where
 \begin{equation*}
 A^1_\star :=
\{\mbf{X}\in\Delta_{50R}(\mbf{X}^0): 
	\mathcal{M}_\sigma \hm(\mbf{X})>M \}\,,\qquad
		 A^2_\star:= \{
		\mbf{X}\in\Delta_{50R}(\mbf{X}^0): \mathcal{M}_\hm \sigma(\mbf{X})>M\}\,.
		\end{equation*}
 Using \eqref{Bourgain:aver}, % and doubling,  
 we have $\hm(\Delta_{M_1R/4}(\mbf{X}^0)) \approx 
 \sigma(\Delta_{M_1R/4}(\mbf{X}^0))$, so from the first inequality in \eqref{weaktype:M:both} we deduce
 \begin{equation}\label{f1bdainf.eq}
 \sigma(A^1_\star) \lesssim \frac{1}{M}\sigma(\Delta_{M_1R/4}(\mbf{X}^0)) \approx \frac{1}{M} \sigma(\Delta_{50R}(\mbf{X}^0))
 \end{equation}
 (with harmless implicit dependence on the fixed constant $M_1$).
Similarly
 \begin{equation}\label{f2bdainf.eq}
 \frac{\hm(A^2_\star)}{\hm(\Delta_{M_1R/4}(\mbf{X}^0))} \lesssim \frac{1}{M},
 \end{equation}
and using \eqref{quantctyrev.eq}  we obtain 
 \begin{equation}\label{Astar2bound.eq}
\frac{\sigma(A^2_\star)}{\sigma(\Delta_{50R}(\mbf{X}^0))} \approx \frac{\sigma(A^2_\star)}{\sigma(\Delta_{M_1R/4}(\mbf{X}^0))} \lesssim (1/M)^\theta,
\end{equation}
where $\theta = 1/p < 1$. 
Observe that
\[
\Delta_{50R}(\mbf{X}^0))\setminus \widetilde{F}_\star \subset
A^1_\star \cup A^2_\star\,,.
\]
Choosing $M$ %,  depending only on the structural constants and $\epsilon$, 
such that $(1/M)^\theta \ll \epsilon$, we 
deduce from \eqref{f1bdainf.eq} 
and \eqref{Astar2bound.eq} % \eqref{f2bdainf.eq} 
that
\[\sigma( \Delta_{50R}(\mbf{X}^0) \setminus \widetilde{F}_\star) 
\,\le \,\frac{\epsilon}{2} \,\sigma(\Delta_{50R}(\mbf{X}^0))\,,\]
so using the regularity of $\sigma$ we can find a closed set $F_\star \subseteq \widetilde{F}_\star$ with
\[\sigma( \Delta_{50R}(\mbf{X}^0) \setminus F_\star) \le \epsilon \sigma(\Delta_{50R}(\mbf{X}^0)).\]
\end{proof}
% That
% \[F_\star \subseteq \left\{\mbf{X} \in 
% \Delta_{50R}(\mbf{X}^0): \frac{1}{M} \le \frac{\hm(\Delta_r(\mbf{X}))}
% {\sigma(\Delta_r(\mbf{X}))} \le M  , \quad \forall r \in (0, 600R) \right\} \]
% can be deduced from the fact that $F_\star \subseteq \widetilde{F}_\star 
% = \widetilde{F}^1_\star \cup \widetilde{F}^2_\star$.

We define the projection operator $\Pi$ from $(n+1)$-dimensional space-time to $n$-dimensional space-time according to
\begin{align}\label{proja}\Pi(\mbf{Y}):=(0,y,s),\ \mbf{Y}=(y_0,y,s)\in\ree.
\end{align}
Observe that
\begin{align}\label{proja+}F:=\Pi(F_\star)\subset Q_{50R}.
\end{align}
Thus, if we let $|\cdot|$ denote Lebesgue measure on $n$-dimensional space-time,
\begin{equation}  \label{projsizeeqbasic.eq}
|Q_R\setminus F| \le |Q_{50R}\setminus F|
\le
|\Pi(\Delta_{50R}\setminus F_\star)|
\le
\sigma(\Delta_{50R}\setminus F_\star)
\le
\epsilon
\sigma(\Delta_{50R})
\lesssim \epsilon\,|Q_R| \approx \epsilon R^d,
\end{equation}
where the implicit constant depends only on $n$ and $\|\psi\|_{\Lip(1,1/2)}$, and we recall that $d:=n+1$. In our use of Lemma \ref{CreateFstarlem.lem}, $\epsilon$ will in the end be 
fixed, depending only on the structural constants. In particular, we will 
always insist that $\epsilon$ is small enough that
\begin{equation}\label{projsizeeq.eq}
|Q_R\setminus F| \leq |Q_{50R}\setminus F| \le (1/8) |Q_R|\,.
\end{equation}

\subsection{Level sets of the normalized Green function} 
Recall that % levels set of 
$u$ is the normalized Green function (see \eqref{greennorm}).
For future reference, we collect several important observations in the following. 
% remark for further reference.
\begin{remark}\label{strongharnack4gf.rmk}
By Lemma \ref{strongharnack4gf.lem}, the Green function satisfies a strong Harnack inequality, suitably interpreted. Consequently, the strong Harnack inequality holds for $u$  in the sense that
\begin{equation}\label{stharnlocu.eq}
u(\cA_r^+(\mbf{Z})) \approx u(\cA_r^-(\mbf{Z})),
\end{equation}
for all $Z \in \Delta_{M_1R/2}(\mbf{X}^0)$ and $r < M_1R/4$. Indeed, from Lemma \ref{strongharnack4gf.lem} we have
\begin{equation}\label{stharnlocgfeq.eq}
G(\mbf{X}_\star, \cA_r^+(\mbf{Z})) \approx G(\mbf{X}_\star, \cA_r^-(\mbf{Z}))
\end{equation}
provided $\mbf{X}_\star=\cA^+_{4M_1R}(\mbf{X}^0) \in T^+_{\kappa, r}(\mbf{Z})$. The latter fact can be seen from Lemma \ref{calisdoub.lem}, upon noting that $\Delta_r(\mbf{Z}) \subset \Delta_{M_1R}(\mbf{X}^0)$ so that $r$ can play the role of $2\rho$ in Lemma \ref{calisdoub.lem}. 
% As $u$ is a constant multiple of $G(\mbf{X}_\star, \cdot)$ 
Clearly, \eqref{stharnlocgfeq.eq} implies \eqref{stharnlocu.eq}. 

We may therefore apply Lemma \ref{lemma:CS-3} to obtain that
\[\partial_{y_0}u(\mbf{Y})\approx \frac{u(\mbf{Y})}{\delta(\mbf{Y})},
\quad \text{for every $\mbf{Y}=(y_0,\mbf{y})\in I_{M_1R/8}(\mbf{X}^0) \cap\Omega$}, \ \delta(\mbf{Y}) \le \eta M_1R/2.\]
Recalling the definition of $M_1$ (see \eqref{wtMdef.eq}), we have $M_1R/8 \ge 4000R$ and $\eta M_1R/2 = 16000M_0^2Rn$. Hence
\begin{equation}\label{eqn:est-der-G}
	\partial_{y_0}u(\mbf{Y})\approx \frac{u(\mbf{Y})}{\delta(\mbf{Y})},
\qquad \text{for every $\mbf{Y}=(y_0,\mbf{y})\in \overline{I_{400R}(\mbf{X}^0)}\cap\Omega$}.
\end{equation}
Here we have used \eqref{crdIrest.eq} to conclude that 
$\mbf{Y} \in \overline{I_{400R}(\mbf{X}^0)}$ implies 
\[\delta(\mbf{Y)} \le \|\mbf{Y} - \mbf{X}^0\| \le 2000M_0R\sqrt{n} < 16000M_0^2Rn = \eta M_1R/2\,.
\]
\end{remark}

Next we use the estimate \eqref{eqn:est-der-G}, and the implicit function theorem, to show, for $r$ small, that the level sets $\{u = r\}$ are locally given by the graph of a Lip(1,1/2) function $\psi_r(\mbf{x})$. Given $\mbf{X}_\star=\cA^+_{4M_1R}(\mbf{X}^0)$, the pole of the Green function, we introduce, for $r>0$ small and fixed,
the level set
\[
\Sigma_r(\mbf{X}_\star):=\big\{\mbf{Y} \in \Omega:\, u(\mbf{Y})= r\big\}.
\]

\begin{lemma}\label{lemma:level-sets} For some $\Lambda_0\gg 1$ depending only on the structural constants, if $0<r<R/\Lambda_0$, then there is a 
function $\psi_r\in C^1(Q_{100R}(\mbf{x}^0))$ such that
\[
\Sigma_r(\mbf{X}_\star)\cap I_{100R}(\mbf{X}^0)
=
\big\{\big(\psi_r(\mbf{y}),\mbf{y}\big): \mbf{y}\in Q_{100R}(\mbf{x}^0)\big\}.
\]
Moreover, for every $\mbf{y}\in  Q_{100R}(\mbf{x}^0)$ and $0<r\le R/\Lambda_0$, the mapping
$r\mapsto \psi_r(\mbf{y})$ is strictly increasing and differentiable,
\begin{equation}\label{abdlevelsets.eq}
\psi(\mbf{y}) < \psi_r(\mbf{y})<x_0^0+300M_0 \sqrt{n}R,
\end{equation}
 and
 \begin{equation*} % \label{psizero}
 \lim_{r\to 0^+} \psi_r(\mbf{y})=\psi(\mbf{y}).
 \end{equation*}
\end{lemma}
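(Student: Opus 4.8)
The plan is to construct $\psi_r$ one vertical fiber at a time, using the non-degeneracy estimate \eqref{eqn:est-der-G} as the main input. Fix $\mbf{y}\in Q_{100R}(\mbf{x}^0)$ and examine the one-variable function $s\mapsto u(s,\mbf{y})$ on the segment $\psi(\mbf{y})\le s\le x_0^0+300M_0\sqrt nR$. By the Lip(1,1/2) bound for $\psi$ and \eqref{crdIrest.eq}, every point $(s,\mbf{y})$ with $\psi(\mbf{y})<s<x_0^0+300M_0\sqrt nR$ lies in $\overline{I_{400R}(\mbf{X}^0)}\cap\Omega$, so \eqref{eqn:est-der-G} applies and gives $\partial_s u(s,\mbf{y})\approx u(s,\mbf{y})/\delta(s,\mbf{y})\ge0$; thus $s\mapsto u(s,\mbf{y})$ is nondecreasing, with $u(\psi(\mbf{y}),\mbf{y})=0$ by \eqref{1.4b}. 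Moreover $u=\sigma(\Delta_\star)\,G(\mbf{X}_\star,\cdot)$ with $\mbf{X}_\star=\cA^+_{4M_1R}(\mbf{X}^0)$ located at time $\approx t^0+M_1^2R^2$, far in the future of $I_{400R}(\mbf{X}^0)$; hence, by positivity of the (adjoint) Green function, $u>0$ throughout $\Omega\cap I_{400R}(\mbf{X}^0)$, and then $\partial_s u\gtrsim u/\delta>0$ forces $s\mapsto u(s,\mbf{y})$ to be \emph{strictly} increasing. Finally $u$ extends continuously by $0$ to $\Sigma$, so $u(s,\mbf{y})\to 0$ as $s\downarrow\psi(\mbf{y})$.

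The second step is the lower bound $u(x_0^0+300M_0\sqrt nR,\mbf{y})\gtrsim R$, uniformly in $\mbf{y}\in Q_{100R}(\mbf{x}^0)$. Put $\mbf{Z}:=(\psi(\mbf{y}),\mbf{y})\in\Sigma$. Since $\mbf{y}\in Q_{100R}(\mbf{x}^0)$ we have $\Delta_{CR}(\mbf{X}^0)\subseteq\Delta_{C'R}(\mbf{Z})$ for fixed structural constants $C<C'$, and by Lemma~\ref{calisdoub.lem} the pole condition $\mbf{X}_\star\in T^+_{\kappa,2\rho}(\mbf{Z})$ holds at all scales $\rho\lesssim M_1R$; combining Lemma~\ref{calisdoubpre.lem} (doubling, iterated boundedly many times) with \eqref{Bourgain2} gives $\omega(\Delta_R(\mbf{Z}))\gtrsim\omega(\Delta_{CR}(\mbf{X}^0))\approx\sigma(\Delta_{CR}(\mbf{X}^0))\approx R^{n+1}$, whence Lemma~\ref{CFMS} yields $u(\cA^+_R(\mbf{Z}))=\sigma(\Delta_\star)\,G(\mbf{X}_\star,\cA^+_R(\mbf{Z}))\approx R^{-n}\,\omega(\Delta_R(\mbf{Z}))\gtrsim R$. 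By Lemma~\ref{strongharnack4gf.lem} (the strong Harnack inequality for the Green function) $u(\cA_R(\mbf{Z}))\approx u(\cA^+_R(\mbf{Z}))\gtrsim R$, and since $\cA_R(\mbf{Z})=(\psi(\mbf{y})+2M_0R,\mbf{y})$ lies on the fiber over $\mbf{y}$ strictly below its top point, the monotonicity from the first step gives $u(x_0^0+300M_0\sqrt nR,\mbf{y})\ge u(\cA_R(\mbf{Z}))\gtrsim R$.

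Granting these two steps, choose $\Lambda_0\gg1$, depending only on the structural constants, so that $R/\Lambda_0$ is strictly smaller than the lower bound just obtained. Then for $0<r\le R/\Lambda_0$ the intermediate value theorem and strict monotonicity produce, for each $\mbf{y}\in Q_{100R}(\mbf{x}^0)$, a unique $\psi_r(\mbf{y})\in(\psi(\mbf{y}),\,x_0^0+300M_0\sqrt nR)$ with $u(\psi_r(\mbf{y}),\mbf{y})=r$, which is \eqref{abdlevelsets.eq}. Since $u$ is $C^\infty$ in $\Omega\setminus\{\mbf{X}_\star\}$ (a region containing $I_{400R}(\mbf{X}^0)\cap\Omega$) with $\partial_{y_0}u\ne0$ along $\{u=r\}$ there, the implicit function theorem gives $\psi_r\in C^1(Q_{100R}(\mbf{x}^0))$; and because $I_{100R}(\mbf{X}^0)$ is precisely the open cylinder $\{|y_0-x_0^0|<300M_0\sqrt nR\}$ over $Q_{100R}(\mbf{x}^0)$ (see \eqref{eq3.1}), every point of $\{u=r\}\cap I_{100R}(\mbf{X}^0)$ lies on one of the analyzed fibers (points with $y_0\le\psi(\mbf{y})$ being outside $\Omega$, and $\psi_r(\mbf{y})<x_0^0+300M_0\sqrt nR$ keeping us inside the cylinder), so $\Sigma_r(\mbf{X}_\star)\cap I_{100R}(\mbf{X}^0)$ is exactly the graph of $\psi_r$. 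Differentiating $u(\psi_r(\mbf{y}),\mbf{y})=r$ in $r$ gives $\tfrac{d}{dr}\psi_r(\mbf{y})=1/\partial_{y_0}u(\psi_r(\mbf{y}),\mbf{y})>0$, establishing differentiability and strict monotonicity of $r\mapsto\psi_r(\mbf{y})$; and this monotonicity, together with $\psi_r(\mbf{y})>\psi(\mbf{y})$ and the continuity of $u$ up to $\Sigma$ (where $u=0$) forcing $u$ to vanish at the limiting point while $u>0$ on $\Omega\cap I_{400R}(\mbf{X}^0)$, forces $\lim_{r\to0^+}\psi_r(\mbf{y})=\psi(\mbf{y})$.

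I do not anticipate an essential difficulty: the statement is a quantitative implicit-function-theorem assertion whose only substantive ingredient is the non-degeneracy \eqref{eqn:est-der-G} of $\partial_{y_0}u$ already established. The steps needing a little care are the bookkeeping ones — confirming that the vertical segments sit inside $\overline{I_{400R}(\mbf{X}^0)}$ via the Lip(1,1/2) bound, and checking the reference lower bound $u(\cA^+_R(\mbf{Z}))\gtrsim R$, for which one must verify the pole condition $\mbf{X}_\star\in T^+_{\kappa,\cdot}(\mbf{Z})$ at every intermediate scale (guaranteed by Lemma~\ref{calisdoub.lem} since $M_1$ is enormous) before applying Lemmas~\ref{CFMS} and~\ref{strongharnack4gf.lem}.
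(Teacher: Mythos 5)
Your proof is correct and follows essentially the same route as the paper's: strict fiberwise monotonicity of $y_0\mapsto u(y_0,\mbf{y})$ from the non-degeneracy estimate \eqref{eqn:est-der-G}, a lower bound $u\gtrsim R$ at the top of $I_{100R}(\mbf{X}^0)$ via \eqref{Bourgain2} and Lemma~\ref{CFMS}, the intermediate value theorem to define $\psi_r$ uniquely, and the implicit function theorem together with differentiation in $r$ for the regularity, monotonicity in $r$, and the limit as $r\to0^+$. The only difference is cosmetic: you derive the top-of-box lower bound via corkscrew points $\cA^+_R(\mbf{Z})$, $\cA_R(\mbf{Z})$ over each $\mbf{Z}=(\psi(\mbf{y}),\mbf{y})$ and the strong Harnack inequality of Lemma~\ref{strongharnack4gf.lem} together with fiberwise monotonicity, whereas the paper invokes \eqref{Bourgain2} at $N=100$ and an ordinary Harnack chain within the box — both are standard and equally valid.
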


\begin{proof} We start by observing that by 
% the strong Harnack inequality (see Remark \ref{strongharnack4gf.rmk})  and 
\eqref{Bourgain2} with $N=100$,
Lemma \ref{CFMS}, and Harnack's inequality,
$u(\mbf{Y})\approx R$ for every $\mbf{Y}=(y_0,\mbf{y})\in \overline{I_{100\,R}(\mbf{X}^0)}\cap\Omega$ with $y_0=x_0^0+300M_0\sqrt{n}R$ (these are the points at the ``top'' of the box). % with respect to the $y_0$-variable). 
This means that for every $\mbf{y}\in Q_{100\,R}(\mbf{x}^0)$, we have $u(\psi(\mbf{y}),\mbf{y})=0$ and  $u(x_0^0+300M_0 \sqrt{n}R,\mbf{y})\approx R$. In particular, there exists $\Lambda_0$ such that
\[
u(x_0^0+300M_0\sqrt{n}R,\mbf{y}) > R/\Lambda_0 \quad \forall \mbf{y}\in Q_{100R}(\mbf{x}^0),
\]
and we fix $\Lambda_0$ accordingly.
Moreover, by \eqref{eqn:est-der-G} we have that
$\partial_{y_0} u(y_0,\mbf{y})>0$ for every $y_0$ satisfying 
$\psi(\mbf{y})<y_0< x_0^0+300M_0\sqrt{n}R$, hence  $u(y_0,\mbf{y})$, viewed as a function of $y_0$, is strictly increasing in the interval 
$\psi(\mbf{y})<y_0< x_0^0+300M_0\sqrt{n}R$. By the intermediate value theorem, for every $0<r\le R/\Lambda_0$, 
one can find a unique value $\psi_r(\mbf{y})$ such that $\psi(\mbf{y})<\psi_r(\mbf{y})<x_0^0+ 300M_0 \sqrt{n}R$ 
(depending implicitly on $X_\star$) so that $u(\psi_r(\mbf{y}),\mbf{y})=r$. Furthermore, if we invoke the implicit function theorem and the local smoothness of adjoint caloric functions we conclude that $\psi_r\in C^\infty(Q_{100\,R}(\mbf{x}^0))$. Furthermore, $\psi_r$ is (infinitely) differentiable as a function of the variable $r$.

Fix $\mbf{y}\in  Q_{100\,R}(\mbf{x}^0)$. Note that if $r<s < \Lambda_0\,R$, then $u(\psi_r(\mbf{y}),\mbf{y})=r<s=u(\psi_s(\mbf{y}),\mbf{y})$, and since $u(\cdot,\mbf{y})$ is strictly increasing in the interval $(\psi(\mbf{y}), x_0^0+300\,M_0\,R\, \sqrt{n})$ it follows that $\psi_r(\mbf{y})<\psi_s(\mbf{y})$. Next, using that for fixed $\mbf{y}\in Q_{100\,R}(\mbf{x}^0)$ we have $\psi_r(\mbf{y}) > \psi(\mbf{y})$, and that $\psi_r(\mbf{y})$  is increasing in $r \in (0, \Lambda_0\,R)$, it follows that $\lim_{r\to 0^+} \psi_r(\mbf{y})$ exists and we call the limit $\psi_0(\mbf{y})$. Note that $\psi_0(\mbf{y}) \ge \psi(\mbf{y})$. By the continuity of $u$ up to the boundary
\[
0
=
\lim_{r\to 0^+} r
=
\lim_{r\to 0^+} u(\psi_r(\mbf{y}),\mbf{y})
=
u(\psi_0(\mbf{y}),\mbf{y}),
\]
and this implies that $(\psi_0(\mbf{y}),\mbf{y})\in\Sigma$, i.e. $\psi_0(\mbf{y})=\psi(\mbf{y})$. This completes the proof.
\end{proof}

\subsection{A regularized distance function}

Recall the projection operator $\Pi$ introduced in \eqref{proja}. Recall also the small parameter $\epsilon >0$ (see Lemma \ref{CreateFstarlem.lem}), 
and the set $F$ introduced (see 
\eqref{proja+}-\eqref{projsizeeq.eq}). By the triangle inequality the 
function $\re^n\ni \mbf{x}\mapsto \dist(\mbf{x},F)$ is $\Lip(1,1/2)$ with
 constant at most $1$. Thus, by \cite[Lemma~3.24]{BHHLN-CME} there 
 exists a non-negative $\Lip(1,1/2)$ function $h:\re^n\to \overline{\re^+}$ 
 (a regularized distance function) with properties as stated in the 
 following lemma.
\begin{lemma}\label{regdistlem.lem} The function $h$ satisfies
\begin{align}\label{regular-dist}
h(\mbf{x})\approx \dist(\mbf{x},F),
\qquad
\|h\|_{\Lip(1,1/2)}\lesssim 1, \qquad \|\mathcal{D}_t h\|_{\BMO(\re)}
\lesssim
1,
\end{align}
and
\begin{equation}\label{regular-dist2}
\dist(\mbf{x},F)^{2\,k-1}\,|\partial_t^k h(\mbf{x})|+\dist(\mbf{x},F)^{k-1}\,|\nabla_x^k h(\mbf{x})|\lesssim_k 1,
\qquad \forall\,\mbf{x}\notin F,\, k\in\NN,
\end{equation}
where the implicit constants depend on dimension (and on $k$ for the last estimate). By construction $h\equiv 0$ in $F$.
\end{lemma}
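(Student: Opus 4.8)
The plan is to produce $h$ as a parabolic Whitney regularization of $\delta(\mbf{x}):=\dist(\mbf{x},F)$ and then to verify the listed properties one at a time. First I would take a parabolic Whitney decomposition $\{Q_j\}_j$ of the open set $\ren\setminus F$ into dyadic parabolic cubes with $\ell(Q_j)\approx\dist(Q_j,F)$ and bounded overlap of the slightly dilated cubes $Q_j^{*}:=\tfrac{9}{8}Q_j$, together with a subordinate smooth partition of unity $\{\varphi_j\}_j$, $\varphi_j\in C_c^\infty(Q_j^{*})$, $\sum_j\varphi_j\equiv 1$ on $\ren\setminus F$, with parabolically scaled derivative bounds $|\partial_t^k\nabla_x^\beta\varphi_j|\lesssim_{k,\beta}\ell(Q_j)^{-2k-|\beta|}$. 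I then set $h:=\sum_j\ell(Q_j)\,\varphi_j$ on $\ren\setminus F$ (a locally finite sum, so $h\in C^\infty(\ren\setminus F)$) and $h\equiv 0$ on $F$. The comparability $h\approx\delta$ in \eqref{regular-dist} is immediate: on $\supp\varphi_j$ one has $\ell(Q_j)\approx\dist(Q_j,F)\approx\delta(\mbf{x})$, so $\sum_j\varphi_j(\mbf{x})=1$ gives $h(\mbf{x})\approx\delta(\mbf{x})$ for $\mbf{x}\notin F$, and $h$ extends continuously by $0$ across $F$.

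For the pointwise estimates \eqref{regular-dist2}, I would fix $\mbf{x}\notin F$ and a Whitney cube $Q_{j_0}\ni\mbf{x}$, and use the cancellation $\sum_j\partial_t^k\nabla_x^\beta\varphi_j\equiv 0$ (valid when $2k+|\beta|\ge1$, since $\sum_j\varphi_j\equiv1$ near $\mbf{x}$) to write $\partial_t^k\nabla_x^\beta h(\mbf{x})=\sum_j\big(\ell(Q_j)-\ell(Q_{j_0})\big)\,\partial_t^k\nabla_x^\beta\varphi_j(\mbf{x})$. Only boundedly many $j$ contribute; for each, $Q_j^{*}\ni\mbf{x}$ forces $\ell(Q_j)\approx\ell(Q_{j_0})\approx\delta(\mbf{x})$, hence $|\ell(Q_j)-\ell(Q_{j_0})|\lesssim\delta(\mbf{x})$, while $|\partial_t^k\nabla_x^\beta\varphi_j(\mbf{x})|\lesssim\delta(\mbf{x})^{-2k-|\beta|}$. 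Summing yields $|\partial_t^k\nabla_x^\beta h(\mbf{x})|\lesssim\delta(\mbf{x})^{1-2k-|\beta|}$, which contains both halves of \eqref{regular-dist2} (take $|\beta|=0$ for the time part, $k=0$ for the spatial part).

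The bound $\|h\|_{\Lip(1,1/2)}\lesssim 1$ then follows from the cases $(k,|\beta|)=(0,1)$ and $(1,0)$ above, together with $h|_F=0$ and $h\approx\delta$. The spatial direction uses $\|\nabla_x h\|_\infty\lesssim 1$ directly (if a spatial segment meets $F$, one uses instead that $h\approx\delta$ is at most the distance to that meeting point). For the time direction one compares $(x,t)$ and $(x,s)$ by a two-case argument: if $|t-s|^{1/2}\le c\,\delta(x,t)$ then $\delta\approx\delta(x,t)$ along the time segment and $|\partial_t h|\lesssim\delta(x,t)^{-1}$ gives $|h(x,t)-h(x,s)|\lesssim\delta(x,t)^{-1}|t-s|\le|t-s|^{1/2}$; otherwise both $h(x,t)$ and $h(x,s)$ are $\lesssim|t-s|^{1/2}$ (from $h\approx\delta$ and $|\delta(x,t)-\delta(x,s)|\le|t-s|^{1/2}$), and one bounds the difference by the sum.

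The remaining and genuinely substantive point is $\|\mathcal{D}_t h\|_{\BMO(\ren)}\lesssim 1$, which is where the parabolic structure really enters and where I expect the main difficulty to lie. The plan is to deduce it from the pointwise bounds \eqref{regular-dist2} (in particular $|\partial_t h(\mbf{x})|\lesssim\delta(\mbf{x})^{-1}$ and $|\nabla_x^2 h(\mbf{x})|\lesssim\delta(\mbf{x})^{-1}$) together with $h\equiv 0$ on $F$ and $h\approx\delta$, via a Carleson-measure estimate for a suitable parabolic square function of $h$ --- equivalently, by checking directly that the oscillation of $\mathcal{D}_t h$ (or of the equivalent half-derivative $D_{1/2}^t$ appearing in \eqref{eqnormequiv}) over an arbitrary parabolic cube $Q$ is $O(1)$, splitting the kernel representation into a local piece ($|s-t|$ small relative to $\ell(Q)^2$) and a far piece and estimating each using \eqref{regular-dist2} and the $\Lip(1,1/2)$ regularity of $\delta$. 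This is precisely the content of \cite[Lemma~3.24]{BHHLN-CME}, and closely related regularized-distance estimates underlie the constructions of \cite{HL96,LewMur}; in the write-up it is cleanest to invoke that lemma rather than reproduce its proof. Everything else is routine Whitney bookkeeping, so this BMO bound is the sole genuine obstacle.
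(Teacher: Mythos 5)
Your proposal ultimately does exactly what the paper does: it invokes \cite[Lemma~3.24]{BHHLN-CME} for the genuinely nontrivial BMO bound $\|\mathcal{D}_t h\|_{\BMO}\lesssim 1$, which is where the paper also rests its case (the paper gives no independent argument at all, only the citation). Your supplementary sketch of the parabolic Whitney regularization --- $h=\sum_j\ell(Q_j)\varphi_j$, cancellation via $\sum_j\partial_t^k\nabla_x^\beta\varphi_j\equiv 0$, and the two-case argument for the $\Lip(1,1/2)$ time regularity --- is correct and is in substance the construction underlying the cited lemma, so it adds useful context but does not diverge from the paper's route.
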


\begin{remark}\label{remdist}
Note that \eqref{regular-dist} states that $h$ is a non-negative regular $\Lip(1,1/2)$ function with constants of the order 1. In particular, 
the surface $$\{(h(x,t),x,t):\ (x,t)\in\mathbb R^n\}$$ is parabolic uniformly rectifiable, see Remark \ref{PUR}.
\end{remark}

In the sequel, we will often employ the notation
\begin{equation}\label{asemicoldef.eq}
\psi(r;\cdot\,):=\psi_r(\cdot)\,, \qquad \psi(0;\cdot\,):=\psi(\cdot)\,,
\end{equation}
since we will often plug the function $h(\cdot)$ in place of $r$ and hence this notation is more convenient.

\begin{lemma}\label{lemma:a-h-small} 
Suppose $\Lambda \ge  2\Lambda_0$, where $\Lambda_0 \gg 1$ is as in Lemma \ref{lemma:level-sets}. If the parameter $\epsilon$
in Lemma \ref{CreateFstarlem.lem} is chosen small enough, depending only on the structural constants, we then have
\begin{equation}\label{h:small}
\sup_{\mbf{y}\in Q_{50R}(\mbf{x}^0)} h(\mbf{y}) \le  R/(80\Lambda).
\end{equation} Moreover, if $\Lambda$ is sufficiently large, then  for every $\mbf{y}\in Q_{50R}(\mbf{x}^0)$ we have
\begin{equation}\label{ahest}
0\le \psi(h(\mbf{y});\mbf{y})-\psi(\mbf{y})\approx h(\mbf{y}) \approx \dist(\mbf{y},F),
\end{equation}
where the implicit constants depend on $\Lambda$, $\epsilon$, and the structural constants,
and
\begin{equation}\label{ahforom1lem}
\psi(h(\mbf{y});\mbf{y}) < x_0^0+300M_0\sqrt{n} R .
\end{equation}
\end{lemma}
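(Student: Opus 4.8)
First I would prove \eqref{h:small} directly from the measure bound on $Q_{50R}(\mbf{x}^0)\setminus F$. By Lemma~\ref{regdistlem.lem} one has $h\approx\dist(\cdot,F)$, so it suffices to show $\dist(\mbf{y},F)\lesssim\epsilon^{1/d}R$ for every $\mbf{y}\in Q_{50R}(\mbf{x}^0)$. Setting $\rho:=\dist(\mbf{y},F)$, the parabolic cube $Q_\rho(\mbf{y})$ is disjoint from $F$; if $\rho\le R$ the (flat) geometry of $Q_{50R}(\mbf{x}^0)$ gives $|Q_\rho(\mbf{y})\cap Q_{50R}(\mbf{x}^0)|\gtrsim\rho^{d}$, while by \eqref{projsizeeqbasic.eq} this intersection has measure $\le|Q_{50R}(\mbf{x}^0)\setminus F|\lesssim\epsilon R^{d}$, forcing $\rho\lesssim\epsilon^{1/d}R$; the case $\rho>R$ is excluded for $\epsilon$ small by the same argument with $Q_R(\mbf{y})$ in place of $Q_\rho(\mbf{y})$. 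Hence $h(\mbf{y})\le C\epsilon^{1/d}R$ with $C$ structural, and choosing $\epsilon$ small in terms of the structural constants and the fixed large value of $\Lambda$ yields \eqref{h:small}.

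Next, the easy parts of \eqref{ahest} and all of \eqref{ahforom1lem}: since $h(\mbf{y})\le R/(80\Lambda)\le R/\Lambda_0$, Lemma~\ref{lemma:level-sets} applies at $\mbf{y}$ and gives $\psi(\mbf{y})\le\psi(h(\mbf{y});\mbf{y})<x_0^0+300M_0\sqrt{n}R$ (this is \eqref{abdlevelsets.eq}), which is exactly \eqref{ahforom1lem} together with the first inequality of \eqref{ahest}; also $\psi(h(\mbf{y});\mbf{y})=\psi(\mbf{y})$ when $h(\mbf{y})=0$. Since $h\approx\dist(\cdot,F)$, what remains is the comparison $\psi(h(\mbf{y});\mbf{y})-\psi(\mbf{y})\approx h(\mbf{y})$, and I may assume $h(\mbf{y})>0$.

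The heart of the matter is the two-sided size estimate $u\big(\psi(\mbf{y})+C_3h(\mbf{y}),\mbf{y}\big)\approx h(\mbf{y})$ for suitable structural constants $C_2,C_3$. To get it, I would pick $\mbf{y}'\in F$ with $\|\mbf{y}-\mbf{y}'\|\le 2\dist(\mbf{y},F)$; since $F=\Pi(F_\star)$ and $F_\star\subset\Sigma$, the point $\mbf{Z}':=(\psi(\mbf{y}'),\mbf{y}')$ belongs to $F_\star$. Put $\rho:=C_2h(\mbf{y})$. By Lemma~\ref{calisdoub.lem} (as in Remark~\ref{strongharnack4gf.rmk}) one has $\mbf{X}_\star\in T^+_{\kappa,\rho}(\mbf{Z}')$, so Lemma~\ref{CFMS} gives $u(\cA^\pm_\rho(\mbf{Z}'))\approx\rho^{-n}\omega(\Delta_\rho(\mbf{Z}'))$; and since $\mbf{Z}'\in\widetilde{F}_\star$ with $\rho<M_1R/8$, property \eqref{Fstarprops.eq} forces $\omega(\Delta_\rho(\mbf{Z}'))\approx\sigma(\Delta_\rho(\mbf{Z}'))\approx\rho^{d}$, whence $u(\cA^\pm_\rho(\mbf{Z}'))\approx\rho\approx h(\mbf{y})$. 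The upper bound for $u\big(\psi(\mbf{y})+C_3h(\mbf{y}),\mbf{y}\big)$ then comes from the (adjoint) Carleson estimate of Lemma~\ref{carlest.lem} applied to $u$ on $I_\rho(\mbf{Z}')$, and the matching lower bound comes from connecting $(\psi(\mbf{y})+C_3h(\mbf{y}),\mbf{y})$ to $\cA^-_\rho(\mbf{Z}')$ through a chain of parabolic balls of bounded length in $\Omega$ at height $\approx h(\mbf{y})$ and invoking Harnack's inequality for adjoint caloric functions. Granting this, \eqref{eqn:est-der-G} and Lemma~\ref{distgraphlem.lem} give $\partial_{y_0}\log u(y_0,\mbf{y})\approx(y_0-\psi(\mbf{y}))^{-1}$ on the vertical segment over $\mbf{y}$ (which, for $\Lambda$ large, stays inside $I_{400R}(\mbf{X}^0)$ and below $x_0^0+300M_0\sqrt{n}R$); integrating between $\psi(h(\mbf{y});\mbf{y})$, where $u=h(\mbf{y})$, and $\psi(\mbf{y})+C_3h(\mbf{y})$, where $u\approx h(\mbf{y})$, produces a power-law comparison of the two heights above $\psi(\mbf{y})$, and since the two corresponding values of $u$ are comparable, so are the heights, i.e.\ $\psi(h(\mbf{y});\mbf{y})-\psi(\mbf{y})\approx h(\mbf{y})$, which completes \eqref{ahest}.

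The main obstacle is the lower bound in the core estimate: the size control $\omega\approx\sigma$ is available only at the base point $\mbf{Z}'\in F_\star$, and transporting it to a point lying directly above a generic $\mbf{y}$ requires a Harnack chain for the \emph{adjoint} heat equation; one must therefore use the backward reference point $\cA^-_\rho(\mbf{Z}')$ and arrange the chain so that time decreases along it, matching the direction in which the adjoint Harnack inequality propagates. The remaining ingredients (the measure/geometry argument for \eqref{h:small}, the invocation of Lemma~\ref{lemma:level-sets}, and the logarithmic-derivative integration) are comparatively routine.
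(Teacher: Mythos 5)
Your proofs of \eqref{h:small} and \eqref{ahforom1lem} match the paper's. For \eqref{ahest} you take a genuinely different (and somewhat longer) route. The paper's argument is: first establish the lower bound $\psi(h(\mbf{y});\mbf{y})-\psi(\mbf{y})\gtrsim h(\mbf{y})$ by showing, via the Carleson estimate together with Lemma~\ref{CFMS} and the $A_\infty$-induced property \eqref{Fstarprops.eq}, that $u<\tfrac12 h(\mbf{y})$ in a thin box $I_{h(\mbf{y})/N}(\mbf{Y})$, which forces $\widetilde{\mbf{Y}}=(\psi(h(\mbf{y});\mbf{y}),\mbf{y})$ to lie outside that box; this in turn yields $\|\widetilde{\mbf{Y}}-\widehat{\mbf{Y}}\|\lesssim\delta(\widetilde{\mbf{Y}})$, and then a single application of Lemma~\ref{CFMS} at scale $\delta(\widetilde{\mbf{Y}})$ centered at $\widehat{\mbf{Y}}\in F_\star$ gives both bounds of \eqref{ahest} simultaneously. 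You instead pin down a two-sided estimate $u\big(\psi(\mbf{y})+C_3h(\mbf{y}),\mbf{y}\big)\approx h(\mbf{y})$ at a preselected height (upper bound from the Carleson estimate, lower bound from a Harnack chain to the corkscrew over $\mbf{Z}'\in F_\star$) and then integrate $\partial_{y_0}\log u\approx(y_0-\psi(\mbf{y}))^{-1}$, available from \eqref{eqn:est-der-G} and Lemma~\ref{distgraphlem.lem} along the vertical segment over $\mbf{y}$, to convert comparability of $u$-values into comparability of heights. Your logarithmic-integration device is a nice alternative and is sound; its cost is that it needs a Harnack chain for the lower bound, which the paper avoids by delivering both bounds from CFMS at one stroke after first getting the separation $\delta(\widetilde{\mbf{Y}})\gtrsim h(\mbf{y})$.

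One substantive slip: your final paragraph has the orientation of the Harnack chain backwards. Since $u$ is adjoint caloric, Harnack propagates from later time to earlier time, i.e., $u(\mbf{Y}_1)\leq C\,u(\mbf{Y}_2)$ when $t(\mbf{Y}_1)>t(\mbf{Y}_2)$. To bound $u$ from \emph{below} at $(\psi(\mbf{y})+C_3h(\mbf{y}),\mbf{y})$ by the known value at a corkscrew over $\mbf{Z}'$, you want that corkscrew at a \emph{later} time than $t(\mbf{y})$, hence $\cA^+_\rho(\mbf{Z}')$ (with $\rho=C_2 h(\mbf{y})$, $C_2$ large), not $\cA^-_\rho(\mbf{Z}')$ as you wrote; and the time coordinate then \emph{decreases} from the corkscrew toward the target, not the other way around. (It is the Carleson estimate for the adjoint equation, giving the upper bound, that uses $\cA^-$.) The error is harmless here because the strong Harnack inequality (Lemma~\ref{strongharnack4gf.lem}, Remark~\ref{strongharnack4gf.rmk}) gives $u(\cA^+_\rho(\mbf{Z}'))\approx u(\cA^-_\rho(\mbf{Z}'))$, but the chain itself must be anchored at $\cA^+$.
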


\begin{proof}  Recall that $d=n+1$ is the homogeneous dimension of 
parabolic $\rn$.
Since $h \equiv 0$ on $F$, and  
$F\subset Q_{50R}(\mbf{x}^0)$, we deduce from the definition of $h$ and \eqref{projsizeeqbasic.eq} that
\[
h(\mbf{x}) \approx \dist(\mbf{x},F) \lesssim  \epsilon^{1/d} R\,, \quad \forall \,  \mbf{x} \in Q_{50R}(\mbf{x}^0)\,,
\]
which yields \eqref{h:small} by choice of $\epsilon$ small enough, 
depending on $\Lambda$.
We will henceforth assume that  $\epsilon$ is at least small enough to ensure that \eqref{h:small} holds with $\Lambda = 2\Lambda_0$, but with the freedom to take $\Lambda$ larger (thus, $\epsilon$ smaller, 
depending on $\Lambda$, but with each ultimately fixed).

% If $y \in Q_{50R}(\mbf{x}^0)$ is such that $ h(\mbf{y}) \ge R/(80\Lambda)$, 
% then by \eqref{regular-dist}  we have
% \[
% \dist(\mbf{y},F) \ge c_n R/(80\Lambda).
% \]
% Thus, letting $d=n+1$ denote the homogeneous dimension of 
% parabolic $\rn$, we have
% \[
% \sigma\big(\Delta_{50R}(\mbf{X}^0) \setminus F_\star\big) 
% \gtrsim \left(\frac{R}{80\Lambda}\right)^{d} \approx \Lambda^{-d}
% \sigma\big(\Delta_{50R}(\mbf{X}^0)\big),
% \]
% which contradicts \eqref{Fstarampleeqn.eq} if $\epsilon > 0$ is sufficiently small. 

We next prove \eqref{ahest} and \eqref{ahforom1lem}. 
Let $\mbf{y}\in Q_{50R}(\mbf{x}^0)$. Observe that the case $\mbf{y}\in F$ is trivial since $ h(\mbf{y}) \approx \dist(\mbf{y},F)=0$ and $ \psi(0;\mbf{y})=\psi(\mbf{y})$. We may then assume that $\mbf{y}\not\in F$, hence $h(\mbf{y})>0$. Pick $\widehat{\mbf{y}}\in F$ so that $\|\widehat{\mbf{y}}-\mbf{y}\|=\dist(\mbf{y}, F)\approx_n h(\mbf{y})$.
Set $\mbf{Y}:=(\psi(\mbf{y}), \mbf{y})\in\Sigma$, 
$\widehat{\mbf{Y}}:=(\psi(\widehat{\mbf{y}}), \widehat{\mbf{y}})\in F_\star $, 
and $\widetilde{\mbf{Y}}:=(\psi(h(\mbf{y});\mbf{y}), \mbf{y})\in\Omega$. 
By Lemma \ref{CreateFstarlem.lem},
% Since $\widehat{\mbf{Y}}\in F_\star$,
% Note that 
% $\Delta_{h(\mbf{y})}(\mbf{Y}) \subset \Delta_{Ch(\mbf{y})}(\widehat{\mbf{Y}}) 
% \subset \Delta_{C^2h(\mbf{y})}(\mbf{Y})$ for some harmless constant $C>1$ 
% depending on dimension. Thus, if $\Lambda$ is sufficiently large we can 
% use \eqref{h:small} to obtain $h(\mbf{y}) \le C^{-2}R$, and then 
% use \eqref{Fstarprops.eq}  and the local doubling of $\hm$ 
% (see Lemma \ref{calisdoub.lem}) to conclude that
	\begin{equation}\label{342rt3qf}
% \frac{\omega(\Delta_{h(\mbf{y})}(\mbf{Y}))}{\sigma(\Delta_{h(\mbf{y})}
% (\mbf{Y}))} \approx
% \frac{\omega(\Delta_{Ch(\mbf{y})}(\widehat{\mbf{Y}}))}
% {\sigma(\Delta_{Ch(\mbf{y})}(\widehat{\mbf{Y}}))}
\frac{\omega(\Delta_\rho(\widehat{\mbf{Y}}))}{\sigma(\Delta_{\rho}
(\widehat{\mbf{Y}}))}
	\approx
	1\,, \qquad 0<\rho<M_1R/8\,,
	\end{equation}
since $\widehat{\mbf{Y}} \in F_\star$.
	Here the implicit constant depends on the constant $M$ of Lemma \ref{CreateFstarlem.lem}, and hence on $\epsilon$. 

% , Since $\widehat{\mbf{Y}}\in F_\star$,
In particular, note that 
$\Delta_{h(\mbf{y})}(\mbf{Y}) \subset \Delta_{Ch(\mbf{y})}(\widehat{\mbf{Y}}) 
\subset \Delta_{C^2h(\mbf{y})}(\mbf{Y})$ for some harmless constant $C>1$ 
depending on dimension. Thus, if $\Lambda$ is sufficiently large we can 
use \eqref{h:small} to obtain $h(\mbf{y}) \le C^{-2}R$, and then 
use \eqref{Fstarprops.eq}  and the local doubling of $\hm$ 
(see Lemma \ref{calisdoub.lem}) to conclude that
	\begin{equation}\label{average1}
 \frac{\omega(\Delta_{h(\mbf{y})}(\mbf{Y}))}{\sigma(\Delta_{h(\mbf{y})}
 (\mbf{Y}))} \approx
 \frac{\omega(\Delta_{Ch(\mbf{y})}(\widehat{\mbf{Y}}))}
 {\sigma(\Delta_{Ch(\mbf{y})}(\widehat{\mbf{Y}}))}
	\approx_M
	1\,.
	\end{equation}
Let $N$ be a large constant to be chosen momentarily.
	If $\mbf{Z}\in I_{h(\mbf{y})/N}(\mbf{Y})\cap\Omega$, we can then invoke Lemma~\ref{carlest.lem},  Lemma \ref{CFMS}, and \eqref{average1}, 
	to deduce that
	\[
	u(\mbf{Z})
	\lesssim
	\Big(\frac{\delta(\mbf{Z})}{h(\mbf{y})}\Big)^\alpha
	u(\cA^+_{h(\mbf{y})}(\mbf{Y}))
	\lesssim
	N^{-\alpha}\,h(\mbf{y})
	\frac{\omega(\Delta_{h(\mbf{y})}(\mbf{Y}))}{\sigma(\Delta_{h(\mbf{y})}(\mbf{Y}))}
	\approx
	N^{-\alpha}\,h(\mbf{y})
	<\frac12\,
	h(\mbf{y}),
	\]  
for  $N=N(n,M,M_0)$ large enough.
Consequently, using that 
$u(\widetilde{\mbf{Y}})= u(\psi(h(\mbf{y});\mbf{y}) \mbf{y})=h(\mbf{y})$ by construction (see Lemma~\ref{lemma:level-sets}), we conclude that 
$\widetilde{\mbf{Y}}\notin  I_{h(\mbf{y})/N}(\mbf{Y})$. Hence, 
\[
\psi(h(\mbf{y});\mbf{y})> \psi(\mbf{y})+3M_0\sqrt{n} h(\mbf{y})/N,
\]
that is, since $M$ depends on $\epsilon$,
\begin{equation}\label{hlowerbound}
\delta(\widetilde{\mbf{Y}})
\approx_{n,M_0} \psi(h(\mbf{y});\mbf{y})-\psi(\mbf{y}) \geq c_1
h(\mbf{y})>0\,,\qquad c_1 = c(n,\epsilon,M_0)\,.
\end{equation}

	To obtain the converse inequality we observe that what we have just obtained implies that
	\begin{multline}\label{distytildeyhat}
	\|\widetilde{\mbf{Y}}-\widehat{\mbf{Y}}\|
	\le
	|\psi(h(\mbf{y});\mbf{y})-\psi(\mbf{y})|+|\psi(\mbf{y})-\psi(\widehat{\mbf{y})}|+\|\mbf{y}-\widehat{\mbf{y}}\|
	\lesssim
	\delta(\widetilde{\mbf{Y}})+ (1+M_0)\|\mbf{y}-\widehat{\mbf{y}}\|
	\\
	=
	\delta(\widetilde{\mbf{Y}})+ (1+M_0)\dist(\mbf{y},F)
	\approx_n
	\delta(\widetilde{\mbf{Y}})+ (1+M_0)h(\mbf{y})
	\lesssim_{n,M_0}
	\delta(\widetilde{\mbf{Y}}).
	\end{multline}
	Recall that, as before,
$x_0^0=\psi(\mbf{x}^0)$.  Note that by Lemma \ref{lemma:level-sets} 
	and \eqref{h:small} we have
\[\psi(\mbf{y})< \psi(h(\mbf{y});\mbf{y}) < x_0^0+300M_0\sqrt{n}R ,\] 
so that \eqref{ahforom1lem} holds.  In particular, 
$\widetilde{\mbf{Y}}= 
(\psi(h(\mbf{y});\mbf{y}), \mbf{y}) \in I_{100R}(\mbf{X}^0)$, 
and therefore
\begin{equation}\label{tydist}
	0< % \rho(\mbf{y}):= 
\delta(\widetilde{\mbf{Y}})
= \delta(\psi(h(\mbf{y});\mbf{y}),\mbf{y})
	\le \|\widetilde{\mbf{Y}} - \mbf{X}^0\| \le 500M_0\sqrt{n}R
	\ll M_1R/8\,,
\end{equation}
by \eqref{crdIrest.eq}, with a possibly smaller choice of $\eta$, depending on $n$ 
and $M_0$ (see \eqref{wtMdef.eq}).  
% Combining the latter bound 
% with \eqref{distytildeyhat}, we obtain
% \begin{equation}\label{yhatdist}
%	0<\rho(\mbf{y}):= \delta(\widetilde{\mbf{Y}})
% = \delta(\psi(h(\mbf{y});\mbf{y}),\mbf{y})
%	\le \|\widetilde{\mbf{Y}} - \mbf{X}^0\| \le 500M_0\sqrt{n}R\,,
% \end{equation}
% Set $\rho(\mbf{y}):= \delta(\widehat{\mbf{Y}})
% = \delta(\psi(h(\mbf{y});\mbf{y}), \mbf{y}) $, and observe that 
% $\rho(\mbf{y}) \lesssim \delta(\widetilde{\mbf{Y}})$, by \eqref{yhatdist}.
We now claim that
% can then use 
% Lemma \ref{CFMS} and doubling to see that \eqref{342rt3qf} implies
	\[
	\frac{h(\mbf{y})}{\psi(h(\mbf{y});\mbf{y})-\psi(\mbf{y})} \approx
	\frac{h(\mbf{y})}{\delta(\psi(h(\mbf{y});\mbf{y}),\mbf{y})}
	=
\frac{u(\psi(h(\mbf{y});\mbf{y}),\mbf{y})}{\delta(\psi(h(\mbf{y});\mbf{y}),\mbf{y})}
	\approx
	\frac{\omega\big(\Delta_{\delta(\widetilde{\mbf{Y}})}(\widehat{\mbf{Y}} )\big)}{\sigma\big(\Delta_{\delta(\widetilde{\mbf{Y}})}(\widehat{\mbf{Y}})\big)}
	%\approx
	%\frac{\omega(\Delta_{\delta(a(h(\mbf{y});\mbf{y}),\mbf{y})}(\mbf{Z}))}{\sigma(\Delta_{\delta(a(h(\mbf{y});\mbf{y}),\mbf{y})}(\mbf{Z}))}
	\approx 1\,,
	\]
which yields \eqref{ahest}.  Indeed, in the string of inequalities above we first used that $\Sigma$ is a Lip(1,1/2) graph so that 
$\psi(h(\mbf{y});\mbf{y})-\psi(\mbf{y}) 
\approx \delta(\psi(h(\mbf{y});\mbf{y}),\mbf{y})$,
% = \delta(\widetilde{\mbf{Y}})$, 
then that $u(\psi_r(\mbf{y}),\mbf{y}) = r$ (with $r=h(\mbf{y})$), and then \eqref{distytildeyhat},
Lemma \ref{CFMS},  and the strong Harnack 
inequality applied to $u$. 
% and the definition of $\rho(\mbf{y})$.
% and doubling. 
The last estimate is \eqref{342rt3qf} and \eqref{tydist}.
\end{proof}

We let $\Lambda_1:=\Lambda$,  where $\Lambda$ from now on is fixed so that Lemma \ref{lemma:a-h-small} holds. This imposes a condition on $\epsilon$, and with  this condition met, along with \eqref{projsizeeq.eq}, we have 
fixed $\epsilon$. Next, we introduce the region
\begin{equation}\label{omz.eq}
\mathcal{S}:=\big\{(r,\mbf{y})\in I_{50R}(0,\mbf{x}^0): h(\mbf{y})<r<R/\Lambda_1\big\},
\end{equation}
which can be understood as a local sawtooth relative to $F$ since $h(\mbf{y})\approx\dist(\mbf{y},F)$. By Lemma~\ref{lemma:a-h-small} we have that $h(\mbf{y})\le R/(80\Lambda_1)$, hence the 
top of $\mathcal{S}$ (at height $R/\Lambda_1$) is 
above $h(\mbf{y})$. Moreover,  
there exists $c_1\in (0,1)$ so that
$\psi(h(\mbf{y});\mbf{y})-\psi(\mbf{y})\ge c_1 h(\mbf{y})$ for every $\mbf{y}\in Q_{50R}(\mbf{x}^0)$ (see \eqref{hlowerbound}). Here $c_1$ depends on $\epsilon$, which is now fixed. We set
\begin{equation}\label{firstdom}
\Omega_\star
:=\big\{(y_0,\mbf{y})\in I_{100R}(\mbf{X}^0): 
y_0>\psi_\star(\mbf{y})\big\}, \quad \psi_{\star}: = \psi+c_1h.
\end{equation}
 We define
\begin{equation}\label{wtomegadef.eq}
\Omega_{\mathcal{S}} = \{(\psi(r;\mbf{y}), \mbf{y}): (r,\mbf{y})\in \mathcal{S}\}.
\end{equation}

\begin{lemma}\label{omnmapstar.lem}
	If $(r,\mbf{y})\in \mathcal{S}$ then $(\psi(r;\mbf{y}), \mbf{y})\in \Omega_\star$, that is, ${\Omega}_{\mathcal{S}} \subseteq \Omega_\star$.
\end{lemma}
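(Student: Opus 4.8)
The plan is to unwind the definitions and show that, for $(r,\mbf{y})\in\mathcal{S}$, the first coordinate $\psi(r;\mbf{y})=\psi_r(\mbf{y})$ exceeds $\psi_\star(\mbf{y})=\psi(\mbf{y})+c_1 h(\mbf{y})$, and that the resulting point $(\psi_r(\mbf{y}),\mbf{y})$ lies in the vertical slab $I_{100R}(\mbf{X}^0)$. The height bound is immediate: since $(r,\mbf{y})\in\mathcal{S}$ we have $h(\mbf{y})<r<R/\Lambda_1\le R/\Lambda_0$, so Lemma~\ref{lemma:level-sets} applies and gives $\psi(\mbf{y})<\psi_r(\mbf{y})<x_0^0+300M_0\sqrt{n}R$; together with $\mbf{y}\in Q_{100R}(\mbf{x}^0)$ (which follows from $(r,\mbf{y})\in I_{50R}(0,\mbf{x}^0)$) and the crude estimate \eqref{crdIrest.eq}, this already places $(\psi_r(\mbf{y}),\mbf{y})$ in $I_{100R}(\mbf{X}^0)$, which is exactly the containment condition in the definition \eqref{firstdom} of $\Omega_\star$.

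The substantive point is the inequality $\psi_r(\mbf{y})>\psi(\mbf{y})+c_1 h(\mbf{y})$. First I would dispose of the trivial case $\mbf{y}\in F$: there $h(\mbf{y})=0$, and $\psi_r(\mbf{y})>\psi(\mbf{y})=\psi(\mbf{y})+c_1 h(\mbf{y})$ directly from Lemma~\ref{lemma:level-sets}. So assume $\mbf{y}\notin F$, hence $h(\mbf{y})>0$. The key is monotonicity of $r\mapsto\psi_r(\mbf{y})$: by Lemma~\ref{lemma:level-sets} the map $r\mapsto\psi_r(\mbf{y})$ is strictly increasing on $(0,R/\Lambda_0]$, and since $(r,\mbf{y})\in\mathcal{S}$ forces $r>h(\mbf{y})$, we get $\psi_r(\mbf{y})>\psi_{h(\mbf{y})}(\mbf{y})=\psi(h(\mbf{y});\mbf{y})$. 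Now invoke \eqref{hlowerbound} from the proof of Lemma~\ref{lemma:a-h-small}: $\psi(h(\mbf{y});\mbf{y})-\psi(\mbf{y})\ge c_1 h(\mbf{y})$, with precisely the constant $c_1$ appearing in \eqref{firstdom}. Chaining these gives $\psi_r(\mbf{y})>\psi(h(\mbf{y});\mbf{y})\ge\psi(\mbf{y})+c_1 h(\mbf{y})=\psi_\star(\mbf{y})$, which is what we need. Combined with the slab containment, this shows $(\psi(r;\mbf{y}),\mbf{y})\in\Omega_\star$, and since $\Omega_{\mathcal{S}}$ is by \eqref{wtomegadef.eq} exactly the set of such points as $(r,\mbf{y})$ ranges over $\mathcal{S}$, we conclude $\Omega_{\mathcal{S}}\subseteq\Omega_\star$.

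I do not expect any serious obstacle here: the lemma is essentially a bookkeeping consequence of Lemma~\ref{lemma:level-sets} (monotonicity and the top/bottom bounds on $\psi_r$) and of the lower bound \eqref{hlowerbound} already extracted during the proof of Lemma~\ref{lemma:a-h-small}. The only mild care needed is to make sure the constant $c_1$ in \eqref{firstdom} is literally the same constant produced in \eqref{hlowerbound} — which it is, by construction — and to check that the parameter regime $h(\mbf{y})<r<R/\Lambda_1$ is compatible with the hypotheses of Lemma~\ref{lemma:level-sets}, i.e. that $R/\Lambda_1\le R/\Lambda_0$; this holds because $\Lambda_1=\Lambda\ge 2\Lambda_0>\Lambda_0$. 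There is nothing deep to prove; it is a structural lemma setting up the comparison between the sawtooth image $\Omega_{\mathcal{S}}$ and the Lip(1,1/2) domain $\Omega_\star$ that will be used in the sequel.
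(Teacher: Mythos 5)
Your argument is correct and follows essentially the same route as the paper's proof: monotonicity of $r\mapsto\psi_r(\mbf{y})$ from Lemma~\ref{lemma:level-sets} plus the lower bound \eqref{hlowerbound} gives $\psi_r(\mbf{y})>\psi_\star(\mbf{y})$, and the upper bound \eqref{abdlevelsets.eq} together with the Lip(1,1/2) control on $\psi$ gives the slab containment. One small imprecision: for the lower bound $\psi_r(\mbf{y})>x_0^0-300M_0\sqrt{n}R$ you should invoke $\psi_r(\mbf{y})>\psi(\mbf{y})$ and $|\psi(\mbf{y})-\psi(\mbf{x}^0)|\le M_0\|\mbf{y}-\mbf{x}^0\|<100M_0\sqrt{n}R$ directly, rather than \eqref{crdIrest.eq}, which points in the opposite direction (a point in a box is parabolically close, not vice versa).
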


 \begin{proof}
Let $(r,\mbf{y})\in \mathcal{S}$, that is, $\mbf{y} \in Q_{50R}(\mbf{x}^0)$ and $h(\mbf{y})<r<R/\Lambda_1$. Lemma~\ref{lemma:level-sets} yields that $r\mapsto \psi_r(\mbf{y})$ is strictly increasing for  
$\mbf{y}\in  Q_{100R}(\mbf{x}^0)$ and $0<r<R/\Lambda_0$. 
Hence, %Lemma~\ref{lemma:a-h-small} 
\eqref {hlowerbound} gives
\[
\psi(r;\mbf{y})> \psi(h(\mbf{y});\mbf{y})\ge \psi(\mbf{y})+c_1h(\mbf{y})=\psi_\star(\mbf{y}).
\]
It remains to show that $(\psi(r;\mbf{y}), \mbf{y})\in  I_{100R}(\mbf{X}^0)$.
To this end, note that since $\psi(r;\mbf{y})$ is increasing in $r$, and
since $x_0^0=\psi(\mbf{x}^0)$ by definition, it holds
\[
	\psi(r;\mbf{y})-\psi(\mbf{x}^0)
\,\le\,
\psi(R/\Lambda_1;\mbf{y})-x_0^0
 \,\le\,
\psi(R/\Lambda_0;\mbf{y})-x_0^0
\,\le\, 300M_0 \sqrt{n}R \,,
\]
%\begin{align*} 	\psi(r;\mbf{y})-\psi(\mbf{x}^0)
% &\le \psi(R/\Lambda_1;\mbf{y})-x_0^0
% \\& \le \psi(R/\Lambda_0;\mbf{y})-x_0^0
% \le 300M_0 \sqrt{n}R ,\end{align*}
where we used Lemma \ref{lemma:level-sets}, specifically \eqref{abdlevelsets.eq}.
On the other hand, since also $\psi(r;\mbf{y}) \ge \psi(\mbf{y})$, and 
\[
|\psi(\mbf{y}) - \psi(\mbf{x}^0)| \leq M_0 \|\mbf{y} - \mbf{x}^0\| <
100M_0\sqrt{n} R\,,
\] 
we find that $\psi(r;\mbf{y})-\psi(\mbf{x}^0) \geq -100M_0\sqrt{n} R$,
and hence that  $(\psi(r;\mbf{y}), \mbf{y})\in \Omega_\star$.
 \end{proof}

 \section{Square function estimates}\label{SecSq}
 Recall the region $\mathcal{S}$ introduced  in \eqref{omz.eq}. The purpose of this section is to prove the following square function estimate.
Recall that $d:=n+1$ is the homogeneous dimension of parabolic $\rn$.

\begin{proposition}\label{prop:CM-ar} We have
\begin{equation}\label{psiSbound}
\iiint_{\mathcal{S}}\big(
|r\,\partial_s \psi_r(y,s)|^2+|r\,\nabla_{y,r}^2 \psi_r(y,s)|^2+ |r^2 \,\nabla_{y,r}\partial_s \psi_r(y,s)|^2\big)\,\frac{\d r}{r}{\d y\,\d s}
\lesssim R^d.
\end{equation}
\end{proposition}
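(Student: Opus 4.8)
\emph{Plan of proof.} The idea is to transfer \eqref{psiSbound}, via the level–set parametrisation of Lemma~\ref{lemma:level-sets}, to a second–order Littlewood--Paley estimate for the normalised Green function $u$ (see \eqref{greennorm}) on the sawtooth $\Omega_{\mathcal{S}}$ (see \eqref{wtomegadef.eq}), and then to prove the latter by an integration by parts that is fed by the $A_\infty$ hypothesis. The starting point is the two pointwise facts, valid at every $\mbf{Y}=(\psi_r(\mbf{y}),\mbf{y})\in\Omega_{\mathcal{S}}$ and, more generally, on $\Omega_\star$ truncated inside $I_{C_0R}(\mbf{X}^0)$ for a fixed large multiple $C_0$ of $M_0\sqrt{n}$ (chosen so that $\Omega_{\mathcal{S}}$ is well inside $I_{C_0R}(\mbf{X}^0)$, $\mbf{X}_\star\notin I_{C_0R}(\mbf{X}^0)$, and the conclusion of Remark~\ref{strongharnack4gf.rmk} holds there), that
\[
u(\mbf{Y})\approx\delta(\mbf{Y}),\qquad \partial_{y_0}u(\mbf{Y})\approx 1\approx\frac{u(\mbf{Y})}{\delta(\mbf{Y})}.
\]
These follow from \eqref{eqn:est-der-G}, Lemma~\ref{CFMS}, the strong Harnack inequality (Remark~\ref{strongharnack4gf.rmk}), the local doubling of $\omega$ (Lemma~\ref{calisdoub.lem}) and the defining property \eqref{Fstarprops.eq} of $F_\star$, exactly by the argument already used for \eqref{ahest} in Lemma~\ref{lemma:a-h-small}. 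Combined with the interior parabolic bounds $|\nabla_Y^k u|\lesssim_k\delta^{1-k}$ and $|\nabla_Y u|\lesssim 1$, and with the implicit differentiation of $u(\psi_r(\mbf{y}),\mbf{y})=r$, they yield, at such points,
\[
|r\,\partial_s\psi_r|+|r\,\nabla_{y,r}^2\psi_r|\lesssim\delta\,|\nabla_Y^2u|,\qquad
|r^2\,\nabla_{y,r}\partial_s\psi_r|\lesssim\delta^2\,|\nabla_Y^3u|+\delta\,|\nabla_Y^2u|,
\]
the time derivatives of $u$ being absorbed into the spatial Hessian through $\Delta_Y u=-\partial_s u$. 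Since the change of variables $(r,\mbf{y})\mapsto\mbf{Y}=(\psi_r(\mbf{y}),\mbf{y})$ satisfies $\d r\,\d y\,\d s=\partial_{y_0}u\,\d\mbf{Y}$, hence $\tfrac{\d r}{r}\,\d y\,\d s\approx\delta(\mbf{Y})^{-1}\d\mbf{Y}$ on $\Omega_{\mathcal{S}}$, the estimate \eqref{psiSbound} reduces to
\[
\iiint_{\Omega_{\mathcal{S}}}\Big(\delta\,|\nabla_Y^2u|^2+\delta^3\,|\nabla_Y^3u|^2\Big)\,\d\mbf{Y}\lesssim R^d.
\]

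Next, the third–order term is absorbed by the second–order one: $u$ and all of its derivatives solve $-\partial_s v-\Delta_Y v=0$, so the Caccioppoli inequality on parabolic balls of radius $\approx\delta(\mbf{Y})$, summed over a Whitney decomposition of $\Omega_{\mathcal{S}}$ (the enlarged balls being contained in a slight enlargement $\widehat{\Omega}_{\mathcal{S}}$ of $\Omega_{\mathcal{S}}$ still well inside $I_{C_0R}(\mbf{X}^0)$), gives $\iiint_{\Omega_{\mathcal{S}}}\delta^3|\nabla_Y^3u|^2\lesssim\iiint_{\widehat{\Omega}_{\mathcal{S}}}\delta|\nabla_Y^2u|^2$. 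Everything therefore comes down to the master estimate $\iiint_{\widehat{\Omega}_{\mathcal{S}}}\delta\,|\nabla_Y^2u|^2\,\d\mbf{Y}\lesssim R^d$.

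For the master estimate we follow the refinement of the standard integration by parts due to \cite{LeNy}. The distance function is a bad weight here, because $|\nabla_Y u|$ does not vanish at $\Sigma$ over $F_\star$ and the Laplacian of $\delta$ would generate a non–integrable $\delta^{-1}$ term; instead one uses a caloric comparison function. Fix a parabolic box $\widetilde{I}\supset I_{C_0R}(\mbf{X}^0)$ with $\mbf{X}_\star\notin\widetilde{I}$ and whose past–time face lies well below time $t^0$, set $\Omega'':=\Omega_\star\cap\widetilde{I}$, and let $\phi_0\ge 0$ solve $\partial_s\phi_0-\Delta_Y\phi_0=1$ in $\Omega\cap\widetilde{I}$ with $\phi_0=0$ on the lateral and past–time portions of $\partial(\Omega\cap\widetilde{I})$; put $\phi:=R^{-1}\phi_0$. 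Barrier estimates in $\Lip(1,1/2)$ domains give $\phi\ge 0$, $|\nabla_Y\phi|\lesssim 1$, $\Delta_Y\phi-\partial_s\phi=-R^{-1}$, and $\phi\approx\dist(\cdot,\partial(\Omega\cap\widetilde{I}))$, which on $\widehat{\Omega}_{\mathcal{S}}$ equals $\delta$; in particular $\phi\gtrsim\delta$ on $\widehat{\Omega}_{\mathcal{S}}$. Integrating $\iiint_{\Omega''}|\nabla_Y^2u|^2\phi$ by parts twice on $\Omega''\cap\{u>\varepsilon'\}$ (whose inner boundary $\{u=\varepsilon'\}$ is $C^1$ by Lemma~\ref{lemma:level-sets}) and using $\Delta_Y u=-\partial_s u$, then letting $\varepsilon'\to 0^+$, one obtains
\[
\iiint_{\Omega''}|\nabla_Y^2u|^2\,\phi\,\d\mbf{Y}=-\frac{1}{2R}\iiint_{\Omega''}|\nabla_Y u|^2\,\d\mbf{Y}+(\text{boundary contributions}),
\]
and since $\phi\gtrsim\delta$ on $\widehat{\Omega}_{\mathcal{S}}\subset\Omega''$ the left side dominates $\iiint_{\widehat{\Omega}_{\mathcal{S}}}\delta|\nabla_Y^2u|^2$. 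The bulk term is $\lesssim R^{-1}|\Omega''|\lesssim R^d$ because $|\nabla_Y u|\lesssim 1$ on $\Omega''$. Of the boundary contributions: on the faces of $\widetilde{I}$ (where $\phi=0$) the bounds are immediate; on the graph of $\psi_\star$ over $F^c$ the factor $\phi\approx\delta$ compensates the bound $|\nabla_Y^2u|\lesssim\delta^{-1}$ and, using \eqref{projsizeeqbasic.eq}, one gets $\lesssim R^d$; and the terms coming from the inner boundary $\{u=\varepsilon'\}$ converge, as $\varepsilon'\to 0$, to integrals over $\partial\Omega''\cap\Sigma$, which is contained in $F_\star$ — this is the whole reason for working inside $\Omega_\star$ rather than $\Omega$ — and there, because $|\nabla_Y u|\approx u/\delta\approx 1$ by \eqref{Fstarprops.eq}, these are $\lesssim\iint_{F_\star}|\nabla_Y u|^2\,\d\sigma\lesssim\sigma(\Delta_{C_0R}(\mbf{X}^0))\approx R^d$. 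This last step is where the $A_\infty$ hypothesis enters; crucially, since $\partial\Omega''$ meets $\Sigma$ only on $F_\star$, the boundary term never sees $k=\d\omega/\d\sigma$ away from where it is bounded, which sidesteps the fact that $k$ is a priori only in $L^q$ for some possibly small $q>1$. Collecting all terms proves the master estimate, hence \eqref{psiSbound}.

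The main obstacle is precisely this master estimate. The naive integration by parts diverges, and the content of the estimate — a quantitative expression of the flatness of $\Sigma$ over $F_\star$ forced by the $A_\infty$ assumption — lies entirely in the cancellation that survives once a solution of the heat equation is used as the weight in place of the distance, together with the observation that the geometry of the sawtooth $\Omega_\star$ confines the boundary integral to the good set $F_\star$. The remaining ingredients (the identifications $u\approx\delta$ and $\partial_{y_0}u\approx 1$ on the sawtooth, the implicit–differentiation bookkeeping, and the justification of the limiting integration by parts) are variants of arguments already carried out, respectively, for Lemma~\ref{lemma:a-h-small} and in the standard theory of parabolic square functions.
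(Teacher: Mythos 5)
Your reduction of \eqref{psiSbound} to a weighted second-order square function bound for $u$ on a sawtooth is essentially the same as the paper's (the paper organizes the bookkeeping slightly differently, retaining $\partial_s u$ as a separate quantity and producing the bound $\iiint_{\Omega_{\star\star}}\big(u|\partial_s u|^2+u|\nabla_Y^2 u|^2\big)\,\d\mbf{Y}\lesssim R^d$, but that is cosmetic). Where you genuinely diverge is in the proof of the master estimate, and there is a gap.

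You propose to integrate $\iiint|\nabla_Y^2 u|^2\,\phi$ by parts against a comparison function $\phi=R^{-1}\phi_0$, where $\phi_0\ge 0$ solves $\partial_s\phi_0-\Delta\phi_0=1$ with zero lateral and initial data, and you assert as a ``barrier estimate in $\Lip(1,1/2)$ domains'' that $\phi\approx\dist(\cdot,\partial(\Omega\cap\widetilde{I}))$, in particular $\phi\gtrsim\delta$ on $\widehat{\Omega}_{\mathcal{S}}$. This is not available under the hypotheses of Theorem~\ref{mainthrm.thrm}. The function $\phi_0$ is a non-negative supersolution of the forward heat equation vanishing on the lateral boundary, and (by Lemma~\ref{BdryHarn.lem} and Lemma~\ref{CFMS}) the near-boundary decay rate of such functions is governed by the \emph{adjoint} caloric measure $\widetilde{\omega}$: for $\mbf{Z}$ in the past, $\rho^n G(\cA_\rho^-(\mbf{X}),\mbf{Z})\approx\widetilde{\omega}^{\mbf{Z}}(\Delta_\rho(\mbf{X}))$, so $\phi_0$ vanishes linearly at a boundary point precisely when the adjoint density $\d\widetilde{\omega}/\d\sigma$ is bounded away from zero there. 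The hypothesis is only that the caloric measure $\omega$ is $A_\infty$; nothing is assumed about $\widetilde{\omega}$, and in a general $\Lip(1,1/2)$ domain (cf.\ the Kaufman--Wu examples in \cite{KWu}) $\widetilde{\omega}$ can be singular, in which case $\phi_0/\delta\to 0$ on a large boundary set and your lower bound fails. Note that the paper's unconditional non-degeneracy result (Lemma~\ref{lemma:CS-3}) controls the \emph{ratio} $\partial_{y_0}u/(u/\delta)$, not the absolute decay rate $u/\delta$, and cannot rescue $\phi\gtrsim\delta$.

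This is exactly the obstruction the paper circumvents by adopting the $A+2B$ argument of Lemma~\ref{sqrf}: there the weight is $u$ itself, which solves the \emph{same} (adjoint) equation as the integrand, so that the bulk terms cancel identically without ever needing a separate barrier whose boundary decay would drag in $\widetilde{\omega}$. As the text preceding Lemma~\ref{sqrf} says, this refinement (from \cite{LeNy}) ``is crucial to our ability to impose an assumption only on the caloric measure, or only on the adjoint caloric measure, but not on both simultaneously.'' Your remaining steps (the implicit differentiation, the absorption of third-order derivatives via Caccioppoli on a Whitney decomposition, and the control of the boundary terms over $F_\star$ and over the graph of $\psi_\star$ above $F^c$) are sound and parallel to the paper's \eqref{redderivtxest.eq} and the treatment of $\mathcal{B}_N$, but the barrier-weight approach to the master estimate, as written, implicitly assumes the conclusion of Theorem~\ref{mainconsq.thrm}(iii) and is therefore circular. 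To repair the argument you would need to either prove $\widetilde{\omega}\in A_\infty$ independently first, or replace $\phi$ with $u$ and reproduce the cancellation of Lemma~\ref{sqrf}.
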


To prove Proposition \ref{prop:CM-ar} we will  use several auxiliary domains which we next introduce. Recall that the domain
\begin{equation*}
\Omega_\star
:=\big\{(y_0,\mbf{y})\in I_{100R}(\mbf{X}^0): 
y_0>\psi_\star(\mbf{y})\big\}, \quad \psi_{\star}: = \psi+c_1\,h.
\end{equation*}
was introduced in \eqref{firstdom}. $\Omega_\star$  is a pseudo-sawtooth relative to  $F_\star$ whose boundary agrees with $\Sigma$ above $F$, that is, on $F_\star$. In the following we will also use the domains
\begin{align*}
\Omega_{\star\star}
&:=\big\{(y_0,\mbf{y})\in I_{125R}(\mbf{X}^0): y_0>\psi_{\star\star}(\mbf{y})\big\}, \quad \psi_{\star\star}: = \psi+c_1h/2,\\
\Omega_{\star\star\star}
&:=\big\{(y_0,\mbf{y})\in I_{150R}(\mbf{X}^0): y_0>\psi_{\star\star\star}(\mbf{y})\big\}, \quad \psi_{\star\star\star}: = \psi+c_1h/4,\\
\Omega_{\star\star\star\star}
&:=\big\{(y_0,\mbf{y})\in I_{175R}(\mbf{X}^0): y_0>\psi_{\star\star\star\star}(\mbf{y})\big\}, \quad \psi_{\star\star\star\star}: = \psi+c_1h/8.
\end{align*}
By construction
$$\Omega_{\star}\subset\Omega_{\star\star}\subset\Omega_{\star\star\star}\subset\Omega_{\star\star\star\star}.$$
We first prove the following lemma which shows that by the construction $u$ behaves like the distance to $\Sigma$ in $\Omega_{\star\star\star\star}$.

\begin{lemma}\label{gislikedeltainsss.eq} It holds that
\begin{equation}\label{eqdistlemma1}
u(\mbf{X}) \approx \delta(\mbf{X}), \quad \forall \mbf{X} \in \Omega_{\star\star\star\star}. 
\end{equation}
Furthermore,
\begin{equation}\label{eqdistlemma2}
h(\mbf{x})\lesssim\delta(\mbf{X}), \quad \forall \mbf{X}=(x_0,\mbf{x}) \in \Omega_{\star\star\star\star}.
\end{equation}
\end{lemma}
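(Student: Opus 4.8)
The estimate \eqref{eqdistlemma2} is immediate from the definition of $\Omega_{\star\star\star\star}$. If $\mbf X=(x_0,\mbf x)\in\Omega_{\star\star\star\star}$ then $x_0>\psi_{\star\star\star\star}(\mbf x)=\psi(\mbf x)+(c_1/8)h(\mbf x)$, so $x_0-\psi(\mbf x)>(c_1/8)h(\mbf x)$, and since $\Omega_{\star\star\star\star}\subset\Omega$, Lemma \ref{distgraphlem.lem} gives $\delta(\mbf X)\ge M_0^{-1}(x_0-\psi(\mbf x))>(c_1/(8M_0))\,h(\mbf x)$, i.e. $h(\mbf x)\lesssim\delta(\mbf X)$. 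Combined with $h(\mbf x)\approx\dist(\mbf x,F)$ (see \eqref{regular-dist}), this says: for every $\mbf X=(x_0,\mbf x)\in\Omega_{\star\star\star\star}$ the spatial point $\mbf x$ lies within parabolic distance $\lesssim\delta(\mbf X)$ of $F$. This is the one place where membership of $\mbf X$ in $\Omega_{\star\star\star\star}$ — rather than merely in $\Omega$ — is used, and it is exactly what lets us attach to $\mbf X$ a boundary point of $F_\star$ at the right scale.

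Turning to \eqref{eqdistlemma1}, fix $\mbf X=(x_0,\mbf x)\in\Omega_{\star\star\star\star}$ and set $\rho:=\delta(\mbf X)$; by Lemma \ref{distgraphlem.lem}, $\rho\approx x_0-\psi(\mbf x)$, and by \eqref{crdIrest.eq}, $\rho\lesssim R$. Consider first the case $\rho<c_0R$, where $c_0$ is a small structural constant fixed below. Choose $\widehat{\mbf y}\in F$ with $\|\widehat{\mbf y}-\mbf x\|=\dist(\mbf x,F)$ ($F\neq\emptyset$ by \eqref{projsizeeq.eq}), and set $\widehat{\mbf Y}:=\bfpsi(\widehat{\mbf y})=(\psi(\widehat{\mbf y}),\widehat{\mbf y})$; since $\widehat{\mbf y}\in F=\Pi(F_\star)$ and $F_\star\subset\Sigma$, we have $\widehat{\mbf Y}\in F_\star$, and by the previous paragraph and the Lipschitz character of $\psi$, $\|\widehat{\mbf Y}-\mbf X\|\lesssim\rho$. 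Fix a large structural constant $C_1$ and consider the corkscrew points $\cA^\pm_{C_1\rho}(\widehat{\mbf Y})$. Since $\widehat{\mbf Y}\in F_\star$, Lemma \ref{CreateFstarlem.lem} (via \eqref{342rt3qf}) gives $\omega(\Delta_s(\widehat{\mbf Y}))\approx\sigma(\Delta_s(\widehat{\mbf Y}))\approx s^{d}$ for all $0<s<M_1R/8$; choosing $c_0$ small enough (depending on $C_1$ and the structural constants) ensures $C_1\rho<M_1R/8$. By Lemma \ref{calisdoub.lem}, $\mbf X_\star=\cA^+_{4M_1R}(\mbf X^0)\in T^+_{\kappa,C_1\rho}(\widehat{\mbf Y})$ (because $\Delta_{C_1\rho}(\widehat{\mbf Y})\subset\Delta_{M_1R}(\mbf X^0)$), so Lemma \ref{CFMS} and Lemma \ref{strongharnack4gf.lem} yield
\[
u\big(\cA^+_{C_1\rho}(\widehat{\mbf Y})\big)\approx u\big(\cA^-_{C_1\rho}(\widehat{\mbf Y})\big)\approx(C_1\rho)^{-n}\,\omega\big(\Delta_{C_1\rho}(\widehat{\mbf Y})\big)\approx C_1\rho\approx\rho .
\]
It remains to transfer these bounds to $\mbf X$ itself. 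The points $\cA^\pm_{C_1\rho}(\widehat{\mbf Y})$ satisfy $\delta(\cA^\pm_{C_1\rho}(\widehat{\mbf Y}))\approx\rho$ and $\|\cA^\pm_{C_1\rho}(\widehat{\mbf Y})-\mbf X\|\lesssim\rho$, while their time coordinates equal $t(\widehat{\mbf y})\pm2C_1^2\rho^2$ with $|t(\widehat{\mbf y})-t(\mbf x)|\le\|\widehat{\mbf y}-\mbf x\|^2\lesssim\rho^2$; hence, for $C_1$ large, $t(\cA^-_{C_1\rho}(\widehat{\mbf Y}))<t(\mbf X)<t(\cA^+_{C_1\rho}(\widehat{\mbf Y}))$ with time gaps of order $\rho^2$. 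We may therefore join $\mbf X$ to $\cA^+_{C_1\rho}(\widehat{\mbf Y})$ by a Harnack chain of boundedly many parabolic balls along which the time coordinate increases (and to $\cA^-_{C_1\rho}(\widehat{\mbf Y})$ by such a chain along which it decreases), all contained in $\Omega$ and far from the pole $\mbf X_\star$. Since $u$ is a nonnegative solution of the adjoint heat equation in this region, the parabolic Harnack inequality gives $u(\cA^+_{C_1\rho}(\widehat{\mbf Y}))\lesssim u(\mbf X)\lesssim u(\cA^-_{C_1\rho}(\widehat{\mbf Y}))$, whence $u(\mbf X)\approx\rho=\delta(\mbf X)$.

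In the remaining case $\rho\ge c_0R$ we have $\rho\approx R$ (again by \eqref{crdIrest.eq}), and the argument is the same but simpler: one connects $\mbf X$, by Harnack chains with the appropriate monotone time behaviour, to the reference points $\cA^\pm_{cR}(\mbf X^0)$ for a suitable structural $c$ (large enough that the time gaps are positive), and evaluates $u$ there using Lemma \ref{CFMS}, Lemma \ref{strongharnack4gf.lem}, and \eqref{Bourgain2} (in place of \eqref{342rt3qf}), obtaining $u(\cA^\pm_{cR}(\mbf X^0))\approx R$, hence $u(\mbf X)\approx R\approx\delta(\mbf X)$. This proves \eqref{eqdistlemma1}, and with \eqref{eqdistlemma2} already established, the lemma follows.

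The delicate point is the parabolic time-direction bookkeeping in the transfer step: because $u$ is adjoint-caloric, the Harnack inequality propagates values from later times to earlier ones, so the lower bound for $u(\mbf X)$ must be routed through the forward corkscrew $\cA^+_{C_1\rho}(\widehat{\mbf Y})$ and the upper bound through the backward corkscrew $\cA^-_{C_1\rho}(\widehat{\mbf Y})$, with the time order of the chains chosen accordingly; the backward-in-time Harnack inequality for the Green function (Lemma \ref{strongharnack4gf.lem}) is what allows free passage between $\cA^+$ and $\cA^-$. Apart from this, one only has to check that every auxiliary surface cube $\Delta_s(\widehat{\mbf Y})$ (resp. $\Delta_s(\mbf X^0)$) occurring has radius $s\ll M_1R/8$ — which holds since $s\lesssim\rho\lesssim R$ and $M_1$ is large — so that the hypotheses of Lemmas \ref{CreateFstarlem.lem}, \ref{calisdoub.lem}, \ref{CFMS}, \ref{strongharnack4gf.lem} and estimate \eqref{Bourgain2} are all in force.
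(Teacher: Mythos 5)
Your proof is correct and takes essentially the same route as the paper's: both use \eqref{eqdistlemma2} to locate a nearby point of $F_\star$ at scale $\delta(\mbf{X})$, and then combine the CFMS-type estimate (Lemma \ref{CFMS}), the strong Harnack inequality, doubling, and the $F_\star$ property \eqref{Fstarprops.eq} to show $u(\mbf X)/\delta(\mbf X)\approx 1$, splitting into the cases $\delta(\mbf X)\lesssim R$ and $\delta(\mbf X)\approx R$. The only organizational difference is that the paper first applies CFMS at the vertical projection $\widetilde{\mbf{X}}=(\psi(\mbf{x}),\mbf{x})$ (for which $\mbf{X}$ is itself a corkscrew-type point, so the strong Harnack inequality handles the time direction implicitly) and then moves to the $F_\star$ point by doubling of $\omega$ and $\sigma$, whereas you apply CFMS at the $F_\star$ point directly and transfer to $\mbf{X}$ by explicit Harnack chains with careful time bookkeeping; both choices are valid and the bookkeeping you describe (lower bound via the forward corkscrew, upper bound via the backward corkscrew, since $u$ is adjoint caloric) is the correct one.
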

\begin{proof} %We note first that,
% The inequality $ h(\mbf{x})\lesssim\delta(\mbf{X})$ is a 
% consequence of the fact that, 
Since $\Sigma$ is a Lip(1,1/2) graph,  
$\delta(\mbf{X})\approx x_0 - \psi(\mbf{x}) \ge c_1h(\mbf{x})/8$, by the very definition of $\Omega_{\star\star\star\star}$, which gives \eqref{eqdistlemma2}.

To prove \eqref{eqdistlemma1}, 
consider $\mbf{X}=(x_0,\mbf{x})= (x_0,x,t) \in \Omega_{\star\star\star\star}$. Using that
$\dist(\mbf{x}, F)\approx h(\mbf{x})$, % $\lesssim\delta(\mbf{X}),$
we deduce from \eqref{eqdistlemma2} that
\begin{equation}\label{eqdistlemma3}
\dist(\mbf{x}, F)\leq C\delta(\mbf{X})
\end{equation}
for a uniform constant $C>1$. We introduce $c := C^{-1}$.

Assume first that  $\delta(\mbf{X}) =\delta(x_0,\mbf{x})  \le c R$,  and let $\widetilde{\mbf{X}}= (\psi(\mbf{x}),\mbf{x}) \in \Sigma$ be the point on the graph below $\mbf{X}$.  Let $\mbf{y}\in F$ be such that 
$\|\mbf{x} - \mbf{y}\| = \dist(\mbf{x}, F)$, 
and let $\mbf{Y} = (\psi(\mbf{y}),\mbf{y})$ be the 
corresponding point on the graph.  
Since $M_0=\|\psi\|_{\text{Lip(1,1/2)}}+1$, by
Lemma \ref{distgraphlem.lem} we have
\begin{align*}
\|\mbf{X} - \mbf{Y}\| &\le \|\widetilde{\mbf{X}} - \mbf{Y}\| + \|\widetilde{\mbf{X}} - \mbf{X}\|
\\ & \le M_0 \|\mbf{x} - \mbf{y}\| + M_0\delta(\mbf{X})\le 2CM_0\delta(\mbf{X}) \le 2M_0R\,,
\end{align*}
provided $\delta(\mbf{X})\le cR$.
Now  using Lemma \ref{CFMS}, Harnack's inequality,
and doubling, we have
\[\frac{u(\mbf{X})}{\delta(\mbf{X})} \approx 
\frac{\hm\big(\Delta(\widetilde{\mbf{X}}, \delta(\mbf{X}))\big)}{\sigma\big(\Delta(\widetilde{\mbf{X}}, \delta(\mbf{X}))\big)} 
\approx 
\frac{\hm\big(\Delta(\mbf{Y}, \delta(\mbf{X}))\big)}{\sigma\big(\Delta(\mbf{Y}, \delta(\mbf{X}))\big)} \approx  1\,.\]
In the last step, we used that $\mbf{Y} \in F_\star$ and that
$\delta(\mbf{X}) <M_1 R/8$, hence \eqref{Fstarprops.eq} holds.

Next, assume that $\delta(\mbf{X}) =\delta(x_0,\mbf{x})  \geq c R$. In this case we don't need to use the set $F$. Since $\mbf{X} \in I_{175R}(\mbf{X}^0)$, we have $\|\mbf{X}  - \mbf{X}^0\| \le 875\sqrt{n}M_0R$ and hence
\[cR \le \delta(x_0,\mbf{x}) \le  875\sqrt{n}M_0R \le M_1R/4.\]
Let again $\widetilde{\mbf{X}}= (\psi(\mbf{x}),\mbf{x}) \in \Sigma$ be the point on the graph below $\mbf{X}$. In this case we have, using  Lemma \ref{CFMS}, the strong Harnack inequality and the doubling property of $\hm$ and $\sigma$, that
\[
\frac{u(\mbf{X})}{\delta(\mbf{X})} \approx 
\frac{\hm\big(\Delta(\widetilde{\mbf{X}}, \delta(\mbf{X}))\big)}{\sigma\big(\Delta(\widetilde{\mbf{X}}, \delta(\mbf{X}))\big)} 
\approx 
\frac{\hm\big(\Delta_{M_1R/4}(\mbf{X}^0)\big)}{\sigma\big(\Delta_{M_1R/4}(\mbf{X}^0)\big)} \approx 1.\]
Here we have used \eqref{Bourgain:aver} in the final step. 
% and we have used doubling several times 
% (depending on $M_1$ and $c$) and 
We can justify the use of doubling up to the scale here, see Remark \ref{whnweusrhq.rmk} and Lemma \ref{calisdoub.lem}.
\end{proof}

We will make use of the following lemma. Recall that the closed, standard 
parabolic cube 
$\mathcal{J}_{r}(\mbf{X})$ (note to be confused with the vertically 
elongated open cube $I_r$) was introduced in \eqref{cuba}.

\begin{lemma}\label{binstars.lem}
There is a uniform positive constant % $\theta$, with % exists $\theta >0 $, 
$ \theta \ll 1$, such that if 
$\mathcal{J} := \mathcal{J}_\rho(\mbf{Z})\subset \Omega$, 
with $\diam(\mathcal{J}) \le \theta \dist(\mathcal{J}, \Sigma)$, 
and $\mathcal{J}\cap\Omega_{\star}\neq\emptyset$, then
\[100\mathcal{J} = \mathcal{J}_{100\rho} (\mbf{Z}) \subset \Omega_{\star\star}.\]
Furthermore, the same statement is true if the pair of domains $(\Omega_{\star},\Omega_{\star\star})$ is replaced by either  $(\Omega_{\star\star},\Omega_{\star\star\star})$ or  $(\Omega_{\star\star\star},\Omega_{\star\star\star\star})$.
\end{lemma}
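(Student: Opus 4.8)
The plan is a routine Whitney-cube perturbation argument, anchored at a single point of $\mathcal{J}\cap\Omega_\star$: the hypothesis $\diam(\mathcal{J})\le\theta\,\dist(\mathcal{J},\Sigma)$ forces all of $100\mathcal{J}$ into a neighbourhood of that point which is small \emph{relative to $\delta$}, after which the two defining conditions of $\Omega_{\star\star}$ are checked pointwise. First I would fix $\mbf{W}=(w_0,\mbf{w})\in\mathcal{J}\cap\Omega_\star$, so that $w_0-\psi(\mbf{w})>c_1h(\mbf{w})\ge 0$ and $\mbf{W}\in I_{100R}(\mbf{X}^0)$. Since $\mathcal{J}\subset\Omega$ is closed and $\Omega$ is open, $\dist(\mathcal{J},\Sigma)>0$, and as $\mbf{W}\in\mathcal{J}$ we get $\dist(\mathcal{J},\Sigma)\le\delta(\mbf{W})\le\|\mbf{W}-\mbf{X}^0\|\le 500M_0\sqrt{n}\,R$ by \eqref{crdIrest.eq}. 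Because $100\mathcal{J}=\mathcal{J}_{100\rho}(\mbf{Z})$ is the image of $\mathcal{J}=\mathcal{J}_\rho(\mbf{Z})$ under the parabolic dilation by $100$ about $\mbf{Z}$, and parabolic dilations rescale $\|\cdot\|$, one has $\diam(100\mathcal{J})=100\,\diam(\mathcal{J})$; hence for every $\mbf{V}\in 100\mathcal{J}$,
\[
\|\mbf{V}-\mbf{W}\|\ \le\ \diam(100\mathcal{J})\ =\ 100\,\diam(\mathcal{J})\ \le\ 100\,\theta\,\dist(\mathcal{J},\Sigma)\ \le\ 100\,\theta\,\delta(\mbf{W})\ \le\ 5\cdot 10^{4}\,\theta\,M_0\sqrt{n}\,R.
\]

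For the localization, write $\mbf{V}=(v_0,\mbf{v})$ with $\mbf{v}=(v,t(\mbf{V}))$. Each of $v_0$, the $\mathbb{R}^{n-1}$-coordinate $v$, and $t(\mbf{V})$ differs (in the appropriate parabolic sense) from the corresponding coordinate of $\mbf{W}\in I_{100R}(\mbf{X}^0)$ by at most $\|\mbf{V}-\mbf{W}\|$; combined with the bound just displayed, this gives $\mbf{V}\in I_{125R}(\mbf{X}^0)$ provided $\theta$ is small enough depending only on $n$ and $M_0$.

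For the vertical condition $v_0>\psi_{\star\star}(\mbf{v})$, use $v_0\ge w_0-\|\mbf{V}-\mbf{W}\|$, $\psi(\mbf{v})\le\psi(\mbf{w})+M_0\|\mbf{V}-\mbf{W}\|$ (from \eqref{1.1}, since $\|\psi\|_{\Lip(1,1/2)}<M_0$), and $h(\mbf{v})\le h(\mbf{w})+C_h\|\mbf{V}-\mbf{W}\|$ with $C_h:=\|h\|_{\Lip(1,1/2)}\lesssim 1$ (from \eqref{regular-dist}), to obtain
\[
v_0-\psi_{\star\star}(\mbf{v})\ =\ v_0-\psi(\mbf{v})-\tfrac{c_1}{2}h(\mbf{v})\ \ge\ \Big(w_0-\psi(\mbf{w})-\tfrac{c_1}{2}h(\mbf{w})\Big)-\Big(1+M_0+\tfrac{c_1C_h}{2}\Big)\|\mbf{V}-\mbf{W}\|.
\]
The only genuine subtlety is that $h(\mbf{w})$ may vanish (e.g.\ if $\mbf{w}\in F$), so Lipschitz continuity of $\psi$ and $h$ alone is insufficient; the point is that $\mathcal{J}$ is far from $\Sigma$. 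Indeed $w_0-\psi(\mbf{w})>c_1h(\mbf{w})$ forces $w_0-\psi(\mbf{w})-\tfrac{c_1}{2}h(\mbf{w})>\tfrac12\big(w_0-\psi(\mbf{w})\big)\ge\tfrac12\,\delta(\mbf{W})$ by Lemma \ref{distgraphlem.lem}, and combining this with $\|\mbf{V}-\mbf{W}\|\le 100\,\theta\,\delta(\mbf{W})$ yields
\[
v_0-\psi_{\star\star}(\mbf{v})\ >\ \Big(\tfrac12-100\,\theta\big(1+M_0+\tfrac{c_1C_h}{2}\big)\Big)\delta(\mbf{W})\ >\ 0
\]
for $\theta$ small depending only on the structural constants (recall $c_1=c(n,\epsilon,M_0)$ with $\epsilon$ now fixed, and $\delta(\mbf{W})>0$ as $\mbf{W}\in\Omega$). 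Together with the localization this places $\mbf{V}\in\Omega_{\star\star}$, hence $100\mathcal{J}\subset\Omega_{\star\star}$.

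The cases $(\Omega_{\star\star},\Omega_{\star\star\star})$ and $(\Omega_{\star\star\star},\Omega_{\star\star\star\star})$ are handled identically: at each step the defining floor passes from $\psi+a\,h$ to $\psi+\tfrac a2\,h$ with $a\in\{c_1/2,\,c_1/4\}\le c_1$, so the estimate above applies with at worst the same coefficients, while the localization boxes enlarge from $I_{100R}$ up to $I_{175R}$ in increments of $25R$, which the same $\theta$ absorbs; thus a single $\theta$, depending only on the structural constants, serves in all three cases. I expect the main obstacle to be purely the bookkeeping of absolute constants required to pin down one explicit smallness condition on $\theta$; the sole conceptual point is that $h$ vanishes on $F$, which is handled by the split $w_0-\psi(\mbf{w})-\tfrac{c_1}{2}h(\mbf{w})>\tfrac12\,\delta(\mbf{W})$ above.
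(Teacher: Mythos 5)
Your proof is correct, and it is in fact a bit cleaner than the paper's. The paper splits into two cases according to whether $\delta(\mbf{X}) \ge h(\mbf{x})$ or $\delta(\mbf{X}) \le h(\mbf{x})$: in the first case it works with the distance function $\delta'$ to $\partial\Omega_{\star\star}$ (using that $\delta'$ is parabolically $1$-Lipschitz, that $\delta'(\mbf{X}) \gtrsim \delta(\mbf{X})$ when $c_1 h(\mbf{x}) \le \delta(\mbf{X})$, and convexity of $100\mathcal{J}$ to conclude it stays on one side of $\Sigma'$); in the second case it estimates $y_0 - \psi_{\star\star}(y,s)$ directly against $c_1 h(\mbf{x})/2$. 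You avoid the case split entirely by observing that the single membership $\mbf{W} \in \Omega_\star$, i.e. $w_0 - \psi(\mbf{w}) > c_1 h(\mbf{w})$, forces $w_0 - \psi(\mbf{w}) - \tfrac{c_1}{2}h(\mbf{w}) > \tfrac12(w_0 - \psi(\mbf{w})) \ge \tfrac12\delta(\mbf{W})$ unconditionally, so that one uniform coordinate estimate using the Lip(1,1/2) bounds on $\psi$ and $h$ handles the vertical condition in all three cases. Both proofs then finish the localization $100\mathcal{J}\subset I_{125R}(\mbf{X}^0)$ by the same elementary size bound. Your algebraic shortcut removes the only subtlety the paper had to work around, so this is a genuine simplification rather than just a stylistic variant.
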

\begin{proof}
We prove the statement only 
for the pair $(\Omega_{\star},\Omega_{\star\star})$, 
as the arguments for the other pairs are entirely analogous.
First note that $\diam(\mathcal{J}) \le \theta \delta(\mbf{X})$ for all $\mbf{X} \in \mathcal{J}$. Fix $\mbf{X}  \in \mathcal{J} \cap \Omega_{\star}$. Then $\mbf{X} \in I_{100R}(\mbf{X}^0)$, hence $\delta(\mbf{X}) \lesssim M_0R$, so that for
$\theta \ll M_0^{-1}$,
\[
100 \rho< 200\diam(\mathcal{J}) \leq 200\theta \delta(\mbf{X}) \ll R\,,
\]
whence it follows that
$100\mathcal{J} \subset I_{125R}(\mbf{X}^0)$. 
%Since $100\mathcal{J} \subset I_{125R}(\mbf{X}^0)$ we 

It remains only to show that $100\mathcal{J}$ stays above the function defining the lower boundary of $\Omega_{\star\star}$. 
To this end, for future reference, we 
% set $\delta'(\mbf{Y}) :=\dist(\mbf{Y}, \Sigma') > 0$,
% for all $\mbf{Y} \in 100\mathcal{J}$, 
%  in $ \mathcal{J}_{100\rho} (\mbf{Z})$.   and
% set $\delta'(\mbf{Y}) := \dist(\mbf{Y},\Sigma')$.
% since $\delta'$ is parabolically Lipschitz %(in the parabolic sense) 
%with norm 1, 
note that for $\mbf{X}  \in \mathcal{J} \cap \Omega_{\star}$,
\begin{equation}\label{distcomp2}
\|\mbf{X} - \mbf{Y}\| \leq  100\diam(\mathcal{J})
\leq 100\theta \dist(\mathcal{J},\Sigma) \leq 100\theta \delta(\mbf{X})\,,
\qquad \forall\, \mbf{Y} \in 100\mathcal{J}.
% \lesssim  \theta \delta'(\mbf{X})
\end{equation}
% Let $N$ be large a suitably large constant to be chosen momentarily. 
We consider two cases.

With $\mbf{X} = (x_0,x,t)\in
\mathcal{J} \cap \Omega_{\star}$ fixed as above, we
first suppose that $\delta(\mbf{X}) \ge h(x,t)$. 
% Using Lemma \ref{distgraphlem.lem}, 
We then have
\begin{equation}\label{starstardist}
x_0 - \psi_{\star\star}(x,t) %+ c_1 h(x,t)/2) 
= x_0 - \psi(x,t) - c_1 h(x,t)/2
\ge \delta(\mbf{X}) - c_1 h(x,t)/2 \ge \delta(\mbf{X})/2\,,
\end{equation}
since $c_1<1$.
Using Lemma \ref{distgraphlem.lem} (applied in $\Omega_{\star\star}$, so $M_0$ is replaced by 
$\|\psi_{\star\star}\|_{\text{Lip(1,1/2)}}+1\lesssim M_0 + 1$), 
along with \eqref{starstardist}, we deduce that
\begin{equation}\label{distcomp}
\dist(\mbf{X}, \Sigma') \leq \delta(\mbf{X}) \lesssim (M_0 + 1) \dist(\mbf{X}, \Sigma'),
\end{equation}
where $\Sigma':= 
\pom_{\star\star}= \{(\psi(x,t)+(c_1/2)\,h(x,t), x,t): (x,t) \in \rn \}$. 
% is the graph that defines $\Omega_{\star\star}$ 
Here, $\dist(\mbf{X}, \Sigma')\le \delta(\mbf{X})$ trivially, 
since $\Sigma'$ lies at or above $\Sigma$.  

% Let $\mbf{Y} \in \mathcal{J}$.  
We now claim that
$\delta'(\mbf{Y}) :=\dist(\mbf{Y}, \Sigma') > 0$,
for all $\mbf{Y} \in 100\mathcal{J}$. %  in $ \mathcal{J}_{100\rho} (\mbf{Z})$.   
Indeed, % set $\delta'(\mbf{Y}) := \dist(\mbf{Y},\Sigma')$.
since $\delta'$ is parabolically Lipschitz %(in the parabolic sense) 
with norm 1, for $\mbf{Y} \in 100\mathcal{J}$, by \eqref{distcomp2},
we find that
\begin{equation*}% \label{distcomp3}
\left|\delta'(\mbf{X})
- \delta'(\mbf{Y}) \right| \leq  100\theta \delta(\mbf{X})
\lesssim \theta \delta'(\mbf{X}) \,,
\end{equation*}
where we used
\eqref{distcomp} and the definition of $\delta'$ in the last step.
Consequently,
\[
\delta' (\mbf{Y}) = \delta'(\mbf{X}) -\big(\delta'(\mbf{X})
- \delta'(\mbf{Y}) \big) \geq (1-C\theta)\delta'(\mbf{X}) 
% \approx \delta(\mbf{X})
>0\,,
\]
provided that $\theta$ is chosen small enough.  
Since  $100\mathcal{J}$ is convex and contains a point $\mbf{X}$ in $\Omega_{\star\star}$, it then follows that 
$100\mathcal{J}\subset \Omega_{\star\star}$.

Now suppose that $\delta(\mbf{X}) \le h(x,t)$. In this case, 
we first note that  
\[
x_0  - (\psi(x,t) + (c_1/2) h(x,t)) >  (c_1/2) h(x,t)\,,
\]
as $\mbf{X} \in \Omega_{\star}$.
For $\mbf{Y} = (y_0,y,s) \in 100\mathcal{J}$, 
write
\begin{multline*}
y_0 - (\psi(y,s) + c_1 h(y,s)/2) \\[4pt]
= \bigl (x_0  - (\psi(x,t) + c_1 h(x,t)/2)\bigr)
 +\bigl ((\psi(x,t) + c_1 h(x,t)/2)- (\psi(y,s) + c_1 h(y,s)/2)\bigr )
% \\[4pt] 
+ (y_0 - x_0),
\end{multline*}
so that, using \eqref{distcomp2}, we have
\begin{multline*}
y_0 - (\psi(y,s) + c_1 h(y,s)/2) \ge (c_1/2) h(x,t) - C\|\mbf{Y} - \mbf{X}\|
\\ \ge (c_1/2) h(x,t)  - C\theta\delta(\mbf{X})
 \ge (c_1/2) h(x,t) - C\theta h(x,t)  > 0,
\end{multline*}
provided $\theta$ is sufficiently small; i.e., we conclude that  $\mbf{Y}$ lies above the graph defining $\Omega_{\star\star}$. 
% In this deduction we have used that 
% $\mbf{Y} \in \mathcal{J}_{100\rho} (\mbf{Z}) \subseteq 
% \mathcal{J}_{200\theta \delta(\mbf{X})}(\mbf{X})$ and that the 
% function $\psi + (c_1/2)h$ is Lip(1,1/2).
\end{proof}

\subsection{Whitney decompositions} 
We here introduce Whitney decompositions.
Let $\theta$ be as in Lemma \ref{binstars.lem}, chosen small enough that the conclusion of the Lemma holds for each pair of sub-domains $(\Omega_{\star},\Omega_{\star\star})$,
$(\Omega_{\star\star},\Omega_{\star\star\star})$, and  $(\Omega_{\star\star\star},\Omega_{\star\star\star\star})$.
 We let $\mathcal{W} = \{\mathcal{J}\}$ be a 
 (parabolic) Whitney decomposition of $\Omega$ in parabolic dyadic cubes
 (see e.g. \cite[Chapter 6]{Stein-SIOs} for the classical construction, which adapts readily to the parabolic setting) with the additional property that
\begin{equation}\label{thetapwhitdecomp.eq}
\theta^{-1} \diam(\mathcal{J}) \le  \dist(2\mathcal{J}, \Omega^c) \le \dist(\mathcal{J}, \Omega^c) \le  C_n \theta^{-1}  \diam(\mathcal{J}).
\end{equation}
This means that each $\mathcal{J} \in \mathcal{W}$ is a (closed) parabolic dyadic cube and whenever $\mathcal{J}$ and $\mathcal{J}'$ are distinct they have disjoint interiors. Moreover, taking $\theta$ smaller if need be,
we can insist that when $10 \mathcal{J} \cap 10\mathcal{J}' \neq \emptyset$ then $\ell(\mathcal{J}) \approx \ell(\mathcal{J}')$, and that the collection $\{10\mathcal{J}\}$ have bounded overlap.  

Note that by Lemma \ref{binstars.lem}, if $\mathcal{J} \in \W$ and $\mathcal{J}$ meets $\Omega_{\star}$, then $10\mathcal{J} \in \Omega_{\star\star}$. Similarly, %$\mathcal{J} \in \W'$ and 
if $\mathcal{J}$ meets $\Omega_{\star\star}$,  then $10\mathcal{J} \in \Omega_{\star\star\star}$ and finally, if % $\mathcal{J} \in \W''$ and 
$\mathcal{J}$ meets $\Omega_{\star\star\star}$, then $10\mathcal{J} \in \Omega_{\star\star\star\star}$.

\subsection{Pointwise estimates}
We now present some preliminary estimates for the Green function and to be used in the proof of Proposition \ref{prop:CM-ar}.

\begin{lemma} It holds that
\begin{equation}\label{est:G-delta-sawtooth:deriv:1}
\partial_{y_0}u(\mbf{Y})
\approx  \frac{u(\mbf{Y})}{\delta(\mbf{Y})}
\approx 1,\quad \forall \mbf{Y}\in \Omega_{\star\star\star},
\end{equation}
and
\begin{equation}\label{est:G-delta-sawtooth:deriv:2}
	\delta(\mbf{Y})^k\,|\nabla_Y^{k+1} u(\mbf{Y})|
	+
	\delta(\mbf{Y})^{k+1}\,|\nabla_Y^k \partial_s u(\mbf{Y})|
	\lesssim_k 1, \quad \forall \mbf{Y}\in \Omega_{\star\star\star},
\end{equation}
and for every $k \ge 0$. Moreover, if $\mathcal{J} \in \W$, and $\mathcal{J}$ meets $\Omega_{\star\star}$, then
\begin{multline}\label{redderivtxest.eq}
\iiint_{\mathcal{J} }
\big(
|\delta\,\partial_s u|^2
+
|\delta\,\nabla_Y^2 u|^2
+
|\delta^2\,\nabla_Y\partial_s u|^2
+
|\delta^2\,\partial_s u\, \nabla_Y^2u|^2
\big) \, \frac{\d\mbf{Y}}{\delta(\mbf{Y})}
\\[4pt]  \lesssim \,
\iiint_{4\mathcal{J} }
\big(
|\partial_s u|^2
+
|\nabla^2_Y u|^2
\big) % (\mbf{Y})   
\, \delta(\mbf{Y})\d\mbf{Y}
\, \approx\,
\iiint_{4\mathcal{J} }
\big(
|\partial_s u|^2
+
|\nabla^2_Y u|^2
\big)% (\mbf{Y})   
u \, \d\mbf{Y}.
 \end{multline}
\end{lemma}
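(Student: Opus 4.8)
The plan is to establish the three assertions in order, using throughout that, since the pole $\mbf{X}_\star=\cA^+_{4M_1R}(\mbf{X}^0)$ has time coordinate $t^0+32M_1^2R^2$ and hence (as $M_1\gg1$) lies far to the future of $\overline{I_{200R}(\mbf{X}^0)}$, the function $u$ is a genuine non-negative solution of the adjoint heat equation in $\Omega\cap I_{200R}(\mbf{X}^0)\supset\Omega_{\star\star\star\star}$. Granting this, \eqref{est:G-delta-sawtooth:deriv:1} is immediate: the comparison $\partial_{y_0}u\approx u/\delta$ is \eqref{eqn:est-der-G} of Remark~\ref{strongharnack4gf.rmk}, valid on $\overline{I_{400R}(\mbf{X}^0)}\cap\Omega\supset\Omega_{\star\star\star}$, while $u/\delta\approx1$ on $\Omega_{\star\star\star}\subset\Omega_{\star\star\star\star}$ is \eqref{eqdistlemma1} of Lemma~\ref{gislikedeltainsss.eq}.

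For \eqref{est:G-delta-sawtooth:deriv:2}, fix $\mbf{Y}\in\Omega_{\star\star\star}$ and let $\mathcal{J}^0\in\W$ be the Whitney cube containing $\mbf{Y}$. Since $\mathcal{J}^0$ meets $\Omega_{\star\star\star}$, Lemma~\ref{binstars.lem} gives $100\mathcal{J}^0\subset\Omega_{\star\star\star\star}$, and \eqref{thetapwhitdecomp.eq} (with $\theta$ small, as we may assume) yields $\delta\approx\ell(\mathcal{J}^0)\approx\delta(\mbf{Y})$ on $100\mathcal{J}^0$; hence there is a structural constant $c>0$ with $\mathcal{J}_{c\delta(\mbf{Y})}(\mbf{Y})\subset\Omega_{\star\star\star\star}$. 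On that cube $u$ solves the adjoint heat equation and, by \eqref{eqdistlemma1} and Harnack's inequality, $u\approx\delta(\mbf{Y})$. Applying the classical interior derivative estimates for the heat operator on $\mathcal{J}_{c\delta(\mbf{Y})/2}(\mbf{Y})$, and counting a time derivative as two parabolic units, we get $|\nabla_Y^{k+1}u(\mbf{Y})|\lesssim_k\delta(\mbf{Y})^{-(k+1)}\sup_{\mathcal{J}_{c\delta(\mbf{Y})}(\mbf{Y})}u\lesssim_k\delta(\mbf{Y})^{-k}$ and $|\nabla_Y^{k}\partial_s u(\mbf{Y})|\lesssim_k\delta(\mbf{Y})^{-(k+2)}\sup_{\mathcal{J}_{c\delta(\mbf{Y})}(\mbf{Y})}u\lesssim_k\delta(\mbf{Y})^{-(k+1)}$, which is exactly \eqref{est:G-delta-sawtooth:deriv:2}.

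For \eqref{redderivtxest.eq}, let $\mathcal{J}\in\W$ meet $\Omega_{\star\star}$; then Lemma~\ref{binstars.lem} for the pair $(\Omega_{\star\star},\Omega_{\star\star\star})$ gives $100\mathcal{J}\subset\Omega_{\star\star\star}$, so $4\mathcal{J}\subset\Omega_{\star\star\star}\subset\Omega_{\star\star\star\star}$ and the estimates \eqref{eqdistlemma1}, \eqref{est:G-delta-sawtooth:deriv:1}, \eqref{est:G-delta-sawtooth:deriv:2} all hold on $4\mathcal{J}$, while $\delta\approx\ell:=\ell(\mathcal{J})$ there. Expanding the weights, the left-hand side of \eqref{redderivtxest.eq} is comparable to $\ell\iiint_{\mathcal{J}}(|\partial_s u|^2+|\nabla_Y^2u|^2)+\ell^3\iiint_{\mathcal{J}}(|\nabla_Y\partial_s u|^2+|\partial_s u|^2|\nabla_Y^2u|^2)$. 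The first two terms are at most $\ell\iiint_{4\mathcal{J}}(|\partial_s u|^2+|\nabla_Y^2u|^2)$, simply because $\mathcal{J}\subset4\mathcal{J}$. For the third we apply the Caccioppoli (energy) inequality to the solution $\partial_s u$ on the pair $(\mathcal{J},2\mathcal{J})$, which gives $\ell^2\iiint_{\mathcal{J}}|\nabla_Y\partial_s u|^2\lesssim\iiint_{2\mathcal{J}}|\partial_s u|^2$, hence $\ell^3\iiint_{\mathcal{J}}|\nabla_Y\partial_s u|^2\lesssim\ell\iiint_{4\mathcal{J}}(|\partial_s u|^2+|\nabla_Y^2u|^2)$. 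For the fourth we use the pointwise estimate $|\partial_s u|\lesssim\delta^{-1}\approx\ell^{-1}$ on $\mathcal{J}$, which is \eqref{est:G-delta-sawtooth:deriv:2} with $k=0$, to obtain $\ell^3\iiint_{\mathcal{J}}|\partial_s u|^2|\nabla_Y^2u|^2\lesssim\ell\iiint_{\mathcal{J}}|\nabla_Y^2u|^2\le\ell\iiint_{4\mathcal{J}}(|\partial_s u|^2+|\nabla_Y^2u|^2)$. This yields the middle inequality of \eqref{redderivtxest.eq}, and the final comparison is immediate from $u\approx\delta$ on $4\mathcal{J}\subset\Omega_{\star\star\star\star}$ (Lemma~\ref{gislikedeltainsss.eq}).

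The analytic inputs here, namely the interior derivative estimates and the Caccioppoli inequality, are classical for the constant-coefficient heat operator, so the only delicate point is the geometric bookkeeping: one must check that each enlargement of a Whitney cube that appears lies in the appropriate member of the chain $\Omega_{\star\star}\subset\Omega_{\star\star\star}\subset\Omega_{\star\star\star\star}$, so that $u$ is a solution there, $u\approx\delta$ holds, and the pointwise bounds \eqref{est:G-delta-sawtooth:deriv:2} are available. This is exactly what Lemma~\ref{binstars.lem} together with the Whitney property \eqref{thetapwhitdecomp.eq} delivers, and it is the step in which all the constants must be fixed compatibly.
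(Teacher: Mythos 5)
Your proof is correct and follows essentially the same approach the paper indicates (\eqref{eqn:est-der-G} with \eqref{eqdistlemma1} for the first estimate, then standard interior estimates combined with Lemma~\ref{gislikedeltainsss.eq} and Lemma~\ref{binstars.lem} for the rest); you have merely spelled out the ``routine details'' the paper omits, including a clean Caccioppoli argument for the $\nabla_Y\partial_s u$ term.
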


\begin{proof}
The estimate in \eqref{est:G-delta-sawtooth:deriv:1} is just a combination of \eqref{eqn:est-der-G} and \eqref{eqdistlemma1}. The proof of the remaining estimates stated in the lemma follows from standard interior estimates for the heat/adjoint heat equation, Lemma \ref{gislikedeltainsss.eq} and Lemma \ref{binstars.lem}. We omit the routine details.\end{proof}

To proceed we introduce
\[\mbf{\Psi}_r(\mbf{y}) := (\psi_r(\mbf{y}), \mbf{y}),\quad r \in (0, R/\Lambda_0),\ \mbf{y} \in Q_{100R}(\mbf{x}^0).\]
Then for any fixed $r \in (0, R/\Lambda_0)$, $\mbf{\Psi}_r(\mbf{y})$ is the point on the $r$-level set of $u$ above $\mbf{y} \in Q_{100R}(\mbf{x}^0)$, i.e.,
\begin{equation}\label{old4.17}
r = u(\mbf{\Psi}_r(\mbf{y}))  = u(\psi_r(\mbf{y}), \mbf{y}) \quad \forall r \in (0, R/\Lambda_0),\  \mbf{y} \in Q_{100R}(\mbf{x}^0).
\end{equation}
Furthermore, as the set $\mathcal{S}$ is contained in the domain of $\mbf{\Psi}_r(\mbf{y})$, Lemma \ref{omnmapstar.lem} and Lemma \ref{gislikedeltainsss.eq} imply that
\begin{equation}\label{old4.18.eq}
r = u(\mbf{\Psi}_r(\mbf{y})) \approx \delta(\mbf{\Psi}_r(\mbf{y})) \approx \psi_r(\mbf{y}) - \psi(\mbf{y}), \quad \forall (r,\mbf{y}) \in \mathcal{S},
\end{equation}
where the last equivalence comes from Lemma \ref{distgraphlem.lem} and we have used that $\Omega_{\star} \subset \Omega_{\star\star\star}$.

The following lemma allows us to relate our estimates on the normalized Green function $u$ to corresponding estimates on $\psi_r$.

\begin{lemma}\label{lemma:estimates:G-star}
Let $(r,y,s)\in\mathcal{S}$. Then,
\begin{equation}\label{eqn:estimates:G-star:1}
|\partial_s \psi_r(y,s)|\lesssim |(\partial_s u)(\mbf{\Psi}_r(y,s))|,
\quad
|\nabla_{y,r} \psi_r(y,s)|\lesssim 1,
\quad 
|\nabla_{y,r}^2 \psi_r(y,s)|\lesssim |(\nabla_Y^2 u)(\mbf{\Psi}_r(y,s))|,
\end{equation}	
\begin{equation}\label{eqn:estimates:G-star:2}
|\nabla_{y,r}\partial_s \psi_r(y,s)|
\,\lesssim\,
|(\nabla_Y\partial_s u)(\mbf{\Psi}_r(y,s))|\,+\, |(\partial_s u)(\mbf{\Psi}_r(y,s))|\,|(\nabla_Y^2 u)(\mbf{\Psi}_r(y,s))|
\end{equation}	
\begin{equation}\label{eqn:estimates:G-star:3}
r\,|\partial_s \psi_r(y,s)|\,+\,r\,|\nabla_{y,r}^2 \psi_r(y,s)|\,+\, r^2\,|\nabla_{y,r}^3 \psi_r(y,s)|\,+\,r^2\, |\nabla_{y,r}\partial_s \psi_r(y,s)|\,\lesssim 1.
\end{equation}	
\end{lemma}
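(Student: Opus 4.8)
The plan is to obtain every bound in the statement by implicit differentiation of the identity \eqref{old4.17}, namely $u(\psi_r(y,s),y,s)=r$ for $(r,y,s)\in\mathcal{S}$, and then to substitute the pointwise control on $u$ from \eqref{est:G-delta-sawtooth:deriv:1}--\eqref{est:G-delta-sawtooth:deriv:2}. All derivatives of $u$ below are evaluated at $\mbf{\Psi}_r(y,s)=(\psi_r(y,s),y,s)$, and I will use throughout that $\mbf{\Psi}_r(y,s)\in\Omega_\star\subset\Omega_{\star\star\star}$ for $(r,y,s)\in\mathcal{S}$ (Lemma \ref{omnmapstar.lem}), so that the estimates of the previous lemma apply, together with the comparison $\delta(\mbf{\Psi}_r(y,s))\approx r$ on $\mathcal{S}$ recorded in \eqref{old4.18.eq} (via Lemma \ref{gislikedeltainsss.eq} and Lemma \ref{distgraphlem.lem}). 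The differentiations are legitimate since $\psi_r\in C^\infty$ in $(r,y,s)$ and $\partial_{y_0}u>0$ along the level set, by Lemma \ref{lemma:level-sets}.

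First I would differentiate \eqref{old4.17} in $s$, in each $y_i$, and in $r$, obtaining
\[
\partial_s\psi_r=-\frac{(\partial_su)(\mbf{\Psi}_r)}{(\partial_{y_0}u)(\mbf{\Psi}_r)},
\qquad \partial_{y_i}\psi_r=-\frac{(\partial_{y_i}u)(\mbf{\Psi}_r)}{(\partial_{y_0}u)(\mbf{\Psi}_r)},
\qquad \partial_r\psi_r=\frac{1}{(\partial_{y_0}u)(\mbf{\Psi}_r)}.
\]
Since $(\partial_{y_0}u)(\mbf{\Psi}_r)\approx1$ by \eqref{est:G-delta-sawtooth:deriv:1} and $|\nabla_Y u|\lesssim1$ by \eqref{est:G-delta-sawtooth:deriv:2} with $k=0$, the first two estimates of \eqref{eqn:estimates:G-star:1} follow. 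Differentiating these three identities once more via the quotient and chain rules, and repeatedly using that the first-order quantities $\nabla_Yu$, $\partial_{y_0}u\approx1$, $\nabla_{y,r}\psi_r$ are bounded, I expect to find that every entry of $\nabla_{y,r}^2\psi_r$ is a bounded combination of entries of $(\nabla_Y^2u)(\mbf{\Psi}_r)$ divided by a power of $(\partial_{y_0}u)(\mbf{\Psi}_r)$ --- only the spatial Hessian of $u$ appears, since differentiating $\psi_r$ in $(y,r)$ never introduces an $s$-derivative of $u$ --- which is the third estimate of \eqref{eqn:estimates:G-star:1}; and likewise that every entry of $\nabla_{y,r}\partial_s\psi_r$ is a bounded combination of entries of $(\nabla_Y\partial_su)(\mbf{\Psi}_r)$ together with products $(\partial_su)(\mbf{\Psi}_r)\,(\nabla_Y^2u)(\mbf{\Psi}_r)$ coming from differentiating the denominator $\partial_{y_0}u$, which is \eqref{eqn:estimates:G-star:2}.

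For \eqref{eqn:estimates:G-star:3} I would use $\delta(\mbf{\Psi}_r(y,s))\approx r$ on $\mathcal{S}$ to read off from \eqref{est:G-delta-sawtooth:deriv:2} the pointwise bounds $|(\partial_su)(\mbf{\Psi}_r)|\lesssim r^{-1}$, $|(\nabla_Y^2u)(\mbf{\Psi}_r)|\lesssim r^{-1}$, $|(\nabla_Y\partial_su)(\mbf{\Psi}_r)|\lesssim r^{-2}$, $|(\nabla_Y^3u)(\mbf{\Psi}_r)|\lesssim r^{-2}$. Substituting into the estimates just proved gives $r|\partial_s\psi_r|+r|\nabla_{y,r}^2\psi_r|+r^2|\nabla_{y,r}\partial_s\psi_r|\lesssim1$ directly. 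For the last term $r^2|\nabla_{y,r}^3\psi_r|$ I would differentiate the second-order identities once more and count orders: each term is a product of one factor among $\nabla_Y^3u$, $\nabla_Y^2u$ (at most twice), $\nabla_Yu$, $\partial_{y_0}u$ with already-bounded derivatives of $\psi_r$ of order $\le2$ and a harmless power of $(\partial_{y_0}u)^{-1}\approx1$, hence is $\lesssim r^{-2}$ by the bounds above and the already proven part of \eqref{eqn:estimates:G-star:1}; so $r^2|\nabla_{y,r}^3\psi_r|\lesssim1$.

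The hard part is purely the bookkeeping in the repeated implicit differentiation: one must check that a second (resp. third) derivative of $\psi_r$ can only involve a second (resp. third or second) \emph{spatial} derivative of $u$, so that no ``forbidden'' term --- an isolated $\nabla_Y^3u$ on the right of \eqref{eqn:estimates:G-star:1}, or a stray $\partial_su$ of the wrong order --- appears, and that every coefficient generated by the chain rule is a lower-order derivative that has already been estimated. This is routine, and is made transparent by the shape of \eqref{old4.17}, where $u$ is composed with $(\psi_r,y,s)$ and the parameter $r$ enters only through $\psi_r$.
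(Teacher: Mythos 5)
Your proposal is correct and follows the same route as the paper: differentiate the level-set identity $u(\psi_r(y,s),y,s)=r$ implicitly in $s$, $y_i$, and $r$, invoke $(\partial_{y_0}u)(\mbf{\Psi}_r)\approx 1$ and the pointwise estimates \eqref{est:G-delta-sawtooth:deriv:2} for the higher derivatives of $u$ in $\Omega_{\star\star\star}$, and conclude \eqref{eqn:estimates:G-star:3} via the comparison $\delta(\mbf{\Psi}_r)\approx r$ from \eqref{old4.18.eq}. The paper leaves the chain-rule bookkeeping to the reader; you have merely filled it in, and your accounting of which derivatives of $u$ can appear at each order is accurate.
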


\begin{proof} %The proof is an exercise in implicit differentiation.
For $(r,y,s)\in\mathcal{S}$, we have $\mbf{\Psi}_r(y,s)\in\Omega_\star$, by Lemma \ref{omnmapstar.lem}.  Thus, by \eqref{est:G-delta-sawtooth:deriv:1}, 
\begin{equation}\label{uyPsi}
(\partial_{y_0} u)(\mbf{\Psi}_r(y,s)) \approx 1\,,
\qquad \forall \, (r,y,s)\in\mathcal{S}\,.
\end{equation}
Differentiating \eqref{old4.17} with respect to $s$, we obtain
\[
0
=
\partial_s\bigl (u(\psi_r(y,s), y,s)\big)
=
(\partial_s u)(\mbf{\Psi}_r(y,s))+(\partial_{y_0} u)(\mbf{\Psi}_r(y,s))\,\partial_s \psi_r(y,s).
\]
I.e.,
\begin{equation}\label{partial-s:a-r}
	\partial_s \psi_r(y,s)
=
-\frac{(\partial_s u)(\mbf{\Psi}_r(y,s))}{(\partial_{y_0} u)(\mbf{\Psi}_r(y,s))}.
\end{equation}
Similarly, if we differentiate \eqref{old4.17} with respect either to $r$, or to $y_i$ with $1\le i\le n-1$, we obtain
\begin{equation}\label{partial-r-yi:a-r}
\partial_r \psi_r(y,s)
=
\frac1{(\partial_{y_0} u)(\mbf{\Psi}_r(y,s))},
\qquad
\partial_{y_i} \psi_r(y,s)
=
-\frac{(\partial_{y_i} u)(\mbf{\Psi}_r(y,s))}{(\partial_{y_0} u)(\mbf{\Psi}_r(y,s))}.
\end{equation}
% Since $\mbf{\Psi}_r(\mbf{y})\in\Omega_\star$, 
We readily obtain the first two estimates in  \eqref{eqn:estimates:G-star:1} from
 % \eqref{est:G-delta-sawtooth:deriv:1} 
 \eqref{uyPsi}
 and \eqref{est:G-delta-sawtooth:deriv:2}. The last estimate in \eqref{eqn:estimates:G-star:1}, and also \eqref{eqn:estimates:G-star:2}, 
 follow by differentiating the formulas \eqref{partial-s:a-r} and \eqref{partial-r-yi:a-r}, and by invoking again 
 % \eqref{est:G-delta-sawtooth:deriv:1} 
  \eqref{uyPsi}
 and \eqref{est:G-delta-sawtooth:deriv:2}. Finally \eqref{eqn:estimates:G-star:3} follows using the previous estimates, the same kind of arguments, and \eqref{old4.18.eq}. Details are left to the interested reader.
\end{proof}

 \subsection{Proof of Proposition \ref{prop:CM-ar}} 
 % Based on the preliminaries developed in the previous subsection, 
 With the preceeding preliminaries in hand, we are now ready to prove Proposition \ref{prop:CM-ar}. % The proof is by integration by parts. 
 % We first observe that if $(r,y,s) \in \mathcal{S}$, then  
 % we obtain from \eqref{est:G-delta-sawtooth:deriv:1} 
 % \eqref{eqn:est-der-G} and Lemma \ref{gislikedeltainsss.eq} 
 % that
% \begin{equation}\label{Jacoblike1.eq}
% \frac1{(\partial_{y_0} u)(\mbf{\Psi}_r(y,s))} \approx \frac{\delta(\mbf{\Psi}% _r(y,s))}{u(\mbf{\Psi}_r(y,s))} \approx 1.
% \end{equation}
% Using this observation,  
By Lemma~\ref{lemma:estimates:G-star}, \eqref{est:G-delta-sawtooth:deriv:1}, and % \eqref{old4.17}, 
\eqref{old4.18.eq}, we derive
\begin{multline}\label{ibpstp1.eq}
\iiint_{\mathcal{S}}\big(
|r\,\partial_s \psi_r(y,s)|^2+|r\,\nabla_{y,r}^2 \psi_r(y,s)|^2+ |r^2 \,\nabla_{y,r}\partial_s \psi_r(y,s)|^2\big)\,\frac{\d r}{r}{\d y\,\d s}
\\  \lesssim
\iiint_{\mathcal{S}}
\big(
|\delta\,\partial_s u|^2
+
|\delta\,\nabla_Y^2 u|^2
\big)(\mbf{\Psi}_r(y,s))\,\frac{\d r\,\d y\,\d s}{\delta
% \partial_{y_0}u)
(\mbf{\Psi}_r(y,s))}
\\ + \,\,
\iiint_{\mathcal{S}}
\big(
|\delta^2\,\nabla_Y\partial_s u|^2
+
|\delta^2\,\partial_s u\, \nabla_Y^2u|^2
\big)(\mbf{\Psi}_r(y,s))\,\frac{\d r\,\d y\,\d s}{\delta
%\partial_{y_0}u)
(\mbf{\Psi}_r(y,s))} =:\, I+II.
\end{multline}

Next, we use the definition of ${\Omega}_{\mathcal{S}}$, see \eqref{wtomegadef.eq}, and the change of variable
 $\mbf{Y} =\mbf{\Psi}_r(y,s)$, which amounts to  % =(\psi_r(y,s),y,s)$, 
 the 1-dimensional change of variable
 $y_0=\psi_r(y,s)$, with $(y,s)$ fixed.  By 
 \eqref{uyPsi} and \eqref{partial-r-yi:a-r}, the 
 Jacobian of this change of variable is uniformly 
 bounded above and below. Hence, % we deduce that
  by Lemma \ref{omnmapstar.lem},
 \begin{equation}\label{ibpstp3.eq} 
  I + II 
  % \approx 
 %  \iiint_{{\Omega}_{\mathcal{S}}} \big( |\delta\,\partial_s u|^2
% + |\delta\,\nabla_Y^2 u|^2 + |\delta^2\,\nabla_Y\partial_s u|^2 +
% |\delta^2\,\partial_s u\, \nabla_Y^2u|^2
% \big)(\mbf{Y})\,\frac{\d\mbf{Y}}{\delta(\mbf{Y})}
\lesssim \iiint_{\Omega_\star}
\big(
|\delta\,\partial_s u|^2
+
|\delta\,\nabla_Y^2 u|^2
+
|\delta^2\,\nabla_Y\partial_s u|^2
+
|\delta^2\,\partial_s u\, \nabla_Y^2u|^2
\big)(\mbf{Y})\,\frac{\d\mbf{Y}}{\delta(\mbf{Y})}\, =: III \,.
 \end{equation}

 Let $\W_\star = \{\mathcal{J} \in \W: \mathcal{J} \cap \Omega_\star \neq \emptyset\}$. As $\{10\mathcal{J}\}_{\mathcal{J} \in \W_\star}$ have bounded overlap, we can use \eqref{redderivtxest.eq} and 
  then Lemma \ref{binstars.lem} (along with 
 the definition of $\W$, 
 specifically \eqref{thetapwhitdecomp.eq}) to deduce that
 \begin{multline}\label{ibpstp4.eq}
% \iiint_{\Omega_\star}
% \big(|\delta\,\partial_s u|^2 + |\delta\,\nabla_Y^2 u|^2 +
% |\delta^2\,\nabla_Y\partial_s u|^2 + |\delta^2\,\partial_s u\, \nabla_Y^2u|^2
% \big)(\mbf{Y})\,\frac{\d\mbf{Y}}{\delta(\mbf{Y})} \\
III \,\lesssim\,
\sum_{\mathcal{J} \in \W_\star}\iiint_{\mathcal{J}}\big(
|\delta\,\partial_s u|^2
+
|\delta\,\nabla_Y^2 u|^2
+
|\delta^2\,\nabla_Y\partial_s u|^2
+
|\delta^2\,\partial_s u\, \nabla_Y^2u|^2
\big)(\mbf{Y})\,\frac{\d\mbf{Y}}{\delta(\mbf{Y})}
\\
 \lesssim
\sum_{\mathcal{J} \in \W_\star}\iiint_{4\mathcal{J}}\big(
u|\partial_s u|^2
+
u|\nabla_Y^2 u|^2
\big)(\mbf{Y})\,\d\mbf{Y}
 \lesssim
\iiint_{\Omega_{\star\star}}\big(
u|\partial_s u|^2
+
u|\nabla_Y^2 u|^2
\big)(\mbf{Y})\,\d\mbf{Y}.
\end{multline}

Combining \eqref{ibpstp1.eq}, \eqref{ibpstp3.eq} and \eqref{ibpstp4.eq}, we have reduced the proof of  Proposition \ref{prop:CM-ar} to proving that
\begin{equation}\label{ibnwstup.eq}
\iiint_{\Omega_{\star\star}}\big(
u|\partial_s u|^2
+
u|\nabla_Y^2 u|^2
\big)(\mbf{Y})\,\d\mbf{Y} \lesssim R^d.
\end{equation}
% We will proceed by integration by parts. 
To this end, for $N \in \mathbb{N}$, with $N \gg \Lambda_1/R$, we set
\begin{equation}\label{Ndef}
\Omega_{\star\star,N}
:=\big\{(y_0,\mbf{y})\in I_{125R}(\mbf{X}^0): y_0> \psi_N(\mbf{y})\big\}, \quad \psi_N: = \psi+c_1\,h/2 + 1/N,
\end{equation}
that is, $\Omega_{\star\star,N}$ is the domain formed by pushing the lower boundary of $\Omega_{\star\star}$ up by a distance of $1/N$. By the monotone convergence theorem we see that  to prove \eqref{ibnwstup.eq}, it is enough to prove that
\begin{equation}\label{ibnwstuptrunc.eq}
\iiint_{\Omega_{\star\star,N}}\big(
u|\partial_s u|^2
+
u|\nabla_Y^2 u|^2
\big)(\mbf{Y})\,\d\mbf{Y} \lesssim R^d\,,
\end{equation}
uniformly in $N$.

Using the collection of Whitney-type cubes $\W = \{\mathcal{J}\}$, 
we form a partition of unity.  
That is, using the fact that
\begin{equation}\label{whitneypts.eq}
\delta(\mbf{X}) \approx \theta^{-1} \ell(\mathcal{J}),\quad 
\forall \, \mbf{X} \in 2\mathcal{J},\, \mathcal{J} \in \W\,,
\end{equation}
we construct
$\eta_\mathcal{J} \in C_0^\infty(2\mathcal{J})$, with the properties
\begin{equation}\label{POUderest.eq}
\ell(\mathcal{J})|\nabla \eta_\mathcal{J}| + \ell(\mathcal{J})^2|\partial_t \eta_\mathcal{J}| \lesssim 1\,,
\end{equation}
such that 
\begin{equation}\label{partunity}
\sum_{\mathcal{J} \in \W} \eta_{\mathcal{J}}(\mbf{X}) = 1,\qquad
\forall \, \mbf{X} \in \Omega\,.
\end{equation}
The well known construction
in \cite[Chapter 6]{Stein-SIOs} adapts routinely to the parabolic setting.  We omit the details.

% For each $\mathcal{J} \in \W$, 
%we let $\tilde{\eta}_\mathcal{J}$ be such that 
% $0 \le \eta \le 1$, $\tilde{\eta}_\mathcal{J} \in C_0^\infty(2\mathcal{J})$, 
% $\tilde{\eta}_\mathcal{J} \equiv 1$ on $\mathcal{J}$ and 
% $|\nabla \tilde{\eta}_\mathcal{J}| \lesssim \ell(\mathcal{J})^{-1}$ and 
% $|\partial_t \tilde{\eta}_\mathcal{J}| \lesssim \ell(\mathcal{J})^{-2}$.
% Then we set
% \[ \phi = \sum_{\mathcal{J} \in \mathcal{W}'} 
% \tilde{\eta}_\mathcal{J} \approx 1,\]
% where the equivalence comes from the bounded overlap of 
% $\{10\mathcal{J}\}_{\mathcal{J} \in \W'}$. 

Now we set
\begin{equation}\label{WssNdef.eq}
\W_{\star\star,N} = \{\mathcal{J} \in \W: 2\mathcal{J} \cap \Omega_{\star\star,N} \neq \emptyset\}
\end{equation}
and
\begin{equation}\label{etaNdefin.eq}
\eta := \eta_N := \sum_{\mathcal{J} \in \W_{\star\star,N}} \eta_{\mathcal{J}}.
\end{equation}
Since $\supp \eta_{\mathcal{J}} \subset 2\mathcal{J}$, and $1 = \sum_{\mathcal{J}} \eta_\mathcal{J}$ on $\Omega$,  
we have $\eta \equiv 1$ on $\Omega_{\star\star,N}$. Moreover,
by Lemma \ref{binstars.lem} and \eqref{thetapwhitdecomp.eq}, 
we see that if $\mathcal{J} \in W_{\star\star,N}$, then $10\mathcal{J} \in \Omega_{\star\star\star}$. Hence
\begin{equation}\label{etasupp.eq}
\mathbbm{1}_{\Omega_{\star\star,N}} \le \eta \le \mathbbm{1}_{\Omega_{\star\star\star}}.
\end{equation}
In particular,
\begin{equation}\label{A2Bbdsssn.eq}
\iiint_{\Omega_{\star\star,N}}\big(
u|\partial_s u|^2
+
u|\nabla_Y^2 u|^2
\big)(\mbf{Y})\,\d\mbf{Y} \leq A+B \leq A + 2B,
\end{equation}
where
\[A := \sum_{0\leq i,j\leq n-1}\iiint  u ( u_{y_i y_j} )^2\eta \,\d\mbf{Y},\qquad B:= \iiint u u_s^2\eta\, \d\mbf{Y}\,.\]
%and where we use the convention that we sum over all 
% $i$ and $j$ in $\{0,1,\dots, n-1\}$. 
The following square function estimate is adapted from
 \cite{LeNy}, and is crucial to our ability to impose an assumption 
 {\em only} on the caloric measure, or only on the adjoint caloric measure,
 but not on both simultaneously. 
 For the convenience of the reader we 
 include a proof in the next subsection.  % (Subsection \ref{sgfproof}).

\begin{lemma}[essentially, \cite{LeNy}]
\label{sqrf} Let $A$ and $B$ be defined as above. Then
\begin{equation*}
A+2B\lesssim  \iiint \big(|\nabla_Y \eta(\mbf{Y})|+\delta(\mbf{Y} )|\partial_s \eta(\mbf{Y})|\big)\, \d\mbf{Y}.
\end{equation*}
\end{lemma}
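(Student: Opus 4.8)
The plan is to prove the identity by repeated integration by parts, using throughout that $u$ --- and hence each partial derivative $u_{y_k}$ ($0\le k\le n-1$) and $u_s$ --- solves the adjoint heat equation $-\partial_s w-\Delta_Y w=0$ on a full neighborhood of $\supp\eta$, so that $\Delta_Y u=-u_s$ there (recall $\supp\eta\subset\Omega_{\star\star\star}$ by \eqref{etasupp.eq}, and that $u$ is a smooth adjoint-caloric function on $\Omega_{\star\star\star}$ since the pole $\mbf{X}_\star$ lies far away in time). Since $\eta\in C_0^\infty(\Omega)$, every integration by parts produces no boundary terms. I will also freely use the pointwise bounds $u\approx\delta$, $|\nabla_Y u|\lesssim 1$, $\delta|\nabla_Y^2 u|\lesssim 1$, and $\delta|u_s|\lesssim 1$, all valid on $\Omega_{\star\star\star}$ by \eqref{est:G-delta-sawtooth:deriv:1}--\eqref{est:G-delta-sawtooth:deriv:2}.

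First I would expand $A$. Writing $|\nabla_Y^2 u|^2=\sum_k|\nabla_Y u_{y_k}|^2$ and applying, for each $w=u_{y_k}$, the identity $|\nabla_Y w|^2=\dv_Y(w\nabla_Y w)+w\,w_s$ (a consequence of $\Delta_Y w=-w_s$), then integrating by parts in $Y$ and in $s$ and summing over $k$, one arrives at
\[
A=-\tfrac12\iiint u\,\nabla_Y\eta\cdot\nabla_Y\big(|\nabla_Y u|^2\big)-\iiint\eta\,T-\tfrac12\iiint\eta\,u_s|\nabla_Y u|^2-\tfrac12\iiint u\,\partial_s\eta\,|\nabla_Y u|^2,
\]
where $T:=\tfrac12\,\nabla_Y u\cdot\nabla_Y(|\nabla_Y u|^2)$. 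The key manoeuvre is to recast $T$ itself in divergence form: integrating by parts and again invoking $\Delta_Y u=-u_s$ gives $\iiint\eta\,T=\tfrac12\iiint\eta\,u_s|\nabla_Y u|^2-\tfrac12\iiint|\nabla_Y u|^2\,\nabla_Y u\cdot\nabla_Y\eta$, and substituting this back collapses the identity to
\[
A=G_1-X,\qquad X:=\iiint\eta\,u_s|\nabla_Y u|^2,
\]
where $G_1$ comprises only $\iiint u\,\nabla_Y\eta\cdot\nabla_Y(|\nabla_Y u|^2)$, $\iiint|\nabla_Y u|^2\,\nabla_Y u\cdot\nabla_Y\eta$, and $\iiint u\,\partial_s\eta\,|\nabla_Y u|^2$, each of whose integrands is pointwise $\lesssim|\nabla_Y\eta|$ or $\lesssim\delta|\partial_s\eta|$ by the bounds above.

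Next I would expand $B$. Using $u_s=-\dv_Y\nabla_Y u$ and integrating by parts once, $B=\iiint u u_s^2\eta=\iiint\nabla_Y(u u_s\eta)\cdot\nabla_Y u=X+Y+G_2$, with $Y:=\iiint u\,\eta\,\nabla_Y u_s\cdot\nabla_Y u$ and $G_2:=\iiint u u_s\,\nabla_Y\eta\cdot\nabla_Y u$ (and $|G_2|\lesssim\iiint|\nabla_Y\eta|$). Since $\nabla_Y u_s=\partial_s\nabla_Y u$, an integration by parts in $s$ yields $Y=-\tfrac12 X-\tfrac12\iiint u\,\partial_s\eta\,|\nabla_Y u|^2$, whence $X+2Y=-\iiint u\,\partial_s\eta\,|\nabla_Y u|^2$. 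Combining,
\[
A+2B=(G_1+2G_2)+(X+2Y)=(G_1+2G_2)-\iiint u\,\partial_s\eta\,|\nabla_Y u|^2,
\]
and the $X$-terms have cancelled. Estimating each remaining integrand pointwise on $\Omega_{\star\star\star}$ (each is $\lesssim|\nabla_Y\eta|$ or $\lesssim\delta|\partial_s\eta|$, using $u\approx\delta$, $|\nabla_Y u|\lesssim1$, $\delta|\nabla_Y^2u|\lesssim1$, $\delta|u_s|\lesssim1$) gives $A+2B\lesssim\iiint\big(|\nabla_Y\eta|+\delta|\partial_s\eta|\big)$, as claimed.

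The main obstacle is the cubic term $T$, equivalently the quantity $X=\iiint\eta\,u_s|\nabla_Y u|^2$: it cannot be estimated term by term, since pointwise $u_s|\nabla_Y u|^2\approx\delta^{-1}$ and $\iiint_{\Omega_{\star\star\star}}\delta^{-1}$ diverges --- this is precisely why a one-sided hypothesis looks at first insufficient, and is the point of the Lewis--Nystr\"om refinement \cite{LeNy}. The resolution is structural rather than quantitative: put $T$ into divergence form using the equation, reducing $A$ to $G_1-X$, and then observe that the coefficient $2$ in front of $B$ is exactly what makes the occurrences of $X$ cancel ($-X$ from $A$, $+2X$ from $B$, and a further $-X$ from integrating $Y$ by parts in $s$). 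A minor point to monitor throughout is that no integration by parts should place more than one derivative on $\eta$; one checks that this is automatic in the above scheme, so terms with $\nabla_Y^2\eta$ or $\partial_s^2\eta$ never appear.
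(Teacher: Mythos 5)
Your proof is correct, and it is essentially the paper's argument recast in slightly different bookkeeping: the paper's key intermediate quantity $\iiint|\nabla_Y u|^2\,\Delta u\,\eta$ is precisely $-X$ in your notation, and the paper likewise finds $A = -X + \mathrm{errors}$, $B = \tfrac12 X + \mathrm{errors}$, so that $A+2B$ reduces to error terms controlled by $|\nabla_Y\eta|$ and $\delta|\partial_s\eta|$. Your remark identifying the cancellation of $X$ (enabled by the exact coefficient $2$ in $A+2B$) as the reason a one-sided hypothesis suffices is exactly the Lewis--Nystr\"om point.
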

We take the lemma for granted momentarily, and 
defer the proof to Subsection \ref{sgfproof}.

We claim that
\begin{equation}\label{POUtruncIBP.eq}
\iiint \big(|\nabla_Y \eta(\mbf{Y})|
+\delta(\mbf{Y} )|\partial_s \eta(\mbf{Y})|\big)\, \d\mbf{Y} \lesssim R^d.
\end{equation}
Taking \eqref{POUtruncIBP.eq} for granted momentarily, 
we observe that
\eqref{POUtruncIBP.eq},  Lemma \ref{sqrf} and \eqref{A2Bbdsssn.eq} 
combine to give \eqref{ibnwstuptrunc.eq}. Therefore we have, modulo Lemma \ref{sqrf}, reduced matters to verifying \eqref{POUtruncIBP.eq}.

To start the proof of \eqref{POUtruncIBP.eq}, recall the definition of $\eta$ \eqref{etaNdefin.eq} and $\W_{\star\star,N}$ \eqref{WssNdef.eq}. Note that by \eqref{etasupp.eq},  if $D$ %$\partial$ 
is any partial derivative operator (in $X$ or $t$), then 
$D \eta(\mbf{X}) = 0$ if $\mbf{X} \in \Omega_{\star\star,N}$. Hence
\[
|D \eta| \le \sum_{\substack{\mathcal{J} \in 
\W_{\star\star,N}\\ 2\mathcal{J} \cap( \Omega_{\star\star,N} )^c 
\neq \emptyset} }|D \eta_\mathcal{J}|.
\]
By definition $2\mathcal{J} \cap \Omega_{\star\star,N} \neq \emptyset$ 
for all $\mathcal{J} \in \W_{\star\star,N}$, and for such $\mathcal{J}$,
\[
2\mathcal{J} \cap( \Omega_{\star\star,N} )^c \neq \emptyset \implies 
2\mathcal{J} \cap \partial\Omega_{\star\star,N} \neq \emptyset\,.
\]
Thus, if we define
\[\mathcal{B}_N = \{\mathcal{J} \in \W_{\star\star,N}: 2\mathcal{J} \cap \partial\Omega_{\star\star,N} \neq \emptyset\},\]
then it holds that % for any partial derivative $\partial$,
\[|D \eta| \le \sum_{\mathcal{J} \in \mathcal{B}_N }|D \eta_\mathcal{J}|\,.\]
Therefore, to prove \eqref{POUtruncIBP.eq} it is enough to show
\begin{equation}\label{BNgoal.eq}
\sum_{\mathcal{J} \in \mathcal{B}_N } \iiint 
\big(|\nabla_Y \eta_\mathcal{J}(\mbf{Y})|+\delta(\mbf{Y} )|\partial_s \eta_\mathcal{J}(\mbf{Y})|\big)\,\d\mbf{Y} \lesssim R^d.
\end{equation}

Let $\Sigma_N : = \{(\psi_N(x,t),(x,t)): (x,t) \in \rn\}$, where 
$\psi_N$ is defined in \eqref{Ndef}. Then
\[
\partial\Omega_{\star\star,N} 
=  (\partial\Omega_{\star\star,N} \cap \Sigma_N)  \cup \big(\partial\Omega_{\star\star,N} \cap \partial I_{125R}(\mbf{X}^0)\big),
\]
 and we can further decompose $\B_N = \B_N^{(1)} \cup \B_N^{(2)}$, where
 \[\B_N^{(1)}= \{\mathcal{J} \in \W_{\star\star,N}: 2\mathcal{J} \cap (\partial\Omega_{\star\star,N} \cap \Sigma_N)  \neq \emptyset\},\]
 and
 \[
 \B_N^{(2)}= \{\mathcal{J} \in \W_{\star\star,N}: 2\mathcal{J} \cap \big(\partial\Omega_{\star\star,N} \cap \partial I_{125R}(\mbf{X}^0)\big)
  \neq \emptyset\}.
 \]
We will handle contributions to \eqref{BNgoal.eq} coming from each of these families similarly, but with a small difference which we point out later. Let us start with $\B_N^{(1)}$. For each $\mathcal{J} \in \B_N^{(1)}$ we have that $\sigma_N(E_\mathcal{J}) \approx \ell(\mathcal{J})^d$ where 
$E_\mathcal{J}: = 4\mathcal{J} \cap \Sigma_N$ and $\sigma_N: = \cH^d_p|_{\Sigma_N}$, The sets $\{E_\mathcal{J}\}_{\mathcal{J}\in \B_N^{(1)}} $ 
have bounded overlap, since the fattened cubes in 
$\{10 \mathcal{J}\}_{\mathcal{J}\in\W}$ have 
bounded overlap. Moreover,
\[
\bigcup_{\mathcal{J} \in \B_N^{(1)}} 
4\mathcal{J} \subset I_{CR}(\mbf{X}^0),
\]
where $C$ can be taken to depend only on $n$ 
and  $M_0$. % $\|\psi\|_{\Lip(1,1/2)}$. 
Thus, since $\sigma_N$ is a 
parabolic ADR measure, with constants also
depending only on $n$ and  $M_0$, % $\|\psi\|_{\Lip(1,1/2)}$, 
we have
 \[
 \sum_{\mathcal{J} \in \B_N^{(1)}} \ell(\mathcal{J})^d 
 \approx \sum_{\mathcal{J} \in \B_N^{(1)}} \sigma_N(E_\mathcal{J}) 
 \approx \sigma_N\biggl(\bigcup_{\mathcal{J} \in \B_N^{(1)}} E_\mathcal{J} \biggr) \lesssim R^d.
 \]
Now using 
% that $\delta(\mbf{X}) \approx_{\theta} \ell(\mathcal{J})$ 
% for $\mbf{X} \in \mathcal{J}$, 
\eqref{whitneypts.eq} and \eqref{POUderest.eq}, we have
\begin{equation}
\sum_{\mathcal{J} \in \mathcal{B}_N^{(1)}} \iiint 
\big(|\nabla_Y \eta_\mathcal{J}(\mbf{Y})|
+\delta(\mbf{Y} )|\partial_s \eta_\mathcal{J}(\mbf{Y})|\big)\,
\d\mbf{Y} \lesssim \sum_{\mathcal{J} \in \B_N^{(1)}} 
\ell(\mathcal{J})^d \lesssim  R^d.
\end{equation}
This controls the contribution of $\B_N^{(1)}$ to \eqref{BNgoal.eq}.

For each cube $\mathcal{J}\in \B_N^{(2)}$, we have that 
$2\mathcal{J}$ meets a face of 
$\partial I_{125R}(\mbf{X}^0)$. 
We can handle the `space-time' faces (the ones not perpendicular to the time direction) in the same way as we handled the cubes in $\B_N^{(1)}$. 
In the case that $2\mathcal{J}$ meets a face perpendicular 
to the time direction, we need to
do something a little different. 

Let us sketch the argument. 
Recall that $\xbf^0:= (x_0^0,x^0,t^0)$.
Consider, e.g., the time-forward
face, call it $\tt{F}^+$, where 
$t = t_* := t^0 + (125R)^2$,  and
let $\widetilde{\B}_N$ be the collection of cubes in $\B^{(2)}_N$ 
such that $2\mathcal{J}$ meets this face. 
% If $\mathcal{J} \in \widetilde{\B}_N$, then  $2\mathcal{J}$ meets the 
% time-slice cross-section of $\Omega$ defined by
% \[S = \{(x_0,x,t_*): x_0 > \psi(x,t_*)\}.\]
In a similar manner as above, we set 
$E_\mathcal{J} := 4\mathcal{J} \cap \tt{F}^+$, 
which is simply the 
time-slice cross-section of a parabolic cube, i.e., an
$n$-dimensional standard cube.
Consequently, the $n$-dimensional Lebesgue measure of 
$E_\mathcal{J}$ satisfies 
$|E_\mathcal{J}|_n = 
\ell(\mathcal{J})^n =\ell(\mathcal{J})^{d -1}$. 
% (by Lemma \ref{binstars.lem}). 
Note that for $\mbf{X} = (x_0,x,t_*) \in E_{\mathcal{J}}$, 
it holds that 
$\ell(\mathcal{J}) \approx \theta\delta(\mbf{X})$, and furthermore,
% $\approx x_0 - \psi(x,t_*)$. 
for all $\mathcal{J} \in \B_N$, we have
$4\mathcal{J} \subset I_{CR}(\mbf{X}^0)$,
and therefore $\ell(\mathcal{J}) \lesssim R$.
Then by
\eqref{POUderest.eq}, since $d=n+1$ by definition,
%and that the sets $\{E_\mathcal{J}\}$ have bounded overlap, we see that
\begin{multline*}
\sum_{\mathcal{J} \in \widetilde{\B}_N} 
\iiint \big(|\nabla_Y \eta_\mathcal{J}(\mbf{Y})|+\delta(\mbf{Y} )|\partial_s \eta_\mathcal{J}(\mbf{Y})|
\big)\,\d\mbf{Y} \lesssim \sum_{\mathcal{J} \in \widetilde{\B}_N} \ell(\mathcal{J})^d
= \sum_{\mathcal{J} \in \widetilde{\B}_N} \ell(\mathcal{J}) 
|E_{\mathcal{J}}|_n
\\
\lesssim \, R \sum_{\mathcal{J} \in \widetilde{\B}_N} 
|E_{\mathcal{J}}|_n \, \lesssim \, R^{n+1}\, =\, R^d\,,
\end{multline*}
where in the last inequality, we have used
that the sets $\{E_\mathcal{J}\}$ have bounded overlap, and are all 
contained in ${\tt F}^+$, which is an $n$-dimensional standard cube of 
side length $\ell({\tt F}^+) \approx R$.

The time backwards face, where $t=t^0 - (125R)^2$, can be handled by 
essentially the same argument.
This proves \eqref{BNgoal.eq} and concludes the proof of Proposition \ref{prop:CM-ar} modulo Lemma \ref{sqrf}.

\subsection{Proof of Lemma \ref{sqrf}}\label{sgfproof}
We follow \cite{LeNy}. Recall that $u$ and its derivatives solve the adjoint equation (we will just say `the equation' below), that is, $\partial_su+\Delta u =0= \partial_su_{y_j}+\Delta u_{y_j}$.  Here and below we use $f_{y_j}$ to denote the partial derivative of a function $f$ with respect to $y_j$ and we will $f_s$ to denote the $s$-derivative of $f$. 
% Let us begin with $A$. 
We shall use the standard summation convention for repeated indices. 
Integrating by parts and using the equation, we have
\begin{multline*}
A=-\iiint u_{y_i}u_{y_j}u_{y_iy_j}\eta \, \d\mbf{Y}
-\iiint uu_{y_j}\Delta u_{y_j}\eta \, \d\mbf{Y}
-\iiint uu_{y_j}u_{y_iy_j}\eta_{y_i} \, \d\mbf{Y}
\\[4pt]
=-\frac 1 2 \iiint \frac {\partial}{\partial y_j}|\nabla_Y u|^2u_{y_j}\eta \, \d\mbf{Y}
+\frac 1 2 \iiint u\frac {\partial}{\partial s}|\nabla_Y u|^2\eta \, \d\mbf{Y}
-\iiint uu_{y_j}u_{y_iy_j}\eta_{y_i} \, \d\mbf{Y}.
\end{multline*}
Using integration by parts once more, as well as the equation,
\begin{multline*}
A=\frac 1 2 \iiint |\nabla_Y u|^2\Delta u\,\eta \, \d\mbf{Y}+
\frac 1 2 \iiint |\nabla_Y u|^2u_{y_j}\eta_{y_j} \, \d\mbf{Y}
+\frac 1 2 \iiint \Delta u|\nabla_Y u|^2\eta \, \d\mbf{Y}
\\[4pt] 
- \frac 1 2 \iiint u|\nabla_Y u|^2\eta_s \, \d\mbf{Y}
-\iiint uu_{y_j}u_{y_iy_j}\eta_{y_i} \, \d\mbf{Y}
\\[4pt]
= \iiint |\nabla_Y u|^2\Delta u\,\eta \, \d\mbf{Y} \,+\, {\tt E}\,,
\end{multline*}
where ${\tt E}$ represents a sum of error terms satisfying
\begin{equation}\label{errorterm}
|{\tt E}| \lesssim \iiint \left(|\nabla_{Y}u|^3 |\nabla_Y \eta|
\,+\, u|\nabla_{Y}u|^2 |\eta_s| \,
+\,u |\nabla_Y u|\,|\nabla_Y^2 u|\, |\nabla_Y\eta|\right)
\d\mbf{Y}\,.
\end{equation}

We similarly manipulate the term $B$:
\begin{multline*}
B=
\iiint u u_s^2\eta\, \d\mbf{Y}=\iiint u (\Delta u)^2\eta\, \d\mbf{Y}\\
=-\iiint |\nabla_Y u|^2\Delta u\eta\, \d\mbf{Y}-
\iiint u u_{y_i}\Delta u_{y_i}\eta\, \d\mbf{Y}
-\iiint u u_{y_i}\Delta u\eta_{y_i}\, \d\mbf{Y}.
\end{multline*}
By further manipulations, integration by parts and using the equation satisfied by $u$,
\begin{multline*}
B=-\iiint |\nabla_Y u|^2\Delta u\,\eta\, \d\mbf{Y}+\frac 1 2
\iiint u \frac {\partial}{\partial s}|\nabla_Y u|^2\eta\, \d\mbf{Y}
-\iiint u u_{y_i}\Delta u\eta_{y_i}\, \d\mbf{Y}
\\[4pt]
=-\iiint |\nabla_Y u|^2\Delta u\,\eta\, \d\mbf{Y}+\frac 1 2
\iiint \Delta u |\nabla_Y u|^2\eta\, \d\mbf{Y} \,+\, {\tt E}
\\[4pt]
=-\frac12\iiint |\nabla_Y u|^2\Delta u\,\eta\, \d\mbf{Y}
% +\frac 1 2 \iiint \Delta u |\nabla_Y u|^2\eta\, \d\mbf{Y} 
\,+\, {\tt E}\,,
% \\ -\,\frac 1 2
% \iiint u |\nabla_Y u|^2\eta_s\, \d\mbf{Y}-
% \iiint u u_{y_i}\Delta u\eta_{y_i}\, \d\mbf{Y}.
\end{multline*}
where ${\tt E}$ is a sum of two error terms, again controlled by \eqref{errorterm}.
Adding, we conclude that
\[
A+ 2B = {\tt E}\,.
\]
Hence, using \eqref{est:G-delta-sawtooth:deriv:1}, \eqref{est:G-delta-sawtooth:deriv:2}, and \eqref{etasupp.eq}, we deduce from the nature of the error terms that
\begin{equation}\label{a2bid.eq}
A+2B\lesssim  \iiint \big(|\nabla_Y \eta(\mbf{Y})|+\delta(\mbf{Y} )|\partial_s \eta(\mbf{Y})|\big)\ \, \d\mbf{Y}.
\end{equation}

\section{Proof of Theorem \ref{mainthrm.thrm}}\label{final}

As mentioned at the beginning of Section \ref{mainthrmsetup.sect}, to prove Theorem \ref{mainthrm.thrm} it suffices to prove, for the fixed cube $Q_R=Q_R(\mbf{x}^0)$, that there exists $N_\star$, depending only on the structural constants, such that
\begin{equation}\label{JS-lemma}
	\inf_C \big|\big\{
\mbf{y}\in Q_R: |\mathcal{D}_t \psi(\mbf{y})-C|>N_\star
\big\}\big|
\le
\,(1/4)|Q_R|.
\end{equation}
With this goal in mind, recall that $\Pi(\mbf{y}):=(0,y,s)$ for each $\mbf{y}=(y_0,y,s)\in\ree$,  and that the closed set  $F=\Pi(F_\star)\subset Q_{50 R}(\mbf{x}^0)$ was introduced in \eqref{proja+}. By \eqref{projsizeeq.eq} and Chebyshev's inequality,
\[
\big|\big\{
\mbf{y}\in Q_R: |\mathcal{D}_t \psi(\mbf{y})-C|>N_\star
\big\}\big|
\le
\frac1{N_\star} \iint_{F\cap Q_R} |\mathcal{D}_t \psi(\mbf{Y})-C|\,\d\mbf{y}
+
(1/8)|Q_R|.
\]
We claim that
\begin{equation}\label{BMO-q}
\frac1{|Q_R|} \iint_{F\cap Q_R} |\mathcal{D}_t \psi(\mbf{y})-C|\,\d\mbf{y}
\le
C_1,
\end{equation}
where $C_1$ depends only on the structural constants.  We then choose $N_\star = 1/(8C_1)$ to deduce \eqref{JS-lemma} 
from \eqref{BMO-q}. 
Our goal is therefore to obtain \eqref{BMO-q}. We divide the proof into steps.

\subsection{Step 1: Localization} \label{ss6.1}

Let $\varphi \in C_c^\infty(\re)$ be
an even function with $\mathbbm{1}_{(-9,9)}\le \varphi \le \mathbbm{1}_{(-10,10)}$ and set $\Phi(x,t) := \varphi(x_1)\varphi(x_2)\dots \varphi(x_{n-1})
\varphi(t/10)$ for $\mbf{x}= (x,t) = (x_1,\dots x_{n-1}, t) \in\re^n$. We set $\Phi_R(\mbf{x}):= \Phi(x/R,t/R^2)$ for $\mbf{x}= (x,t) \in\re^n$.
Recalling the definition of $\mathrm{I_P}$ in \eqref{def-IP}, we write $V_R=\Phi_R\,V$ for the locally truncated kernel, and we consider the localized 
parabolic fractional integral
\begin{equation}\label{IPRdef.eq}
\mathrm{I}_\mathrm{P}^R h(\mbf{x})
:=
\iint_{\re^n} V_R(\mbf{x}-\mbf{y})\,h(\mbf{y})\,\d\mbf{y}
=
\iint_{\re^n} \Phi_R(\mbf{x}-\mbf{y})\,V(\mbf{x}-\mbf{y})\,h(\mbf{y})\,\d\mbf{y}.
\end{equation}
We can then define the localized half-order time derivative
\begin{multline}\label{localdtdef}
	\mathcal{D}_t^R \psi(\mbf{x})
:=
\partial_t \circ\mathrm{I}_\mathrm{P}^R  \psi(\mbf{x})
=
\iint_{\re^n} K^R (\mbf{x}-\mbf{y})\,\psi(\mbf{y})\,\d\mbf{y}
: =
\iint_{\re^n} \partial_t\big(V_R(\mbf{x}-\mbf{y})\big)\,\psi(\mbf{y})\,\d\mbf{y}
\\
=
\iint_{\re^n} \partial_t\big(V(\mbf{x}-\mbf{y})\,\Phi_R(\mbf{x}-\mbf{y})\big)\,\psi(\mbf{y})\,\d\mbf{y},
\qquad
\mbf{x}=(x,t)\in\re^n.
\end{multline}
This operator should be viewed as a principal value operator, or $\partial_t$ should be considered in the weak sense. Let $\mathcal{E}^R:=\mathcal{D}_t-\mathcal{D}_t^R$ and set $K_R := \partial_t V - K^R$. Thus
\[
\mathcal{E}^R \psi(\mbf{x})
=
\iint_{\re^n} \partial_t\big(V(\mbf{x}-\mbf{y})\,(1-\Phi_R(\mbf{x}-\mbf{y}))\big)\,\psi(\mbf{y})\,\d\mbf{y}
=
\iint_{\re^n} K_R (\mbf{x}-\mbf{y})\,\psi(\mbf{y})\,\d\mbf{y}
\]
for all $\mbf{x}=(x,t)\in\re^n$.
Recalling that $d=n+1$, we observe that
\begin{align}
	\label{KR:estimates}
	\begin{split}
	& |K_R(\mbf{x})|\lesssim \|\mbf{x}\|^{-d-1}\,\mathbbm{1}_{(Q_{9R}(0))^c}(\mbf{x}),
	\\[2pt]
	&|K_R(\mbf{x})-K_R(\mbf{x}')|\lesssim \frac{R}{(\|\mbf{x}\|+R)^{d+2}}, \qquad\text{if }\|\mbf{x}-\mbf{x}'\|\lesssim R,
	\\[2pt]
	&\iint_{\re^n} K_R(\mbf{x})\,\d\mbf{x}=0, \qquad\text{(i.e., $\mathcal{E}^R 1=0$).}
	\end{split}
\end{align}
These estimates imply that if $\|\mbf{x}-\mbf{x'}\|\lesssim R$, then
\begin{multline*}
	|\mathcal{E}^R \psi(\mbf{x})-\mathcal{E}^R \psi(\mbf{x}')|
=
\Big|
\iint_{\re^n} \big(K_R(\mbf{x}-\mbf{y})-K_R(\mbf{x}'-\mbf{y})\big)\,\big(\psi(\mbf{y})-\psi(\mbf{x})\big)\,\d\mbf{y}
\Big|
\\
\lesssim\,
\iint_{\re^n} \frac{R}{(\|\mbf{x}-\mbf{y}\|+R)^{d+2}}\,\|\mbf{y}-\mbf{x}\|\,\d\mbf{y}
\, \lesssim\,
\iint_{\re^n} \frac{R}{(\|\mbf{x}-\mbf{y}\|+R)^{d+1}}\,\d\mbf{y}
\lesssim 1.
\end{multline*}
Thus, if we pick $C=\mathcal{E}^R \psi(\mbf{x}^0)$, then
\begin{multline*}
	\frac1{|Q_R|} \iint_{F\cap Q_R} |\mathcal{D}_t \psi(\mbf{y})-C|\,\d\mbf{y}
	\\ =\,
	\frac1{|Q_R|} \iint_{F\cap Q_R} \big| \mathcal{D}_t^R \psi(\mbf{y})
+ \mathcal{E}^R \psi(\mbf{y})
	 -\mathcal{E}^R \psi(\mbf{x}^0)\big|\,\d\mbf{y}
	\, \lesssim\, 
	\frac1{|Q_R|} \iint_{F\cap Q_R} \big|\mathcal{D}_t^R \psi(\mbf{y}) \big|\,\d\mbf{y} \,+\, 1.
\end{multline*}
As a result, \eqref{BMO-q} follows once we can prove its localized version
\begin{equation}\label{BMO-q:localized}
	\frac1{|Q_R|} \iint_{F\cap Q_R} |\mathcal{D}_t^R \psi(\mbf{y})|\,\d\mbf{y}
	\le
	C_2,
\end{equation}
for a constant $C_2$ depending only on the structural constants. Having reduced matters to proving \eqref{BMO-q:localized}, we make a further reduction in the next step.

\subsection{Step 2: Replacing $\psi$ by $\psi$ along a contour}
To begin, for $\mbf{x} \in Q_{50R}(\mbf{x}^0)$ we define
\begin{equation}\label{ahdef.eq}
\psi^h(\mbf{x}):=\psi(h(\mbf{x});\mbf{x}),
\end{equation}
where $h(\mbf{x}) \approx \dist(\mbf{x}, F)$ is as introduced in Lemma \ref{regdistlem.lem} and $\psi(r;\mbf{x})$ is defined in \eqref{asemicoldef.eq}. Note that the function $\psi^h(\mbf{x})$ is well defined. Indeed, by Lemma \ref{lemma:a-h-small} it holds $h(\mbf{x}) \le R/(80\Lambda_1) \le R/\Lambda_0$ for $\mbf{x} \in Q_{50R}(\mbf{x}^0)$, so that Lemma \ref{lemma:level-sets} allows us to plug-in $r = h(\mbf{x})$ into $\psi_r(\mbf{x})$.

Recalling that $K^R(\mbf{x})=\partial_t\big(V_R(\mbf{x})\big)=\partial_t\big(V(\mbf{x})\,\Phi_R(\mbf{x})\big)$ is the 
kernel of $\mathcal{D}_t^R$, that $h\equiv 0$ in $F$, and that 
$K^R(\mbf{z}) \lesssim \|\mbf{z}\|^{-d-1}\,\mathbbm{1}_{Q_{10R}(0)}(\mbf{z})$, we see that if $\mbf{x}\in Q_R  \cap F 
=Q_R(\mbf{x}^0)\cap F$, then
\begin{multline}\label{awrfgase-}
|\mathcal{D}_t^R \psi(\mbf{x})-\mathcal{D}_t^R \psi^h(\mbf{x})|
\\ =\,
\Big|\iint_{\re^n} K^R(\mbf{x}-\mbf{y})\,\big(\psi(\mbf{y})-\psi^h(\mbf{y}) \big)\,\d\mbf{y}\Big|
\,\lesssim\,
\iint_{Q_{10R}(\mbf{x})\setminus F} \frac{\big|\psi(\mbf{y})-\psi(h(\mbf{y});\mbf{y}) \big|}{\|\mbf{x}-\mbf{y}\|^{d+1}\,}\,\d\mbf{y}.
\end{multline}
Furthermore, using \eqref{ahest} we can estimate the last term and deduce that
 for $\mbf{x}\in Q_R\cap F =Q_R(\mbf{x}^0)\cap F$,
\begin{equation}\label{awrfgase}
|\mathcal{D}_t^R \psi(\mbf{x})-\mathcal{D}_t^R \psi^h(\mbf{x})|\,\lesssim \,\iint_{Q_{10R}(\mbf{x})\setminus F}\frac{\dist(\mbf{y},F)}{\|\mbf{x}-\mbf{y}\|^{d+1}\,}\,\d\mbf{y} 
\,\le\, \iint_{Q_{20R}(\mbf{x}^0)\setminus F}\frac{\dist(\mbf{y},F)}{\|\mbf{x}-\mbf{y}\|^{d+1}\,}\,\d\mbf{y} \,, %=: \,H_{F^c}\left(1_{Q_{20R}(\mbf{x}^0)}\right)(x)\,,
\end{equation}
% Note that the term to the right in the last display
which is a parabolic Marcinkiewicz integral; thus,
since $\dist(\mbf{y},F) \leq \|\mbf{x}-\mbf{y}\|$ for
 $\mbf{x}\in F$, % and  in particular  $\|\mbf{x}-\mbf{x}^0\|\lesssim_n R$, we have
 \begin{equation*}
\iint_{Q_R\cap F} |\mathcal{D}_t^R 
\psi(\mbf{x})-\mathcal{D}_t^R \psi^h(\mbf{x})|\, \d\mbf{x}\,
\lesssim \, \iint_{Q_{20R}(\mbf{x}^0)}\iint_{\|\mbf{x}-\mbf{y}\|\ge \dist(\mbf{y},F)}
\frac{\dist(\mbf{y},F)}{\|\mbf{x}-\mbf{y}\|^{d+1}\,}\,
 \d\mbf{x}\d\mbf{y}\\
\,\lesssim \,  |Q_R|\,.
\end{equation*}

Having previously reduced matters to proving \eqref{BMO-q:localized}, and using Cauchy-Schwarz, we can now conclude that it suffices to prove that
\begin{equation}\label{BMO-contour}
\frac1{|Q_R|} \iint_{Q_R} |\mathcal{D}^R_t \psi^h(\mbf{y})|^2\d\mbf{y}
\le
C_3,
\end{equation}
for a constant $C_3$ depending only on the structural constants.

\subsection{Step 3: Proof of \eqref{BMO-contour}}

 We let $\zeta \in C_0^\infty([-1,1])$ be an even function with $\mathbbm{1}_{[-1/2,1/2]} \le \zeta \le \mathbbm{1}_{[-1,1]}$ and set
\begin{equation}\label{prdef1}
p(x,t) = c_n\zeta(x_1)\zeta(x_2)\dots \zeta(x_{n-1})\zeta(t), \quad \forall \mbf{x} = (x,t) = (x_1,\dots,x_{n-1},t) \in \rn,
\end{equation}
where $c_n$ is chosen so that $\iint_{\rn} p(x,t) \, \d x \, \d t = 1$.  We define, for $(x,t) \in \rn$,
\begin{equation}\label{prdef2}
P_r f(x,t) := (p_r \ast f)(x,t),
\end{equation}
where $p_r(x,t) := r^{-d}p(x/r,t/r^2)$,
Thus, $P_r$ is a nice parabolic approximation to the identity on $\rn$.

Since $h$ is Lip(1,1/2) with $\|h\|_{\Lip{(1,1/2)}} \lesssim_n 1$, there exists $\gamma \ll_n 1/100$ such that
\begin{equation*}
|h(\mbf{x}) - P_{\gamma r}h(\mbf{x})| \le r/4,\quad \forall r > 0, \mbf{x} \in \rn,\]
and therefore
\begin{equation}\label{eqprh1}
r + P_{\gamma r}h(\mbf{x}) \ge 3r/4 + h(\mbf{x}) > h(\mbf{x}), \quad \forall \, r > 0, \mbf{x} \in \rn.
\end{equation}
Moreover, by \eqref {h:small},  recalling that $\Lambda$ in Lemma \ref{lemma:a-h-small} has been fixed equal to
 $\Lambda_1$, we have
\begin{equation}\label{eqhbound}
h(\mbf{x}) \leq R/(80\Lambda_1)\,, \quad \forall \, \mbf{x} \in Q_{50R}(\mbf{x}^0)\,.
\end{equation}
Consequently, for $r < R/(10\Lambda_1)$, 
\begin{equation}\label{eqprh2}
r + P_{\gamma r}h(\mbf{x}) \leq 5r/4 + h(\mbf{x}) \leq
R/(8\Lambda_1) + R/(80\Lambda_1) < R/\Lambda_1\,.
\end{equation}
In particular, if we set
\begin{equation}\label{tsdef}
\widetilde{\mathcal{S}}= \{(r,\mbf{x}),\, r \in (0,R/(10\Lambda_1)),\, \mbf{x} \in Q_{20R}(\mbf{x}^0)\},
\end{equation}
then \eqref{eqprh1}-\eqref{eqprh2} imply that
\begin{equation}\label{om1om0apcont.eq}
(r,\mbf{x}) \in \widetilde{\mathcal{S}} \implies \big(r + P_{\gamma r}h(\mbf{x}),\mbf{x}\big) \in \mathcal{S},
\end{equation}
where $\mathcal{S}$ is defined in \eqref{omz.eq}. Hence, by Lemma \ref{omnmapstar.lem},
\begin{equation}\label{om1omstar.eq}
(r,\mbf{x}) \in \widetilde{\mathcal{S}} \implies 
\big(\psi(r +P_{\gamma r}h(\mbf{x});\mbf{x}),\mbf{x}\big) \in \Omega_\star.
\end{equation}
We now define
\begin{equation}\label{atilddef.eq}
\tilde{\psi}(r;\mbf{x}): = \psi(r +P_{\gamma r}h(\mbf{x});\mbf{x})
\end{equation}
creating a map 
$ (r,\mbf{x}) \mapsto (\tilde{\psi}(r;\mbf{x}),\mbf{x})$
from $\widetilde{\mathcal{S}}$ to  $\Omega_\star$. We also introduce
\begin{equation}\label{Rprimedef}
% R' := 10^{-3} R/\Lambda_1.
\lambda := 1/(1000\Lambda_1)
\end{equation}
Thus, if  $r \in (0, 100\lambda R)$, then $r < R/(10\Lambda_1)$, which is the condition appearing in $\widetilde{\mathcal{S}}$.

Based on Lemma \ref{regdistlem.lem}, 
see also Remark \ref{remdist}, we refer to 
\cite[Lemma 2.8]{HL96} 
for the proof of the following lemma.

\begin{lemma}\label{PgrhLPprop.lem} Define $P_r$ as in 
\eqref{prdef1}-\eqref{prdef2}.  We have
\begin{align*}\bigl| \frac{1}{r} (P_r - I) h(\mbf{x})  \bigr| + \bigl|\partial_r P_r h(\mbf{x}) \bigr| + \bigl|r^{2j + m - 1}\nabla_{x,r}^m \partial_t^j P_{r} h(\mbf{x})\bigr| \lesssim 1,
\end{align*}
where the implicit constant depends at most on $(n,m,j)$. Furthermore,  the following estimates holds for all $Q \subset \rn$,
\begin{align*}
\mathrm{(i)}&\quad  \int_{0}^{\ell(Q)} \iint_{Q}\left|\frac{1}{r} (P_r - I) h(\mbf{x})  \right|^2\, \d\mbf{x} \, \frac{\d r}{r} \lesssim |Q|,\\
\mathrm{(ii)}&\quad \int_{0}^{\ell(Q)} \iint_{Q} \left|\partial_r P_r h(\mbf{x})  \right|^2\, \d\mbf{x} \, \frac{\d r}{r}\lesssim |Q|,\\
\mathrm{(iii)}&\quad \int_{0}^{\ell(Q)} \iint_{Q}  \left|r^{2j + m - 1}\nabla_{x,r}^m \partial_t^j P_{r} h(\mbf{x})  \right|^2 \, \d\mbf{x} \, \frac{\d r}{r} \lesssim |Q|,
\end{align*}
where in $\mathrm{(iii)}$ we require that $j,m\geq 0$, and that either 
$j \ge 1$,  or that $ m \ge 2$. Again the implicit constants depend at most on $(n,m,j)$.
\end{lemma}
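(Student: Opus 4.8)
The plan is to treat the elementary pointwise bounds and the Carleson‑measure estimates (i)--(iii) separately, with the latter carrying all the weight.

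For the pointwise bounds I would argue directly from $\|h\|_{\Lip(1,1/2)}\lesssim1$. Since $\iint_{\rn}p_r=1$ and $p$ is even, a scaling computation shows that each of the kernels $\partial_r p_r$ and $\nabla_{x,r}^m\partial_t^j p_r$ (with $m+j\ge1$) is smooth, supported in $Q_{Cr}(0)$, of parabolic homogeneity $r^{-d-1}$, resp.\ $r^{-d-m-2j}$, and has vanishing integral. One then inserts the center value $h(\mbf{x})$ into the relevant convolution and uses $|h(\mbf{y})-h(\mbf{x})|\lesssim\|\mbf{y}-\mbf{x}\|\lesssim r$ on the support, obtaining $\bigl|\tfrac1r(P_r-I)h\bigr|+\bigl|\partial_r P_r h\bigr|+\bigl|r^{2j+m-1}\nabla_{x,r}^m\partial_t^j P_r h\bigr|\lesssim1$.

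The engine for (i)--(iii) will be the observation that the operators in question annihilate $t$-independent affine functions, hence are pointwise dominated by the parabolic $\beta$-numbers of the graph of $h$, which themselves form a Carleson measure because $h$ is regular $\Lip(1,1/2)$. By Remark \ref{PUR} together with \eqref{eqnormequiv}, the conclusion of Lemma \ref{regdistlem.lem} is equivalent to the assertion that $\d\nu_h:=\beta_h^2(r,\mbf{x})\,\tfrac{\d r}{r}\,\d\mbf{x}$ is a Carleson measure with $\|\nu_h\|\lesssim1$, where $\beta_h$ is the $\beta$-number of the graph of $h$; and since $h$ is Lipschitz in the space variable, $\beta_h(r,\mbf{x})^2\approx\inf_L\bariint_{Q_r(\mbf{x})}\bigl|\tfrac{h(y,s)-L(y)}{r}\bigr|^2\,\d y\,\d s$, the infimum over affine functions $L$ of $y$ alone, so that $\int_0^{\ell(Q)}\iint_Q\beta_h^2(r,\mbf{x})\,\tfrac{\d r}{r}\,\d\mbf{x}\lesssim|Q|$ for every parabolic cube $Q$. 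Next, for any $t$-independent affine $L$ one has $P_r L=L$, so $\partial_r P_r$ and $\nabla_{x,r}^m\partial_t^j P_r$ (for $j\ge1$, or for $j=0$ and $m\ge2$) all kill $L$; writing these operators in convolution form, subtracting $L$, applying Cauchy--Schwarz, and taking the infimum over $L$ I would get
\[
|\partial_r P_r h(\mbf{x})|+|r^{2j+m-1}\nabla_{x,r}^m\partial_t^j P_r h(\mbf{x})|\,\lesssim\,\beta_h(Cr,\mbf{x})
\]
for a dimensional constant $C\ge1$. Squaring, integrating against $\tfrac{\d r}{r}\,\d\mbf{x}$ over $Q\times(0,\ell(Q))$, rescaling $r\mapsto Cr$ (slightly enlarging $Q$), and invoking $\|\nu_h\|\lesssim1$ then gives (ii) and (iii). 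Finally (i) follows from (ii) by writing $\tfrac1r(P_r-I)h=-\tfrac1r\int_0^r\partial_s P_s h\,\d s$, whence $\bigl|\tfrac1r(P_r-I)h(\mbf{x})\bigr|^2\le\tfrac1r\int_0^r|\partial_s P_s h(\mbf{x})|^2\,\d s$, and a Fubini argument. The first-display bounds are recovered along the way since $\beta_h\lesssim\|h\|_{\Lip(1,1/2)}\lesssim1$.

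The hard part is really packaged in the Carleson‑measure property $\|\nu_h\|\lesssim1$, equivalently $\|\mathcal{D}_t h\|_{\BMO(\rn)}\lesssim1$: this is exactly Lemma \ref{regdistlem.lem}, obtained from the regularized-distance construction of \cite{BHHLN-CME}, and is where the half-order time regularity of $h$ genuinely enters. The only other delicate point is bookkeeping which operators annihilate the $t$-independent affine functions --- in particular the bare spatial gradient $\nabla_x P_r$ (case $m=1$, $j=0$) does not, which is why it is excluded from (iii). Alternatively, following \cite[Lemma 2.8]{HL96} more closely, one can split $P_r$ into its spatial and temporal parts and control the spatial square function by $\|\nabla_x h\|_\infty\lesssim1$ via Plancherel (the extra decay coming from the $O(|\zeta|^2)$ flatness of $\widehat{p}$ at the origin, by evenness of $p$), and the temporal one by the Fefferman--Stein characterization of $\BMO$ applied to $\mathcal{D}_t h$.
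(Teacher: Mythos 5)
Your proof is correct, and it is worth noting that the paper itself does not supply one: the text simply defers to \cite[Lemma 2.8]{HL96}, whose argument is Fourier-analytic (Plancherel on the spatial part, Fefferman--Stein square-function characterization of $\BMO$ applied to $\mathcal{D}_t h$ for the temporal part), as you note in your closing paragraph. Your primary route is genuinely different and essentially geometric: you observe that every operator appearing in (ii) and (iii) annihilates $t$-independent affine functions (which is exactly what the hypothesis ``$j\ge 1$ or $m\ge 2$'' in (iii) is designed to ensure, and why the bare $m=1$, $j=0$ case is excluded), deduce the pointwise dominance $|\partial_r P_r h(\mbf{x})|+|r^{2j+m-1}\nabla_{x,r}^m\partial_t^j P_r h(\mbf{x})|\lesssim\beta_h(Cr,\mbf{x})$, and then invoke the Carleson property of $\beta_h^2\,\frac{\d r}{r}\d\mbf{x}$ from Lemma \ref{regdistlem.lem} via the equivalence in Remark \ref{PUR}. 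The reduction of (i) to (ii) by writing $\frac1r(P_r-I)h=-\frac1r\int_0^r\partial_s P_s h\,\d s$, Cauchy--Schwarz, and Fubini is clean and correct.

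The trade-off between the two routes is this: the \cite{HL96} argument is self-contained on the Fourier side and makes no appeal to the $\beta$-number formulation, working directly from $\mathcal{D}_t h\in\BMO$ plus $\|\nabla_x h\|_\infty\lesssim 1$. Your argument packages everything into the single Carleson input $\|\nu_h\|\lesssim 1$, which is conceptually tidy, but outsources the implication ``$\mathcal{D}_t h\in\BMO\Rightarrow\beta_h$ Carleson'' to the reference \cite{HLN2} cited in Remark \ref{PUR}. Since that implication is itself proved by a square-function/Littlewood--Paley argument of the same flavor as (ii)--(iii), the two proofs are in the end close cousins; there is no circularity, but the amount of imported machinery is roughly comparable. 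Both correctly identify Lemma \ref{regdistlem.lem} as the place where the half-order time regularity of $h$ actually enters.

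One minor bookkeeping point worth flagging: after the pointwise dominance $|\cdots|\lesssim\beta_h(Cr,\mbf{x})$ with a dimensional $C>1$, passing to the Carleson bound requires the change of variable $r\mapsto r/C$ and an enlargement of $Q$ to a cube of side $\approx C\ell(Q)$; this is harmless (it only degrades the constant by a factor depending on $C$ and $n$), but should be said explicitly since the $\beta$-numbers do not have any monotonicity in $r$.
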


The following lemma is really the heart of the matter and it is a consequence of Proposition \ref{prop:CM-ar} and Lemma \ref{PgrhLPprop.lem}. The proof of the  lemma is postponed to the next subsection.
\begin{lemma}\label{atildeprop.lem} Let $\tilde \psi$ be defined as 
in \eqref{atilddef.eq}, and $\lambda$ as in \eqref{Rprimedef}. Then
\begin{equation}\label{L3est2.eq}
|r\partial_r^2 \tilde{\psi}(r;\mbf{x})| + |r\partial_t \tilde{\psi}(r; \mbf{x})|  + |r^2 \partial_t \partial_r \tilde{\psi}(r;\mbf{x})| \lesssim 1,
\quad \forall \, r\in (0, 10\lambda R),\, \mbf{x}\in Q_{20R}\,,
\end{equation}
and
\begin{equation}\label{L3est1.eq}
\int_{0}^{10\lambda R}\! \iint_{Q_{20R}} 
\left[|r\partial_r^2 \tilde{\psi}(r;\mbf{x})|^2 
+ |r\partial_t \tilde{\psi}(r; \mbf{x})|^2  +
 |r^2 \partial_t \partial_r \tilde{\psi}(r;\mbf{x})|^2 \right]  
  \d\mbf{x}  \frac{\d r}{r} \,\lesssim\, |Q_R|.
\end{equation}
Here the implicit constants depend only on the structural constants.
\end{lemma}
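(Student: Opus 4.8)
The plan is to reduce everything to the Carleson estimate \eqref{psiSbound} for the level-set parametrizations $\psi_r$ and to the pointwise/square-function estimates for $P_{\gamma r}h$ in Lemma~\ref{PgrhLPprop.lem}, via the chain rule. Write $\tilde\psi(r;\mbf{x})=\psi(\rho;\mbf{x})$ with $\rho=\rho(r,\mbf{x}):=r+P_{\gamma r}h(\mbf{x})$, cf.\ \eqref{atilddef.eq}. The bookkeeping I would set up first: for fixed $\mbf{x}\in Q_{20R}$ and $r\in(0,10\lambda R)$ (so $(r,\mbf{x})\in\widetilde{\mathcal{S}}$, cf.\ \eqref{tsdef}, \eqref{Rprimedef}), the pointwise bound in Lemma~\ref{PgrhLPprop.lem} gives $|\partial_r\rho-1|=|\partial_r(P_{\gamma r}h)(\mbf{x})|\lesssim\gamma\ll1$, so $r\mapsto\rho$ is a strictly increasing diffeomorphism with Jacobian $\approx1$; since $h\ge0$ we have $r\le\rho$; by \eqref{eqprh1}--\eqref{eqprh2} we have $h(\mbf{x})<\rho<R/\Lambda_1$, i.e.\ $(\rho,\mbf{x})\in\mathcal{S}$, whence $\mbf{\Psi}_\rho(\mbf{x})\in\Omega_\star$ by Lemma~\ref{omnmapstar.lem}; and as $r\to0^+$ one has $\rho\to h(\mbf{x})$, so the image of $(0,10\lambda R)$ under $r\mapsto\rho$ is the slice $\{\rho:h(\mbf{x})<\rho<R/\Lambda_1\}$ of $\mathcal{S}$ over $\mbf{x}$. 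Because $\mbf{\Psi}_\rho(\mbf{x})\in\Omega_\star\subset\Omega_{\star\star\star}$, all of \eqref{uyPsi}, \eqref{partial-r-yi:a-r}, \eqref{est:G-delta-sawtooth:deriv:1} and the pointwise bounds \eqref{eqn:estimates:G-star:3} hold at the point $(\rho,\mbf{x})$, i.e.\ with $r$ there replaced by $\rho$; in particular $\partial_r\psi_r|_{r=\rho}\approx1$ and $\rho\,|\partial_r^2\psi_\rho|+\rho\,|\nabla_{y,s}^2\psi_\rho|+\rho^2\,|\nabla_{y,s}\partial_s\psi_\rho|+\rho^2\,|\partial_r\partial_s\psi_\rho|\lesssim1$.

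Next I would expand by the chain rule. With $g:=P_{\gamma r}h$ one gets $\partial_r\tilde\psi=\psi_1\,\partial_r\rho$, $\partial_r^2\tilde\psi=\psi_{11}\,(\partial_r\rho)^2+\psi_1\,\partial_r^2 g$, $\partial_t\tilde\psi=\psi_1\,\partial_t g+\psi_t$, and $\partial_t\partial_r\tilde\psi=\psi_{11}\,\partial_t g\,\partial_r\rho+\psi_{1t}\,\partial_r\rho+\psi_1\,\partial_t\partial_r g$, where $\psi_1,\psi_{11},\psi_t,\psi_{1t}$ denote, respectively, the first and second $r$-derivatives and the $s$- and mixed $(r,s)$-derivatives of $\psi_r$, all evaluated at $(\rho,\mbf{x})$. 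For \eqref{L3est2.eq} I would then simply insert the pointwise bounds from the previous paragraph together with those of Lemma~\ref{PgrhLPprop.lem} ($|\partial_r g|\lesssim1$, $|\partial_r^2 g|\lesssim r^{-1}$, $|\partial_t g|\lesssim r^{-1}$, $|\partial_t\partial_r g|\lesssim r^{-2}$, after the dilation $s=\gamma r$) and use $r\le\rho$ to replace each $\rho^{-1}$ by $r^{-1}$; this gives $|r\partial_r^2\tilde\psi|+|r\partial_t\tilde\psi|+|r^2\partial_t\partial_r\tilde\psi|\lesssim1$ at once.

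For \eqref{L3est1.eq} I would classify the finitely many terms by their square-summable factor. A term whose singular factor is a derivative of $g=P_{\gamma r}h$ (the other factor being $\psi_1\approx1$, or $(\partial_r\rho)^2\approx1$, or a $\psi$-derivative that is pointwise $\lesssim\rho^{-k}\lesssim r^{-k}$) is, after bounding that $\psi$-factor pointwise, of the form $\int_0^{10\lambda R}\!\iint_{Q_{20R}}|r^{2j+m-1}\nabla_{x,r}^m\partial_t^j P_{\gamma r}h|^2\,\d\mbf{x}\,\tfrac{\d r}{r}$, which by the dilation $s=\gamma r$ and Lemma~\ref{PgrhLPprop.lem}(i)--(iii) is $\lesssim|Q_R|$. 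A term whose singular factor is a $\psi_r$-derivative evaluated at $(\rho,\mbf{x})$ (the other factor bounded) is handled by the inequality $r^k|\cdot|^2\le\rho^k|\cdot|^2$, the change of variable $r\mapsto\rho$ (Jacobian $\approx1$, image inside $\mathcal{S}$), and \eqref{psiSbound}: e.g.\ $\int_0^{10\lambda R}\!\iint_{Q_{20R}}|r\,\psi_{11}(\rho;\mbf{x})|^2\,\d\mbf{x}\,\tfrac{\d r}{r}\le\iint_{Q_{20R}}\!\int r\,|\psi_{11}(\rho;\mbf{x})|^2\,\d r\,\d\mbf{x}\le\iint_{Q_{20R}}\!\int \rho\,|\psi_{11}(\rho;\mbf{x})|^2\,\d\rho\,\d\mbf{x}\le\iiint_{\mathcal{S}}|\rho\,\nabla_{y,r}^2\psi_r|^2\,\tfrac{\d r}{r}\,\d\mbf{x}\lesssim R^d$; the terms $r\,\psi_t$, $r^2\,\psi_{1t}$ and $r^2\,\psi_{11}\,\partial_t g$ are treated the same way, matched respectively against the $|r\,\partial_s\psi_r|^2$, $|r^2\,\nabla_{y,r}\partial_s\psi_r|^2$ and (after bounding $|\partial_t g|\lesssim r^{-1}$ first) $|r\,\nabla_{y,r}^2\psi_r|^2$ pieces of \eqref{psiSbound}. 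Summing the boundedly many contributions yields \eqref{L3est1.eq}, and the implicit constants depend only on the structural constants (and the fixed $\gamma,\Lambda_1$).

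The main obstacle is the bookkeeping in the genuinely bilinear cross terms — typically $r^2\,\psi_{11}(\rho;\mbf{x})\,\partial_t g\,\partial_r\rho$ in the expansion of $r^2\partial_t\partial_r\tilde\psi$ — in which both $\psi_{11}$ and $\partial_t g$ are singular of order $r^{-1}$ as $r\to0^+$, so neither one by itself pays for the square-function bound. The resolution is that the pointwise estimate $|\psi_{11}(\rho;\mbf{x})|\lesssim\rho^{-1}\lesssim r^{-1}$ coming from \eqref{eqn:estimates:G-star:3} (valid since $\mbf{\Psi}_\rho(\mbf{x})\in\Omega_\star$) lets one absorb that factor and reduce to a clean Carleson integral for $P_{\gamma r}h$ controlled by Lemma~\ref{PgrhLPprop.lem}; symmetrically, one could instead absorb $\partial_t g$ pointwise and reduce to \eqref{psiSbound}. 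Thus in every term exactly one factor is used pointwise and the other supplies a genuine square-function bound, and no product of two ``Carleson'' quantities is ever left untreated.
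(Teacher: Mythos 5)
Your proof is correct, and for \eqref{L3est2.eq} it is essentially the paper's argument: chain rule, pointwise bounds from Lemma~\ref{lemma:estimates:G-star} at $(\rho,\mbf{x})\in\mathcal{S}$ with $\rho=r+P_{\gamma r}h(\mbf{x})$, pointwise bounds from Lemma~\ref{PgrhLPprop.lem}, and $r\le\rho$. For \eqref{L3est1.eq}, however, you take a genuinely different route. After the chain-rule expansion you split each term according to which factor carries the singularity, feeding the $\psi_r$-derivative pieces directly to \eqref{psiSbound} via the change of variables $r\mapsto\rho$ (Jacobian $\approx1$, image contained in $\mathcal{S}$), and the $P_{\gamma r}h$-derivative pieces to the square-function estimates of Lemma~\ref{PgrhLPprop.lem}. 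The paper instead compares $r\partial_r^2\tilde\psi(r;\mbf{x})$ with the reference quantity $r(\partial_r^2\psi)(r+h(\mbf{x});\mbf{x})$: the latter is controlled by \eqref{psiSbound} using the \emph{translation} $r\mapsto r+h(\mbf{x})$ (Jacobian exactly $1$), while the difference is controlled by the mean value theorem, the third-derivative bound $r^2|\partial_r^3\psi_r|\lesssim1$ from \eqref{eqn:estimates:G-star:3}, and Lemma~\ref{PgrhLPprop.lem}(i) applied to $(P_{\gamma r}-I)h$. Your direct substitution avoids the mean-value-theorem step at the modest cost of a non-unit Jacobian; both are valid, and yours is arguably a touch cleaner. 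You also write out the chain rule for all three terms (the paper does only the first and asserts the others are analogous) and correctly isolate the bilinear cross term $r^2(\partial_r^2\psi)(\rho;\cdot)\,\partial_t P_{\gamma r}h\cdot\partial_r\rho$ in $r^2\partial_t\partial_r\tilde\psi$, noting that exactly one singular factor must be absorbed pointwise while the other supplies the Carleson bound; this is precisely the point the paper's ``analogously'' glosses over. One minor imprecision: the image of $(0,10\lambda R)$ under $r\mapsto\rho$ is a proper subset, not all, of the slice $\{h(\mbf{x})<\rho<R/\Lambda_1\}$ of $\mathcal{S}$, but this is immaterial since you use the containment only to obtain an upper bound.
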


Recall (see \eqref{IPRdef.eq}-\eqref{localdtdef}) that 
$\mathcal{D}_t^R := \partial_t \circ\mathrm{I}_\mathrm{P}^R$, where
$\mathrm{I}_\mathrm{P}^R$ is the localized parabolic fractional integral.
Armed with Lemma \ref{atildeprop.lem}, we shall prove \eqref{BMO-contour} by showing, 
for $f \in C_0^\infty(Q_R)$ with $\|f\|_{L^2}\leq 1$, that
\begin{equation}\label{BMO2cont.eq}
\biggl |\iint_{Q_R} \mathcal{D}_t^R \psi^h(\mbf{x}) f(\mbf{x}) \, \d\mbf{x}\biggr | 
\,=:\, \biggl |\iint \mathrm{I}_\mathrm{P}^R \psi^h(\mbf{x})\, \partial_t f(\mbf{x}) \, \d\mbf{x}\biggr | \,\lesssim\, |Q_R|^{1/2}\,.
\end{equation}
To this end, let $f \in C_0^\infty(Q_R)$ be as stated. 
Note that 
$\tilde{\psi}(0; \mbf{x}) = \psi(h(\mbf{x}); \mbf{x})
= \psi^h(\mbf{x})$, by definition of $\tilde{\psi}$, since $P_r$ is an approximate identity. Integrating by parts twice vertically, and once in $t$, we have 
\begin{multline}\label{IBPfordthalf.eq}
- \iint \mathrm{I}_\mathrm{P}^R \psi^h(\mbf{x})\, \partial_t f(\mbf{x})
 % \mathcal{D}_t^R \psi^h(\mbf{x}) f(\mbf{x}) 
 \d\mbf{x}  \,=\, - \iint
\mathrm{I}_\mathrm{P}^R \tilde{\psi}(0; \mbf{x})\, \partial_t f(\mbf{x})
 % \mathcal{D}_t^R \tilde{\psi}(0; \mbf{x}) f(\mbf{x}) 
 \d\mbf{x}
\\ 
=- \int_0^{\lambda R} \iint_{Q_{5R}} 
\partial_r[\mathcal{D}_t^R \tilde{\psi}(r; \mbf{x}) 
P_{\gamma r} f(\mbf{x})]\, \d\mbf{x} \, \d r \,+\, b_1
\\  = \, \int_0^{\lambda R} \iint_{Q_{5R}} 
\partial_r^2[\mathcal{D}_t^R \tilde{\psi}(r; \mbf{x}) 
P_{\gamma r} f(\mbf{x})]  \, \d\mbf{x}\, r \d r \,+\, b_1 - b_2
\, =:\, I + b_1 - b_2,
\end{multline}
% where we used that $\psi^h(\mbf{x})= \psi(h(\mbf{x}); \mbf{x}) = \tilde{\psi}(0,\mbf{x})$ in the first line, and 
where we may justify integration by parts in $t$ in the second line using 
\eqref{L3est2.eq} to make sense of
$\mathcal{D}_t^R \tilde{\psi}(r; \mbf{x})$ for $r>0$,
and where the boundary terms $b_1$ and $b_2$ are defined by
% \begin{equation}\label{IBPfordthalf.eq+}
% I:=\int_0^{\lambda R} \iint_{Q_{5R}} \partial_r^2[\mathcal{D}_t^R 
% \tilde{\psi}(r; \mbf{x}) P_{\gamma r} f(\mbf{x})]r  \, \d\mbf{x}\,  \d r,
% \end{equation}
\begin{equation}\label{b1err.eq}
b_1 := \iint_{Q_{5R}} \mathcal{D}_t^R \tilde{\psi}(\lambda R; \mbf{x}) P_{\gamma \lambda R} f(\mbf{x})\, \d\mbf{x},
\end{equation}
and
\begin{equation}\label{b2err.eq}
b_2 :=  \iint_{Q_{5R}} \partial_r[\mathcal{D}_t^R \tilde{\psi}(r; \mbf{x}) P_{\gamma r} f(\mbf{x})]\big|_{r = \lambda R}\, \lambda R \, \d\mbf{x}.
\end{equation}
Note that in \eqref{IBPfordthalf.eq}, we used that $\supp P_{\gamma r} f(\cdot) \subseteq Q_{5R}$, whenever $r \le \lambda R$, since $f$ is supported in $Q_{R}$ and the kernel of
$P_{\gamma r}$ is supported in $Q_R(0)$ whenever $r < \lambda R$ (recall $\gamma \ll _n 1/100$). 
We claim, and will prove in 
subsection \ref{tech} below, that
\begin{equation}\label{b12errest.eq}
|b_1| + |b_2| \lesssim |Q_R|^{1/2}.
\end{equation}
This leaves the contribution of the main term
$I$, in which we distribute the $r$-derivatives:
\begin{multline}\label{Iintothree.eq}
I =  \int_0^{\lambda R}\! \iint_{Q_{5R}} \mathcal{D}_t^R  \partial_r^2\tilde{\psi}(r; \mbf{x}) P_{\gamma r} f(\mbf{x}) \d\mbf{x}\,r  \d r 
 \,+\, 2 \int_0^{\lambda R}\! \iint_{Q_{5R}}  \mathcal{D}_t^R \partial_r \tilde{\psi}(r; \mbf{x})  \partial_r P_{\gamma r} f(\mbf{x}) \d\mbf{x}\, r \d r
 \\
 +\,\, \int_0^{\lambda R} \!\iint_{Q_{5R}}  \mathcal{D}_t^R \tilde{\psi}(r; \mbf{x}) \partial_r^2 P_{\gamma r} f(\mbf{x})   \d\mbf{x}\, r  \d r
\,=:\, I_1 + I_2 + I_3.
\end{multline}
By parabolic Littlewood Paley theory\footnote{Estimate
\eqref{IRPLPTest.eq} may be proved via 
Plancherel's theorem.  We omit the standard argument.} 
we have
\begin{equation}\label{IRPLPTest.eq}
\int_0^\infty\! \iint_{\rn} |\mathcal{Q}^{(1)}_rf(\mbf{x})|^2 + |\mathcal{Q}^{(2)}_rf(\mbf{x})|^2 + |\mathcal{Q}^{(3)}_rf(\mbf{x})|^2 \, \d\mbf{x} \, \frac{\d r}{r} \,\lesssim\,  \|f\|_{L^2}^2 \,\le\, 1,
\end{equation}
where $\mathcal{Q}^{(j)}_r = 
\mathcal{Q}^{(j,R)}_r $ are defined by
\[
\mathcal{Q}^{(1)}_r \,=\,  r \,\mathcal{D}_t^R P_{\gamma r}, 
\quad \mathcal{Q}^{(2)}_r \,=\,  \mathrm{I}_P^R \,\partial_r P_{\gamma r} , 
\quad \mathcal{Q}^{(3)}_r \,= \,r \,\mathrm{I}_P^R \,\partial_r^2 P_{\gamma r}\,,
\]
and where the implicit constant in \eqref{IRPLPTest.eq} is independent of $R$.
Here we recall that $\mathrm{I}_P^R$ is the smoothly truncated fractional integral operator of order $1$ (see \eqref{IPRdef.eq}), and 
is self-adjoint, and that
 $\mathcal{D}_t^R = \partial_t \circ \mathrm{I}_P^R$.
Note that the cancellation for $\mathcal{Q}^{(1)}_r $ comes from this $t$-derivative.

Now we estimate $I_1$.  Using Lemma \ref{atildeprop.lem} and \eqref{IRPLPTest.eq}, and the fact that $\mathcal{D}_t^R$ is localized,
we have
\begin{multline*}
|I_1| = \left|\int_0^{\lambda R} \!\iint_{Q_{20R}} r \partial_r^2\tilde{\psi}(r; \mbf{x}) \mathcal{Q}^{(1)}_r f(\mbf{x})  \, \d\mbf{x}\,  \frac{\d r}{r}\right|
\\  
\le \left( \int_0^{\lambda R}\! \iint_{Q_{20R}} 
|r \partial_r^2\tilde{\psi}(r; \mbf{x})|^2 \, \d\mbf{x}\,  
\frac{\d r}{r}\right)^{1/2} \left(\int_0^{\lambda R} \!
\iint_{Q_{20R}} |\mathcal{Q}^{(1)}_r f(\mbf{x})|^2  \, \d\mbf{x}\,  \frac{\d r}{r}\right)^{1/2}
 \lesssim |Q_R|^{1/2}.
\end{multline*}
Similarly, using Lemma \ref{atildeprop.lem} and \eqref{IRPLPTest.eq}, 
we find that
\[|I_2| + |I_3| \lesssim  |Q_R|^{1/2}.\]
Combing our estimates for $I_1,I_2,I_3$, $b_1,b_2$, we can conclude that \eqref{BMO2cont.eq} holds.  This proves \eqref{BMO-contour} and hence Theorem \ref{mainthrm.thrm} modulo, 
Lemma \ref{atildeprop.lem}, and the claimed bounds 
for $b_1,b_2$ in \eqref{b12errest.eq}. 
% and \eqref{IRPLPTest.eq}. 
The proof of these claims are given in the next two subsections.

\subsection{Proof of Lemma \ref{atildeprop.lem}} 
We note that by the
definition of $\lambda$ (see \eqref{Rprimedef}), we are 
always working with $(r,\mbf{x}) \in \widetilde{\mathcal{S}}$
(see \eqref{tsdef}).
We start by proving \eqref{L3est2.eq}. We will only handle the first term to the left in \eqref{L3est2.eq} as the rest of the terms can be handled analogously. We have
\[
\partial_r \tilde{\psi}(r;\mbf{x}) = 
\partial_r \left[
\psi(r +P_{\gamma r} h(\mbf{x});\mbf{x})\right] = 
(\partial_r \psi)(r + 
P_{\gamma r}h(\mbf{x}) ;\mbf{x})
(1 + \partial_r P_{\gamma r}h(\mbf{x})),
\]
and hence
\begin{equation}\label{dr2tpsi}
r\partial_r^2 \tilde{\psi}(r;\mbf{x})  
= r(\partial_r^2 \psi)(r + P_{\gamma r}h(\mbf{x}) ;\mbf{x}) \,
\left(1 + \partial_r P_{\gamma r}h(\mbf{x})\right)^2
\, +\, (\partial_r \psi)(r + P_{\gamma r}h(\mbf{x}) ;\mbf{x}) 
\, r\left(\partial_r^2 P_{\gamma r}h(\mbf{x})\right).
\end{equation}
The bound for $|r\partial_r^2 \tilde{\psi}(r;\mbf{x})|$ now 
follows from Lemma \ref{lemma:estimates:G-star}, specifically 
\eqref{eqn:estimates:G-star:1} and 
\eqref{eqn:estimates:G-star:3},  and 
Lemma \ref{PgrhLPprop.lem}. In particular, 
to apply % \eqref{eqn:estimates:G-star:3} 
Lemma \ref{lemma:estimates:G-star},
we use \eqref{om1om0apcont.eq}, 
and make the observation that 
$r \le r + P_{\gamma r}h(\mbf{x})$,
since $h$ is non-negative.
% \eqref{om1omstar.eq}. 
% For instance, we have
% \[ |r(\partial_r^2 \psi)(r + h(\mbf{x} );\mbf{x})| 
% \lesssim  |(r + h(\mbf{x} ))(\partial_r^2 \psi)
% (r + h(\mbf{x}) ;\mbf{x})| \lesssim 1,\]
% by \eqref{eqn:estimates:G-star:3}. The estimate in the 
% display is used to bound the term 
% $|r(\partial_r^2 \psi)(r + h(\mbf{x} );\mbf{x})||
% (1 + \partial_r P_{\gamma r}h(\mbf{x}))|^2 $ above.

Next we turn our attention to \eqref{L3est1.eq}, which is a little more delicate. Again, we will only handle the first term (in the integral) as the others terms can be handled analogously. First, we control a 
closely related expression. We observe, using Proposition \ref{prop:CM-ar}, that
\begin{multline*}
\int_{0}^{10\lambda R}\iint_{Q_{20R}} \!
|r\partial_r^2 \psi(r +h(\mbf{x});\mbf{x})|^2 \, \d\mbf{x}  
\frac{\d r}{r}
  = \int_{0}^{10\lambda R}\!\iint_{Q_{20R}} 
  |\partial_r^2 \psi(r+ h(\mbf{x});\mbf{x})|^2\, r \d\mbf{x}  \d r
\\  = \int_{h(\mbf{x})}^{h(\mbf{x})+ 
10\lambda R}\iint_{Q_{20R}} 
|\partial_r^2 \psi(r;\mbf{x})|^2 \,\big(r - h(\mbf{x})\big) \d\mbf{x}  \d r
\\  \le \int_{h(\mbf{x})}^{h(\mbf{x})+ 10\lambda R}
\iint_{Q_{20R}} |\partial_r^2 \psi(r;\mbf{x})|^2 \,r \d\mbf{x} 
 \d r
\\  \le  \iiint_{\mathcal{S}} 
|\partial_r^2 \psi(r;\mbf{x})|^2 r \d\mbf{x} \d r 
\lesssim R^d\,,
\end{multline*}
where in the last step we used \eqref{psiSbound},
% the control provide by the second term in \eqref{prop:CM-ar} 
 and in the next-to-last inequality, we used 
\eqref{eqhbound} and 
\eqref{Rprimedef} to see 
that $h(\mbf{x})+ 10\lambda R \le R/(80\Lambda_1) 
+ R/(10\Lambda_1) < R/\Lambda_1$, so that we may
change the domain of integration to the region $\mathcal{S}$. With the preceeding estimate in hand, we see that to obtain the bound
\[\int_{0}^{10\lambda R}\iint_{Q_{20R}} |r\partial_r^2 \tilde{\psi}(r;\mbf{x})|^2\, \d\mbf{x} \, \frac{\d r}{r} \lesssim |Q_R|,\]
it is enough to prove
\begin{equation}\label{roughhsmoothhbd.eq}
\int_{0}^{10\lambda R}\iint_{Q_{20R}} 
|r\partial_r^2[\tilde{\psi}(r;\mbf{x}) - \psi(r +h(\mbf{x});\mbf{x})]|^2 \, \d\mbf{x} \, \frac{\d r}{r} \lesssim |Q_R|.
\end{equation}
To do this, we first note that by \eqref{dr2tpsi}, Lemma 
\ref{lemma:estimates:G-star}, and Lemma \ref{PgrhLPprop.lem}, 
\[
r\partial_r^2\tilde{\psi}(r;\mbf{x}) =
r(\partial_r^2 \psi)(r + P_{\gamma r}h(\mbf{x}) ;\mbf{x}) 
\,+\, O\left(
\left|\partial_r P_{\gamma r}h(\mbf{x})\right|\,+\,r\left|\partial_r^2 P_{\gamma r}h(\mbf{x})\right|\right)\,,
\]
and observe further that the ``big-$O$" term 
may be handled via Lemma \ref{PgrhLPprop.lem}.
To treat the contribution to \eqref{roughhsmoothhbd.eq}
of the remaining term, we 
use the mean value theorem:  
 for $(r,\mbf{x}) \in (0,10\lambda R) \times {Q_{20R}}$, 
 there exists $\tilde{r}$ between  $r + h(\mbf{x})$ and $r + P_{\gamma r}h(\mbf{x})$, such that
 \begin{equation}\label{eesta}
|(\partial_r^2 \psi)(r + P_{\gamma r}h(\mbf{x}) ;\mbf{x}) 
\,-\,\partial_r^2  \psi(r +h(\mbf{x});\mbf{x})| 
= |(P_{\gamma r} - I) h(\mbf{x})| 
|(\partial_r^3\psi)(\tilde{r};\mbf{x})|
 \lesssim r^{-2}|(P_{\gamma r} - I) h(\mbf{x})|.
\end{equation}
The use of \eqref{eqn:estimates:G-star:3} to 
derive the last inequality 
in this display may be justified % as in the proof of  \eqref{L3est2.eq},
by the fact that $\tilde{r}$ is between $r + h(\mbf{x})$ and 
$r + P_{\gamma r}h(\mbf{x})$; in particular,
$(\tilde{r},\mbf{x})$ lies in $\mathcal{S}$. 
Multiplying \eqref{eesta} by $r$,
and using Lemma \ref{PgrhLPprop.lem} (i), we obtain \eqref{roughhsmoothhbd.eq}, thus completing the proof of the lemma.

\subsection{Proof of the bounds for $b_1$, $b_2$} \label{tech} 
Recall that $\|f\|_{L^2} \leq 1$, and that, for $r>0$, the operator $P_{r}$ is bounded on $L^2$, uniformly in $r$.  Hence,
by the definition of $b_1$ in \eqref{b1err.eq},
\begin{multline}\label{b1bdst.eq}
|b_1|\,\le  \,\left( \iint_{Q_{5R}} |\mathcal{D}_t^R 
\tilde{\psi}(\lambda R; \mbf{x})|^2 \, \d\mbf{x}\right)^{1/2} \left(\iint_{Q_{5R}} |P_{\gamma \lambda R} f(\mbf{x})|^2\, \d\mbf{x}\right)^{1/2}
\\
 \lesssim \, \left( \iint_{Q_{5R}} |\mathcal{D}_t^R 
 \tilde{\psi}(\lambda R; \mbf{x})|^2 \, \d\mbf{x}\right)^{1/2} 
\,  \lesssim \,\|\mathcal{D}_t^R \tilde{\psi}(\lambda R; \mbf{x})\|_{L^\infty} 
 \, |Q_R|^{1/2}\,.
\end{multline}
Next, recall that $\mathcal{D}_t^R$ is a convolution operator with kernel $K^R = \partial_t V_R$, where $V_R$ is the truncated version of the kernel of
$\mathrm{I}_P$, localized at scale $R$
(see subsection \ref{ss6.1}). 
Hence,  passing the $t$-derivative onto $\tilde{\psi}(\lambda R; \mbf{x})$,
and then using \eqref{L3est2.eq}, we have
\begin{align*}
|\mathcal{D}_t^R \tilde{\psi}(\lambda R; \mbf{x})| &= |V_R \ast \partial_t \tilde{\psi}(\lambda R; \mbf{x})|\lesssim \frac{1}{R}\iint_{Q_{10R}(\mbf{x})}\|\mbf{x} - \mbf{y}\|^{1-d} \, \d \mbf{y} \lesssim 1\,.
\end{align*}
Plugging the latter bound into \eqref{b1bdst.eq} gives 
$|b_1| \lesssim |Q_R|^{1/2}$, as desired. 
The proof in the case of $b_2$ proceeds analogously 
and we omit the details.

\appendix
\section{Proof of Lemma \ref{lemma:CS-3}} \label{appendixA}

Here we give the proof of Lemma \ref{lemma:CS-3}. 
For the most part, we follow the ideas of \cite[Chapter 13]{CS} (see also
\cite{ACS}), but with some simplifications: see Remark \ref{remarkA} below.

Before proceeding, we recall that the ``vertically elongated" box $I_R$ is defined in \eqref{eq3.1}, and that
for $\mbf{X}\in \Sigma$, the time forward corkscrew point $\cA_R^+(\mbf{X})$ and the subdomain
 $\Omega_{2R}(\mbf{X})$ are defined in \eqref{CSpm} and \eqref{CS2}.
 We fix $\mbf{X} =(x_0,x,t)\in\Sigma$, set
% let $\thm$ %= \thm_+$ and $\thm_-$ 
% denote %, respectively, 
% the caloric %and adjoint caloric 
%measure for the domain
$\Omega_{2R}= \Omega_{2R}(\mbf{X})$,  and let
	$\partialp \Omega_{2R}$ denote the parabolic boundary of $ \Omega_{2R}$, i.e.,
	\begin{equation}\label{pbdry}
	\partialp  \Omega_{2R}
	:=
	\partial \Omega_{2R} \setminus\big\{ %\mbf{Y}=(y_0,y,s)\in \ree: a(y,s)<y_0<4M_0R,\ |y-x|<2R,\ 
	s=t+ (2R)^2 \big\}.
	\end{equation}
We split the parabolic boundary of 
$\Omega_{2R}$ into
\begin{equation}\label{eq-boundarysplit}
\partialp \Omega_{2R} = \mathcal{B} \cup \mathcal{S}\,,
\end{equation}
where $ \mathcal{B} := \overline{\Omega_{2R}}\cap \{(y_0,y,s): s=t-(2R)^2\}$ is the bottom boundary of $\Omega_{2R}$, 
and $ \mathcal{S} :=\partialp \Omega_{2R} \setminus  \mathcal{B}$ is the lateral boundary.

With $\Omega_{3R}= \Omega_{3R}(\mbf{X})$, we also set
\begin{equation}\label{ostar}
 \Omega_{*}	:= \Omega_{3R} \cap \big\{ \mbf{Y}=(y_0,y,s): y_0<x_0+6M_0\sqrt{n} R\big\}
 \end{equation}
(i.e., $\Omega_*$ is a ``shortened" version of $\Omega_{3R}$, for which 
the ``ceiling", with $y_0\equiv  6M_0\sqrt{n} R$, overlaps that of $\Omega_{2R}$).  
Thus
	\[
	\partialp  \Omega_{*}
	=
	\partial \Omega_{*} \setminus\big\{ %\mbf{Y}=(y_0,y,s)\in \ree: a(y,s)<y_0<4M_0R,\ |y-x|<2R,\ 
	s=t+ (3R)^2 \big\},
	\]
and
\begin{equation}\label{starsplit}
\partialp \Omega_{*} = \mathcal{B}_* \cup \mathcal{S}_*\,,
\end{equation}
where $ \mathcal{B}_* := \overline{\Omega_{*}}\cap \{(y_0,y,s): s=-(3R)^2\}$ is the bottom boundary of $\Omega_{*}$, 
and $ \mathcal{S}_* :=\partialp \Omega_{*} \setminus  \mathcal{B}_*$ is the lateral boundary.	
We further define
\begin{equation}\label{sprime}
\s' := \s_* \cap \big\{(y_0,y,s):\, y_0 = x_0+6M_0\sqrt{n}R,\, -8R^2 <s-t<-5R^2\big\}
\end{equation}
Let $\hm_*$ denote caloric measure for
$\Omega_*$.

%For each $\eta>0$, we 
We then have the Bourgain-type estimate
\begin{equation}\label{bourgain}
\hm_*^{\mbf{Y}}\, (\s') %\big(\Delta_{\eta R}(\mbf{Y})\big)  
\geq c_0 \,,
  \quad  \forall \,\mbf{Y}\in  \s'':=\Omega_{2R} \cap \big\{(y_0,y,s):\, y_0 = x_0+5M_0\sqrt{n}R\big\}\,.
\end{equation}
%\begin{equation}\label{bourgain}
%\thm^{A^+_R(\mbf{X})}\big(\Delta_{\eta R}(\mbf{Y})\big)  \geq c_\eta c_0 \,,
%  \quad  \mbf{X}\in \Sigma\,, \,\,  \mbf{Y}=(y_0,y,s)\in \partialp \Omega_{2R} \,\cap\, \{s\leq t+2R^2\} \,,
%\end{equation}
for some uniform
constant $c_0=c_0(n,M_0)>0$.
Estimate \eqref{bourgain} is well known; see e.g., \cite[Lemma 2.2]{GH} for a more general result.

Let us recall the boundary Harnack principle.

\begin{lemma}[Boundary Harnack inequality {\cite[Chapter 3, Lemma 6.5]{LewMur}}]\label{BdryHarn.lem}
Let $u$ and $v$ satisfy the hypothesis of the previous Lemma \eqref{carlest.lem} . Then there exists a constant $C \ge 1$ depending only on dimension and the $Lip(1,1/2)$ constant of the graph function $a$ such that
\[\frac{u(\mbf{Y})}{v(\mbf{Y})} \le C \frac{u(\cA^+_R(\mbf{X}))}{v(\cA^-_R(\mbf{X}))},\]
whenever $\mbf{Y} \in  I_{R/2}(\mbf{X}) \cap \Omega$.
\end{lemma}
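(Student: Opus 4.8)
Lemma~\ref{BdryHarn.lem} is the parabolic boundary Harnack principle for non-negative caloric functions vanishing on a portion of the lateral boundary of a $\Lip(1,1/2)$ domain; it is classical and may simply be invoked from \cite[Chapter~3, Lemma~6.5]{LewMur} (see also \cite{FGS}). Should one prefer to reprove it, the plan is as follows. After normalizing so that $u(\cA^+_R(\mbf{X})) = v(\cA^-_R(\mbf{X})) = 1$, the assertion is equivalent to $u(\mbf{Y}) \lesssim v(\mbf{Y})$ for every $\mbf{Y} \in I_{R/2}(\mbf{X}) \cap \Omega$. The upper bound on $u$ is immediate from the Carleson estimate: since $u$ vanishes on $\Delta_{2R}(\mbf{X})$, Lemma~\ref{carlest.lem} gives $u(\mbf{Y}) \lesssim u(\cA^+_R(\mbf{X})) = 1$ throughout $I_R(\mbf{X}) \cap \Omega$, and the boundary H\"older estimate (Lemma~\ref{HCatBdry.lem}) even yields $u(\mbf{Y}) \lesssim (\delta(\mbf{Y})/R)^\alpha$ on $I_{R/2}(\mbf{X}) \cap \Omega$. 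The real content is that $v$ cannot decay faster near $\Delta_{2R}(\mbf{X})$ than $u$ does.

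\textbf{Lower bound on $v$ and conclusion.} The cleanest route is to compare both solutions to caloric measure. Given $\mbf{Y} \in I_{R/2}(\mbf{X}) \cap \Omega$, set $\rho := \delta(\mbf{Y})$ and pick $\widehat{\mbf{Y}} \in \Sigma$ with $\|\mbf{Y} - \widehat{\mbf{Y}}\| \approx \rho$; the interior Harnack inequality then gives $u(\mbf{Y}) \approx u(\cA^+_{c\rho}(\widehat{\mbf{Y}}))$ and $v(\mbf{Y}) \approx v(\cA^+_{c\rho}(\widehat{\mbf{Y}}))$ for a small dimensional $c$. One next establishes the standard two-sided estimate
\[
w(\mbf{Z}) \,\approx\, w(\cA^+_R(\mbf{X}))\,\frac{\omega^{\mbf{Z}}_{\Omega_{2R}(\mbf{X})}(\Delta_R(\mbf{X}))}{\omega^{\cA^+_R(\mbf{X})}_{\Omega_{2R}(\mbf{X})}(\Delta_R(\mbf{X}))}\,,\qquad \mbf{Z}\in I_{R/2}(\mbf{X})\cap\Omega\,,
\]
valid for every non-negative caloric $w$ vanishing on $\Delta_{2R}(\mbf{X})$; this follows from the maximum principle on $\Omega_{2R}(\mbf{X})$, the Bourgain-type estimate \eqref{bourgain}, the Carleson estimate, and the doubling of caloric measure (Lemma~\ref{calisdoubpre.lem}). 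Applying it to $w=u$ and, after rescaling about $\widehat{\mbf{Y}}$, to $w=v$, and using Lemma~\ref{CFMS} to pass between caloric measure and the Green function together with the backward-in-time Harnack inequality (Lemma~\ref{strongharnack4gf.lem}) to move from the small scale $c\rho$ back to scale $R$, the ratios of caloric measures cancel upon dividing, leaving $u(\mbf{Y})/v(\mbf{Y}) \lesssim u(\cA^+_R(\mbf{X}))/v(\cA^-_R(\mbf{X}))$, which is the claim.

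\textbf{Main obstacle.} The delicate point is the passage, for $v$, from corkscrew values at the small scale $c\rho$ up to the fixed scale $R$ while crossing from the forward to the backward time-direction: this is exactly where the parabolic theory departs from the elliptic one, since a non-negative caloric function vanishing on the boundary does not a priori satisfy $v(\cA^+_R(\mbf{X})) \approx v(\cA^-_R(\mbf{X}))$. Only the easy inequality $v(\cA^-_R(\mbf{X})) \lesssim v(\cA^+_R(\mbf{X}))$ is free (from Harnack, since $\cA^-_R$ lies at an earlier time), and the fact that Lemma~\ref{BdryHarn.lem} records $v(\cA^-_R(\mbf{X}))$ in the denominator is precisely what makes the stated version the ``weak'' form, accessible without the full strength of the Fabes--Garofalo--Salsa backward Harnack inequality; the latter is nonetheless the conceptual heart of all such comparison results and is in any case available to us here as Lemma~\ref{strongharnack4gf.lem}.
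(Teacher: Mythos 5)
The paper does not actually prove this lemma; it is invoked directly from \cite[Chapter 3, Lemma 6.5]{LewMur}, with a pointer to \cite{DeSS} for a streamlined alternative proof. Your opening, which cites the same sources, therefore matches the paper's treatment exactly, and nothing more would be required.

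The additional sketch you offer, however, has a genuine flaw in the key step. The claimed ``two-sided'' estimate
\[
w(\mbf{Z}) \,\approx\, w(\cA^+_R(\mbf{X}))\,\frac{\omega^{\mbf{Z}}_{\Omega_{2R}(\mbf{X})}(\Delta_R(\mbf{X}))}{\omega^{\cA^+_R(\mbf{X})}_{\Omega_{2R}(\mbf{X})}(\Delta_R(\mbf{X}))}\,,
\]
is \emph{false} for general non-negative $w$ caloric in $I_{2R}(\mbf{X})\cap\Omega$ vanishing on $\Delta_{2R}(\mbf{X})$, and if it were true it would instantly give the full symmetric comparison $u(\mbf{Y})/v(\mbf{Y}) \approx u(\cA^+_R(\mbf{X}))/v(\cA^+_R(\mbf{X}))$ — that is, the backward Harnack inequality itself, which does not hold at this level of generality. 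For a concrete counterexample, take $w(\cdot) := \omega^{\cdot}_{\Omega_{2R}(\mbf{X})}(E)$ where $E$ is a portion of the lateral boundary of $\Omega_{2R}(\mbf{X})$ at times $s > t + R^2$; this $w$ is caloric, non-negative, and vanishes on all of $\Delta_{2R}(\mbf{X})$, yet $w(\mbf{Z}) = 0$ for every $\mbf{Z}\in I_{R/2}(\mbf{X})\cap\Omega$ (those $\mbf{Z}$ all have time coordinate $\le t + R^2/4$), while $w(\cA^+_R(\mbf{X}))>0$ and $\omega^{\mbf{Z}}_{\Omega_{2R}(\mbf{X})}(\Delta_R(\mbf{X}))>0$, so the lower bound in your display fails. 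The correct CFMS-type comparison is one-sided and asymmetric in $\cA^\pm$: one proves $v(\cA^-_R(\mbf{X}))\,\omega^{\mbf{Y}}(\Delta) \lesssim v(\mbf{Y})$ (via the maximum principle, a Harnack chain going \emph{forward} in time from earlier-time boundary points to $\cA^-_R(\mbf{X})$, and the Bourgain-type estimate) and $u(\mbf{Y}) \lesssim u(\cA^+_R(\mbf{X}))\,\omega^{\mbf{Y}}(\Delta)$ (via Carleson and the maximum principle), and then divides; no backward Harnack is needed anywhere. Your closing paragraph correctly identifies this asymmetry as the crux, but your middle paragraph, as written, contradicts it and cannot be the basis of a proof.
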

An alternative, streamlined proof of Lemma \ref{BdryHarn.lem} can be found in \cite{DeSS}.

Next, we prove a backwards Harnack inequality for certain solutions vanishing on a surface box on $\Sigma$.
Let $\B, \s$ be as in \eqref{eq-boundarysplit}.
\begin{lemma}\label{backharnacklemma} Fix $\mbf{X}\in \Sigma$.
Let $u$ be a positive, bounded caloric function in $\Omega_{2R}=\Omega_{2R}(\mbf{X})$, vanishing continuously on
$\Sigma \cap \s$, with $u\equiv 1$ on $\B$, and $\|h\|_\infty \leq 1$.  Then for all  $r<R/2$, and for every
$\mbf{Z}\in \Delta_R(\mbf{X})$,
we have the strong Harnack inequality
\begin{equation}\label{backharnack}
u\left(\cA_r^-(\mbf{Z}\right)\lesssim u\left(\cA_r^+(\mbf{Z})\right) \lesssim u\left(\cA_r^-(\mbf{Z})\right)\,,
\end{equation}
where the implicit constants depend only upon $n$ and $M_0$.
\end{lemma}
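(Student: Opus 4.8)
The first of the two inequalities, $u(\cA_r^-(\mbf Z))\lesssim u(\cA_r^+(\mbf Z))$, is a routine consequence of the (forward‑in‑time) parabolic Harnack inequality: for $\mbf Z\in\Delta_R(\mbf X)$ and $r<R/2$ the two points lie in $\Omega_{2R}(\mbf X)$ at parabolic distance $\approx r$ from $\Sigma$ (cf.\ \eqref{morecslocinfo.eq}), they differ only by a time‑lapse $\approx r^2$, and they can be joined by a Harnack chain of $O(1)$ parabolic balls on which $\dist(\cdot,\Sigma)\gtrsim r$; applying Harnack boundedly many times gives the claim. Thus the whole point is the backward inequality $u(\cA_r^+(\mbf Z))\lesssim u(\cA_r^-(\mbf Z))$, and this is where the hypothesis $u\equiv 1$ on $\mathcal B$ is used.

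I would first establish a ``large‑scale'' version of the backward inequality. Since $0\le u\le 1$ and $u\to 1$ near the interior of $\mathcal B$, a forward Harnack chain started from a point of $\mathcal B$ at height $\approx x_0+5M_0\sqrt n R$ — which lies at distance $\gtrsim R$ from $\Sigma$ by Lemma \ref{distgraphlem.lem} — and run up through the time interval $(t-4R^2,t+4R^2)$ shows that $u\gtrsim 1$ on the ``core'' $\{\mbf Y\in\Omega_{2R}(\mbf X):\dist(\mbf Y,\Sigma)\gtrsim R,\ \text{time of }\mbf Y>t-4R^2+cR^2\}$. Consequently, for any boundary point $\mbf W\in\Sigma$ and any scale $\rho$ with $\tfrac1{C_0}R\le\rho<\tfrac R2$ for which $\cA_\rho^\pm(\mbf W)$ lie in $\Omega_{2R}(\mbf X)$, the backward point $\cA_\rho^-(\mbf W)$ sits at distance $\gtrsim R$ from $\Sigma$ at a time $>t-4R^2+cR^2$, so one more $O(1)$‑step forward chain from the core gives $u(\cA_\rho^-(\mbf W))\gtrsim 1\ge u(\cA_\rho^+(\mbf W))$, hence $u(\cA_\rho^+(\mbf W))\approx u(\cA_\rho^-(\mbf W))$ (combining with the easy direction). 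I would apply this for $\mbf W=\mbf X$ and, crucially, for the slightly time‑advanced boundary points $\mbf X^\sharp_{\mbf Z}:=\bfpsi\big(z,\tau(\mbf Z)+R^2/64\big)$, $\mbf Z\in\Delta_R(\mbf X)$: advancing in time is what keeps the surface box $\Delta_{R/2}(\mbf X^\sharp_{\mbf Z})$ inside the lateral part $\Sigma\cap\mathcal S$ of $\partialp\Omega_{2R}$, away from the corner $\overline{\mathcal B}\cap\Sigma$, where $u$ need not be continuous.

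To descend to scales $r<R/C_0$ I would introduce a reference solution that carries a backward Harnack inequality at \emph{every} scale: let $v:=G(\cdot,\mbf Y_0)$ be the Green function of $\Omega$ with pole $\mbf Y_0:=(x_0+20M_0R,\,x,\,t-100R^2)$ placed far in the past. Then $v$ is a positive caloric function vanishing on $\Sigma$, and one checks that $\mbf Y_0\in T^-_{\kappa,\rho}(\mbf Z)$ for all $\mbf Z\in\Delta_R(\mbf X)$ and all $\rho<R/2$, so Lemma \ref{strongharnack4gf.lem} gives $v(\cA_\rho^+(\mbf Z))\approx v(\cA_\rho^-(\mbf Z))$. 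Now fix $\mbf Z\in\Delta_R(\mbf X)$ and $r<R/C_0$, put $\mbf X^\sharp:=\mbf X^\sharp_{\mbf Z}$, and apply the boundary Harnack principle (Lemma \ref{BdryHarn.lem}) at scale $R/2$ about $\mbf X^\sharp$ to the pairs $(u,v)$ and $(v,u)$ — both admissible, since $u$ and $v$ are nonnegative, caloric in $I_{R}(\mbf X^\sharp)\cap\Omega\subset\Omega_{2R}(\mbf X)$, and vanish on $\Delta_{R}(\mbf X^\sharp)\subset\Sigma\cap\mathcal S$. Combining the two resulting inequalities with the large‑scale backward estimates $u(\cA_{R/2}^+(\mbf X^\sharp))\approx u(\cA_{R/2}^-(\mbf X^\sharp))$ (previous step) and $v(\cA_{R/2}^+(\mbf X^\sharp))\approx v(\cA_{R/2}^-(\mbf X^\sharp))$ (Lemma \ref{strongharnack4gf.lem}) pins the quotient: $u/v\approx\lambda_{\mbf Z}$ on $I_{R/4}(\mbf X^\sharp)\cap\Omega$. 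Since $\cA_r^\pm(\mbf Z)\in I_{R/4}(\mbf X^\sharp)\cap\Omega$ for $r<R/C_0$, we conclude
\[
\frac{u(\cA_r^+(\mbf Z))}{u(\cA_r^-(\mbf Z))}\;\approx\;\frac{v(\cA_r^+(\mbf Z))}{v(\cA_r^-(\mbf Z))}\;\approx\;1,
\]
using the backward Harnack for $v$ at scale $r$ in the last step. With the easy direction, this covers all $r<R/2$ and $\mbf Z\in\Delta_R(\mbf X)$.

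The essential new input is the large‑scale step (the only place where $u\equiv1$ on $\mathcal B$ enters); it can equivalently be obtained from the maximum‑principle bound $u\ge\hm_{\Omega_{2R}}^{\cdot}(\mathcal B)$ together with a Bourgain‑type estimate as in \eqref{bourgain}. I expect the main nuisance — not a conceptual obstacle, but what must be handled carefully — to be twofold: (i) keeping all the boxes and corkscrew points where they must be, so that $u$ is defined and $u$, $v$ vanish continuously on the relevant surface boxes, which forces the use of the time‑advanced centers $\mbf X^\sharp_{\mbf Z}$; and (ii) making the boundary‑Harnack comparison \emph{once}, at the fixed scale $\approx R$, rather than iterating it between consecutive dyadic scales, since the latter would lose a multiplicative constant per scale and destroy the bound.
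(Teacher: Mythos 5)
Your argument is correct, but it follows a genuinely different route from the paper's. The paper introduces a single auxiliary solution $v$ — the caloric-measure solution in $\Omega_{2R}$ with boundary data $1$ on the bottom $\B$ and $0$ on \emph{all} of the lateral boundary $\s$ (not merely on $\Sigma\cap\s$) — and sandwiches $u$ between $v$ and a constant multiple of $v$. The inequality $v\le u$ is the maximum principle; the inequality $u\lesssim v$ in $\Omega'':=\Omega_{2R}\cap\{y_0<x_0+5M_0\sqrt n R\}$ follows because $u\le 1$ everywhere while $v\ge w\ge c_0$ on the slice $\s''=\{y_0=x_0+5M_0\sqrt n R\}$, with $w=\omega_*^{\,\cdot}(\s')$ and $c_0$ coming from the Bourgain-type estimate \eqref{bourgain}, followed by the maximum principle in $\Omega''$. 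Since $v$ vanishes on the entire lateral boundary, its backward Harnack inequality is precisely \cite[Theorem 4]{FGS}, and the sandwich $v\le u\lesssim v$ transfers it at once to $u$. Your proof instead uses a two-stage scheme: a direct large-scale backward Harnack for $u$ (from Harnack chains emanating from $\B$, or equivalently, as you note, from the bound $u\ge\omega_{\Omega_{2R}}^{\,\cdot}(\B)$ plus \eqref{bourgain}), then a boundary-Harnack comparison at a single fixed scale of $u$ against a Green function of $\Omega$ with pole far in the past, whose backward Harnack is supplied by Lemma \ref{strongharnack4gf.lem}. Both proofs are ultimately powered by Fabes--Garofalo--Salsa; the paper's is shorter and more elementary, needing only the maximum principle, the Bourgain estimate, and the FGS citation, and avoiding the boundary Harnack principle entirely. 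Yours uses more machinery but makes transparent that, once one has large-scale nondegeneracy of $u$, the small scales are handled by a one-shot boundary-Harnack comparison against any reference solution carrying backward Harnack at every scale — a mechanism the paper exploits separately in Lemmas \ref{comparisonlemma} and \ref{lemma:CS-2}. You also correctly anticipated that this comparison must be made once, at a fixed scale $\approx R$, rather than iterated dyadically. (A minor typographical note: $\|h\|_\infty\le 1$ in the lemma statement should read $\|u\|_\infty\le 1$, as you inferred.)
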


\begin{remark}\label{remarkA1}
For solutions vanishing on all of $\s$, i.e., on the entire lateral boundary  of
$\Omega_{2R}$, %(or even on the entire lateral boundary of $\Omega_{2R}$), 
the result appeared previously as \cite[Theorem 4]{FGS}.  
The point here is that we {\em do not require vanishing on} $\s\setminus \Sigma$.
\end{remark}

\begin{remark}\label{remarkA}
A similar result appears as \cite[Theorem 13.7]{CS} (see also
\cite{ACS}), without the restrictions that 
$\|h\|_\infty\leq 1$, and that
$h\equiv 1$ on $\B$ (or even that $h\approx  1$ on $\B$, which would work just as well), 
but without those restrictions one obtains worse dependence for
the implicit constant
in the right hand inequality in \eqref{backharnack}.  Specialized to our setting,
the result in 
\cite[Theorem 13.7]{CS} yields ours, but the
%However, a careful examination of the argument 
%in \cite{CS}, one does not obtain uniformity of the implicit constant
%in the right hand inequality in \eqref{backharnack}; rather the constant depends upon the ratio 
%$\sup_{\Omega_{2R}} h/h(A_{3R/2}^-(\mbf{X}))$, which need not be uniformly controlled.
proof that we present is rather different to that of \cite[Theorem 13.7]{CS}, 
and is shorter and a bit simpler.
\end{remark}

\begin{proof}[Proof of Lemma \ref{backharnacklemma}]
The left hand inequality is simply the standard parabolic Harnack inequality. To prove the right hand inequality, we 
proceed as follows.

Fix $\mbf{X} \in \Sigma$.  %By translation we may assume that $\mbf{X} = (x_0,x,t)= 0$.  
Define $\Omega_*,\B_*,\s_*$ and $\s'$ as in \eqref{ostar}, \eqref{starsplit} and \eqref{sprime}.
We define two positive auxiliary solutions: let $v$ be caloric in $\Omega_{2R}$, with $v\equiv 1$ on $\B$, $v\equiv 0$ on $\s$, and set
\[
w(\mbf{Y}):= \omega_*^{\mbf{Y}}(\s')\,,
\]
Then by the maximum principle,
\begin{equation}\label{mpone}
w\leq v \leq u \,\, \text{ in } \, \Omega_{2R}\,,
\end{equation}
and by \eqref{bourgain},
we have
\[
v(\mbf{Y}) \geq w(\mbf{Y}) \geq c_0\,, \quad  \forall \,\mbf{Y}\in \s''\,.
\]
Consequently, by the maximum principle, 
\[
u(\mbf{Y}) \lesssim v(\mbf{Y})\,, \quad \forall\, \mbf{Y}\in\Omega'':= \Omega_{2R}\cap \big\{(y_0,y,s):\, y_0< x_0+5M_0 \sqrt{n} R \big\}\,.
\]
where the implicit constants depend only on $n$ and $M_0$.  Combining the latter bound with
\eqref{mpone}, and using that by \cite[Theorem 4]{FGS}, the estimate \eqref{backharnack} applies to $v$ 
(since the latter vanishes on all of $\s$), we obtain
\[
 u\left(\cA_r^+(\mbf{Z})\right) \lesssim v\left(\cA_r^+(\mbf{Z})\right) \lesssim v\left(\cA_r^-(\mbf{Z})\right) \leq u\left(\cA_r^-(\mbf{Z})\right)\,,
\]
for all  $r<R/2$, and for every
$\mbf{Z}\in \Delta_R(\mbf{X})$, since in that case $\cA_r^\pm(\mbf{Z})\in \Omega''$.
\end{proof}

We continue to define %$\Omega_{2R}=\Omega_{2R}(\mbf{X}),\,
$\B,\,\s$ as in \eqref{eq-boundarysplit}.
As a consequence of Lemma \ref{backharnacklemma}, we have the following.
\begin{lemma}\label{comparisonlemma} Fix $\mbf{X}\in \Sigma$.
Let $u,v$ be a positive, bounded caloric functions in $\Omega_{2R}=\Omega_{2R}(\mbf{X})$, each vanishing continuously on
$\Sigma \cap \s$, satisfying the strong Harnack inequality \eqref{backharnack} for all
$r<R/2$, and for every
$\mbf{Z}\in \Delta_R(\mbf{X})$.  We then have
\begin{equation}\label{bhp1}
\frac{u(\mbf{Y})}{v(\mbf{Y})}\, \approx \, \frac{u \left(\cA_R^+(\mbf{X})\right) }{v \left(\cA_R^+(\mbf{X})\right) }\,,\qquad 
\forall\, \mbf{Y} \in \Omega_{R/2}(\mbf{X})\,,
\end{equation}
and  for all
$r<R/2$, and for every
$\mbf{Z}\in \Delta_{R/2}(\mbf{X})$,
\begin{equation}\label{bhp2}
\left|\frac{u(\mbf{Y})}{v(\mbf{Y})} -  \frac{u(\mbf{Y'})}{v(\mbf{Y'})} \right|\, \lesssim \, \left(\frac{r}{R}\right)^\alpha\,
\frac{u \left(\cA_R^+(\mbf{X})\right) }{v \left(\cA_R^+(\mbf{X})\right) }\,,\qquad 
\forall\, \mbf{Y}, \mbf{Y}' \in \Omega_{r}(\mbf{Z})\,,
\end{equation}
where the implicit constants and $\alpha$ depend only upon $n$, $M_0$, and the implicit constants in
\eqref{backharnack}.
\end{lemma}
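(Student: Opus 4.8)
The plan is to deduce Lemma \ref{comparisonlemma} from the standard boundary Harnack machinery (Lemmas \ref{carlest.lem} and \ref{BdryHarn.lem}) combined with the backwards Harnack inequality established in Lemma \ref{backharnacklemma}. The point is that the usual proofs of the elliptic-type boundary Harnack principle and its H\"older refinement go through verbatim once one has, in addition to Carleson-type estimates, a \emph{backwards in time} comparison between $\cA_r^+$ and $\cA_r^-$ reference points at every boundary cube; and that is precisely the content of the hypothesis \eqref{backharnack} that both $u$ and $v$ are assumed to satisfy (which in our application follows from Lemma \ref{backharnacklemma}). So the strategy is: (1) prove \eqref{bhp1} via the oscillation/iteration argument; (2) promote this to the H\"older-decay statement \eqref{bhp2} by applying \eqref{bhp1} at dyadically smaller scales and summing a geometric series.

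For step (1): Normalize so that $u(\cA_R^+(\mbf{X})) = v(\cA_R^+(\mbf{X})) = 1$; this is legitimate since both are positive and finite by the Harnack inequality. By Lemma \ref{BdryHarn.lem} applied to the pair $(u,v)$ (and using \eqref{backharnack} to replace $v(\cA^-_R(\mbf{X}))$ by $v(\cA^+_R(\mbf{X}))\approx 1$), we get $u(\mbf{Y})/v(\mbf{Y}) \lesssim 1$ for $\mbf{Y}\in I_{R/2}(\mbf{X})\cap\Omega$; the reverse bound $v(\mbf{Y})/u(\mbf{Y})\lesssim 1$ follows by symmetry (interchanging the roles of $u$ and $v$), so $u/v \approx 1$ on $\Omega_{R/2}(\mbf{X})$, which is \eqref{bhp1} after undoing the normalization. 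The only subtlety is making sure the hypotheses of Lemma \ref{BdryHarn.lem}/\ref{carlest.lem} are met: both $u,v$ are nonnegative caloric in $I_{2R}(\mbf{X})\cap\Omega = \Omega_{2R}(\mbf{X})$ and vanish on $\Delta_{2R}(\mbf{X}) = \Sigma\cap\mathcal S$ — wait, one should be slightly careful here, since the functions are only assumed to vanish on $\Sigma\cap\mathcal S$ rather than on all of $\partialp\Omega_{2R}$; but Lemmas \ref{HCatBdry.lem}, \ref{carlest.lem}, \ref{BdryHarn.lem} are local near the lateral boundary $\Sigma$ and do not see the artificial ceiling/bottom of the box $\Omega_{2R}$, so they still apply. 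This is the same point that makes Lemma \ref{backharnacklemma} non-trivial (cf. Remark \ref{remarkA1}).

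For step (2): Fix $\mbf{Z}\in\Delta_{R/2}(\mbf{X})$ and $r<R/2$. Set $a_\rho := \sup_{\Omega_\rho(\mbf{Z})} u/v$ and $b_\rho := \inf_{\Omega_\rho(\mbf{Z})} u/v$ for $\rho\le R/2$. Applying \eqref{bhp1} with $\mbf{X}$ replaced by $\mbf{Z}$ and $R$ by $\rho$ shows that $a_\rho/b_\rho$ is bounded by a uniform constant, hence $u/v$ satisfies a Harnack-type chain estimate. The standard oscillation-decay argument: one shows $a_{\rho/2} - b_{\rho/2} \le \theta_0 (a_\rho - b_\rho)$ for some $\theta_0<1$, by applying \eqref{bhp1} to the nonnegative caloric functions $u - b_\rho v$ and $a_\rho v - u$ (both vanish on $\Sigma\cap\mathcal S$, both satisfy \eqref{backharnack} since they are fixed linear combinations of $u,v$ with nonnegative coefficients on the relevant region) and comparing their values at $\cA_{\rho/2}^+(\mbf{Z})$ with their infima. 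Iterating gives $\osc_{\Omega_r(\mbf{Z})}(u/v) \lesssim (r/R)^\alpha (a_{R/2} - b_{R/2}) \lesssim (r/R)^\alpha$, which is \eqref{bhp2} after reinserting the normalization $u(\cA_R^+(\mbf{X}))/v(\cA_R^+(\mbf{X}))$.

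The main obstacle I anticipate is the bookkeeping in step (2): verifying that the auxiliary functions $u-b_\rho v$ and $a_\rho v - u$ genuinely satisfy the backwards Harnack inequality \eqref{backharnack} with constants independent of $\rho$ — this needs that \eqref{backharnack} holds for $u$ and $v$ at \emph{all} scales $r<R/2$ and \emph{all} centers in $\Delta_R(\mbf{X})$, which is exactly how the hypothesis is phrased, so one has to track that the linear combinations inherit this uniformly, and that the coefficients $a_\rho, b_\rho$ stay comparable (guaranteed by step (1)). The rest — the oscillation-decay iteration and the geometric summation — is entirely standard, modeled on the elliptic proof in e.g.\ \cite{CS} or the parabolic treatment in \cite{LewMur, FGS}.
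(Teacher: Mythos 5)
Your Step 1 is fine and matches the paper's (very brief) indication: \eqref{bhp1} is Lemma \ref{BdryHarn.lem} plus the assumed strong Harnack inequality \eqref{backharnack} applied to $v$ (and then, by symmetry, to $u$), and you correctly note that the boundary-type Lemmas \ref{HCatBdry.lem}, \ref{carlest.lem}, \ref{BdryHarn.lem} only see the portion of the boundary contained in $\Sigma$. The paper itself does not spell out Step 2 at all and simply refers to \cite{HLN2} and the elliptic argument of \cite{JKNTA}, so the content of a review must bear on your sketch of the iteration.

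There is a genuine gap in that sketch. You propose to apply \eqref{bhp1} to the auxiliary functions $w_1 = u - b_\rho v$ and $w_2 = a_\rho v - u$, and you justify this by asserting they ``satisfy \eqref{backharnack} since they are fixed linear combinations of $u,v$ with nonnegative coefficients.'' This is false: the coefficient of $v$ in $u - b_\rho v$ is $-b_\rho < 0$, and a difference of functions each satisfying the backward Harnack inequality need not satisfy it with any uniform constant. Concretely, if $u(\cA^+) = 2u(\cA^-)$ while $v(\cA^+) = v(\cA^-)$, and $b_\rho$ is close to $u(\cA^-)/v(\cA^-)$ (which happens precisely when the infimum $b_\rho$ is nearly attained at $\cA^-$), then $(u - b_\rho v)(\cA^-)$ can be arbitrarily small relative to $(u - b_\rho v)(\cA^+)$, and the constant in \eqref{backharnack} for $w_1$ blows up. So \eqref{bhp1}, whose hypothesis requires \eqref{backharnack}, cannot legitimately be applied to $w_1, w_2$.

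The standard way to run the iteration (following \cite[Theorem 7.9]{JKNTA}, adapted parabolically as in \cite{HLN2}) avoids ever needing the backward Harnack for the differences. One applies Lemma \ref{BdryHarn.lem} directly — which requires no backward Harnack for the functions to which it is applied — to the pairs $(v, w_1)$ and $(v, w_2)$, and invokes \eqref{backharnack} \emph{only for $v$} to pass between $v(\cA^+)$ and $v(\cA^-)$. The dichotomy in the oscillation-decay step should be keyed to the value of $F = u/v$ at the \emph{backward} corkscrew $\cA^-_{\rho/2}(\mbf{Z})$ (not $\cA^+$): writing $M_\rho, m_\rho$ for the sup and inf of $F$ on $\Omega_\rho(\mbf{Z})$, at least one of $F(\cA^-_{\rho/2}) - m_\rho$ or $M_\rho - F(\cA^-_{\rho/2})$ is $\ge \tfrac12(M_\rho - m_\rho)$; in the first case Lemma \ref{BdryHarn.lem} applied to $(v, w_1)$ gives
\[
\inf_{\Omega_{\rho/4}(\mbf{Z})} \frac{w_1}{v} \,\gtrsim\, \frac{w_1(\cA^-_{\rho/2})}{v(\cA^+_{\rho/2})} \,\approx\, \frac{w_1(\cA^-_{\rho/2})}{v(\cA^-_{\rho/2})} \,=\, F(\cA^-_{\rho/2}) - m_\rho \,\geq\, \tfrac12(M_\rho - m_\rho),
\]
where only the strong Harnack for $v$ was used; then $m_{\rho/4} - m_\rho \gtrsim M_\rho - m_\rho$, while trivially $M_{\rho/4} \le M_\rho$, giving $\osc_{\Omega_{\rho/4}} F \le \theta_0 \osc_{\Omega_\rho} F$. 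The other case is handled symmetrically with $w_2$. This corrected version is what the paper's citation tacitly invokes; your overall strategy is right, but the specific justification for the iteration step needs to be replaced.
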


Estimate  \eqref{bhp1} follows from Lemma \ref{BdryHarn.lem} and the strong Harnack inequality. The estimates \eqref{bhp1} and \eqref{bhp2} are both stated without proof (and more generally, in parabolic NTA domains) 
in \cite{HLN2} as Lemma 3.18 and Lemma 3.19, respectively.
As observed in
\cite{HLN2}, the strong Harnack inequality \eqref{backharnack}
 allows one to repeat the proof given in the 
 elliptic case in \cite[Theorem 5.1 and Theorem 7.9]{JKNTA}, {\em mutatis mutandis}; 
 in particular, \eqref{bhp2} follows from
 \eqref{bhp1} and a standard iteration argument.  
For solutions vanishing continuously on all of $\s$, these results appeared previously in \cite{FGS} (see also \cite{FSY}).  
 
From this point onward, we follow very closely the proofs in
\cite{CS} (or \cite{ACS} or \cite{N2006}).  We include the remaining
arguments for the reader's convenience.

\begin{lemma}\label{lemma:CS-1}
	Let $\mbf{X}\in\Sigma$ and $R>0$. 
	Assume that $0\le u\in W^{1,2}(I_{2\,R}(\mbf{X})\cap\Omega)\cap C(\overline{I_{2\,R}(\mbf{X})\cap\Omega})$ 
	satisfies $\partial_tu -\mathcal{L}u=0$ in $I_{2\,R}(\mbf{X})\cap\Omega$ with 
	$u\equiv 0$ in $\Delta_{2\,R}(\mbf{X})$. 
Suppose further that $u$ satisfies the strong Harnack inequality \eqref{backharnack}
for all  $r<R/2$, and for every
$\mbf{Z}\in \Delta_{R}(\mbf{X})$.		
	If  $\partial_{y_0} u\ge 0$ in $I_{2\,R}(\mbf{X})\cap\Omega$, then 
	\begin{equation}\label{derivative}
	\partial_{y_0} u(\mbf{Y})\approx \frac{u(\mbf{Y})}{\delta(\mbf{Y})},
	\qquad \mbox{for every }\mbf{Y}\in I_{R}(\mbf{X})\cap\Omega.
	\end{equation}
\end{lemma}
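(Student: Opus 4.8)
The plan is to prove Lemma~\ref{lemma:CS-1} by combining the comparison estimate \eqref{bhp1} of Lemma~\ref{comparisonlemma} with a suitable choice of the ``competitor'' solution $v$, and then using a Caccioppoli-type / De Giorgi iteration argument in the manner of \cite[Chapter 13]{CS}. The point is that, since $u$ vanishes on $\Delta_{2R}(\mbf{X})$ and is caloric, the difference quotients in the $y_0$ direction are themselves caloric and vanish on the same surface box; after normalizing, the backwards Harnack inequality \eqref{backharnack} (available by hypothesis for $u$) and the comparison principle let us compare $\partial_{y_0}u$ to $u$ itself in an averaged sense. So the first step is to set up difference quotients $u_\tau(\mbf{Y}):= \tau^{-1}(u(\mbf{Y}+\tau e_0)-u(\mbf{Y}))$, observe $u_\tau\ge 0$ (by the monotonicity hypothesis $\partial_{y_0}u\ge 0$), that $u_\tau$ is caloric in a slightly shrunken box and vanishes continuously on the relevant portion of $\Delta_{2R}(\mbf{X})$, and pass to the limit $\tau\to 0^+$ to conclude the same for $\partial_{y_0}u$.

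Next I would establish the two one-sided bounds in \eqref{derivative} separately. The upper bound $\partial_{y_0}u(\mbf{Y})\lesssim u(\mbf{Y})/\delta(\mbf{Y})$ is the ``easy'' direction: it follows from interior estimates for the heat equation applied on a Whitney ball of radius $\approx \delta(\mbf{Y})$ centered at $\mbf{Y}$, which give $|\nabla u(\mbf{Y})|\lesssim \delta(\mbf{Y})^{-1}\sup_{B}u$, combined with the Carleson-type estimate (Lemma~\ref{carlest.lem}) and Harnack to replace the sup of $u$ over the Whitney ball by $u(\mbf{Y})$ itself (here one uses that $u$ vanishes on the surface box and the Harnack chain argument, valid because of the backward-Harnack hypothesis). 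For the lower bound $\partial_{y_0}u(\mbf{Y})\gtrsim u(\mbf{Y})/\delta(\mbf{Y})$, the idea is to compare $u$ with the distance-like solution: one takes $v$ to be, say, a positive caloric function in $\Omega_{2R}(\mbf{X})$ vanishing on $\s$ and behaving like $\delta$ near $\Sigma$ (for instance a suitable caloric measure of a far-away piece of the boundary, or the solution built from Lemma~\ref{backharnacklemma}); then $\partial_{y_0}v\approx v/\delta\approx 1$ in the relevant region by a direct computation/barrier argument, and \eqref{bhp1} gives $u/v\approx u(\cA_R^+)/v(\cA_R^+)$ on $\Omega_{R/2}$, which one differentiates in $y_0$ using the quantitative Hölder estimate \eqref{bhp2} for the ratio $u/v$ to control the error term $u\,\partial_{y_0}v/v^2$ against the main term $\partial_{y_0}u/v$.

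Concretely, writing $q:=u/v$, we have $\partial_{y_0}u=v\,\partial_{y_0}q + q\,\partial_{y_0}v$. By \eqref{bhp1}, $q\approx q(\cA_R^+(\mbf{X}))=:q_0>0$ on $\Omega_{R/2}(\mbf{X})$, and by \eqref{bhp2} applied on a Whitney ball $\Omega_r(\mbf{Z})$ with $r\approx\delta(\mbf{Y})$, the oscillation of $q$ there is $\lesssim (\delta(\mbf{Y})/R)^\alpha q_0$, so interior estimates for the caloric ratio (or simply the Hölder estimate plus Cauchy estimates on the ball where $q$ is caloric away from the pole of neither solution) give $|\partial_{y_0}q(\mbf{Y})|\lesssim \delta(\mbf{Y})^{-1}(\delta(\mbf{Y})/R)^\alpha q_0$. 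Hence $\partial_{y_0}u(\mbf{Y}) = q\,\partial_{y_0}v + O\big(v\,\delta^{-1}(\delta/R)^\alpha q_0\big)\gtrsim q_0\, v(\mbf{Y})/\delta(\mbf{Y})\,(1-C(\delta/R)^\alpha)$, and since $v\approx\delta$ this is $\gtrsim q_0 \approx u(\mbf{Y})/\delta(\mbf{Y})$ provided $\delta(\mbf{Y})\le\eta R$ for $\eta$ small; the restriction to $\mbf{Y}\in I_R(\mbf{X})\cap\Omega$ with the extra smallness of $\delta(\mbf{Y})$ is then absorbed exactly as in the statement of Lemma~\ref{lemma:CS-3} (the full box $I_R$ being handled by a covering and Harnack chaining once the estimate is known near $\Sigma$).

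The main obstacle I expect is the lower bound, and specifically making rigorous the step ``$q=u/v$ is smooth enough that control of its oscillation (via \eqref{bhp2}) yields control of $\partial_{y_0}q$.'' The ratio $q$ is not caloric, so one cannot directly invoke interior Schauder/Cauchy estimates; instead one must use the structure — e.g., that $q$ satisfies a uniformly parabolic equation in divergence form with bounded measurable coefficients built from $v$ (this is the classical device of \cite{CS,JKNTA}), to which De Giorgi–Nash–Moser applies, upgrading the Hölder bound \eqref{bhp2} to an interior gradient bound after an appropriate rescaling. Managing the dependence of constants (keeping everything depending only on $n$, $M_0$, and the backward-Harnack constants, uniformly down to the boundary) and handling the difference-quotient approximation cleanly near $\Sigma$ are the technical points that require care; everything else is a routine assembly of the lemmas already proved in this section.
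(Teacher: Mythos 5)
Your upper-bound argument is essentially the paper's (interior derivative estimate on a Whitney-scale cube, then backward Harnack to pass from $u(\mbf{Y}^+)$ to $u(\mbf{Y})$). The lower bound, however, departs from the paper's proof and has two genuine gaps. (1) You propose comparing $u$ with a reference solution $v$, using the boundary-Harnack estimates \eqref{bhp1}--\eqref{bhp2} for the ratio $q=u/v$, and asserting $\partial_{y_0}v\approx v/\delta\approx 1$ ``by a direct computation/barrier argument.'' But that derivative estimate for $v$ is precisely the conclusion of Lemma \ref{lemma:CS-1} applied to $v$ --- the statement you are trying to prove. That is exactly how the paper orders its logic: Lemma \ref{lemma:CS-1} is proved first by a self-contained integration argument, and only then used (applied to $v=ch$, for which $\partial_{y_0}v\ge 0$ is obtained by a shift/maximum-principle argument) to supply $\partial_{y_0}v\approx v/\delta$ inside the proof of Lemma \ref{lemma:CS-2}, which is the argument you are proposing. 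There is no explicit barrier in a general $\Lip(1,1/2)$ domain that bypasses this, so as written your route is circular. (2) Your suggested fix for the regularity of $q$ --- viewing $q$ as a solution of a divergence-form equation with bounded measurable coefficients and invoking De Giorgi--Nash--Moser to upgrade \eqref{bhp2} to a gradient bound --- does not work: De Giorgi--Nash--Moser yields interior H{\"o}lder continuity, not gradient estimates; solutions of such equations need not even be Lipschitz. The device the paper actually uses (in Lemma \ref{lemma:CS-2}) is to consider the \emph{caloric} difference $u-\Theta v$, to which standard interior estimates for the heat equation apply, and to feed in \eqref{bhp2} in the form \eqref{bhp2a}.

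For the record, the paper's own proof of the lower bound is short and uses only the stated hypotheses. With $r\approx\delta(\mbf{Y})$, one considers the vertical segment $\mu\mapsto\mbf{Y}^-(\mu)=(\psi(\mbf{y}^-)+\mu r,\mbf{y}^-)$ at the slightly earlier time $t-r^2$; by the H{\"o}lder/Carleson estimate \eqref{carlhcest.eq}, together with Harnack and the backward Harnack hypothesis \eqref{backharnack}, $u(\mbf{Y}^-(\eta))<\tfrac12 u(\mbf{Y}^-(M_0))$ if $\eta$ is chosen small. Hence $u(\mbf{Y})\lesssim u(\mbf{Y}^-(M_0))-u(\mbf{Y}^-(\eta))=r\int_\eta^{M_0}\partial_{y_0}u(\mbf{Y}^-(\mu))\,\d\mu$, and since $\partial_{y_0}u$ is a nonnegative caloric function (this is where the hypothesis $\partial_{y_0}u\ge 0$ enters essentially, not merely through difference quotients), the ordinary Harnack inequality applied to $\partial_{y_0}u$ converts the integral bound into the pointwise bound $u(\mbf{Y})\lesssim r\,\partial_{y_0}u(\mbf{Y})$. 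No reference solution is needed, no smallness of $\delta(\mbf{Y})/R$ is imposed, and no subsequent covering/Harnack-chaining step is required: the estimate holds on all of $I_R(\mbf{X})\cap\Omega$ at once.
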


\begin{proof}
	Let  $\mbf{X}=(\psi(\mbf{x}),\mbf{x})\in\Sigma$ and $\mbf{Y}=(y_0,\mbf{y})\in I_{R}(\mbf{X})\cap\Omega$ where $\mbf{y}=(y,t)$. 
	Choose $r>0$ 
	so that $y_0=\psi(\mbf{y})+10M_0\sqrt{n}r$, i.e.,
	$r:=(y_0-\psi(\mbf{y}))/(10M_0\sqrt{n})$. 
	Note that $\delta(\mbf{Y})\approx r$.  Since $\mbf{Y}\in I_{R}(\mbf{X})$, we see that $0<r<R/2$. Indeed:
	\[
10M_0\sqrt{n}r
	=
	y_0-\psi(\mbf{y})
	=
	y_0-\psi(\mbf{x})+ \psi(\mbf{x})-\psi(\mbf{y})
	<
	3M_0\sqrt{n}R+ M_0\,\|\mbf{x}-\mbf{y}\|
	<
	5M_0\sqrt{n}R.
	\]

	Set $\mbf{y}^-=(y,t^-):=(y,t-r^2)$, and given $0\leq\mu\le M_0$, 
	let $\mbf{Y}^-(\mu)=(\psi(\mbf{y}^-)+\mu r, \mbf{y}^-)$ so that 
$\delta(\mbf{Y}^-(\mu))\approx \mu r$. 
	Note that since $\mbf{Y}\in I_{R}(\mbf{X})$ we have
	\[
	% |y_i^- -x_i| =
	|y_i-x_i|
	<R\,,\,\, 1\leq i\leq n, %\quad %\text{and} \quad
	%|y'-x| =	|y-x|<\sqrt{n}R
	\]
	and
	\[
	|t^- -t(\mbf{x})|^{\frac12}
	=
	|t-r^2-t(\mbf{x})|^{\frac12}
	\le
	r+|t-t(\mbf{x})|^{\frac12}
	<
	r+R
	<
	3R/2,
	\]
	so that in particular, $\mbf{y}^-\in Q_{3R/2}(\mbf{x})$,
	and $\|\mbf{y}^--\mbf{x}\| \leq \sqrt{n}R + 3R/2$.
	Hence for $0<\mu\le M_0$, we have
	\[
	|\psi(\mbf{y}^-)+\mu r-\psi(\mbf{x})|
	<
	\mu r+M_0\|\mbf{y}^--\mbf{x}\|
	<
	\mu r+3M_0\sqrt{n}R
	<
	4M_0\sqrt{n}R\,,
	\]
and therefore 
$\mbf{Y}^-(\mu)\in I_{3 R/2}(\mbf{X})\cap\Omega$ for 
every $0<\mu\le M_0$.

 	Applying \eqref{carlhcest.eq} (which combines
	%Lemma \ref{HCatBdry.lem} 
	Hölder continuity at the boundary % and Lemma \ref{carlest.lem}
with Carleson's estimate), with center 
$\mbf{Y}^-(0)$ in place of $\mbf{X}$ and at scale $r/16$ instead of $R$,  and using
Harnack's inequality and the backward Harnack inequality in \eqref{backharnack}, we obtain 
	\begin{align*}
		u(\mbf{Y}^-(\eta))
		\leq C
		\Big(\frac{\delta(\mbf{Y}^-(\eta))}{r}\Big)^\alpha\,u(\psi(\mbf{y})+r,\mbf{y})
		\leq C
		\eta^\alpha\,u(\mbf{Y}^-(M_0))
		<
		\frac12\,u(\mbf{Y}^-(M_0))
	\end{align*}  
	provided we fix $\eta=\eta(n,M_0)\ll 1$ sufficiently small. 
Thus, by the backward Harnack inequality, 
% (applied to $u$,  which vanishes continuously in $\Delta_{2\,R}(\mbf{X})$) 
	% and the fundamental theorem of calculus  
% we obtain
	\[
	u(\mbf{Y})
	\lesssim \frac12
	u(\mbf{Y}^-(M_0))
	<
	u(\mbf{Y}^-(M_0))- u(\mbf{Y}^-(\eta))
	=
	r\,
	\int_{\eta}^{M_0} \partial_{y_0} u(\mbf{Y}^-(\mu))\,d\mu. 
	\]
	Using next that $(\partial_t -\mathcal{L})(\partial_{y_0} u)=0$ in $I_{2\,R}(\mbf{X})\cap\Omega$, since $\partial_{y_0} u\ge 0$, we may apply 
 Harnack's inequality %(applied to $\partial_{y_0}u$) we
 to conclude that 
	\begin{align*}
		u(\mbf{Y})
		\lesssim
		r\,\partial_{y_0} u(\mbf{Y})
		\approx
		\delta(\mbf{Y})\,\partial_{y_0} u(\mbf{Y}).
	\end{align*}
	
	To prove the opposite inequality in \eqref{derivative},
set $\mbf{y}^+=(y,t+r^2)$ and %for $0\leq\mu\le M_0$, 
	let $\mbf{Y}^+:=(\psi(\mbf{y}^+)+r, \mbf{y}^+)$. % so that $\delta(\mbf{Y}_+'(\mu))\approx \mu r$. 
Then by a standard interior estimate for derivatives of caloric functions, 
	combined with the backward Harnack inequality in
\eqref{backharnack}, we have
\[
\partial_{y_0} u(\mbf{Y}) \lesssim \delta(\mbf{Y})^{-1}  u(\mbf{Y}^+) 
\lesssim \delta(\mbf{Y})^{-1} u(\mbf{Y})\,.
\]  
\end{proof}

\begin{lemma}\label{lemma:CS-2}
	Let $\mbf{X}\in\Sigma$ and $R>0$. Assume that 
	$0\le u\in W^{1,2}(I_{2R}(\mbf{X})\cap\Omega)\cap C(\overline{I_{2R}(\mbf{X})\cap\Omega})$ satisfies 
	$\partial_tu -\mathcal{L}u=0$ in $I_{2R}(\mbf{X})\cap\Omega$, with 
	$u\equiv 0$ in $\Delta_{2R}(\mbf{X})$. 
	Suppose further that $u$ satisfies the strong Harnack inequality \eqref{backharnack}
for all  $r<R/2$, and for every
$\mbf{Z}\in \Delta_{R}(\mbf{X})$.	
	Then, there exists $\eta = \eta(n,M_0)\in (0,1/16)$ such that 
	\begin{equation}\label{posderiv}
	\partial_{y_0} u(\mbf{Y})\ge 0, 
	\qquad \text{for every } \mbf{Y}\in I_{R/2}(\mbf{X})\cap\Omega \text{ with } \delta(\mbf{Y})<\eta R.
	\end{equation}
\end{lemma}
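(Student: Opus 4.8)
The plan is to adapt, in scale‑invariant form, the Athanasopoulos–Caffarelli–Salsa argument (see \cite[Chapter 13]{CS}, \cite{ACS}): rather than treating $\partial_{y_0}u=D_{e_0}u$ in isolation, we prove monotonicity of $u$ along a whole cone of directions about $e_0$ in a one‑sided neighbourhood of $\Sigma$, and then specialize to $e_0$.

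\emph{Step 1: the transversal cone.} Let $L:=\|\psi\|_{\Lip(1,1/2)}$ and fix $c_0=c_0(n,M_0)$ with $0<c_0<1/(2L)$, and set
\[
\Gamma:=\big\{\,e=e_0+\omega:\ \omega\in\re^{n-1}\times\{0\}\times\{0\},\ |\omega|\le c_0\,\big\}.
\]
Since $\psi$ is Lip$(1,1/2)$ and $c_0<1/L$, for every $e\in\Gamma$ and $\tau\ge 0$ one has $\Omega\subset\Omega-\tau e$; hence, for $\tau>0$ small, $u_\tau:=u(\,\cdot\,+\tau e)$ is a non‑negative caloric function on a neighbourhood of $\overline{I_{3R/2}(\mbf{X})\cap\Omega}$, it vanishes on $\Sigma-\tau e$, and it is strictly positive on $\Sigma\cap I_{3R/2}(\mbf{X})$ (we may assume $u\not\equiv 0$, otherwise there is nothing to prove). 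Note $e_0\in\Gamma$, and that the cone is stable under averaging, so that control of $D_eu$ for off‑axis $e\in\Gamma$ will give control of the tangential derivatives $\partial_{y_i}u$, which is needed below.

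\emph{Step 2: the quantity to be driven to zero.} By interior estimates for caloric functions together with Lemma \ref{carlest.lem} and the backward Harnack inequality \eqref{backharnack}, one has $|\nabla_Y u(\mbf{Y})|\lesssim u(\mbf{Y})/\delta(\mbf{Y})$ on $I_{R}(\mbf{X})\cap\Omega$; in particular there is $C_1=C_1(n,M_0)$ with $D_eu\ge -C_1\,u/\delta$ on $E:=I_{R/2}(\mbf{X})\cap\Omega\cap\{\delta<\eta R\}$ for every $e\in\Gamma$, where $\eta=\eta(n,M_0)\in(0,1/16)$ is a small constant to be fixed. Set
\[
\epsilon_\star:=\inf\big\{\epsilon\ge 0:\ D_eu\ge -\epsilon\,u/\delta\ \text{on }E\ \ \forall\, e\in\Gamma\big\}\,\in\,[0,C_1].
\]
It suffices to show $\epsilon_\star=0$; then \eqref{posderiv} holds for $e=e_0$ with this $\eta$.

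\emph{Step 3: the improvement (the crux).} The claim is that there is $\theta=\theta(n,M_0)\in(0,1)$ such that $D_eu\ge -\theta\epsilon_\star\,u/\delta$ on $E$ for all $e\in\Gamma$; together with the definition of $\epsilon_\star$ this gives $\epsilon_\star\le\theta\epsilon_\star$, hence $\epsilon_\star=0$. The mechanism is a localized comparison at the boundary scale. Given $\mbf{Y}$ with $\delta:=\delta(\mbf{Y})$ small, let $\mbf{Z}\in\Sigma$ be a nearest point and work in $I_{8\rho}(\mbf{Z})$ with $\rho\approx\delta$. One compares the translated solutions $u_\tau$, $0<\tau\lesssim\rho$, against $u$: on $\Delta_{2\rho}(\mbf{Z})$ we have $u_\tau>0=u$, while on the portion of the parabolic boundary of $I_{2\rho}(\mbf{Z})\cap\Omega$ that remains at distance $\gtrsim\rho$ from $\Sigma$, Lemma \ref{carlest.lem} and the backward Harnack inequality \eqref{backharnack} give $u_\tau\approx u\approx u(\cA_\rho^+(\mbf{Z}))$, and on the remaining boundary portions both are controlled by $(\cdot/\rho)^{\alpha}$‑type powers via Lemma \ref{HCatBdry.lem}. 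Feeding this information into the maximum principle — equivalently, into the boundary Harnack inequality and the Hölder continuity of quotients of Lemma \ref{comparisonlemma}, applied to $u$ against the caloric competitor built from $u_\tau$ and $\epsilon_\star\,\delta^{-1}u$ — produces a fixed gain $\theta<1$ in the lower bound for $D_eu/(u/\delta)$, uniformly over the relevant scales $\rho\approx\delta<\eta R$.

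\emph{Main obstacle.} The heart of the matter is Step 3: extracting the definite gain $\theta<1$. Two points require care. First, $u$ may vanish to high order at points of $\Sigma$ where the Lip$(1,1/2)$ graph ``spikes into the past'', so no Hopf‑type non‑degeneracy is available (the interior backward cone of a Lip$(1,1/2)$ domain may be too narrow) and the argument must be purely scale‑invariant. Second, one must control $u_\tau$ against $u$ on the parts of the parabolic boundary of the working box that approach $\Sigma$; this is exactly the role of the backward Harnack inequality of Lemma \ref{backharnacklemma} — valid here \emph{without} assuming vanishing on the full lateral boundary — together with Carleson's estimate. Everything else — the choice of the cone, the interior bound $|\nabla_Y u|\lesssim u/\delta$, the averaging stability of the cone, and the rigidity argument $\epsilon_\star\le\theta\epsilon_\star\Rightarrow\epsilon_\star=0$ — is routine.
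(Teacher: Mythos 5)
Your Step 3 --- the claimed improvement $\epsilon_\star\le\theta\epsilon_\star$ with a uniform $\theta<1$, which is what forces $\epsilon_\star=0$ --- is asserted but not established. You write that ``feeding this information into the maximum principle \ldots\ produces a fixed gain $\theta<1$,'' but you never construct the caloric competitor, nor do you explain the mechanism by which the strict gain is extracted from the boundary Harnack/H\"older estimate: the comparison you describe only gives $D_e u \gtrsim -\epsilon_\star u/\delta$ back again, and the step where the constant actually contracts is exactly what you flag as ``the crux'' and ``the main obstacle'' before leaving it open. Since this contraction is the entire content of the lemma, the proposal as written is incomplete. (Steps 1 and 2 are fine but routine; the interior bound $|\nabla_Y u|\lesssim u/\delta$ indeed uses interior estimates plus the backward Harnack inequality \eqref{backharnack}, as you note.)

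The paper bypasses the $\epsilon_\star$-contraction framework entirely, following \cite[Lemma 11.12]{CS} rather than the Chapter 13/ACS cone argument you are adapting. It introduces a single explicit reference caloric function $h$ in $\Omega_{2R}$, solving the initial-Dirichlet problem with $h\equiv 1$ on $\partialp\Omega_{2R}\setminus\Sigma$ and $h\equiv 0$ on $\Delta_{2R}(\mbf{X})$. The key point is that $\partial_{y_0}h\ge 0$ follows immediately from the maximum principle: translating $h$ downward in $y_0$ decreases its boundary data. Since both $u$ and $v:=ch$ (normalized so $u=v$ at $\cA_R^+(\mbf{X})$) satisfy the strong Harnack inequality, Lemma \ref{comparisonlemma} gives the H\"older bound $|u-\Theta v|\lesssim (r/R)^\alpha v$ near $\Sigma$, with $\Theta:=u(\mbf{Y})/v(\mbf{Y})\approx 1$. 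Because $\partial_{y_0}v\ge 0$, Lemma \ref{lemma:CS-1} applies to $v$ and gives $\partial_{y_0}v\approx v/\delta$; interior estimates for the caloric function $u-\Theta v$ then yield $|\partial_{y_0}u-\Theta\,\partial_{y_0}v|\lesssim\eta^\alpha\partial_{y_0}v$, hence $\partial_{y_0}u\ge(\Theta-C\eta^\alpha)\partial_{y_0}v\ge 0$ for $\eta$ small. In effect, the role your translated solutions $u_\tau$ are supposed to play is taken over by the single reference $v$, whose monotonicity $\partial_{y_0}v\ge 0$ is built in by construction; this removes the need for the hard improvement step altogether.
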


\begin{proof} 
We follow very closely the proof of \cite[Lemma 11.12]{CS}.  The strong Harnack inequality \eqref{backharnack} allows 
one to follow the elliptic argument.

	Write $\mbf{X}=(x_0,x,t)=(\psi(x,t),x,t)\in\Sigma$. 
	As above, we set %$\widetilde{\Omega}$ 
	$\Omega_{2R}:=I_{2R}(\mbf{X})\cap\Omega$, and let 
$\partialp\Omega_{2R}$ denote the parabolic boundary of $\Omega_{2R}$ (see \eqref{pbdry}).  
As in \eqref{eq-boundarysplit}, we make the splitting $\partialp\Omega_{2R}=\B \cup \s$.
We set $F=\partialp \Omega_{2R}\setminus \Sigma$, and let $h$ be the solution of the initial-Dirichlet problem for the heat equation
in $\Omega_{2R}$, with data $h\equiv 1$ on $F$, and $h\equiv 0$ on  $\partialp \Omega_{2R}\cap \Sigma=\Delta_{2R}(\mbf{X})$.
Thus, Lemma \ref{backharnacklemma} applies to $h$.
We note that 
\begin{equation}\label{lowerbound}
\partial_{y_0} h(\mbf{Y}) \geq 0\,,\quad \forall \, \mbf{Y}\in \Omega_{2R}\,,
\end{equation}
as may be seen by the fact that for $0<\rho<1$,
\[
 h_\rho(y_0,\mbf{y}):= h(y_0 -\rho, \mbf{y}) \leq h(y_0, \mbf{y})\,,\quad \text {if }\,\, (y_0, \mbf{y}),(y_0 -\rho, \mbf{y}) \in \Omega_{2R}\,,
\]
by the maximum principle. 
	
	Set $v:= ch$, where $c$ is a positive constant chosen so that
$u \left(\cA_R^+(\mbf{X})\right) = v \left(\cA_R^+(\mbf{X})\right)$.  Of course, Lemma \ref{backharnacklemma} 
and \eqref{lowerbound} apply also to $v$.
Since both $u$ and $v$ satisfy the strong Harnack inequality \eqref{backharnack}, we may apply Lemma \ref{comparisonlemma}
to the ratio $u/v$.  In particular, for any fixed $\mbf{Y} \in \Omega_{R/2}(\mbf{X})$, with
$\delta(\mbf{Y})=: r<R/2$, and for 
$\|\mbf{Z}-\mbf{Y}\|\leq r/2$, we have
\begin{equation}\label{bhp2a}
\left| u(\mbf{Z}) \,- \, \frac{u(\mbf{Y})}{v(\mbf{Y})}\, v(\mbf{Z})\right|\, \lesssim \, \left(\frac{r}{R}\right)^\alpha\, v(\mbf{Z}) \,
\lesssim \,  r^{\alpha +1} R^{-\alpha} %\left(\frac{r}{R}\right)^\alpha
\,\partial_{z_0} v(\mbf{Z})\,, 
\end{equation}
where in the last inequality we have used Lemma \ref{lemma:CS-1} applied to $v$.  Set $\Theta=\Theta(\mbf{Y})=u(\mbf{Y})/v(\mbf{Y})$.  Then by \eqref{bhp1}, and our normalization,
$\Theta \approx 1$.  Combining
\eqref{bhp2a} with standard interior estimates for spatial derivatives of the
caloric function $u(\cdot) - \Theta v(\cdot)$, we see that for $\|\mbf{Z}-\mbf{Y}\|\leq r/2$ and $r<\eta R$, 
\[
|\partial_{z_0} u(\mbf{Z})\, -\,\Theta\, \partial_{z_0} v(\mbf{Z})| \,\lesssim \,\eta^\alpha \partial_{z_0} v(\mbf{Z})\,.
\]
In particular, the latter bound holds with $\mbf{Z}=\mbf{Y}$, so that
\[
\partial_{y_0} u(\mbf{Y})\,\geq  \,\left(\Theta\,  -\, C\eta^\alpha\right) \partial_{y_0} v(\mbf{Y})\,.
\]
Choosing $\eta$ small enough, we obtain \eqref{posderiv}.
\end{proof}

Lemmas \ref{lemma:CS-1} and \ref{lemma:CS-2} can be combined to give a version of the former on which the non-negativity of $\partial_{y_0} u$ needs not be assumed:

\begin{lemma}\label{lemma:CS-3a}
	Let $\mbf{X}\in\Sigma$ and $R>0$. Assume that $0\le u\in W^{1,2}(I_{2\,R}(\mbf{X})\cap\Omega)\cap C(\overline{I_{2\,R}(\mbf{X})\cap\Omega})$ satisfies $\partial_tu -\mathcal{L}u=0$ in $I_{2\,R}(\mbf{X})\cap\Omega$ with 
	$u\equiv 0$ in $\Delta_{2\,R}(\mbf{X})$. 
		Suppose further that $u$ satisfies the strong Harnack inequality \eqref{backharnack}
for all  $r<R/2$, and for every
$\mbf{Z}\in \Delta_{R}(\mbf{X})$.	
	Then, there exists $\eta=\eta(n,M_0)>0$ such that 
	\begin{equation}\label{eq3.25appendix}
	\partial_{y_0} u(\mbf{Y})\approx \frac{u(\mbf{Y})}{\delta(\mbf{Y})},
	\qquad \text{for every $\mbf{Y}\in I_{R/4}(\mbf{X})\cap\Omega$ with $\delta(\mbf{Y})<\eta\,R$}.
	\end{equation}
\end{lemma}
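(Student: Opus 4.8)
The plan is to deduce Lemma~\ref{lemma:CS-3a} by combining Lemma~\ref{lemma:CS-1} and Lemma~\ref{lemma:CS-2} in a standard way, exploiting the fact that the vanishing of $\partial_{y_0}u$ is only needed \emph{near the boundary}. The point is that Lemma~\ref{lemma:CS-2} already gives $\partial_{y_0}u\ge 0$ in a thin collar $\{\mbf{Y}\in I_{R/2}(\mbf{X})\cap\Omega:\,\delta(\mbf{Y})<\eta R\}$, and in that collar Lemma~\ref{lemma:CS-1} (or its proof) yields the two-sided estimate $\partial_{y_0}u(\mbf{Y})\approx u(\mbf{Y})/\delta(\mbf{Y})$. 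So the work is really about patching the local-in-scale statement of Lemma~\ref{lemma:CS-1}, which was phrased with the global sign hypothesis, to the situation where the sign is only known in the collar.

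First I would fix $\mbf{Y}=(y_0,\mbf{y})\in I_{R/4}(\mbf{X})\cap\Omega$ with $\delta(\mbf{Y})<\eta R$, where $\eta=\eta(n,M_0)$ is (the smaller of) the constants produced by Lemma~\ref{lemma:CS-2} and by the argument below. Let $\mbf{X}'=(\psi(\mbf{y}),\mbf{y})\in\Sigma$ be the point on the graph directly below $\mbf{Y}$, and choose a scale $\rho\approx\delta(\mbf{Y})$, say $\rho:=C_0\delta(\mbf{Y})$ for a structural constant $C_0$, small enough that $I_{2\rho}(\mbf{X}')\cap\Omega\subset I_{R/2}(\mbf{X})\cap\Omega$ and hence sits inside the collar where $\partial_{y_0}u\ge0$. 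Since $\rho<R/2$ and $\mbf{X}'\in\Delta_R(\mbf{X})$, the strong Harnack hypothesis \eqref{backharnack} for $u$ is inherited on $\Delta_{2\rho}(\mbf{X}')$ (indeed \eqref{backharnack} at the larger scale $R$ implies its counterpart at scale $\rho$ for corkscrew points over $\Delta_{\rho}(\mbf{X}')$, by Harnack chaining and doubling). Now apply Lemma~\ref{lemma:CS-1} with $\mbf{X}'$ in place of $\mbf{X}$ and $2\rho$ in place of $2R$: all of its hypotheses hold on $I_{2\rho}(\mbf{X}')\cap\Omega$ — $u$ is a non-negative caloric function vanishing on $\Delta_{2\rho}(\mbf{X}')$, it satisfies \eqref{backharnack} at the scales $r<\rho/2$, and crucially $\partial_{y_0}u\ge0$ throughout $I_{2\rho}(\mbf{X}')\cap\Omega$ because that set lies in the collar where Lemma~\ref{lemma:CS-2} applies. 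The conclusion of Lemma~\ref{lemma:CS-1} at this scale gives
\[
\partial_{y_0}u(\mbf{Z})\approx\frac{u(\mbf{Z})}{\delta(\mbf{Z})},\qquad \mbf{Z}\in I_{\rho}(\mbf{X}')\cap\Omega,
\]
and since $\mbf{Y}\in I_{\rho}(\mbf{X}')\cap\Omega$ by the choice of $\rho$ (one checks $\|\mbf{Y}-\mbf{X}'\|\lesssim\delta(\mbf{Y})\approx\rho$), specializing to $\mbf{Z}=\mbf{Y}$ yields \eqref{eq3.25appendix}. The implicit constants depend only on $n$, $M_0$, and the constants in \eqref{backharnack}, as required.

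The only genuinely delicate point — and the step I expect to require the most care — is checking that the strong Harnack inequality \eqref{backharnack}, assumed for $u$ at scales $r<R/2$ over surface boxes $\Delta_R(\mbf{X})$, descends to the smaller scales $r<\rho/2$ over $\Delta_\rho(\mbf{X}')$ that Lemma~\ref{lemma:CS-1} needs. This is not automatic from the statement but follows from the proof of Lemma~\ref{backharnacklemma}: there the right-hand inequality in \eqref{backharnack} is obtained by comparison with an auxiliary solution vanishing on the full lateral boundary, for which \cite[Theorem 4]{FGS} gives the backward Harnack inequality at \emph{all} interior scales, together with the Carleson estimate and Harnack's inequality, all of which are scale-invariant and localize freely. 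Alternatively, and perhaps more cleanly, one can simply re-run the proofs of Lemma~\ref{lemma:CS-1} and Lemma~\ref{lemma:CS-2} directly at the scale $\delta(\mbf{Y})$: those proofs only ever invoke \eqref{backharnack} at corkscrew points $\cA_r^\pm(\mbf{Z})$ with $r$ comparable to the distance of the evaluation point to $\Sigma$, which here is $\approx\delta(\mbf{Y})\ll R$, so the hypothesis \eqref{backharnack} at scales $r<R/2$ is exactly what is used. I would phrase the write-up using this second route to avoid any ambiguity about which scales \eqref{backharnack} is invoked at, and then conclude \eqref{eq3.25appendix} as above; Lemma~\ref{lemma:CS-3} (with its two cases \eqref{eq3.25} and \eqref{eq3.25a}) then follows by applying Lemma~\ref{lemma:CS-3a} to $u(\mbf{Y})=G(\mbf{Y}^\ast,\mbf{Y})$ (resp.\ $G(\mbf{Y},\mbf{Y}^\ast)$), after noting via Lemma~\ref{strongharnack4gf.lem} and the reasoning in Remark~\ref{strongharnack4gf.rmk} that such Green functions satisfy the requisite strong Harnack inequality \eqref{backharnack}, and that $\langle\nabla_Y u,e_0\rangle=\partial_{y_0}u\le|\nabla_Y u|$ together with an interior-gradient estimate gives the full comparison $|\nabla_Y u|\approx\partial_{y_0}u\approx u/\delta$.
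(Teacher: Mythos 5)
Your proof is correct and follows essentially the same route as the paper: apply Lemma~\ref{lemma:CS-2} to obtain $\partial_{y_0}u\ge 0$ in a thin collar near $\Sigma$, and then, for each point of interest, localize Lemma~\ref{lemma:CS-1} at a scale small enough that the shrunken box lies inside that collar. The only substantive difference is cosmetic: the paper applies Lemma~\ref{lemma:CS-1} at a fixed scale $\lambda R\approx\eta_0(M_0\sqrt n)^{-1}R$ around each surface point $\mbf{X}'\in\Delta_{R/4}(\mbf{X})$, while you rescale to $\rho\approx\delta(\mbf{Y})$; both choices work after checking $\mbf{Y}\in I_\rho(\mbf{X}')$ and $I_{2\rho}(\mbf{X}')\cap\Omega\subset\{\delta<\eta_0 R\}$.

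One small but worth-noting overcomplication: you flag the ``descent'' of the strong Harnack hypothesis to the smaller scale as the most delicate point and suggest either appealing to the proof of Lemma~\ref{backharnacklemma} or re-running the proofs of Lemmas~\ref{lemma:CS-1} and~\ref{lemma:CS-2}. Neither is needed. Lemma~\ref{lemma:CS-1}, applied at scale $\rho$ around $\mbf{X}'$, requires \eqref{backharnack} only for the pairs $(r,\mbf{Z})$ with $r<\rho/2$ and $\mbf{Z}\in\Delta_\rho(\mbf{X}')$; since $\rho/2<R/2$ and $\Delta_\rho(\mbf{X}')\subset\Delta_R(\mbf{X})$ (because $\mbf{X}'\in\Delta_{R/4}(\mbf{X})$ and $\rho$ is small), this is literally a sub-case of the hypothesis as stated for $u$ — no re-derivation from auxiliary solutions or localization of the backward Harnack inequality is required. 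Your closing remarks about deducing Lemma~\ref{lemma:CS-3} (adding the $|\nabla_Y u|$ comparison via interior estimates and the change of variable $t\mapsto -t$) go beyond the statement under review but are accurate and match what the paper does.
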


\begin{remark}\label{remark:CS-3}
% The same type of argument allows one to obtain that the
Estimate \eqref{eq3.25appendix} holds also for solutions of the adjoint caloric equation, that is for
$u$ as above, but satisfying $\partial_tu +\mathcal{L}u=0$. This follows immediately from Lemma \ref{lemma:CS-3} and the change of variable
$t \to -t$. 

\end{remark}

\begin{proof}[Proof of Lemma \ref{lemma:CS-3a}]
  Let $\eta_0$ denote the constant $\eta$ in Lemma~\ref{lemma:CS-2}.
  Observe that the current hypotheses are identical to those of Lemma \ref{lemma:CS-2}, so %for the same constant $\eta>0$, we have
  $\partial_{y_0} u \geq 0$ in  $I_{R/2}(\mbf{X})\cap\{\mbf{Y}\in\Omega: \delta(\mbf{Y})<\eta_0 R\}$. Hence, for each $\mbf{X'}\in\Delta_{R/4}(\mbf{X})$, 
  we may apply Lemma \ref{lemma:CS-1} in $I_{2\lambda R}(\mbf{X'})\cap \Omega$, with $\lambda R\approx \eta_0 (M_0\sqrt{n})^{-1} R$, to obtain
  \eqref{eq3.25appendix} with $\eta \approx \eta_0$.
\end{proof}	

% \begin{remark}\label{remarkA3}	
\begin{proof}[Proof of Lemma \ref{lemma:CS-3}]
The proof follows immediately 
from Lemma \ref{lemma:CS-3a} (and its adjoint caloric version).
% \end{remark}
\end{proof}

\section{Parabolic SIO bounds on Lip(1,1/2) graphs 
imply parabolic uniform rectifiability.}\label{appendixB}

As noted in the introduction, all sufficiently
nice parabolic singular integral operators (SIOs) are $L^2$ 
bounded on any parabolic uniformly rectifiable set \cite[Corollary 4.9]{BHHLN-BP}.  The converse remains open, in general, but
in this appendix, we obtain a restricted version of the converse in the setting of the present paper; i.e., we observe that $L^2$ boundedness of SIOs on a Lip(1,1/2) graph, implies that the graph is {\em regular} Lip(1,1/2).   We remark that we actually require only $L^2$ bounds for
parabolic SIOs with homogeneous kernels.

\begin{definition}\label{defnice}
We shall say that $K=K(X,t)$ is a ``nice" (homogeneous) parabolic C-Z kernel 
(of homogeneous dimension $d=n+1$)
if it satisfies the following properties:

\begin{enumerate}
\item[(i)] Parabolic Homogeneity:  $K(\lambda X, \lambda^2 t) =\lambda^{-d} K(X,t)$, for all $\lambda >0$.
\smallskip
\item[(ii)] Smoothness:  $K \in C^\infty\big(\ree\setminus \{0\}\big)$.
\smallskip
\item[(iii)] Oddness in spatial variables:  $K(X,t) = -K(-X,t)$, for each $(X,t) \in \rn\times\re$.
\end{enumerate}
\end{definition}

Corresponding to any such ``nice" kernel, and given a Lip(1,1/2) graph 
$\Sigma$ with surface measure $\sigma$, % :=\{(\psi(x,t),x,t): (x,t)\in\rn\}$,
% closed set
% $\Sigma\subset \rn$, which is $d$-dimensional ADR in the parabolic sense, 
we define for each $\eps>0$ the corresponding truncated SIO
on $\Sigma$ in the usual way:
\[ T_\eps f(x,t) = T_\eps(K)f(x,t):= \int_{\|(x-y,t-s)\|>\eps} K(x-y,t-s)\, f(y,s) \, \d\sigma(y,s)\,.
\]

\begin{proposition}\label{prop1}
Suppose that $\Sigma\subset \ree$ 
is a Lip(1,1/2) graph, on which, for every ``nice" kernel $K$ as in Definition \ref{defnice},
the corresponding truncated
SIOs $T_\eps(K)$ are bounded on $L^2(\Sigma)$, uniformly in $\eps>0$, i.e.,
\begin{equation}\label{eq2}
\sup_{\eps>0} \|T_\eps(K) f\|_{L^2(\Sigma)} \leq C_K  \| f\|_{L^2(\Sigma)}\,.
\end{equation}
Then $\Sigma$ is a {\bf regular} Lip(1,1/2) graph.
\end{proposition}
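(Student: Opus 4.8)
The plan is to deduce parabolic uniform rectifiability of the Lip(1,1/2) graph $\Sigma$ from the $L^2$ boundedness of parabolic SIOs with ``nice'' kernels, by constructing, for each direction, a specific nice kernel whose associated singular integral, applied to a carefully chosen test function, produces the half-order time derivative $\mathcal{D}_t\psi$ (up to controllable errors). The key conceptual point is that the parabolic Riesz-type transforms on a graph, written out via the graph parametrization, reveal the quantity $\psi(\mathbf{y})-\psi(\mathbf{x})$ in the numerator; differentiating the kernel appropriately in time pulls down the half-order time derivative. Concretely, I would recall from \cite{HL96} (or \cite{Hofmann-Lewis} type computations) the classical identity for the parabolic ``double layer''-type operator: applying $T(K)$ to the constant function $1$ on $\Sigma$, the value $T1(x,t)$ is (after subtracting a harmless constant and using oddness of $K$ in the spatial variables to kill the leading term) essentially $\mathcal{D}_t\psi(x,t)$ plus a term controlled in $L^\infty$ by $\|\nabla_x\psi\|_\infty$ and lower-order contributions. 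Since $T1$ is, by the $L^2$ boundedness and a standard $T(1)$-type argument (or directly by testing $T_\eps 1$ against $L^2$ bumps supported on surface cubes and using \eqref{eq2}), in $\BMO(\Sigma)$, pulling this back to $\rn$ via the bi-Lipschitz graph parametrization gives $\mathcal{D}_t\psi\in\BMO(\rn)$, which is exactly the definition of $\Sigma$ being regular Lip(1,1/2) (see Subsection \ref{Lipnotreg} and Remark \ref{PUR}).

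In more detail, the first step is to fix the kernel. One takes $K(X,t)$ to be (a smooth, parabolically homogeneous, spatially odd version of) the kernel obtained by applying $\partial_t$ to the fundamental-solution-type kernel $V$ of the parabolic fractional integral $\mathrm{I_P}$ of order $1$ introduced in \eqref{def-IP}; since $V$ is homogeneous of degree $1-d$ and even in the spatial variables, $\partial_t V$ is homogeneous of degree $-d$ and even in space, so one instead uses $X_j\,\partial_t V(X,t)/\|(X,t)\|$-type combinations, or more cleanly, the kernel $K_j(X,t):= \partial_{x_j}\partial_t\big(\mathrm{I_P}^2\text{-kernel}\big)$, arranged so that (i)--(iii) of Definition \ref{defnice} hold. (A robust alternative: take $K$ to be any nice kernel for which the corresponding $T(1)$ on the flat plane $\{x_0=0\}$ reproduces $\mathcal{D}_t$ up to a nonzero constant; such a $K$ exists by matching Fourier multipliers $2\pi i\tau/\vertiii{(\xi,\tau)}$.) The second step is the computation of $T_\eps 1$ on $\Sigma$: parametrize $\Sigma$ by $\bfpsi$, change variables, Taylor-expand $\psi$, use oddness in the spatial variables to cancel the term linear in $\nabla_x\psi$ after integrating over spheres, and identify the surviving principal-value term as $c\,\mathcal{D}_t\psi(x,t)$ with the remainder bounded pointwise by a constant times $1+\|\psi\|_{\Lip(1,1/2)}^2$. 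The third step is to show $T_\eps 1\in \BMO$ uniformly in $\eps$: this is the standard fact that $L^2$-bounded CZ operators map $L^\infty\to\BMO$; here one only needs it for $f\equiv 1$, so it follows from a direct local argument — split $1 = 1_{2\Delta} + 1_{(2\Delta)^c}$ over a surface cube $\Delta$, handle the local piece by \eqref{eq2} and Cauchy--Schwarz, and the far piece by kernel smoothness (property (ii)) and homogeneity, exactly as in the classical setting. The final step is the transfer: the equivalence $\sigma\approx d\sigma^{\mathbf{s}}$ (Subsection \ref{Lipnot}) and the bi-Lipschitz nature of $\bfpsi$ mean $\BMO(\Sigma)$ pulls back to $\BMO(\rn)$ (parabolic metric), so $\mathcal{D}_t\psi\in\BMO(\rn)$; combined with $\|\nabla_x\psi\|_\infty<\infty$, this is the regular Lip(1,1/2) condition, and by Remark \ref{PUR} it is equivalent to parabolic uniform rectifiability of $\Sigma$.

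The main obstacle I anticipate is the kernel bookkeeping in the second step: verifying that the particular nice kernel one chooses genuinely produces $\mathcal{D}_t\psi$ as its principal-value contribution, rather than some other half-order derivative or a multiplier-modified version of it. This requires care because (a) the natural time-differentiated fundamental solution is \emph{even} in space, hence not directly admissible under (iii), forcing a slightly indirect choice; (b) the Taylor-expansion/cancellation argument must be organized so that the error terms are genuinely lower order in the parabolic scaling (size $O(\|\mathbf{y}-\mathbf{x}\|)$ relative to the kernel, so that they integrate to something bounded); and (c) one must confirm the constant $c$ multiplying $\mathcal{D}_t\psi$ is nonzero, which is a Fourier-symbol computation ($2\pi i\tau/\vertiii{(\xi,\tau)}$ must appear with nonzero coefficient). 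None of this is deep — it is essentially the content of \cite[Lemma 2.8]{HL96} and the classical David--Journ\'e / Semmes philosophy that $\|T1\|_{\BMO}$ controls the relevant geometric square function — but writing it out cleanly for a general admissible $K$ (or pinning down one convenient $K$) is where the real work lies. Everything else (the $L^\infty\to\BMO$ step, the transfer to $\rn$, and the identification with parabolic UR) is standard and cited above.
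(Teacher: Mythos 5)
Your proposal is morally in the right direction, but it takes a genuinely different and more laborious route than the paper does, and the ``Taylor expand and kill the leading term'' step contains a real gap. The paper makes no $T(1)$ computation at all: it constructs a family of nice homogeneous kernels $K^{\nu,M}$ depending on a direction $\nu$ and a cutoff parameter $M$, and observes that with $\nu=\nu_0=(1,0,\dots,0)$ and $M=M_\psi$ (the Lip(1,1/2) constant of $\psi$) the cutoff $\Phi_M$ is identically one on the cone $|x_0|\le M\|(x,t)\|$, which contains $\Sigma-\Sigma$, and — crucially — the kernel $K^{\nu_0,M_\psi}$ is \emph{linear in $x_0$} on that cone. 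Consequently the graph SIO $T_\eps(K^{\nu_0,M_\psi})$, written in graph coordinates with the surface density merged into the test function, is \emph{exactly} the truncated parabolic Calder\'on commutator $T_\psi=[H^{1/2},\psi]$ of \eqref{defcomm}. The proposition then follows in one stroke from Theorem \ref{t3} (\cite{H95}), which says that $L^2$-boundedness of $T_\psi$ is equivalent to $\psi$ being regular Lip(1,1/2). You cite \cite[Lemma 2.8]{HL96} as the relevant prior work, but the result you actually need is Theorem \ref{t3}, and what you propose amounts to re-proving its necessity direction rather than invoking it.

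Two places where your sketch as written does not hold up. (1) For a generic nice kernel, expanding $K(\psi(\mathbf{x})-\psi(\mathbf{y}),\cdot)$ in the $x_0$-slot produces a tail of higher-order commutators that are \emph{not} lower order: the $k$-th term $(\psi(\mathbf{x})-\psi(\mathbf{y}))^k\,\partial_{x_0}^kK(0,\cdot)$ still scales like $\|\mathbf{x}-\mathbf{y}\|^{-d}$ (since $\partial_{x_0}^kK(0,\cdot)$ is homogeneous of degree $-d-k$), so your claim that the error terms ``integrate to something bounded'' fails; each term is itself a Calder\'on commutator of the same order. The clean fix, which the paper builds in, is to choose $K$ affine in $x_0$ on the cone $|x_0|\lesssim\|(x,t)\|$, which makes every term past the first vanish identically. (2) Applying $T(K)$ to $1$ on $\Sigma$ and pulling back to $\rn$ gives $T_\psi J$, with $J=d\sigma/d\mathbf{y}$ the bounded (not H\"older) surface density, not $T_\psi 1$; and $T_\psi 1=H^{1/2}\psi$ (since $H^{1/2}1=0$), whose parabolic symbol $(2\pi i\tau+4\pi^2|\xi|^2)^{1/2}$ is not that of $\mathcal{D}_t$. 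Both the Jacobian error $T_\psi(J-1)$ and the passage from $H^{1/2}\psi\in\BMO(\rn)$ to $\mathcal{D}_t\psi\in\BMO(\rn)$ can be handled — the first by the $L^\infty\to\BMO$ bound for $T_\psi$, the second by a $0$-order parabolic multiplier argument as in \eqref{eqnormequiv} — but these contributions live in $\BMO$, not $L^\infty$ as your sketch asserts, and writing them out is nontrivial. Once you do, you have essentially reproduced the proof of \cite{H95}, which the paper avoids by citing it directly.
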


To prove the proposition, we shall make use of the following fact, established in \cite{H95}.
\begin{theorem}[\cite{H95}]\label{t3}
Let $T_\psi$ denote the parabolic Calder\'on commutator
\begin{equation}\label{defcomm}
T_\psi f(x,t) :=  p.v.
\int_{\rn} \frac{\psi(x,t)-\psi(y,s)} {(t-s)^{1+(n/2)} }\,\exp\left(\frac{-|x-y|^2}{4(t-s)}\right)1_{\{t>s\}}\, f(y,s) \, \d y\d s.
\end{equation}
%\begin{multline*}
%T_\psi f(x,t)\\[4pt]
%:= \text{p.v.} \int_{\rn} \big(\psi(x,t)-\psi(y,s) \big) (t-s)^{-1-(n/2)} \exp\left(\frac{-|x-y|^2}{4(t-s)}\right) f(y,s) \, dyds.
%\end{multline*}
Then $T: L^2(\rn) \to L^2(\rn)$ if and only if $\psi$ is a {\em regular} Lip(1,1/2) function.
\end{theorem}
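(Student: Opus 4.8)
The result is due to \cite{H95}; the plan is to prove the equivalence by combining the parabolic $T1$ theorem with an explicit computation of $T_\psi 1$ and $T_\psi^\ast 1$. Write $\mbf{x}=(x,t)$, $\mbf{y}=(y,s)$ with $x,y\in\re^{n-1}$, and recall from \eqref{defcomm} that the kernel of $T_\psi$ is
\[
K_\psi(\mbf{x},\mbf{y})=\frac{\psi(\mbf{x})-\psi(\mbf{y})}{(t-s)^{1+n/2}}\,\exp\!\Big(\frac{-|x-y|^2}{4(t-s)}\Big)\,\mathbbm{1}_{\{t>s\}}.
\]
As a function of $\mbf{x}-\mbf{y}$, the Gaussian factor is parabolically homogeneous of degree $-(n+2)$, so if $\psi\in\Lip(1,1/2)$ the numerator is an $O(\|\mbf{x}-\mbf{y}\|)$ factor and $K_\psi$ is a standard parabolic Calder\'on--Zygmund kernel of homogeneous dimension $d=n+1$, with size and H\"older estimates following from $\psi\in\Lip(1,1/2)$ alone (so that, in particular, $\nabla_x\psi\in L^\infty$). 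The key computation is that, splitting $\psi(\mbf{x})-\psi(\mbf{y})=\big[\psi(x,t)-\psi(x,s)\big]+\big[\psi(x,s)-\psi(y,s)\big]$ and integrating the Gaussian out of the first difference (using $\int_{\re^{n-1}}e^{-|x-y|^2/(4(t-s))}\,\d y=(4\pi(t-s))^{(n-1)/2}$, which converts $(t-s)^{-1-n/2}$ into $(t-s)^{-3/2}$), one obtains
\begin{equation}\label{keyident}
T_\psi 1 \,=\, c_n\int_{s<t}\frac{\psi(x,t)-\psi(x,s)}{(t-s)^{3/2}}\,\d s \;+\; \mathcal{R}\psi \pmod{\text{constants}},
\end{equation}
where $c_n\neq 0$ and $\mathcal{R}\psi$ — the ``tangential'' contribution from the second difference, essentially a spatial Calder\'on commutator applied at each fixed time — is controlled in $\BMO(\rn)$ by $\|\nabla_x\psi\|_\infty$. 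By \eqref{1.8gg}, the principal term in \eqref{keyident} is a nonzero multiple of a one-sided version of the half-order time derivative $D_{1/2}^t\psi$, which lies in $\BMO$ if and only if $\mathcal{D}_t\psi$ does (a reflection); the identity for $T_\psi^\ast 1$ is the $t\mapsto -t$ analogue. Hence, once $\psi$ is known to be $\Lip(1,1/2)$, we have $T_\psi 1,\,T_\psi^\ast 1\in\BMO(\rn)$ if and only if $\mathcal{D}_t\psi\in\BMO(\rn)$, i.e.\ if and only if $\psi$ is regular $\Lip(1,1/2)$, using \eqref{eqnormequiv}.

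For the implication ``$\psi$ regular $\Lip(1,1/2)\Rightarrow T_\psi\colon L^2(\rn)\to L^2(\rn)$'': $\psi$ is then $\Lip(1,1/2)$, so $K_\psi$ is a parabolic CZ kernel as above; the weak boundedness property for $T_\psi$ follows from a direct bump-function estimate, exploiting that the commutator structure of $K_\psi$ gains one power of the scale; and $T_\psi 1,\,T_\psi^\ast 1\in\BMO(\rn)$ by \eqref{keyident}. The parabolic $T1$ theorem then yields $L^2$ boundedness of $T_\psi$.

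For the converse, suppose $T_\psi\colon L^2(\rn)\to L^2(\rn)$. The first step is to show this already forces $\psi\in\Lip(1,1/2)$: given $\mbf{x}^0,\mbf{y}^0$ with $\ell:=\|\mbf{x}^0-\mbf{y}^0\|$, choose parabolic cubes $Q,Q'$ of side $\approx\ell$ near $\mbf{x}^0$ and $\mbf{y}^0$, time-ordered so that $t>s$ on $Q\times Q'$ and so that $K_\psi$ does not change sign there; a direct computation (the parabolic form of Calder\'on's commutator argument) gives $\langle T_\psi\mathbbm{1}_{Q'},\mathbbm{1}_Q\rangle\approx c\,\ell^{\,n}\big(\psi(\mbf{x}^0)-\psi(\mbf{y}^0)\big)$ up to lower-order terms, while $|\langle T_\psi\mathbbm{1}_{Q'},\mathbbm{1}_Q\rangle|\le\|T_\psi\|\,|Q|^{1/2}|Q'|^{1/2}\approx\|T_\psi\|\,\ell^{\,n+1}$, whence $|\psi(\mbf{x}^0)-\psi(\mbf{y}^0)|\lesssim\|T_\psi\|\,\ell$. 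Once $\psi\in\Lip(1,1/2)$ is known, $K_\psi$ is a CZ kernel, the necessity half of the parabolic $T1$ theorem gives $T_\psi 1,\,T_\psi^\ast 1\in\BMO(\rn)$, and \eqref{keyident} then yields $\mathcal{D}_t\psi\in\BMO(\rn)$, i.e.\ $\psi$ is regular $\Lip(1,1/2)$.

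The main obstacle is making \eqref{keyident} rigorous: the temporal integral in the principal term is only conditionally convergent (its tail is $O(|t-s|^{-1/2})$ for a $\Lip(1/2)$-in-time $\psi$), so the identity must be read in the $T1$ sense, by pairing with mean-zero parabolic atoms, with the slowly decaying tails contributing quantities that are essentially constant on each cube and therefore drop out of the oscillation; one must also pin down the nonzero constant $c_n$ and verify that $\mathcal{R}\psi$ is genuinely controlled by $\|\nabla_x\psi\|_\infty$ (rather than merely lying in $\BMO$), which requires the symmetry cancellation of the spatial increment against the even Gaussian. A secondary difficulty is securing the lower bound $|\langle T_\psi\mathbbm{1}_{Q'},\mathbbm{1}_Q\rangle|\gtrsim\ell^{\,n}|\psi(\mbf{x}^0)-\psi(\mbf{y}^0)|$ in the blow-up step: the cubes must be positioned so that the principal term dominates the error terms uniformly, and this is exactly where the time-ordering built into the $\mathbbm{1}_{\{t>s\}}$ factor and the $\Lip(1/2)$ scaling are used essentially.
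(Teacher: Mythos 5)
You should first note that the paper does not prove Theorem \ref{t3} at all: it is imported verbatim from \cite{H95} and used as a black box in Appendix \ref{appendixB}, so there is no internal proof to compare with, and the known proofs in the literature (\cite{H95}, together with the machinery of \cite{LewMur} and \cite{H}) run through parabolic Calder\'on-type kernel estimates, square functions and Carleson measures rather than a short $T1$ computation. Your outline does capture the right skeleton — split $\psi(\mbf{x})-\psi(\mbf{y})$ into a temporal and a spatial increment, integrate the Gaussian out of the temporal piece to produce a one-sided half-order time derivative, and run $T1$ — but the two places you label ``obstacles'' are not loose ends to be tidied; they are essentially the theorem, and at least one step would fail as written.

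Concretely: (1) the claim $\|\mathcal{R}\psi\|_{\BMO(\rn)}\lesssim\|\nabla_x\psi\|_\infty$ does not follow from the ``symmetry cancellation against the even Gaussian.'' Symmetrizing in $z=y-x$ converts the first difference into a second difference of $\psi(\cdot,s)$, which for a function merely Lipschitz in $x$ gains nothing pointwise; the resulting bound $(t-s)^{-1-n/2}\cdot\|\nabla_x\psi\|_\infty\,(t-s)^{n/2}=(t-s)^{-1}\|\nabla_x\psi\|_\infty$ is logarithmically divergent at both ends of the $s$-integral, so the needed $\BMO$ estimate is a genuine Coifman--Meyer/square-function theorem (the parabolic analogue of the Calder\'on commutator with $L^\infty$ coefficient), not a remark; the same cancellation issue defeats the ``direct bump-function'' proof of WBP, since the kernel is neither antisymmetric (the factor $\mathbbm{1}_{\{t>s\}}$ destroys this) nor absolutely integrable over $Q\times Q$ at the critical homogeneity $\|\mbf{x}-\mbf{y}\|^{-d}$. (2) In the converse, the lower bound $\langle T_\psi\mathbbm{1}_{Q'},\mathbbm{1}_Q\rangle\approx\ell^{\,n}\bigl(\psi(\mbf{x}^0)-\psi(\mbf{y}^0)\bigr)$ is circular as stated: the positive factor $(t-s)^{-1-n/2}e^{-|x-y|^2/4(t-s)}\mathbbm{1}_{\{t>s\}}$ has oscillation comparable to its size on $Q\times Q'$ and $\psi(\mbf{x})-\psi(\mbf{y})$ changes sign, so you cannot replace the kernel by a constant ``up to lower-order terms'' without already knowing the Lipschitz bound you are trying to prove; moreover only time-ordered pairs of cubes are accessible (the kernel is supported in $t>s$), so comparing two points at the same time, and passing from cube averages to pointwise values, requires a chaining and telescoping argument you have not supplied. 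Finally, identifying the one-sided temporal term with $\mathcal{D}_t\psi$ modulo $\BMO$ is itself only true up to an $\|\nabla_x\psi\|_\infty$-type correction, exactly as in \eqref{eqnormequiv}, which is a lemma of \cite{HL96} rather than a reflection. In short, the proposal is a plausible roadmap, but each load-bearing step is either missing or incorrect as stated, so it cannot be accepted as a proof.
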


\noindent{\bf Remark.} Up to a multiplicative constant, $T_\psi =[H^{1/2},\psi]$, where
$H=\partial_t -\Delta$ is the usual heat operator in $\rn$ (see \cite{H95} for details).

\smallskip

It will be convenient to let $\rn_{sp}$ denote spatial $\rn$.

\begin{proof}[Proof of Proposition \ref{prop1}] Let $\Phi \in C_0^\infty (\re)$, with
$0\leq \Phi \leq 1$, $\Phi(r)\equiv 1$ if $|r|\leq 1$, and $\Phi(r) \equiv 0,$ if $r \geq 2$.
Given a positive number $M<\infty$, set $\Phi_M:= \Phi(\cdot/M)$.
Given a unit vector $\nu \subset S^{n-1} \subset \rn_{sp}$, 
and a point $X\in \rn$, set
\[ x_\nu:= (X\cdot \nu)\nu\,,\quad x_\nu^\perp:= X- x_\nu\,.
\]
For each such $\nu$ and $M$, we define the kernels
\[ K^{\nu,M} (X,t) := \frac{x_\nu}{t^{1+(n/2)}} \exp\left(\frac{-|x_\nu^\perp|^2}{4t}\right) 1_{\{t>0\}}\,
 \Phi_M\left(\frac{x_\nu}{\|(x_\nu^\perp,t)\|}\right)\,.
\]
Note that $K^{\nu,M}$ is a ``nice" kernel in the sense of Definition \ref{defnice}, for each $\nu$ and $M$, 
with quantitative bounds that are uniform in $\nu$, but of course depend upon $M$.
Thus, by hypothesis, \eqref{eq2} holds for every $\nu,M$, for the truncated SIO 
$T_\eps^{\nu,M} := T_\eps(K^{\nu,M})$, with a quantitative bound
$C_K$ that is uniform in $\nu$, and depends quantitatively on $M$.

After a possible rotation of the spatial co-ordinates, we may suppose that
 $\Sigma=\{(\psi(x,t),x,t)\}$,
  where $\psi$ is a Lip(1,1/2) function defined on
 $\rn$.  Thus, for some positive constant $M_\psi<\infty$, we have
 \[|\psi(x,t) -\psi(y,s)|\,\leq \,M_\psi\, \|(x-y,t-s)\|\,,\quad \forall \, (x,t),(y,s) \in \rn\,.
 \]
 Let $\nu_0:=(1,0,...,0)$ denote the unit basis vector in the $x_0$ direction in 
 $\rn_{sp}$.  Choosing $\nu=\nu_0$, and $M=M_\psi$, and merging the surface area element
 $\sqrt{1+|\nabla_y\psi(y,s)|^2}$ into $f(y,s)$, we find that
 $T_\eps^{\nu_0,M_\psi}$, defined on $L^2(\Sigma)$ and written in the graph co-ordinates, is 
 merely the truncated version of the parabolic Calder\'on commutator $T_\psi$ defined in \eqref{defcomm}.
 By hypothesis, these truncations are uniformly bounded on $L^2(\rn)$, and thus by Theorem \ref{t3}, we 
 obtain that $\psi$ is a regular Lip(1,1/2) function, equivalently, that $\Sigma$ is a regular Lip(1,1/2) graph.
\end{proof}

\newcommand{\etalchar}[1]{$^{#1}$}

%\bibliography{BHMNrefsgrph}
%\bibliographystyle{alpha}

%\begin{thebibliography}{99999}
%\parskip=0.1cm

%\end{thebibliography}

\end{document}